\DeclareMathAlphabet{\mathpzc}{OT1}{pzc}{m}{it}
\DeclareSymbolFontAlphabet{\mathbb}{AMSb} 
\DeclareSymbolFontAlphabet{\mathbbl}{bbold}
\newcommand{\Prism}{{\mathlarger{\mathbbl{\Delta}}}}
\numberwithin{equation}{subsection}
\newtheorem{theorem}[subsubsection]{Theorem}
\newtheorem{corollary}[subsubsection]{Corollary}
\newtheorem{lemma}[subsubsection]{Lemma}
\newtheorem{lemmadef}[subsubsection]{Definition/Lemma}
\newtheorem{proposition}[subsubsection]{Proposition}
\newtheorem{definition}[subsubsection]{Definition}
\newtheorem{claim}[subsubsection]{Claim}
\theoremstyle{definition}
\newtheorem{construction}[subsubsection]{Construction}
\newtheorem{convention}[subsubsection]{Convention}
\newtheorem{example}[subsubsection]{Example}
\newtheorem{question}[subsubsection]{Question}
\newtheorem{remark}[subsubsection]{Remark}
\newtheorem{assumption}[subsubsection]{Assumption}
\newcommand{\ra}{\rightarrow}
\def\BB{\mathbb{B}}
\def\NN{\mathbb{N}}
\def\calF{\mathcal{F}}
\def\calO{\mathcal{O}}
\def\scrD{\mathscr{D}}
\def\scrX{\mathscr{X}}
\def\ul{\underline}
\def\wt{\widetilde}
\def\wh{\widehat}
\DeclareMathOperator{\Hom}{Hom}
\DeclareMathOperator{\Ext}{Ext}
\DeclareMathOperator{\id}{id}
\DeclareMathOperator{\Spec}{Spec}
\DeclareMathOperator{\Spf}{Spf}
\DeclareMathOperator*{\motimes}{\text{\raisebox{0.25ex}{\scalebox{0.8}{$\bigotimes$}}}}
\DeclareMathOperator*{\moplus}{\text{\raisebox{0.25ex}{\scalebox{0.8}{$\bigoplus$}}}}
\newcommand{\Sh}{\mathrm{Sh}}
\newcommand{\Sym}{\mathrm{Sym}}
\newcommand{\rra}{\longrightarrow}
\newcommand{\lmt}{\longmapsto}
\newcommand{\ol}{\overline}
\newcommand{\frakp}{\mathfrak{p}}
\newcommand{\Fun}{\textrm{Fun}}
\newcommand{\et}{{\mathrm{\acute{e}t}}}
\newcommand{\pe}{{\mathrm{pro\acute{e}t}}}
\newcommand{\Spa}{\mathrm{Spa}}
\newcommand{\ct}{{\mathrm{cont}}}
\newcommand{\Ainf}{{\mathrm{A_{inf}}}}
\newcommand{\Bdr}{{\mathrm{B_{dR}^+}}}
\newcommand{\an}{{\mathrm{an}}}
\newcommand{\colim}{{\mathrm{colim}}}
\newcommand{\Coh}{{\mathrm{Coh}}}
\newcommand{\gr}{{\mathrm{gr}}}
\newcommand{\Fil}{{\mathrm{Fil}}}
\newcommand{\dR}{{\wh{\mathrm{dR}}^\an}}
\newcommand{\op}{{\mathrm{op}}}
\newcommand{\Ainfe}{{\mathrm{A_{inf,e}}}}
\newcommand{\Ainfn}{{\mathrm{A_{inf,n}}}}
\newcommand{\Bdre}{{\mathrm{B^+_{dR,e}}}}
\newcommand{\Bdrn}{{\mathrm{B^+_{dR,n}}}}
\newcommand{\Bdrm}{{\mathrm{B^+_{dR,m}}}}
\newcommand{\Mod}{{\mathrm{Mod}}}
\newcommand{\red}{{\mathrm{red}}}
\newcommand{\BBdr}{{\mathbb{B}_{\mathrm{dR}}}}
\newcommand{\BBdrp}{{\mathbb{B}^+_{\mathrm{dR}}}}
\newcommand{\DF}{{\mathscr{DF}}}
\newcommand{\tf}{{\mathrm{tf}}}
\newcommand{\Cd}{{\mathrm{Cond(Ab)}}}
\newcommand{\Sd}{{\mathrm{Solid}}}
\newcommand{\bs}{{\blacksquare}}
\begin{document}
	\title{Prismatic cohomology of rigid analytic spaces over de Rham period ring}
	\date{}\maketitle
	
	\centerline{Haoyang Guo}

	\begin{abstract}
		Inspired by Bhatt-Scholze \cite{BS19}, in this article, we introduce prismatic cohomology for rigid analytic spaces with l.c.i singularities, with coefficients over Fontaine's de Rham period ring $\Bdr$.
		
	\end{abstract}
	
	\tableofcontents

\section{Introduction}
\subsection{Background and main theorems}
Let $k$ be a complete discretely valued $p$-adic field that has a perfect residue field, and let $K$ be its completed algebraic closure.
In the seminal work of Bhatt and Scholze \cite{BS19}, they introduce the notion of prisms and prismatic cohomology for a $p$-adic smooth formal scheme $X$ over $\mathcal{O}_k$.
A prism can be defined as a diagram of algebras
\[
(A \rra A/I \longleftarrow R),
\]
where $\Spf(R)$ is an open subset of $X$, and $A$ is equipped with an additional structure of a lift of Frobenius operator that satisfies various conditions (\cite[Section 2, 3]{BS19}).
The resulted cohomology theory, namely prismatic cohomology, turns out to specialize to many other important $p$-adic cohomology in the $p$-adic Hodge theory, and thus provides a powerful tool to study the relation of different cohomology associated to the given formal scheme $X$.

Let $X$ now be a rigid space over $K$, and let $\xi$ be a fixed generator of the kernel ideal for the  natural surjection $\theta:\Bdr \ra K$,
where $\Bdr$ is Fontaine's de Rham period ring (\cite{Fon94}).
In this article, we study a rational analogue of  prismatic cohomology for rigid spaces over $K$, with coefficients in $\Bdr$.
We start by defining the notion of a prism in our setting.
\begin{definition}\label{intro def}
	A $\Bdr$-prism over $X$ is defined as a diagram of $\Bdr$-algebras as below
	\[
	(B \rra B/\xi \longleftarrow R),
	\]
	where $\Spa(R)$ is an affinoid open subset of $X$, and $B$ is a $\xi$-adic complete, $\xi$-torsionfree algebra over $\Bdr$ satisfies the finite type condition as in \Cref{pm site} (i).
\end{definition}
One essential difference between our notion of prisms and that of Bhatt-Scholze in \cite{BS19} is that there is \emph{no} Frobenius structure in our definition.
Nonetheless, the resulted cohomology theory over the opposite category of prisms still enjoys many good properties when $X$ has mild singularities.

Now we state our main results.
\begin{theorem}\label{main sm}
	There is a $\Bdr$-linear cohomology theory on the category of $K$-rigid spaces 
	\[
	X \longmapsto R\Gamma_\Prism(X/\Bdr):=R\Gamma(X/\Bdr_\Prism,\mathcal{O}_\Prism),
	\] such that for an affinoid rigid space $X$ that has l.c.i singularities, it satisfies the following property (as $\mathbb{E}_\infty$-algebras over $\Bdr$):
	\begin{enumerate}[(i)]
		\item(Pro-\'etale comparison) There is a natural map 
		\[
		R\Gamma_\Prism(X/\Bdr) \rra R\Gamma_\pe(X,\BBdrp),
		\]
		where the right hand side is pro-\'etale cohomology of the deRham period sheaf as in \cite{Sch13}.
		\footnote{First introduced by Brinon  in \cite{Bri08}.} The map is an isomorphism when $X$ is smooth.
		\item(infinitesimal comparison) There is a natural map
		\[ 
		R\Gamma_{\inf}(X/\Bdr) \rra R\Gamma_\Prism(X/\Bdr),
		\]
		which factors through the subcomplex $L\eta_\xi R\Gamma_\Prism(X/\Bdr)$.
		Here the left hand side is infinitesimal cohomology of $X/\Bdr$ as in \cite[Section 7]{Guo21} (see also \Cref{subs prism to inf}).
		
		Moreover, if $X$ is smooth, the above induces an isomorphism $R\Gamma_{\inf}(X/\Bdr)\ra L\eta_\xi R\Gamma_\Prism(X/\Bdr)$.
		\item(Hodge-Tate filtration) There is a natural $\mathbb{N}^\op$-indexed increasing exhaustive filtration over the reduction $R\Gamma_\Prism(X/\Bdr)\otimes^L_\Bdr K$, whose $i$-th graded piece is $R\Gamma(X,L\wedge^i \mathbb{L}^\an_{X/K}(-i)[-i])$.
		\item(Galois invariant) Assume $X=Y_K$ for some affinoid rigid space $Y$ over $k$.
		Then there is a natural condensed algebra $\ul{R\Gamma}_\Prism(X/\Bdr)$ underlying $R\Gamma_\Prism(X/\Bdr)$, satisfying
		\[
		R\Gamma(G_k, \ul{R\Gamma}_\Prism(X/\Bdr)) \cong R\Gamma(G_k, \Bdr)\otimes^L_k R\Gamma_{\inf}(Y/k),
		\]
		where $R\Gamma(G_k, -)$ is defined as the condensed group cohomology as in \cite[Appendix 2]{Bos21}.
	\end{enumerate}
\end{theorem}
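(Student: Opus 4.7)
The plan is to first construct the cohomology theory by setting up the $\Bdr$-prismatic site $X/\Bdr_\Prism$ of Definition \ref{intro def}, with structure sheaf $\mathcal{O}_\Prism\colon (B\to B/\xi\leftarrow R)\mapsto B$, and then to prove that for an affinoid $X=\Spa(R)$ with l.c.i.\ singularities, $R\Gamma_\Prism(X/\Bdr)$ is computed by an explicit Čech--Alexander complex. Choosing a $\xi$-adic surjection $P\twoheadrightarrow R$ from a $\xi$-completed polynomial $\Bdr$-algebra $P$ and forming the $\xi$-adically completed Čech nerve $B^\bullet$ of $P\to R$, one identifies $R\Gamma_\Prism(X/\Bdr)$ with $\Tot(B^\bullet)$. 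This reduction to a cosimplicial object is the central computational device and mirrors the Čech--Alexander construction of \cite{BS19}.

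For (i) and (ii) the comparison maps are produced by functoriality: each affinoid perfectoid in the pro-\'etale site of $X$ supplies a prism $(\Bdr(R_\infty),\xi,R_\infty)$, giving the map to $R\Gamma_\pe(X,\BBdrp)$; each infinitesimal thickening $P\twoheadrightarrow R$ becomes a prism after $\xi$-adic completion, giving the map from $R\Gamma_\inf(X/\Bdr)$. For smooth $X$, I would prove (i) by showing that on an affinoid perfectoid chart the prismatic site is pro-represented by the tautological prism, so that both Čech totalizations agree termwise along a perfectoid pro-\'etale hypercover. The factorization through $L\eta_\xi$ in (ii) would be obtained from a direct comparison of Čech--Alexander complexes: applying $L\eta_\xi$ to the $\xi$-completed prismatic complex realizes the divided-power envelope that computes infinitesimal cohomology, in the spirit of Bhatt--Morrow--Scholze; in the smooth case, flatness of $\Omega^1_{R/\Bdr}$ upgrades this to an isomorphism.

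For (iii), I would reduce the Čech--Alexander complex modulo $\xi$. The $\xi$-adic (equivalently conjugate) filtration on the free terms descends to the desired increasing filtration on $R\Gamma_\Prism(X/\Bdr)\otimes^L_\Bdr K$, and a transitivity argument for the analytic cotangent complex identifies the $i$-th graded piece with $R\Gamma(X,\wedge^i\mathbb{L}^\an_{X/K}(-i)[-i])$; sheafification then removes the affinoid hypothesis. For (iv), I would carry out the Čech--Alexander construction in condensed $\Bdr$-modules to produce $\ul{R\Gamma}_\Prism$. When $X=Y_K$, a $k$-rational presentation of $Y$ yields a $G_k$-equivariant free cover; combined with (ii) this gives $\ul{R\Gamma}_\Prism(X/\Bdr)\simeq \Bdr\,\widehat{\otimes}_k R\Gamma_\inf(Y/k)$ as condensed $\Bdr$-algebras with continuous $G_k$-action, and applying $R\Gamma(G_k,-)$ produces the displayed formula.

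The main technical obstacle I anticipate is the smooth case of (i): one must establish that the prismatic site of an affinoid perfectoid is pro-represented by its tautological prism, so that higher prismatic cohomology vanishes on perfectoid charts. This requires fine control of $\xi$-adic topologies and $\xi$-torsion-freeness in completed tensor products, playing the role occupied by $\delta$-structures in \cite{BS19}. Once this local vanishing is secured, descent and sheafification deliver all four comparisons.
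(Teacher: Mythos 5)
Your high-level framework (define the indiscrete prismatic site, compute via Čech--Alexander, produce comparison maps by functoriality) matches the paper, but the proposal misses the two organizing technical tools and contains a genuine error in part (iv). The paper's central device is a $\Bdr$-variant of Simpson's Rees functor $\Psi$ applied to infinitesimal cohomology equipped with a "lifted infinitesimal filtration": Theorem \ref{main tech} shows $\Psi(R\Gamma_\inf(X/\Bdr),\widetilde{\Fil}^\bullet)\cong R\Gamma_\Prism(X/\Bdr)$. You never invoke this, and it is precisely what supplies the prismatic de Rham-style complex, the decalage factorization in (ii), the mod-$\xi$ reduction with its graded pieces in (iii), and the Galois computation in (iv). Similarly, the paper's extension from smooth to l.c.i.\ in (iii) rests on establishing finite-variable Tate-algebra simplicial resolutions of affinoid pairs (Section \ref{subset simplicial res}) and a left Kan extension to derived prismatic cohomology (Definition \ref{left Kan HT}, Theorem \ref{left kan of coh}); your "transitivity argument for the analytic cotangent complex" handles the graded identification but does not give the filtration on the totality in the non-smooth case, where one cannot simply quote the smooth Postnikov filtration.

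More seriously, your plan for (i) proposes showing the prismatic site of an affinoid perfectoid is pro-represented by its tautological prism so that higher cohomology vanishes; but the site in Definition \ref{pm site} only allows noetherian prisms (so a perfectoid algebra isn't even an object before the "enlarged site" construction), and, absent $\delta$-structures, there is no evident perfectness argument to force the vanishing you need. The paper instead reduces mod $\xi$, proves a Künneth formula for $\bar{\mathcal{O}}_\Prism$ (Theorem \ref{Kunneth reduced}) and étale localization (Proposition \ref{etale loc}), reduces to a curve, and compares with the analytic cotangent complex in degrees $\leq 1$ (Proposition \ref{proet cotangent}). And in (iv), the claim that $\underline{R\Gamma}_\Prism(X/\Bdr)\simeq \Bdr\,\widehat\otimes_k\,R\Gamma_\inf(Y/k)$ is false: that formula is the base-change description of \emph{infinitesimal} cohomology (Theorem \ref{Galois of inf}(ii)), not prismatic cohomology, which is strictly larger (even after inverting $\xi$, $R\Gamma_\inf[\frac{1}{\xi}]$ is only a direct summand of $R\Gamma_\Prism[\frac{1}{\xi}]$). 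The correct argument applies $\Psi$ to the condensed infinitesimal complex, then uses Tate's vanishing $H^i(G_k,K(j))=0$ for $j\neq 0$ to see that only the weight-zero part of the $\xi$-divided filtration contributes to Galois invariants, yielding exactly $R\Gamma(G_k,\Bdr)\otimes_k R\Gamma_\inf(Y/k)$ (Theorem \ref{Galois of prism}). Without this weight-filtration analysis, the Galois invariant statement cannot be recovered from (ii) alone.
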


We give several comments on the result above.
\begin{remark}[Cohomology of $\BBdrp$]\label{rmk integral proetale comparison}
	Let $X$ be a smooth rigid space over $K$ of dimension $n$. 
	In this case, infinitesimal cohomology $R\Gamma_{\inf}(X/\Bdr)$ was first introduced in \cite[Section 13]{BMS} (denoted there as $R\Gamma_\mathrm{crys}(X/\Bdr)$ and is called \emph{crystalline cohomology}).
	It is locally computed by the formal completion of the de Rham complex of a smooth ambient space with respect to a closed immersion (\cite[Section 13.1]{BMS}, \cite[Theorem 4.1.1]{Guo21}).
	Shown in \cite[Theorem 13.1]{BMS}, infinitesimal cohomology admits a map to pro-\'etale cohomology $R\Gamma_\pe(X,\BBdrp)$, which is an isomorphism after tensoring with $\mathrm{B_{dR}}$.
	This in particular allows us to compute pro-\'etale cohomology of $\BBdr=\BBdrp[\frac{1}{\xi}]$ using the de Rham complex.
	It is then a natural question if one can also compute pro-\'etale cohomology of $\BBdrp$ using a de Rham style cohomology and within the category of noetherian algebras.
	
	Our \Cref{main sm}.(i) then gives an answer to the question.
	In fact, if $X=\Spa(R)$ admits a compatible system of smooth $\Bdr/\xi^e$-algebras $\wt R_e$ (which always exists locally), and let $\wt R$ be their inverse limit.
	Then $R\Gamma_\Prism(R/\Bdr)$ is isomorphic to the following $\xi$-divided de Rham complex
	\[
	\xymatrix{\wt R \ar[r]^{d~~~~~~~} & \frac{\wt \Omega^1}{\xi} \ar[r]^d & \cdots \ar[r] & \frac{\wt\Omega^{n}}{\xi^{n}}}.
	\]
	Here $\wt\Omega^1 :=\varprojlim_e \Omega_{(\wt R/\xi^e)/\Bdre}^1$ is  the inverse limit of analytic differentials of affinoid algebras $\wt R_e$ over $\Bdre$, and the map $d$ is defined as the composition 
	\[
	\wt R \rra \wt\Omega^1 \rra  \frac{\wt\Omega^1}{\xi},
	\]
	where the first map is the continuous differential over $\Bdr$, and the second map is the natural inclusion.
\end{remark}
\begin{remark}[Prismatic $\BBdrp$]
	In the recent work \cite[Construction 6.4]{BS21} Bhatt-Scholze introduce the notion of \emph{de Rham period sheaf} $\Prism_\bullet[1/p]^\wedge_I$ over the category of quasi-regular semiperfectoid algebras over a smooth $p$-complete $\mathcal{O}_K$-algebra $R$ (with quasi-syntomic topology).
	Using $\Ainf$-comparison in \cite[Theorem 17.2]{BS19} together with pro-\'etale comparison in \Cref{main sm} (i), one can show that two prismatic cohomology over $\Bdr$ are compatible, namely cohomology of $\Prism_\bullet[1/p]^\wedge_I$ of a smooth formal scheme of $\scrX$ is isomorphic to $R\Gamma_\Prism(\scrX_\eta/\Bdr)$ (after replacing the ideal $I=(\phi(\xi))$ by $(\xi)$).
	Thus our prismatic cohomology coincides with that of \cite{BS21} for rigid spaces that have good reduction.
\end{remark}
\begin{remark}[Globalization]\label{rmk globalize}
\Cref{main sm} 
naturally extends to proper rigid spaces that has l.c.i singularities (with $L\eta_\xi R\Gamma_\Prism(X/\Bdr)$ in \Cref{main sm} (ii) replaced by a sheaf version).

When $X$ is proper smooth, by Hodge-Tate filtration and the finiteness of coherent cohomology, each $\mathrm{H}^i_\Prism(X/\Bdr)$ is finite generated over $\Bdr$ and lives in cohomological degree $0\leq i\leq 2\dim(X)$.
In fact, using the primitive comparison theorem (\cite{Sch12}) one can show that each $\mathrm{H}^i_\Prism(X/\Bdr)$ is finite free over $\Bdr$.

However, if $X$ is not smooth, the Hodge-Tate filtration in \Cref{main sm} (iii) has infinite graded pieces. 
In the special case when $X=Y_K$ for a proper rigid space $Y$ over a discretely valued subfield $k$, the Hodge-Tate filtration splits into an infinite direct sum  $\moplus_{i\in \mathbb{N}}R\Gamma(X, L\wedge^i\mathbb{L}^\an_{X/K}(-i)[-i])$, which is in general infinite dimensional for proper rigid spaces that have l.c.i singularities.
Thus one loses the finiteness if we go beyond the smooth proper assumption.

On the other hand, combine \Cref{main sm} (i) and (ii), one obtain the following composition
\[
R\Gamma_{\inf}(X/\Bdr) \rra R\Gamma_\Prism(X/\Bdr) \rra R\Gamma_\pe(X,\BBdrp).
\]
It can be shown that the composition is an isomorphism after inverting $\xi$ (\cite[Theoren 7.3.2]{Guo21}), and thus  $R\Gamma_{\inf}(X/\Bdr)[\frac{1}{\xi}]$ is a direct summand of $R\Gamma_\Prism(X/\Bdr)[\frac{1}{\xi}]$, and contains all Galois invariants when $X=Y_K$ is defined over $k$ (notice the latter is finite when $Y$ is proper).
One can then ask if the complement of $R\Gamma_{\inf}(X/\Bdr)$ in $R\Gamma_\Prism(X/\Bdr)$  has bounded torsion, which would imply the following:
\begin{question}\label{ques iso invert xi}
	Are the maps in \Cref{main sm} (i) and (ii) inducing isomorphisms after inverting $\xi$?
\end{question}
In the case when $X=Y_K$ is defined over the discretely valued subfield $k$, it can be shown that the boundedness of torsion in $R\Gamma_\Prism(X/\Bdr)$ follows from the following question on analytic derived de Rham cohomology of $Y/k$.
\begin{question}
	Let $Y$ be a rigid space over $k$ that has l.c.i singularities.
	Is there a positive integer $N$, such that the following map of cohomology is always zero for any $n\in \mathbb{N}$ and $i\in \mathbb{Z}$
	\[
	\mathrm{H}^{i}(Y, \Fil^{n+N} \dR_{Y/k}) \rra \mathrm{H}^i(Y, \Fil^n \dR_{Y/k}).
	\]
	Here $\Fil^\bullet$ is the natural filtration on the analytic derived de Rham complex.
	
\end{question}
\begin{remark}
	The question has a more classical variant in the complex algebraic geometry.
	Namely for a complex algebraic variety $Y$ that has l.c.i singularities, can we find a positive integer $N$ so that the following map is always zero:
	\[
	\mathrm{H}^{i}(Y, \Fil^{n+N} \wh{\mathrm{dR}}_{Y/\mathbb{C}}) \rra \mathrm{H}^i(Y, \Fil^n \wh{\mathrm{dR}}_{Y/\mathbb{C}}),
	\]
	where $\Fil^\bullet$ is the natural filtration on the Hodge-completed derived de Rham complex $\wh{\mathrm{dR}}_{Y/\mathbb{C}}$.
	
	If $Y$ is of dimension zero, one can show the above by hand, using explicit calculation for a closed immersion into $\mathbb{A}^1$.
	On the other hand, it is shown in \cite[Corollary 5.4]{Bha12} that the map of cohomology $\Fil^n\wh{\mathrm{dR}}_{Y/\mathbb{C}} \ra \wh{\mathrm{dR}}_{Y/\mathbb{C}}$ is the zero map for $n>>0$.
\end{remark}

\end{remark}

\begin{remark}(Condensed mathematics)
	Due to the fact that there is no derived category of topological groups, we instead use the notion of condensed mathematics of Clausen-Scholze to study Galois invariant of prismatic cohomology  in \Cref{main sm} (iv).
	As byproducts, we also compute Galois invariant of infinitesmal cohomology, and give a condensed base extension formula of infinitesimal cohomology, as in \Cref{Galois of inf} and \Cref{Galois of inf} (see also the filtered versions as in \Cref{Galois filtered} and \Cref{Galois tensor product}).
\end{remark}

We now briefly discuss the proof in next two subsections.
\subsection{Simpson's functor}

Recall from \cite{Simp} that Simpson introduced an equivalence between the category of $\mathbb{Z}^\op$-filtered vector spaces over a field and the category of modules over $\mathbb{A}^1/\mathbb{G}_m$, using a Rees algebra construction.
The functor is defined as follows
\[
(V,\Fil^\bullet) \lmt \underset{i\in\mathbb{Z}}{\colim} \frac{\Fil^iV}{t^i},
\]
where $t$ is the coordinate of $\mathbb{A}^1$, and  admits a natural action by $\mathbb{G}_m$.
The functor can be extended to an equivalence in the derived level (\cite{Mou19}), and its pullback along $\{t=0\}\ra \mathbb{A}^1$ is the direct sum of graded pieces for $(V,\Fil^\bullet)$.

Analogous to the above construction, we introduce a variant of Simpson's functor, sending a $\mathbb{N}^\op$-filtered $\Bdr$-complex onto the $\Bdr$-complex as below
\[
\Psi:(M, \Fil^\bullet) \lmt (\underset{i\in\mathbb{N}}{\colim} \frac{\Fil^iM}{\xi^i})^\wedge,
\]
where $(-)^\wedge$ is the derived $\xi$-completion.
One of our main observations is that prismatic cohomology over $\Bdr$ can be computed by applying Simpson's functor at infinitesimal cohomology of $X/\Bdr$ with a lifted infinitesimal filtration.
\begin{theorem}\label{main tech}
	Let $X$ be an affinoid rigid space over $K$ that has l.c.i singularities.
	Then we have the following formula
	\[
	\Psi(R\Gamma_{\inf}(X/\Bdr), \wt\Fil^\bullet) \cong R\Gamma_\Prism(X/\Bdr).
	\]
\end{theorem}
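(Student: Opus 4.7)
The plan is to reduce both sides to explicit de Rham-type models via a local closed embedding and then verify the Simpson identification on these models. Both $R\Gamma_{\inf}(X/\Bdr)$ and $R\Gamma_\Prism(X/\Bdr)$ are computed as cohomology of sheaves on sites attached to $X$, and $\Psi$, being built from a filtered colimit followed by derived $\xi$-completion, behaves well with respect to affinoid descent. I would therefore begin by reducing to the case of a small affinoid $X = \Spa(R)$ admitting a closed immersion $R = P/J$ into a smooth $\Bdr$-algebra $P = \Bdr\langle T_1, \ldots, T_n\rangle$, with $J$ cut out (derivedly) by a regular sequence thanks to the l.c.i.\ hypothesis on $X$.

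With this setup, the results of Guo21 (recalled in \Cref{subs prism to inf}) identify $R\Gamma_{\inf}(X/\Bdr)$ with the $J$-adically completed de Rham complex $(\Omega^\bullet_{P/\Bdr})^\wedge_J$, and identify the lifted infinitesimal filtration $\wt\Fil^\bullet$ with the combined Hodge/ideal filtration that incorporates $\xi$ (viewed as a generator of $\Fil^1$ on $\Bdr$) alongside the powers of $J$. On the prismatic side, \Cref{rmk integral proetale comparison} expresses $R\Gamma_\Prism(X/\Bdr)$ as the $\xi$-divided de Rham complex $\wt R \to \wt\Omega^1/\xi \to \cdots \to \wt\Omega^n/\xi^n$, where $\wt R = \varprojlim_e \wt R_e$ is the inverse limit of a compatible system of smooth $\Bdre$-lifts of $R$, which exists locally.

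The crux is then to match these two models under $\Psi$. The heuristic is that, at the $i$-th slot of the colimit defining $\Psi$, dividing a piece of the form $J^{a}\Omega^j$ in $\wt\Fil^{a+j}$ by $\xi^{a+j}$ interpolates between "ambient smooth thickening $P$ of $R$ along $J$" and "ambient smooth $\Bdre$-lifts $\wt R_e$ of $R$ along $\xi$" as $e,a\to\infty$, since reducing $P$ modulo $J$ and reducing $\wt R$ modulo $\xi$ both yield $R$. Concretely, I would construct a map of filtered complexes from the $J$-adic model to the $\xi$-divided model sending a form with $a$ ideal factors to its image divided by $\xi^a$, then show this map becomes an isomorphism after applying $\Psi$ by comparing associated graded pieces; on both sides those pieces reduce to explicit Hodge--Tate-type complexes over $R$ built from $\wedge^\bullet \mathbb{L}^\an_{X/K}$, which can be matched directly.

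The main obstacle I anticipate is the filtration-completion bookkeeping. One needs the lifted filtration to satisfy $\xi \cdot \wt\Fil^i \subset \wt\Fil^{i+1}$ so that the transition maps in $\colim_i \wt\Fil^i/\xi^i$ are defined, and one must verify that derived $\xi$-completion after the colimit commutes correctly with the derived $J$-adic completion appearing on the infinitesimal side. Passing from the smooth case (where the comparison is essentially classical Rees algebra manipulation) to the full l.c.i.\ case requires working with the cotangent complex / derived differentials rather than naive $\Omega^\bullet$, and checking that the Koszul-type resolution of $R$ over $P$ is preserved under $\Psi$; this convergence-and-compatibility analysis between the two distinct completions is where the technical weight of the proof will lie.
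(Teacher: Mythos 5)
Your proposal is close in spirit to the paper's proof but takes a genuinely different route, and the route you choose has a circularity issue you should be aware of.

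The paper does not compare explicit de Rham models directly. Instead, the proof of \Cref{prism coh} runs through \v{C}ech--Alexander complexes: it shows (via \Cref{pm Cech} and \cite[Theorem 7.2.3]{Guo21}) that both $R\Gamma_{\inf}(X/\Bdr)$ and $R\Gamma_\Prism(X/\Bdr)$ are computed by cosimplicial rings $[n]\mapsto D(n)_{\inf}$ and $[n]\mapsto D(n)_\Prism$ built from the self-products $P(n) = P^{\wh\otimes_\Bdr\, n+1}$, and then the entire theorem reduces to the statement that $D(n)_\Prism \cong \Psi\bigl(D(n)_{\inf}, (I,\Delta(n))^\bullet\bigr)$ for each $n$ (\Cref{prism envelope} combined with \Cref{prism envelope 2}), plus the observation that $\Psi$ commutes with the homotopy limit along $\Delta^{\op}$. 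The algebraic core is \Cref{prism envelope}, which identifies $\Psi(P, I^\bullet)$ with the prismatic envelope $P[\frac{I}{\xi}]^\wedge_\tf$ by applying derived Nakayama and checking that both sides reduce to $\bigoplus_n \ol I^n/\ol I^{n+1}$ modulo $\xi$; you do correctly identify this as the ``classical Rees algebra manipulation,'' and your plan to verify isomorphism on associated gradeds matches that lemma's proof.

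The gap in your plan is that you want to cite \Cref{rmk integral proetale comparison} as the de Rham model for the prismatic side, but that remark presupposes a compatible system of \emph{smooth} $\Bdre$-lifts $\wt R_e$ of $R$, which only exists when $R$ is smooth. For l.c.i.\ $R$ the correct model is the de Rham complex of the prismatic envelope $D_\Prism = P[\frac{I}{\xi}]^\wedge_\tf$ with its log connection (\Cref{prism dR}); but in the paper this de Rham model for prismatic cohomology is a \emph{corollary} of \Cref{prism coh} (it is deduced by applying $\Psi$ to the known infinitesimal de Rham model of \cite[Theorem 4.1.1]{Guo21}). So your de Rham-to-de Rham comparison would quietly invoke the theorem you are proving. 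To make your route non-circular you would need to first establish the $\xi$-divided de Rham formula for $R\Gamma_\Prism$ by an independent Poincar\'e-lemma-type argument for the log de Rham complex of $D_\Prism$; the \v{C}ech--Alexander approach in the paper sidesteps exactly this by never invoking any de Rham model for the prismatic side until after the theorem is proved.
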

Here $\wt\Fil^\bullet$ is a lift of the infinitesimal filtration of $X/K$ along $\Bdr\ra K$, and is not canonical in general (see discussion around \Cref{lifted inf}).
We use the following two examples to illustrate what the formula looks like.
\begin{example}
	Assume $X=\Spa(R)$ is smooth affinoid and admits a compatible system of lifts $\Spa(\wt R_e)$ as in \Cref{rmk integral proetale comparison}.
	Then $R\Gamma_{\inf}(X/\Bdr)$ is isomorphic to $\wt \Omega^\bullet$, and the lifted infinitesimal filtration is defined as the Hodge filtration 
	\[
	\wt\Fil^i \wt \Omega^\bullet = (\wt\Omega^i \rra \cdots \rra \wt\Omega^n)[-i].
	\]
	In particular, applying Simpson's functor at this filtered object, we get 
	\[
	\Psi(R\Gamma_{\inf}(X/\Bdr), \wt\Fil^\bullet) = \wt \Omega^\bullet \cup \frac{\wt\Omega^{\geq 1}}{\xi} \cup \cdots \cup \frac{\wt\Omega^d}{\xi^d},
	\]
	which is exactly the $\xi$-divided de Rham complex as in \Cref{rmk integral proetale comparison}.
	Here the union is taken inside of $\wt \Omega^\bullet[\frac{1}{\xi}]$.
\end{example}
\begin{example}
	Another special case is when $X=Y_K$ for an affinoid rigid space $Y$ over the discretely valued subfield $k$.
	Under the assumption, we can take the completed base extension of the infinitesimal filtration of $Y/k$ along a fixed embedding $k\ra \Bdr$ 
	\[
	\wt\Fil^\bullet := \Fil^\bullet_{\inf}(Y/k) \wh\otimes_k \Bdr,
	\]
	which in particular admits a $G_k$-action.
	We warn the reader that this is \emph{not} the same as the infinitesimal filtration for $R\Gamma_{\inf}(Y_K/\Bdr)$.
\end{example}

\subsection{Simplicial resolution}
Another ingredient of the proof is the simplicial resolution and the left Kan extension for affinoid algebras.
Not as in the algebraic setting, where one can use polynomial rings of infinite variables to resolve any algebra; in the rigid geometry a typical algebraic object is topologically generated by finite variables.
It is then natural to ask if we can resolve an affinoid algebra by convergent power series rings of finite variables, thus within the rigid geometry.
We give a positive answer to this question, and show in \Cref{subset simplicial res} that an affinoid algebra admits a simplicial resolution of Tate algebras with finite variables.
Moreover, we use this to show that the analytic cotangent complex $\mathbb{L}^\an$ and the analytic derived de Rham complex $\dR$ for an affinoid algebra can be computed using the aforementioned resolution (\Cref{left kan of ddR}).

With the help of the simplicial resolution, we can apply the left Kan extension at prismatic cohomology of smooth affinoid algebras.
This allows us to define a notion of \emph{derived prismatic cohomology} $L\Prism_{R/\Bdr}$ for an arbitrary affinoid algebra $R$ over $K$ (see \Cref{left Kan HT}), analogous to \cite[Section 7]{BS19}.
As expected, derived prismatic cohomology and usual prismatic cohomology coincide for l.c.i singularities.
\begin{theorem}\label{main derived}
	Let $X$ be a rigid space over $K$ that has l.c.i singularities.
	There is a natural isomorphism of $\Bdr$-$\mathbb{E}_\infty$-algebras
	\[
	L\Prism_{R/\Bdr} \rra R\Gamma_\Prism(X/\Bdr).
	\]
\end{theorem}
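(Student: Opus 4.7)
The plan is to reduce to the affinoid case, apply Theorem~\ref{main tech} termwise along a simplicial resolution by smooth Tate algebras, and exchange Simpson's functor with geometric realization. Since both sides satisfy analytic descent, it suffices to treat $X=\Spa(R)$ affinoid with l.c.i.\ singularities. By the simplicial resolution result of Subsection~1.3, I would choose $R_\bullet \to R$ by smooth Tate algebras over $K$, so that by the very definition of left Kan extension
\[
L\Prism_{R/\Bdr} \simeq \bigl|R\Gamma_\Prism(R_\bullet/\Bdr)\bigr|.
\]

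For each term $R_n$, which is smooth affinoid, Theorem~\ref{main tech} gives $R\Gamma_\Prism(R_n/\Bdr) \cong \Psi(R\Gamma_{\inf}(R_n/\Bdr),\, \wt\Fil^\bullet_n)$. I next want to commute Simpson's functor with the geometric realization. Since $\Psi$ is a filtered colimit over $\NN$ followed by derived $\xi$-adic completion, and both operations commute with sifted colimits in the derived $\infty$-category of $\Bdr$-complexes, this produces
\[
L\Prism_{R/\Bdr} \cong \Psi\bigl(L R\Gamma_{\inf}(R/\Bdr),\, L\wt\Fil^\bullet\bigr),
\]
where $L R\Gamma_{\inf}$ denotes the left Kan extension of infinitesimal cohomology equipped with its filtration, from smooth affinoids.

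The next step invokes the l.c.i.\ hypothesis to identify the filtered object $(L R\Gamma_{\inf}(R/\Bdr), L\wt\Fil^\bullet)$ with $(R\Gamma_{\inf}(R/\Bdr), \wt\Fil^\bullet)$. This is the rigid analytic analogue of the classical fact that for l.c.i.\ singularities, derived (Hodge-completed) de Rham cohomology agrees with its classical counterpart, and it reduces via the comparison of Subsection~1.3 (\emph{left Kan extension of $\dR$}) to the fact that $\mathbb{L}^\an_{R/K}$ is perfect of Tor-amplitude $[-1,0]$, so that a Koszul computation forces the Hodge-style filtration on the analytic derived de Rham complex to coincide with the conormal filtration computed from any closed immersion into a smooth ambient space. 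Combined with Theorem~\ref{main tech} applied directly to $R$, this yields
\[
L\Prism_{R/\Bdr} \cong \Psi\bigl(R\Gamma_{\inf}(R/\Bdr),\, \wt\Fil^\bullet\bigr) \cong R\Gamma_\Prism(R/\Bdr).
\]

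The main obstacle I anticipate is handling the non-canonicity of the lifted filtration $\wt\Fil^\bullet$ in a coherent way across the simplicial diagram $R_\bullet$: one must either promote the whole construction to the filtered derived $\infty$-category and show that a compatible system of lifts $\wt\Fil^\bullet_\bullet$ assembles into a simplicial filtered object, or else demonstrate that $\Psi(R\Gamma_{\inf},\wt\Fil^\bullet)$ is functorially independent of the lift, perhaps by a spectral sequence argument using the Hodge-Tate filtration of \Cref{main sm}(iii) modulo $\xi$. A secondary difficulty is uniform control of the $\xi$-adic convergence across the simplicial degree~$n$, which should follow from the bounded cohomological amplitude on smooth terms together with the $\mathbb{N}^{\op}$-indexing of the filtration, but deserves explicit verification to justify commuting $\Psi$ with totalization.
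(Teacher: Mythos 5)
Your overall strategy mirrors the paper's, but two of the steps as written are not correct, and the difficulty you flag at the end is exactly the missing ingredient. First, the claim that derived $\xi$-adic completion commutes with sifted colimits in $\scrD(\Bdr)$ is false: derived completion is computed as $R\varprojlim_n (- \otimes^L_\Bdr \Bdr/\xi^n)$ and does not commute with colimits (e.g., the colimit of $\Bdr \xra{\xi} \Bdr \xra{\xi} \cdots$ is $\Bdrr$, whose completion vanishes, while the colimit of the completions is $\Bdrr$). Note also that the paper's definition of $L\Prism_{R/\Bdr}$ carries a $\xi$-adic completion outside the colimit, so the first display you wrote, $L\Prism_{R/\Bdr} \simeq |R\Gamma_\Prism(R_\bullet/\Bdr)|$, drops this completion. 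The paper circumvents all of this by reducing, via derived Nakayama, to the reduced cohomology $L\ol\Prism_{R/\Bdr}$, where no completion appears.

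Second, and more essentially, the obstacle you name at the end --- coherence of the lifted filtrations $\wt\Fil^\bullet_n$ across the simplicial diagram --- is a genuine gap, not a detail. A simplicial resolution $K\langle X_\bullet\rangle \to R$ by smooth Tate algebras over $K$ does \emph{not} come with compatible $\Bdr$-lifts of its face and degeneracy maps, so there is no functorial simplicial filtered object $(\,R\Gamma_{\inf}(K\langle X_\bullet\rangle/\Bdr),\, \wt\Fil^\bullet_\bullet\,)$ and hence no way to form $\Psi$ of the geometric realization as you propose. The paper's fix is to resolve \emph{pairs}: it first chooses a regular closed immersion $P = \Bdr\langle T_i\rangle \twoheadrightarrow P/I$ with $P/(I,\xi)\cong R$ as in \Cref{Assump}, and then uses \Cref{res of pair rational} to produce a simplicial pair $(\Bdr\langle X_\bullet, Y_\bullet\rangle, (Y_\bullet)) \to (P,I)$ over $\Bdr$. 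Mod $\xi$ this gives a smooth simplicial resolution $K\langle X_\bullet\rangle \to R$, and crucially it comes with a compatible simplicial system of regular closed immersions over $\Bdr$, hence compatible lifted filtrations. With this in hand, the comparison is made termwise on Hodge--Tate graded pieces (wedge powers of differentials for the smooth terms, $L\wedge^i \mathbb{L}^\an_{R/K}$ for the target, via \Cref{left kan of ddR}), and assembled using the exhaustiveness of the filtration; there is no need to commute $\Psi$ with the realization directly. Without the resolution-of-pairs lemma, your argument cannot be completed as stated.
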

This in particular allows us to define the Hodge-Tate filtration on reduced prismatic cohomology $R\Gamma_\Prism(X/\Bdr)\otimes^L_\Bdr K$ for rigid spaces with l.c.i singularities, which is \Cref{main sm} (iii).

\begin{remark}
	For our application, we consider a slightly more general situation.
	Namely we show the existence of the finite type resolutions for a pair $(B,I)$, consisting of a topologically finite type algebra $B$ over $\Ainf/\xi^e$ (resp. $\Bdr/\xi^e$, or $\Bdr$) together with an ideal $I$.
	This roughly means that any closed immersion of topologically finite type affine formal schemes (resp. affinoid rigid spaces) can be simplicially resolved by regular closed immersions.
	In the special case when $I$ is the zero ideal, we get the aforementioned simplicial resolution for a topological algebra itself.
\end{remark}
\begin{remark}
	As both derived prismatic cohomology and the usual prismatic cohomology are well-defined for general rigid spaces, it is a natural question if \Cref{main derived} holds without the assumption on $X$, or if the cone of the comparing map always has bounded $\xi$-torsion.
	We hope to investigate the question in the future.
\end{remark}

\subsection{Other comments}
\begin{remark}
	We want to mention that one can define an analogous notion of prismatic cohomology for complex algebraic varieties, replacing $\Bdr$ by $\mathbb{C}[[t]]$.
	In the smooth case, this was studied independently in an unpublished article by Nils Waßmuth (\cite{Was19}), where he proved the de Rham comparison and the Hodge-Tate comparison for affine smooth varieties over $\mathbb{C}$.
	In general, similar to what we do in this article, one can use Simpson's equivalence to study prismatic cohomology for complex varieties that has l.c.i singularities, using derived de Rham cohomology (\cite{Ill71}, \cite{Bha12}) and Hartshorne's de Rham cohomology over $\mathbb{C}$ (\cite{Har75}).
\end{remark}
\begin{remark}(Crystal)
	As in \cite{BS21}, our prismatic site $X/\Bdr_\Prism$ admits a natural notion of \emph{crystals}, which we study in \Cref{sec cry}.
	Analogous to the crystal-connection translation in the crystalline theory, a prismatic crystal corresponds to a quasi-nilpotent integrable connection that have log poles at $\{\xi=0\}$ (Definition \ref{pm, log conn}, \Cref{pm cry via envelope}).
	In fact, important resources of prismatic crystals are those coming from the infinitesimal site $X/\Bdr_{\inf}$, whose associated log-connections are integral.
	Moreover, our \Cref{main tech} can be extended to prismatic crystals that come from the infinitesimal site.
\end{remark}
\begin{remark}(Non-noetherian enlargement)
	As mentioned in the beginning, our prismatic site (\Cref{intro def}) consists only of those $\Bdr$-algebras that are noetherian.
	However, one can still ask if we could allow other interesting but non-noetherian rings.
	In fact, to compare prismatic cohomology with pro-\'etale cohomology as in \Cref{main sm} (i), we consider an enlarged version of the prismatic site (with the same indiscrete topology) in \Cref{subsec proetale and enlarged}, adding algebras coming from perfectoid rings.
	There we prove that cohomology of the enlarged site coincides with the smaller one in \Cref{intro def}.
	Similarly, adding $\BBdrp(R)$ for quasi-regular semiperfectoid rings $R$ over $\mathcal{O}_K$  would not change the cohomology.
\end{remark}

\paragraph{\textbf{Leitfaden of the paper}}
The article is structured as follows.
We start with  \Cref{sec site} about the basics on the prismatic site over $\Bdr$. 
Then we study the notion of prismatic crystals in \Cref{sec cry}, where we relate it to log connections, and compute their cohomology using \v{C}ech-Alexander complex.
In \Cref{sec inf to prism}, we give a local calculation of cohomology for an affinoid rigid space that admit a regular immersion. 
Here we first introduce the $\Bdr$-variant of Simpson's functor in \Cref{subsec Simpson}. 
Then in \Cref{subs prism to inf} we construct a natural functor from infinitesimal crystals to prismatic crystals.
In \Cref{subsec loc cal on lci} we compute prismatic cohomology using regular immersions and Simpson's functor, and in particular prove \Cref{main tech}  (in \Cref{prism coh}).
Next in \Cref{sec HT}, we prove the Hodge-Tate filtration theorem (i.e. \Cref{main sm} (iii)) in \Cref{HT comp} and \Cref{HT fil}.
We start by proving the version for smooth rigid spaces in \Cref{HT comp}.
Then we prove the existence of simplicial resolution for affinoid algebras paired with ideals in \Cref{subset simplicial res}.
With the help of the simplicial resolution, we are able to define the notion of derived prismatic cohomology and its Hodge-Tate filtration (\Cref{left Kan HT}).
Finally we show derived prismatic cohomology and usual prismatic cohomology coincide for l.c.i singularities (\Cref{left kan of coh}), thus finishes the proof of \Cref{main tech} (iii).
In \Cref{sec inf compa}, we prove the infinitesimal comparison theorem (i.e. \Cref{main sm} (ii)) in \Cref{decalage comp}.
In \Cref{sec proetale compa}, we prove the pro-\'etale comparison theorem (i.e. \Cref{main sm} (i)) in \Cref{proet comp}.
As a byproduct we also give a K\"unneth formula for reduced prismatic cohomology in \Cref{Kunneth reduced}.
Finally we prove the formula on Galois invariant (i.e. \Cref{main sm} (iv)) in \Cref{sec Galois inv}.
We start with a brief discussion in \Cref{subsec condensed} recalling basic results we need from condensed mathematics. 
Then we calculate Galois invariant of infinitesimal cohomology and prismatic cohomology in \Cref{Galois of inf}, \Cref{Galois filtered}, and \Cref{Galois of prism}.\\

\paragraph{\textbf{Conventions and notations}}
We fix a complete perfectoid field extension $K/\mathbb{Q}_p$ such that $K$ contains all $p^\infty$ roots of unity.
We also fix a generator $\xi=\frac{[\epsilon]-1}{\varphi^{-1}[\epsilon] -1}$ for the canonical surjection $\theta:\Bdr\ra K$, following \cite{Fon94}. 
For a non-negative integer $e$, we denote $\Ainfe$ to be the quotient ring $\Ainf/\xi^e$, and $\Bdre$ to be $\Bdr/\xi^e$ with the $p$-adic rational topology.
We assume the basics of quasi-compact quasi-separated  topologically of finite type adic spaces over a $p$-adic field or $\Bdre$ (which we call $K$-rigid space or $\Bdre$-rigid spaces separately), and refer the reader to \cite{Hu96} for a detailed foundational study.
We also use the notation $\Bdr\langle T_i\rangle=\Bdr\langle T_1,\ldots, T_l\rangle$ throughout the article to denote the inverse limit of affinoid rings $\Bdre\langle T_1,\ldots, T_l\rangle$ with respect $e$.
For a $\Bdr$-module $M$, we use $M_\tf$ to denote its $\xi$-torsionfree quotient $M/(\xi^\infty\text{-torsion})$.

We use mildly the language of infinity category and $\mathbb{E}_\infty$-algebras throughout the article, and refer the reader to \cite{Lu09} and \cite{Lu17} for the foundations.\\

\subsection{Acknowledgements}
The project was initiated during a conversation with Arthur-C\'esar Le Bras.
I thank Le Bras heartily for encouraging me to work on this topic at the very beginning.
It is obvious that the article is heavily influenced by the foundational work of Bhatt-Scholze \cite{BS19}.
I am grateful for various helpful discussions with Bhargav Bhatt on construction of prismatic sites.
Many thanks go to Sebastian Bartling and Shizhang Li for communications in the early stage of the work, and to Peter Scholze for explaining condensed group cohomology.
I also thank Dmitry Kubrak and Emanuel Reinecke for asking questions in an informal seminar, which motivates me to consider the left Kan extension for rigid spaces; and thank Guido Bosco for useful comments on the draft.
At last, during the writing process I am funded by National Science Foundation FRG grant no. DMS-1952399, and by Max Planck Institute for Mathematics.

\section{Prismatic site over $\Bdr$}\label{sec site}
In this section, we define the prismatic site over $\Bdr$ for an affinoid rigid space over $K$.

\subsection{Algebraic preliminary}
We start with an algebraic preliminary on power-bounded functions of a finite type scheme over a rigid space.
The result here will be used later to show the weak finality of the prismatic envelope (\Cref{pm uni}).

\begin{proposition}\label{power-bounded on relative}
	Let $R$ be a topologically finite type algebra over $K$, and let $S$ be a finite type algebra over $R$.
	Assume there exists a $K$-linear homomorphism $\phi:K\langle T\rangle \ra S$.
	Then the image $\phi(T)$ is finite over $R$.
\end{proposition}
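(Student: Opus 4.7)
Write $s := \phi(T) \in S$. The plan has two stages: first derive a pointwise absolute-value bound at every closed point of $\Spec S$, and then upgrade this to global integrality over $R$.

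\medskip

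\noindent\emph{Stage 1: Reducing to the injective case and establishing the residue-field bound.}  Because $K\langle T\rangle$ is a principal ideal domain (a consequence of Weierstrass preparation), the kernel $\ker\phi$ is principal. If it is non-zero, then $\phi(K\langle T\rangle)$ is a finite-dimensional $K$-algebra, so $s$ is algebraic over $K\subseteq R$ and hence finite over $R$; we are done in that case, so assume $\phi$ is injective.

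Next, observe that $R$ is Jacobson (as any affinoid $K$-algebra), hence so is the finitely generated $R$-algebra $S$; moreover, for every maximal ideal $\mathfrak{m}\subset S$ the residue field $\kappa(\mathfrak{m})$ is a finite extension of $K$ and carries a canonical non-archimedean absolute value.  I claim $|\bar s|_{\mathfrak m}\le 1$ for all such $\mathfrak m$.  The key fact is: if $f\in K\langle T\rangle$ has no zero on the closed unit disk of $\bar K$, then $f$ is a unit in $K\langle T\rangle$, so $\phi(f)$ is a unit in $S$.  Applying this to the minimal polynomial $p(T)\in K[T]$ of $\bar s$: if $|\bar s|>1$, Galois-equivariance forces every root of $p$ to have the same absolute value $>1$, so $p$ has no zero on the closed unit disk; then $p(s)$ should be a unit in $S$, while $p(s)\equiv p(\bar s)=0\pmod{\mathfrak m}$, a contradiction.

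\medskip

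\noindent\emph{Stage 2: From the pointwise bound to integrality.}  For each maximal $\mathfrak n\subset R$, the residue field $L:=R/\mathfrak n$ is a finite extension of $K$, and composition with base change gives a $K$-algebra map $K\langle T\rangle\to S\otimes_R L$ still satisfying Stage~1's pointwise bound.  If the image $\bar s$ were transcendental over $L$, then $L[\bar s]\cong L[T]$ would inject into $S\otimes_R L$, and Chevalley's theorem on the image of a finite-type morphism would force the composite $\Spec(S\otimes_R L)\to \mathbb A^1_L$ to hit all but finitely many closed points of $\mathbb A^1_L$, including closed points with coordinate of absolute value $>1$---contradicting the bound.  Hence $\bar s$ is algebraic in every fiber, so $\Spec R[s]\to \Spec R$ is quasi-finite.

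Finally, by Zariski's Main Theorem, there exists a finite $R$-algebra $\bar R$ with $R\subseteq \bar R\subseteq R[s]$ and $R[s]$ a localization of $\bar R$.  One then argues that this localization is trivial: any maximal ideal of $\bar R$ ``lost'' to the localization would have residue field a finite extension of $K$, and the pointwise bound--propagated to $\bar R$ via the same minimal polynomial argument as in Stage~1--rules out such a missing point at infinity.  Therefore $R[s]=\bar R$ is finite over $R$, so $s$ is integral over $R$.

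\medskip

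The main obstacle is the last step: carefully propagating the Stage~1 bound from $S$ to the ZMT-auxiliary algebra $\bar R$, and thereby verifying that the localization $\bar R\to R[s]$ is trivial.  This is where the Jacobson structure of affinoid algebras and the canonical absolute values on residue fields play an essential role; the earlier stages, by contrast, are formal consequences of the PID structure of $K\langle T\rangle$ and the compatibility of $\phi$ with units.
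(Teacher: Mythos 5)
Your Stage~1 is essentially the paper's Step~2: the pointwise bound $|\phi(T)|_{\mathfrak m}\le 1$ at all maximal ideals of $S$ is precisely the statement that $\phi(T)$ is a power-bounded global function on the relative analytification of $\Spec S$ over $\Spa(R)$, which the paper derives from \cite[6.1.3/1]{BGR} and \cite[6.2.3/1]{BGR}. So the first half is fine and parallel.

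It is Stage~2 that carries all the weight, and here the proposal has genuine gaps. First, the quasi-finiteness of $\Spec R[\phi(T)]\to\Spec R$ does not follow from the fiberwise algebraicity you prove. Your Chevalley argument shows that the image of $\phi(T)$ in $S\otimes_R L$ is algebraic over $L$ for each maximal $\mathfrak n\subset R$; but the fiber of $\Spec R[\phi(T)]$ over $\mathfrak n$ is $\Spec(R[\phi(T)]\otimes_R L)$, and the natural map $R[\phi(T)]\otimes_R L\to S\otimes_R L$ can fail to be injective (it has kernel coming from $\mathrm{Tor}_1^R(S/R[\phi(T)],L)$). So $R[\phi(T)]\otimes_R L$ may well be all of $L[T]$ even though its image in $S\otimes_R L$ is finite. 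In other words, the argument controls the wrong fiber ring. Second, you cite Zariski's Main Theorem in a form (``$R[\phi(T)]$ a localization of $\bar R$'') that is stronger than what ZMT gives; the standard statement produces an open immersion into $\Spec\bar R$, not a localization. Third, and most importantly, the ``localization is trivial'' step — which you yourself flag as the main obstacle — is exactly where the theorem's content lives, and no argument is supplied. This is the step the paper handles by an essentially rigid-analytic mechanism: after first reducing to $S$ normal (Step~1, which you omit and which is needed for what follows), it uses Nagata compactification, passes to the relative analytification of a proper compactification, and applies the first Riemann extension theorem \cite{Bar76} for normal rigid spaces to identify $\calO^+$ of the quasi-affine piece with $\calO^+$ of the proper compactification, whose global sections are then finite over $R$ by properness. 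There is no algebraic substitute for the Riemann extension step (algebraic Hartogs fails here), so a purely scheme-theoretic ZMT route of the kind you sketch cannot close the argument without importing the same rigid-geometric input.
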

When $R$ is the $p$-adic field $K$, this is proved in \cite[Lemma 5.5]{HL20}.
\begin{proof}
	$~$
	\begin{itemize}
		\item[Step 1]
		We first reduce to the case when $S$ is normal.
		Let $S_\red$ be the reduced quotient of the noetherian ring $S$, and let $S_\mathrm{nor}$ be the normalization of $S_\red$.
		We then have the diagram
		\[
		\xymatrix{K\langle{T} \rangle  \ar[r]^\phi& S \ar[r]& S_\red \ar[r] & S_\mathrm{nor}\\
			&R \ar[u] &&.}
		\]
		Denote $t$, $t_1$, and $t_2$ to be the image of $T$ in $S$, $S_\red$ and $S_\mathrm{nor}$ separately.
		Assume there exists a monic polynomial $g(X)\in R[X]$ over $R$ such that $g(t_2)=0$.
		As the ring $S_\red$ is noetherian, the normalization $S_\red \ra S_\mathrm{nor}$ is injective.
		In particular, $g(t_2)=0$ implies $g(t_1)$ is zero, which then implies that $g(t)$ is nilpotent in $S$.
		Thus by taking $h(X):=g(X)^N\in R[X]$ for some large $N$, we can find a monic polynomial $h(X)$ over $R$ with $h(t)=0$.
		So it suffices to show that by passing to the normalization of $S$, the image of $T\in K\langle T\rangle$ is finite over $R$.
		
		\item[Step 2]
		Denote by $X$ the scheme $\Spec(S)$, and $X^\an$ the relative analytification of $S$ over the affinoid ring $R$, as a rigid space over $\Spa(R)$ (c.f \cite{Con06}).
		Here we note that $X^\an$ is also a normal rigid space, as the normality is checked around each $K$-valued point of the scheme $X$ (\cite[Theorem 2.2.1]{Berk93}).
		For each affinoid open subset $U=\Spa(A)$ of $X^\an$, the composition of rings $K\langle T\rangle \ra S\ra A$ is automatically continuous by \cite[6.1.3/1]{BGR}.
		In particular, this induces a map of rigid spaces $X^\an \ra \BB^1$, where $\BB^1=\Spa(K\langle T\rangle)$ is the unit disc over $K$.
		Here we note that the ring $S$ is contained in the global section $\calO_{X^\an}(X^\an)$ of the structure sheaf.
		Moreover, to give  a map from the rigid space $Y=X^\an$ to $\BB^1$ is equivalent to give a power-bounded global analytic function over $Y$, i.e. an element in $\calO_Y^+(Y)$.
		\footnote{To see this, it suffices to use the criterion of a power-bounded function over a rigid space: an element $f\in \calO_Y(Y)$ is power-bounded if and only if $|f|_y\leq 1$ for any rigid point $y\in Y$ (\cite[6.2.3/1]{BGR}).}
		So the image $t=\phi(T)$ is a power-bounded elements in $\calO_{X^\an}(X^\an)$.
		
		\item[Step 3]
		As the affine scheme $X=\Spec(S)$ is finite type over the noetherian scheme $\Spec(R)$, by Nagata's compactification theorem we could find a schematic open immersion $X\ra \ol X$ with dense image, where $\ol X$ is a proper $\Spec(R)$-scheme.
		In particular, by taking the relative analytification $\ol X^\an$ of $\ol X$, the induced morphism $X^\an \ra \ol X^\an$ is an open immersion of rigid spaces over $\Spa(R)$ whose complement is Zariski closed of codimension $\geq 1$ in $\ol X$.
		Moreover, notice that since $X^\an$ is normal, it is also an open dense subspace in the normalization of $\ol X^\an$, so we can thus assume $\ol X^\an$ is normal (\cite[Theorem 2.1.2]{Con99}).
		We apply the first Riemann extension theorem (\cite[$\S$3, Theorem]{Bar76}) for the open immersion $X^\an \ra \ol X^\an$ into the proper normal rigid space, to get the isomorphism 
		\[
		\calO^+_{X^\an}(X^\an)= \calO^+_{\ol X^\an}(\ol X^\an).
		\]
		But since $\ol X^\an$ is proper over $\Spa(R)$, the global section of the structure sheaf $\calO_{\ol X^\an}(\ol X^\an)$ is a finite $R$-algebra.
		In this way, the element $t=\phi(T)\in \calO^+_{X^\an}(X^\an)= \calO^+_{\ol X^\an}(\ol X^\an)\subset \calO_{\ol X^\an}(\ol X^\an)$ is also finite over $R$, and we are done.
		
	\end{itemize}
\end{proof}
The proof above also shows the following result.
\begin{corollary}
	Let $Y$ be the relative analytification of a finite type scheme over an affinoid rigid space $\Spa(R)$.
	Then any power-bounded function of $Y$ is finite over $R$.
\end{corollary}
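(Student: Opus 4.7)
The approach is to observe that the proof of the preceding proposition essentially already contains this more general statement, with only a bookkeeping adjustment. Write $Y = X^\an$ for $X = \Spec(S)$ a finite type $R$-scheme, and let $f \in \calO_Y^+(Y)$ be an arbitrary power-bounded global function. The goal is to produce a monic polynomial $g(T) \in R[T]$ with $g(f) = 0$ in $\calO_Y(Y)$.

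First I would reduce to the case of $Y$ normal, mirroring Step 1 of the proposition. Assuming the result known for the analytification $X_{\mathrm{nor}}^\an$ of the normalization of $X_\red$, pulling $f$ back along $X_{\mathrm{nor}}^\an \to Y$ gives a power-bounded function satisfying some monic $g(T) \in R[T]$. Since the composite $\calO_Y(Y) \to \calO_{X_\red^\an}(X_\red^\an) \hookrightarrow \calO_{X_\mathrm{nor}^\an}(X_\mathrm{nor}^\an)$ has nilpotent kernel (the first map being the quotient by the nilradical, the second being injective since normalization of a reduced noetherian scheme is injective), $g(f)$ is nilpotent in $\calO_Y(Y)$, and therefore $g(T)^N$ for sufficiently large $N$ yields a monic $R$-polynomial annihilating $f$.

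Assuming $Y$ normal, I would then repeat Step 3 of the previous proof almost verbatim. Choose a Nagata compactification $X \hookrightarrow \ol X$ over $\Spec(R)$, pass to relative analytifications to obtain an open immersion $Y \hookrightarrow \ol X^\an$ whose complement is Zariski closed of codimension $\geq 1$, and replace $\ol X^\an$ by its normalization (still proper over $\Spa(R)$) to assume $\ol X^\an$ normal. The first Riemann extension theorem then identifies $\calO_Y^+(Y) = \calO_{\ol X^\an}^+(\ol X^\an)$, so $f$ extends to a global analytic function on the proper normal rigid space $\ol X^\an$. Properness of $\ol X$ over $\Spec(R)$ forces $\calO_{\ol X^\an}(\ol X^\an)$ to be a finite $R$-module, which directly yields the desired monic relation for $f$.

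The only slightly delicate point, and what I expect to be the main (mild) obstacle, is the reduction to normality: one must check that the analytifications of $X_\red$ and $X_\mathrm{nor}$ behave well with respect to $Y = X^\an$, which relies on the fact that analytification commutes with reduction and with finite maps for noetherian schemes of finite type over an affinoid base, together with the finiteness of normalization (since $R$, and hence $S$, is excellent). Once these compatibilities are recorded, the remainder is an unchanged rerun of the proposition's argument.
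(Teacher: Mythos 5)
Your proof is correct and is precisely the elaboration the paper intends when it writes ``The proof above also shows the following result'': the corollary is what remains of the proposition once Step~2 (which converted the hypothesis of a homomorphism from $K\langle T\rangle$ into the power-boundedness of $\phi(T)$) is deleted, and Steps~1 and~3 are rerun directly against a power-bounded $f\in\calO_Y^+(Y)$. Your transposition of the reduction-to-normality step from rings ($S\to S_\red\to S_\mathrm{nor}$) to analytic global sections ($\calO_Y(Y)\to\calO_{X_\red^\an}(X_\red^\an)\to\calO_{X_\mathrm{nor}^\an}(X_\mathrm{nor}^\an)$) is exactly the adjustment needed, since $f$ need not lie in the image of $S$; the nilpotence of the kernel of the first map comes from the nilpotent ideal sheaf of the closed immersion $X_\red^\an\hookrightarrow Y$, and the injectivity of the second follows since the finite surjective map $X_\mathrm{nor}^\an\to X_\red^\an$ onto a reduced rigid space detects vanishing of sections. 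One small imprecision: the first map is not literally ``the quotient by the nilradical'' on global sections (it need not be surjective), but all you use is that its kernel consists of nilpotents, which holds. The appeal to excellence of $R$ (hence of $S$) for finiteness of normalization, and to compatibility of relative analytification with closed immersions and finite morphisms, is exactly the right bookkeeping and matches what the paper uses silently in Step~2 of the proposition.
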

The analogous result of \Cref{power-bounded on relative} holds for $\Bdrm$ as well.
\begin{corollary}\label{power-bounded on relative Bdr}
	Let $R$ be a topologically finite type algebra over $\Bdrm$, and let $S$ be a finite type algebra over $R$.
	Assume there exists a $K$-linear homomorphism $\phi:\Bdrm\langle T\rangle \ra S$.
	Then the image $\phi(T)$ is finite over $R$.
\end{corollary}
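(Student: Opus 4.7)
The plan is to deduce this corollary from Proposition \ref{power-bounded on relative} by reducing modulo $\xi$ and exploiting the fact that $\xi^m=0$ in $\Bdrm$. Since the geometric heart of the finiteness statement (Nagata compactification plus the first Riemann extension theorem) already lives in the characteristic $0$ case over $K$, the only work here is to lift the resulting integral relation across the nilpotent thickening $\Bdrm \twoheadrightarrow K$.

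Concretely, I would begin by setting $\bar R := R/\xi R$ and $\bar S := S/\xi S$. Since $\Bdrm/\xi = K$, the ring $\bar R$ is topologically of finite type over $K$ and $\bar S$ is of finite type over $\bar R$. The map $\phi$ induces a $K$-linear homomorphism $\bar\phi : K\langle T\rangle \to \bar S$, to which Proposition \ref{power-bounded on relative} applies. Hence $\bar t := \bar\phi(T)$ is finite over $\bar R$, so there exists a monic polynomial $\bar g \in \bar R[X]$ with $\bar g(\bar t)=0$ in $\bar S$.

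Next, I would lift $\bar g$ coefficientwise to a monic polynomial $g \in R[X]$ of the same degree (the leading coefficient $1$ lifts to $1$, preserving monicity), so that $g(t) \in \xi S$ for $t := \phi(T)$. Since $\xi^m = 0$ in $\Bdrm$, and hence in $R$ and $S$, we get
\[
g(t)^m \in (\xi S)^m \subseteq \xi^m S = 0.
\]
Therefore $g(X)^m \in R[X]$ is a monic polynomial annihilating $t$, so $R[t]$ is a finitely generated $R$-submodule of $S$, proving that $\phi(T)$ is finite over $R$.

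There is no substantive obstacle in this reduction; the serious analytic input has already been supplied by the previous proposition. The only point worth noting is that we genuinely use the nilpotence of $\xi$ in $\Bdrm$ — which is why the statement is formulated for $\Bdrm$ rather than for $\Bdr$ itself — so that a mod-$\xi$ integral relation can be promoted to an honest one by raising to a bounded power.
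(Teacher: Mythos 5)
Your proof is correct and follows essentially the same route as the paper: reduce modulo $\xi$, invoke Proposition \ref{power-bounded on relative} to produce a monic relation for $\bar\phi(T)$ over $R/\xi$, lift it to a monic $g\in R[X]$, and use the nilpotence $\xi^m=0$ in $\Bdrm$ to conclude that $g^m$ annihilates $\phi(T)$. The paper's three-line proof is simply a terse version of exactly this argument.
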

\begin{proof}
	By \Cref{power-bounded on relative}, the image $\phi(T)$ mod $\xi$ is finite over $R/\xi$.
	In particular, there exists a polynomial $g(X)\in \Bdrm[X]$, such that $g(\phi(T))=0$ mod $\xi$.
	Thus by taking a power of $g(X)$ we get the result.
\end{proof}

\subsection{Prismatic site}
In this subsection, we introduce the basics on the prismatic site over $\Bdr$.

We start by defining the prismatic site.
\begin{definition}\label{pm site}
	Let $X=\Spa(R)$ be an affinoid rigid space over $K$.
	\begin{enumerate}[(i)]
		\item An $\Bdr$-\emph{prism} (or in short a \emph{prism}) $(B,\xi B)$ over $X$ is defined by the following diagram of maps
		\[
		\xymatrix{B \ar[r] & B/\xi & R \ar[l]_f,}
		\]
		where  $B$ is a noetherian $\Bdr$-algebra satisfying the conditions below:
		\begin{itemize}
			\item it is $\xi$-torsion free and $\xi$-adic complete;
			\item each quotient $B/\xi^m$ is a finite type algebra over some $\Bdrm\langle T_i\rangle$.
		\end{itemize}
		The map $B\ra B/\xi$ is the natural surjection, and $f:R \ra B/\xi$ is a map of $K$-algebras. 
		\item A map of prisms $(B,\xi B)\ra (B',\xi B')$ over $R$ is defined as a commutative diagram between the data above.
		\item The \emph{prismatic site of $X=\Spa(R)$ over $\Bdr$}, denoted as $X/\Bdr_\Prism$, is defined on the opposite category of all $\Bdr$-prisms over $X$ and is equipped with the indiscrete topology. 
	\end{enumerate}
\end{definition}
\begin{remark}
	By definition, given a prism $(B,\xi B)$ over $R$, the only covering of the prism $(B,\xi B)$ is the identity map from itself.
\end{remark}
\begin{remark}
	Except for everything is in the rational level (namely over $\mathbb{Q}$), the most obvious simplification from \cite{BS19} is that a prism does not come with a delta structure defined integrally.
	Namely there is no Frobenius lift structure.
\end{remark}
\begin{remark}
	To save us from complicated notations, we will abbreviate notation of a prism $(B,\xi B)$ simply into $B$ from time to time, when the meaning is clear.
\end{remark}

Next we define the notion of the prismatic envelope.
\begin{definition}\label{pm envelope}
	Let $B\ra B'$ be a surjection of complete noetherian $\Bdr$-algebras, and let $I$ be the kernel ideal.
	The \emph{prismatic envelope} of $B'$ in $B$, is defined as the $\Bdr$-algebra
	\[
	B[\frac{I}{\xi}]^\wedge_\tf:= \left(B[x_1, \ldots, x_m]/(\xi x_i-f_i, 1\leq i\leq m)\right)^\wedge/(\xi^\infty\text{-}torsion).
	\]
	where $(f_i)$ is a finite set of generators for the ideal $I$, and $(-)^\wedge$ is the classical $\xi$-adic completion.
\end{definition}
To make sense of the notation that is independent of the choice of generators, we have the following easy property.
\begin{lemma}
	Let $I$ be an ideal of a noetherian $\Bdr$-algebra $B$, and let $(f_i)\subset (f_i,g)$ be two finite sets of generators of $I$.
	Then the natural map of rings below is an isomorphism
	\[
	\left(B[x_i]/(\xi x_i-f_i)\right)^\wedge/(\xi^\infty\text{-}torsion) \rra 
	\left(B[x_i,y]/(\xi x_i-f_i; \xi y-g)\right)^\wedge/(\xi^\infty\text{-}torsion).
	\]
	In particular, the ring $B[\frac{I}{\xi}]^\wedge_\tf/(\xi^\infty\text{-}torsion)$ in \Cref{pm envelope} is independent of the choice of generators of $I$.
\end{lemma}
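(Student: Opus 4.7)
The plan is to construct an explicit inverse to the natural map, using the fact that $g$ is a $B$-linear combination of the $f_i$'s, and then verify that the two maps become mutually inverse after passing to the $\xi$-torsionfree quotient.

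First I would write $g = \sum a_i f_i$ for some $a_i \in B$, which is possible since $g \in I = (f_1,\ldots,f_m)$. The forward map $\iota: B[x_i]/(\xi x_i - f_i) \to B[x_i, y]/(\xi x_i - f_i,\, \xi y - g)$ is the obvious inclusion. In the opposite direction, I would define a $B$-algebra homomorphism
\[
\phi: B[x_i, y]/(\xi x_i - f_i,\, \xi y - g) \rra B[x_i]/(\xi x_i - f_i)
\]
by $x_i \mapsto x_i$ and $y \mapsto \sum_i a_i x_i$. To see this is well-defined, note that
\[
\xi \cdot \textstyle\sum_i a_i x_i = \sum_i a_i (\xi x_i) = \sum_i a_i f_i = g,
\]
so the relation $\xi y - g = 0$ is respected. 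Both maps are continuous for the $\xi$-adic topology, so they extend to maps between the $\xi$-adic completions, and they respect the $\xi$-torsion ideals, so they descend to the $\xi$-torsionfree quotients.

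Next I would verify the two compositions. By construction $\phi \circ \iota$ fixes each $x_i$, so it is the identity on $B[x_i]/(\xi x_i - f_i)$ already before completion. For the other composition $\iota \circ \phi$, it fixes each $x_i$ and sends $y$ to $\sum_i a_i x_i$; the element $y - \sum_i a_i x_i$ of the larger ring satisfies $\xi(y - \sum_i a_i x_i) = \xi y - g = 0$, hence represents $\xi$-torsion. After killing the $\xi^{\infty}$-torsion we obtain $y = \sum_i a_i x_i$, so $\iota \circ \phi$ becomes the identity on the quotient. Completion preserves this identity, giving the desired inverse.

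The only subtle point is the interaction of completion with the torsionfree quotient: a priori the map $\phi$ is only defined on the polynomial ring, and one must check that it passes to the $\xi$-adic completion before (or together with) the torsion quotient. The ring $\bigl(B[x_i]/(\xi x_i - f_i)\bigr)^\wedge$ is noetherian with $\xi$-adically continuous algebra structure (since $B$ is noetherian and we add finitely many variables subject to finitely many relations), so $\phi$ extends uniquely by $\xi$-adic continuity, and the $\xi^{\infty}$-torsion submodule is a closed ideal. Once this bookkeeping is done, the two compositions descend to the quotients and the calculation above finishes the proof. The last sentence of the lemma, independence of the choice of generators, follows by induction on the number of extra generators, reducing to the case of adding a single $g$.
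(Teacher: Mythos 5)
Your proposal is essentially the same argument as the paper's: write $g=\sum a_i f_i$, observe $\xi\cdot(y-\sum a_ix_i)=0$, and use the $\xi$-torsion-freeness of the quotients to conclude that the natural map is an isomorphism, with $y\mapsto\sum a_ix_i$ providing the inverse. The only organizational difference is that the paper first uses noetherianity (\cite[Tag 05E9]{Sta}) to switch the order of $\xi$-completion and $\xi^\infty$-torsion quotient, reducing everything to a statement about the \emph{incomplete} finite-type $B$-algebras, after which the comparison is immediate; you instead work directly on the completions and then argue that $\phi$ descends. Your route works, but the justification of the ``subtle point'' is a little off target: what you actually need is not that the $\xi^\infty$-torsion is a closed ideal, but rather (a) that $\phi$ and $\iota$ send $\xi^\infty$-torsion to $\xi^\infty$-torsion (automatic, since they commute with multiplication by $\xi$), and (b) that $(\iota\circ\phi)(z)-z$ is $\xi$-torsion for every $z$ in the completion, which follows because it is $\xi$-torsion on the dense polynomial subring and $\xi$-torsion elements are stable under $\xi$-adic limits. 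The paper's reduction to the incomplete case sidesteps this entirely and is cleaner, but your argument is correct once this bookkeeping is made precise.
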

\begin{proof}
	As a finite type algebra over $B$ is noetherian, its formal completion along $\xi=0$ identifies the $\xi^\infty$-elements before and after the completion (\cite[Tag 05E9]{Sta}).
	In particular, we can switch the order of the $\xi$-completion and the $\xi$-torsionfree quotient, and it suffices to show the isomorphism of the map between incomplete algebras
	\[
	\left( B[x_i]/(\xi x_i-f_i)\right) /(\xi^\infty\text{-}torsion) \rra 
	\left( B[x_i,y]/(\xi x_i-f_i; \xi y-g)\right) /(\xi^\infty\text{-}torsion).
	\]
	By assumption, since $(f_i)$ generates the ideal, we can write $g=\sum a_jf_j$.
	So we get the formula in the second ring above
	\[
	\xi \cdot (y -\sum a_j x_j)=0.
	\]
	Thus by the $\xi$-torsion freeness of the target ring, the map above is surjective.
	Moreover, we can define a section of the above map by mapping $y$ onto $\sum a_jx_j$, which is possible by the $\xi$-torsion freeness of the quotient.
	Hence we get the isomorphism.
\end{proof}
\begin{remark}\label{pm envelope alt}
	In the above setting, we note that as a $B$-algebra, the envelope is automatically complete $I$-adically, by the $\xi$-adic completeness and the $\xi$-divisibility of the image of $I$ in the envelope.
	In particular, the map $B\ra B[\frac{I}{\xi}]^\wedge_\tf$ factors through the $I$-adic completion of $B$.
	As a consequence, mod any power of $\xi$, there is a natural isomorphism of $B$-algebras 
	\[
	B[\frac{I}{\xi}]^\wedge_\tf\cong D[\frac{I}{\xi}]^\wedge_\tf,
	\]
	where $D$ is the formal completion of $B$ along the ideal $I$.
\end{remark}

\begin{remark}\label{pm envelope rep}
	Let $B$ and $I$ be as in Definition \ref{pm envelope}, and let $R=B/(I,\xi)$.
	As the image of $(I,\xi)$ under the composition
	\[
	B \ra B[\frac{I}{\xi}]^\wedge_\tf \ra B[\frac{I}{\xi}]^\wedge_\tf/\xi
	\]
	is zero, it naturally induces a continuous map $f:R=B/(I,\xi) \ra  B[\frac{I}{\xi}]^\wedge_\tf/ \xi$.
	This in particular produces a prism $B[\frac{I}{\xi}]^\wedge_\tf $ over $R$.
	Furthermore, for a prism $(B',\xi B')$ in $X/\Bdr_\Prism$, to give a continuous $\Bdr$-map from the complete $\Bdr$-algebra $B$ to $B'$ such that the composition with $B'\ra B'/\xi$ factors through the surjection $B\ra R$, it is equivalent to give a map of prisms from $(B[\frac{I}{\xi}]^\wedge_\tf ),  B[\frac{I}{\xi}]^\wedge_\tf / \xi)$ to $(B', \xi B')$, by the $\xi$-torsion free assumption on $B'$.
\end{remark}
\begin{definition}
	We define the \emph{prism $D_\Prism$ associated to the envelope} for the surjection $B\ra I$ to be the one associated to the ring $B[\frac{I}{\xi}]^\wedge_\tf $.
\end{definition}
To simplify the notation, we also use the same symbol $D_\Prism$ to denote the ring $B[\frac{I}{\xi}]^\wedge_\tf $ itself.
\begin{example}\label{pm envelope eg}
	Let $B$ be the limit ring $\Bdr\langle T,S\rangle = \varprojlim_n\Bdr/\xi^n\langle T,S\rangle$, 
	and assume $R$ is the quotient ring $K\langle T\rangle$ of $B$.
	Then the envelope of the surjection $B\ra \Bdr\langle T\rangle $ is given by the following formula
	\[
	\Bdr\langle T\rangle[\frac{S}{\xi}]^\wedge=\varprojlim_n \left( \Bdrn\langle T\rangle[\frac{S}{\xi}] \right).
	\]
\end{example}
Thanks to \Cref{power-bounded on relative}, the envelope of a $\Bdr$-lift of $R$ into a formally smooth topological algebra over $\Bdr$ covers the final object in the prismatic topos.
\begin{proposition}\label{pm uni}
	Let $P$ be the ring $\Bdr\langle T_i\rangle$ and $I$ be an ideal of $P$, 
	and $f_0:P \ra R:=P/(I,\xi)$ be the quotient.
	Then the prism $D_\Prism$ associated to the envelope for $P\ra R$ covers the final object of the topos $\Sh(X/\Bdr_\Prism)$.
\end{proposition}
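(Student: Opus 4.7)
The plan is to show that $D_\Prism$ is weakly initial in the category of $\Bdr$-prisms over $X$, which is the concrete content of ``covering the final object of $\Sh(X/\Bdr_\Prism)$'' for the indiscrete topology. For every prism $(B',\xi B')$ with structure map $f'\colon R\ra B'/\xi$, I will construct a morphism of prisms $D_\Prism\ra B'$. By \Cref{pm envelope rep}, this reduces to lifting the composition $\bar g\colon P\xrightarrow{f_0} R\xrightarrow{f'} B'/\xi$ to a continuous $\Bdr$-algebra map $g\colon P=\Bdr\langle T_i\rangle\ra B'$.

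I build $g$ as the inverse limit of compatible continuous $\Bdr/\xi^m$-algebra maps $g_m\colon \Bdr/\xi^m\langle T_i\rangle\ra B'/\xi^m$. By the finite type condition in \Cref{pm site}(i), each $B'/\xi^m$ is finite type over some $\Bdr/\xi^m\langle S_j\rangle$; in particular, $B'/\xi$ is a finite type $K\langle S_j\rangle$-algebra. Set $\bar t_i:=\bar g(T_i)\in B'/\xi$. Applying \Cref{power-bounded on relative} to the $K$-algebra map $K\langle T_i\rangle\ra B'/\xi$ induced by $\bar g$, each $\bar t_i$ is finite over $K\langle S_j\rangle$, hence satisfies a monic relation $\bar Q(\bar t_i)=0$ with $\bar Q\in K\langle S_j\rangle[X]$.

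To produce $g_m$, pick a compatible system of lifts $t_i^{(m)}\in B'/\xi^m$ of $\bar t_i$ (possible since $B'/\xi^{m+1}\ra B'/\xi^m$ is surjective), and lift $\bar Q$ to a monic $Q\in \Bdr/\xi^m\langle S_j\rangle[X]$, so that $Q(t_i^{(m)})\in \xi B'/\xi^m$. Since $(\xi B'/\xi^m)^m\subset \xi^m B'/\xi^m=0$, we get $Q(t_i^{(m)})^m=0$, and expanding gives a monic polynomial of degree $m\cdot\deg Q$ over $\Bdr/\xi^m\langle S_j\rangle$ vanishing on $t_i^{(m)}$. This exhibits $t_i^{(m)}$ as integral over $\Bdr/\xi^m\langle S_j\rangle$, hence defines a continuous $\Bdr/\xi^m$-algebra map $\Bdr/\xi^m\langle T_i\rangle\ra B'/\xi^m$ via $T_i\mapsto t_i^{(m)}$. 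Taking the inverse limit of the $g_m$ yields the desired $g$, whose reduction modulo $\xi$ equals $\bar g$ by construction.

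The step I expect to require the most care is the passage from integrality of $t_i^{(m)}$ over the Tate subring to a continuous map out of $\Bdr/\xi^m\langle T_i\rangle$, since the base $\Bdr/\xi^m$ carries nilpotents and the corresponding affinoid theory over nilpotent thickenings requires some bookkeeping. This is however a direct nilpotent-thickening analogue of the classical $p$-adic affinoid case: the integrality relation factors the assignment through a finite $\Bdr/\xi^m\langle S_j\rangle$-subalgebra of $B'/\xi^m$, which carries a natural affinoid structure, and one invokes the Tate algebra's universal property there.
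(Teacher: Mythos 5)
Your proof takes essentially the same route as the paper's: reduce via \Cref{pm envelope rep} to lifting $\bar g\colon P\to B'/\xi$ along $B'\to B'/\xi$, use \Cref{power-bounded on relative} at the mod-$\xi$ level to get integrality over an affinoid subring, and promote it modulo $\xi^m$ by exploiting nilpotence of $\xi$ — your inline argument with $Q(t_i^{(m)})^m=0$ is precisely the content of the paper's \Cref{power-bounded on relative Bdr}, rederived rather than cited.

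There is, however, one point your write-up glosses over and the paper handles explicitly. The finite-type condition in \Cref{pm site}(i) only guarantees, for each $m$ \emph{separately}, some presentation of $B'/\xi^m$ over some $\Bdr/\xi^m\langle S_j^{(m)}\rangle$; there is no a priori compatibility between the Tate subrings chosen at different levels $m$. Your argument implicitly assumes a fixed map $\Bdr/\xi^m\langle S_j\rangle\to B'/\xi^m$ for each $m$ which reduces mod $\xi$ to the single $K\langle S_j\rangle\to B'/\xi$ used to produce $\bar Q$; without such a compatible system, the claim that $Q(t_i^{(m)})\in\xi B'/\xi^m$ (because $Q$ reduces to $\bar Q$ and $\bar Q(\bar t_i)=0$) does not follow, since $Q$ lives over a ring whose reduction need not be $K\langle S_j\rangle$. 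The paper deals with exactly this by inductively enlarging the variable sets $\{a_{j_m}\},\{b_{l_m}\}$ to arrange compatible surjections $\Bdr/\xi^{m+1}\langle a_{j_{m+1}}\rangle[b_{l_{m+1}}]\to B'/\xi^{m+1}$ lifting those for $m$. If you add that step (or an equivalent argument showing the mod-$\xi$ affinoid structure can be compatibly lifted to all levels, e.g., by choosing lifts of the $S_j$ and verifying they remain power-bounded via the same nilpotence trick), the proof is complete and matches the paper's.
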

\begin{proof}
	Let $(B,\xi B)$ be a prism over $R$.
	We need to show that it admits a map from the prism associated to the envelope $P[\frac{I}{\xi}]^\wedge_\tf $.
	We first notice the following commutative diagram
	\[
	\xymatrix{P=\Bdr\langle T_i\rangle \ar[d] \ar[rd]^{f_0} &\\
		B/\xi & R ~~.\ar[l]}
	\]
	Then we show the diagram lifts to a homomorphism $P\ra B$ over $\Bdr$.
	By the assumption of a prism in Definition \ref{pm site} (ii), the ring $B/\xi^m$ admits a surjection from $\Bdrm\langle a_{j_m}\rangle [b_{l_m}]$ for finite sets of $j_m$ and $l_m$, compatible with $m\in \NN$.
	Moreover, by enlarging the set of $j_m$ and $l_m$ inductively for each $m$, we may assume a compatible system of surjections as below
	\[
	\xymatrix{ \mathrm{B_{dR,m+1}^+} \langle a_{j_{m+1}}\rangle [b_{l_{m+1}}] \ar[d] \ar@{>>}[r] & B/\xi^{m+1} \ar[d]\\
		\Bdrm \langle a_{j_m}\rangle [b_{l_m}]  \ar@{>>}[r] & B/\xi^m.}
	\]
	In particular, when $m=1$, the quotient ring $B/\xi$ is a finite type algebra over an affinoid ring $K\langle a_{j_1}\rangle$, and it admits a map from $P/\xi=K\langle T_i\rangle$ by the diagram above.
	Note that by Proposition \ref{power-bounded on relative} the image $t_i$ of each $T_i$ in $B/\xi$ is integral over $K\langle a_{j_1}\rangle$.
	Moreover, by \Cref{power-bounded on relative Bdr} any lift $\wt t_i$ of $t_i$ in $B/\xi^m$ is integral over $\Bdrm\langle a_{j_m}\rangle$.
	So by choosing compatible lifts $\wt t_i$ of $t_i$ in $B/\xi^m$ for each $m\in \NN$, we could extend the map $\Bdrm\langle a_{j_m}\rangle \ra B/\xi$ to $\Bdrm\langle a_{j_m} ; c_i\rangle \ra B/\xi^m$, where $c_i$ maps onto lifts $\wt t_i$ of $t_i\in B/\xi$.
	As a consequence, 	we can enlarge variables $\{a_{j_m}\}$ into $\{a_{j_m}, c_i\}$ to get a surjection $\Bdrm\langle a_{j_m, c_i}\rangle[b_{l_m}]\ra B/\xi^m\ra B/\xi$.
	In particular, we may assume the map $P=\Bdr\langle T_i\rangle \ra B/\xi$ above factors through the surjections  as below:
	\[
	\xymatrix{
		&& P \ar[d] \ar[rd] \ar[lld]_{T_i\mapsto c_i} &\\
		\Bdrm\langle a_{j_m}, c_i \rangle[b_{l_m}] \ar@{>>}[r] &B/\xi^m \ar@{>>}[r] & B/\xi & R ~~, \ar[l]
	}\]
which is compatible with $m\in \mathbb{N}$.
	So by taking an inverse limit with respect to $m$, we get the commutative diagram
	\[
	\xymatrix{
		&P \ar[ld] \ar[rd] \ar[d]&\\
		B \ar[r]& B/\xi& R~~. \ar[l]
	}\]
	
	At last, since the image of the ideal $I\subset P$ in $B$ is killed by $\xi$ and the ring $B$ is $\xi$-torsion free, the homomorphism $P\ra B$ induces a natural commutative diagram as below
	\[
	\xymatrix{
		P[\frac{I}{\xi}]^\wedge_\tf \ar[d] \ar[r]&P[\frac{I}{\xi}]^\wedge_\tf/\xi=R[\frac{I}{\xi}] \ar[d] & ~~R ~~\ar[l]\ar@{=}[d]\\
		B \ar[r] &B/\xi & ~~R~~,\ar[l]}\]
	where $P[\frac{I}{\xi}]^\wedge_\tf$ is the $\xi$-adic completion of the $\Bdr$-algebra $P[\frac{I}{\xi}]$.
	Hence the ring $P[\frac{I}{\xi}]^\wedge_\tf$ together with the $R$-algebra structure of its quotient forms a prism $D_\Prism$, and we get a map of prisms $(D_\Prism, \xi D_\Prism) \ra (B,\xi B)$.
	
\end{proof}

\section{Prismatic crystal and log connection}\label{sec cry}
There are two natural \emph{structure sheaves} over $X/\Bdr_\Prism$, defined as follows:
\begin{align*}
	\calO_\Prism&: (B,\xi B) \lmt B;\\
	\ol\calO_\Prism&: (B,\xi B) \lmt B/\xi.
\end{align*}
This allows us to define the notion of the crystal over the prismatic site, analogous to the infinitesimal story.
\begin{definition}\label{pm crys}
	A (coherent) \emph{prismatic crystal} over $X$ is defined as a sheaf $\calF$ of $\mathcal{O}_\Prism$-modules over $X/\Bdr_\Prism$, satisfying 
	\begin{enumerate}[(a)]
		\item each $\calF(B,\xi B)$ is a finite module over $B$, for every prism $(B,\xi B)\in X/\Bdr_\Prism$;
		\item for every map of prisms $(B,\xi B)\ra (B',\xi B')$, the natural base change map below is an isomorphism of $B'$-modules
		\[
		\calF(B,\xi B)\otimes_B B' \rra \calF(B',\xi B').
		\]
	\end{enumerate} 
\end{definition}
Notice that given any prismatic crystal $\mathcal{F}$, there is a way to associate a \emph{reduction} $\ol{\mathcal{F}}$ to it, by assigning
\[
(B,\xi B) \lmt \mathcal{F}(B,\xi B) \otimes_B B/\xi.
\]
This in particular gives a prismatic crystal over the reduced structure sheaf $\ol{\mathcal{O}}_\Prism=\mathcal{O}_\Prism\otimes_\Bdr K$.
\begin{definition}
A prismatic crystal $\mathcal{F}$ is called \emph{flat} if each $B$-module $\mathcal{F}(B,\xi B)$ is locally free over $B$.
It is called \emph{reduced} if each $B$-module $\mathcal{F}(B,\xi B)$ is locally free over $B/\xi$.
\end{definition}

Analogous to the infinitesimal theory as in \cite[Theorem 3.3.1]{Guo21}, a crystal can be described as an \emph{integrable log  connection} over the envelope.

To start, let $B=\varprojlim_m B_m$ be an inverse limit of flat noetherian $\Bdrm$-algebras $B_m$, such that the transition maps induce isomorphisms $B_m\otimes_{\Bdrm} \mathrm{B_{dR,m-1}^+} \ra B_{m-1}$, and $B_1$ is isomorphic to a formal completion of some Tate algebra $K\langle T_1,\ldots, T_l\rangle$.
Let $I$ be an ideal of $B$.
Denote $D_\Prism$ to be $\xi$-torsion free quotient of the classical $\xi$-completion of the noetherian ring $B[\frac{I}{\xi}]$.

Fix a map $P:=\Bdr\langle T_i\rangle \ra B$ lifting  $K\langle T_i\rangle \ra B_1$ as above.
We first notice that there is a natural diagram of $\Bdr$-linear continuous differentials
\[
\xymatrix{
	P   \ar[r]^{d~~~~~~~~~} \ar[d]&  \Omega_{P/\Bdr}^1 \ar[d]\\
	D_\Prism \ar[r]^{d~~~~~~~~~~~~~~~~~~~~~~~~~~~~~~~~~~~} & D_\Prism \otimes_{P} \frac{\Omega_{P/\Bdr}^1}{\xi}.}
\]
Here $\Omega_{P/\Bdr}^1$ is the inverse limit of the $p$-adic continuous K\"ahler differential of $P_m/\Bdrm$, and horizontal maps are continuous with respect to $\xi$-adic topology, such that their mod $\xi$-reduction are continuous differentials with respect to $K \langle T_i\rangle$ and $B/\xi$.
On the other hand, the vertical maps are natural maps induced from the inclusions
\[
\Omega_{P/\Bdr}^1\subset \frac{\Omega_{P/\Bdr}^1}{\xi} \subset \Omega_{P/\Bdr}^1\otimes_\Bdr \mathrm{B_{dR}}.
\]
In particular, for an element $\frac{f_1\cdots f_i}{\xi^i}$ in $D_\Prism=P[\frac{I}{\xi}]^\wedge_\tf $ with $f_1,\ldots, f_i\in I$, we have
\[
d(\frac{f_1\cdots f_i}{\xi^i})= \frac{\sum_{j=1}^i f_1\cdots \hat{f_j} \cdots f_i\cdot df_j}{\xi^i}=\sum_{j=1}^i\frac{ f_1\cdots \hat{f_j} \cdots f_i}{\xi^{i-1}}\cdot \frac{df_j}{\xi} \in D_\Prism\otimes_P \frac{\Omega_{P/\Bdr}^1}{\xi}.
\]

Now we can consider a generalization to the coefficient theory.
\begin{definition}\label{pm, log conn}
	For a finite generated module $M$ over $D_\Prism$, a \emph{log connection} of $M$ is defined as an $\Bdr$-linear morphism
	\[
	\nabla: M \rra M\otimes_{D_\Prism} \frac{\Omega^1_{D_\Prism}}{\xi}:=M\otimes_P \frac{\Omega_{P/\Bdr}^1}{\xi},
	\]
	such that for $x\in M$ and $f\in D_\Prism$ we have
	\[
	\nabla(fx)=f\nabla(x)+x\otimes df,
	\]
	where $df$ is the continuous relative differential of $f\in D_\Prism$ over $\Bdr$ constructed above.
\end{definition}
The connection is called \emph{integrable} if the induced composition below is zero
\[
\nabla^1\circ \nabla: M \rra M\otimes_{D_\Prism} \frac{\Omega^2_{D_\Prism}}{\xi^2}:=M\otimes_P \frac{\Omega_{P/\Bdr}^2}{\xi^2}.
\]
Here the map $\nabla^1: M\otimes_{D_\Prism} \frac{\Omega^1_{D_\Prism}}{\xi} \rra M\otimes_{D_\Prism} \frac{\Omega^2_{D_\Prism}}{\xi^2}$ is given by the formula
\[
x \otimes \frac{\omega}{\xi} \lmt \nabla(x)\wedge \frac{\omega}{\xi} +x\otimes \frac{d\omega}{\xi}.
\]

\begin{remark}
	When $B$ is the $J$-adic completion of $P=\Bdr\langle T_i\rangle$ for some ideal $J$ in $P$, and $I$ is the zero ideal, the above definition extends the notion of connections in the infinitesimal theory defined in \cite[Definition 3.1.8]{Guo21}, allowing a log pole at $\{\xi=0\}$.
\end{remark}
\begin{remark}\label{pm cry integral def}
	In the special case when $\nabla:M \ra M\otimes_{D_\Prism} \frac{\Omega_{P/\Bdr}^1}{\xi}$ factors through $ M\otimes_{D_\Prism} \Omega_{P/\Bdr}^1$, we call the connection \emph{integral}.
	We will see soon that log connections coming from the infinitesimal site are all integral.
\end{remark}

Different from the infinitesimal theory, not all of the integrable log connections come from crystal.
This is more similar to the schematic theory in mixed characteristic, where \emph{quasi-nilpotence} condition is needed in order to make the transition power series associated to a connection to be convergent (c.f. \cite{BO78}).
We now define the $\Bdr$-analogue of the quasi-nilpotence as below.
\begin{definition}\label{q-nil}
	Let $B, ~I, ~P=\Bdr\langle T_1,\ldots ,T_l\rangle$ be as above, and $(M,\nabla)$ be a log connection over $D_\Prism=B[\frac{I}{\xi}]^\wedge_\tf$.
	We say $(M,\nabla)$ is \emph{quasi-nilpotent} if for any element $x\in M$, the following power series are convergent in the module $M\otimes_{D_\Prism} B[\frac{I}{\xi},\wt \delta_1,\ldots, \wt \delta_l]^\wedge$
	\[
	\sum_{E=(e_i)} \prod_{i=1}^l   \wt\nabla_i^{e_i}(x)\otimes \frac{\wt \delta_i^{e_i}}{e_i!}.
	\]
	
\end{definition}
Here $\wt\nabla_i:M\ra M$ is defined as the composition 
\[
\xymatrix{ M\ar[r]^{\nabla~~~~~~~~} & M\otimes_P \frac{\Omega_{P/\Bdr}^1}{\xi} \ar[rr]^{~~~~~~~x\otimes \frac{dT_i}{\xi} \mapsto x}_{~~~~~~~~x\otimes \frac{dT_j}{\xi} \mapsto 0} && M,}
\]
where $j$ above is not equals to $i$.
\begin{remark}
	As an exercise of Taylor expansion in calculus, it can be shown that in order to show the quasi-nilpotence, it is enough to check the convergence for a set of generators in \Cref{q-nil}.
\end{remark}
\begin{remark}\label{log conn, rmk on conn}
	The map $\wt \nabla_i$ should be thought as the multiplication of $\xi$ with the classically defined map $\nabla_i$, where the latter is the composition of $\nabla$ and the paring with $\frac{\partial}{\partial T_i}$.
	More explicitly the pairing map sends $\Omega_{P/\Bdr}^1$ to $P$ by the formula
	\[
	dT_i \lmt 1;~dT_j \lmt 0,~\forall j\neq i.
	\]
	This could be made precise either by inverting $\xi$ or when $M$ has no $\xi$-torsion.
\end{remark}
\begin{example}
	As a simple example of a quasi-nilpotent log connection, let $R=K \langle T\rangle,$ $P=\Bdr \langle T\rangle$ and $I=(0)$, so that $D_\Prism=P$.
	Let $(M,\nabla)=(\Bdr \langle T\rangle,d)$ be the natural continuous differential operator on the structure sheaf with $df(T)=f'(T)dT$.
	Then for an element $f(T)\in \Bdr \langle T\rangle$, the power series in Definition \ref{q-nil} is 
	\[
	\sum_e f^{(e)}(T)\cdot \xi^e\otimes \frac{\wt\delta^e}{e!}=f(T+\xi\wt\delta),
	\]
	which is convergent in the ring $P[{\wt\delta}]^\wedge=\Bdr \langle T\rangle[\wt\delta]^\wedge$, by checking mod $\xi^n$ for every $n\in \NN$.
	Thus the canonical continuous differential on $P$ is quasi-nilpotent.	
	
	In fact, as we will see, this is the (log) connection corresponds to the prismatic structure sheaf $\mathcal{O}_\Prism$ over $R/\Bdr_\Prism$.
\end{example}
\begin{example}\label{pm cry integral}
	If a log connection is integral, then it is quasi-nilpotent.
	To show this, it suffices to notice that by assumption in \Cref{pm cry integral def} when $(M,\nabla)$ is integral, each $\wt\nabla_i$(x) is divisible by $\xi$.
\end{example}
\begin{example}\label{log conn, non-eg}
	To give an example that is not quasi-nilpotent, still consider  the special setting as above when $R=K \langle T \rangle$, $P=\Bdr \langle T \rangle$ and $I$ is the zero ideal, in which case the envelope $D_\Prism$ is equal to the ring $P$.
	Let $M$ be $D_\Prism$ itself, and let $\nabla$ be the twisted differential map
	\[
	\nabla=\frac{d}{\xi}: \Bdr \langle  T\rangle \rra \Bdr \langle T\rangle\otimes \frac{dT}{\xi};~f(T)\lmt f'(T)\otimes\frac{dT}{\xi}.
	\]
	Then this is an integrable log connection of $M$ over $D_\Prism$ but is not nilpotent.
	To see the latter, it suffices to notice that given $f(T)\in \Bdr \langle  T\rangle$, the formal power series in Definition \ref{q-nil} is equal to
	\[
	\sum_e f^{(e)}(T)\otimes \frac{\wt \delta^e}{e!}=f(T+\wt \delta),
	\]
	which after mod $\xi$ is not convergent in $K\langle  T\rangle[\wt \delta]$ whenever $f$ mod $\xi$ is an infinite series.
\end{example}
\begin{proposition}\label{pm cry via envelope}
	Let $P=\Bdr\langle T_i\rangle$, let $I$ be an ideal of $P$, let $R$ be the  topologically finite type algebra $P/(I,\xi)$ and let $D_\Prism=P[\frac{I}{\xi}]^\wedge_\tf $ be the associated envelope.
	Then the following two categories are equivalent:
	\begin{align*}
		\{coherent~prismatic~crystals~over~R\} &\longrightarrow \{(M,\nabla)~|~M\in \Coh(D_\Prism),~\nabla~integrable~quasi\text{-}nilpotent~log~connection\}\\
		\calF & \lmt (\calF(D_\Prism), \nabla_{D_\Prism}).
	\end{align*}
\end{proposition}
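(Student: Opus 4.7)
The plan is to follow the standard crystal-to-stratification-to-connection dictionary, adapted to the $\Bdr$-prismatic setting. Since $(D_\Prism, \xi D_\Prism)$ weakly covers the final object by \Cref{pm uni}, a coherent prismatic crystal $\calF$ should be determined by the $D_\Prism$-module $M := \calF(D_\Prism)$ together with a descent datum on the self-product $D_\Prism(1)$, where the latter is the prism associated to the envelope of the multiplication surjection $(P\wh\otimes_\Bdr P) \twoheadrightarrow R$. The first step I would carry out is to write $D_\Prism(1)$ explicitly: setting $\tau_i := T_i' - T_i$, the kernel of $P \wh\otimes_\Bdr P \twoheadrightarrow R$ is generated, modulo $\xi$-multiples, by the $\tau_i$ together with the ideal $I$ on either factor, so one obtains
\[
D_\Prism(1) \;\cong\; (P\wh\otimes_\Bdr P)\bigl[\tau_i/\xi,\;I/\xi\bigr]^\wedge_\tf,
\]
which I would then exhibit as a $\xi$-completed free $D_\Prism$-algebra (via either projection $p_j$) on the divided elements $s_i := \tau_i/\xi$.

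Given this explicit model, the forward direction goes as follows: from $\calF$ take $M := \calF(D_\Prism)$ and the isomorphism $\epsilon : p_2^*M \xra{\sim} p_1^*M$ over $D_\Prism(1)$ supplied by the crystal condition. I would extract $\nabla$ by expanding $\epsilon$ to first order in the $s_i$,
\[
\epsilon(x\otimes 1) \;\equiv\; x\otimes 1 + \sum_{i} \wt\nabla_i(x)\otimes s_i \pmod{(s_i)^2},
\]
which unwinds to the log connection $\nabla(x) = \sum_i \wt\nabla_i(x)\otimes \tfrac{dT_i}{\xi}$; integrability would then be forced by the cocycle condition on the triple envelope $D_\Prism(2)$, exactly as in the classical crystalline dictionary. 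For the reverse direction, starting from $(M,\nabla)$ quasi-nilpotent and integrable, I would define $\epsilon$ via the Taylor-type formula
\[
\epsilon(x\otimes 1) \;:=\; \sum_{E=(e_i)} \prod_{i} \wt\nabla_i^{e_i}(x) \otimes \frac{s_i^{e_i}}{e_i!},
\]
and verify that its convergence inside $D_\Prism(1)$ is precisely the universal version of the quasi-nilpotence condition of \Cref{q-nil}, while integrability supplies the cocycle identity on $D_\Prism(2)$.

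To promote this $D_\Prism(1)$-datum back to a prismatic crystal on all of $X/\Bdr_\Prism$, I would use \Cref{pm uni} to obtain, for any prism $(B,\xi B)$, a map $(D_\Prism, \xi D_\Prism)\to (B,\xi B)$, set $\calF(B) := M\otimes_{D_\Prism} B$ along this map, and invoke $\epsilon$ on the self-product $(D_\Prism\wh\otimes D_\Prism) \to B$ to verify that the assignment is independent of the chosen lift and satisfies the base-change condition of \Cref{pm crys}.

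The main obstacle I anticipate lies in this first step, namely producing the explicit model of $D_\Prism(1)$ and showing that it is $\xi$-adically the correct power-series-like extension $D_\Prism\langle\!\langle s_i\rangle\!\rangle$ needed for the Taylor formula. One has to check that adjoining $I/\xi$ in the second factor becomes redundant once the $s_i$ are present, since $g(T_i') - g(T_i)$ for $g\in I$ is a $\Bdr$-linear combination of the $\tau_i$ and hence $\xi$-divisible via the $s_i$; one also has to argue that the only relations in $D_\Prism(1)$ are those inherited from $D_\Prism$. A closely related subtlety is matching the universal convergence inside $D_\Prism(1)$---whose topology is merely $\xi$-adic after completion of a finite-type extension, not $(\xi, s_i)$-adic---against the $\xi$-adic condition phrased in the polynomial completion $D_\Prism[\wt\delta_i]^\wedge$ of \Cref{q-nil}; these should literally coincide, but pinning this down requires identifying $D_\Prism(1)$ as the $\xi$-completion of $D_\Prism[s_i]$ with no further relations.
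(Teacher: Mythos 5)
Your proposal follows essentially the same route as the paper's proof: one passes to the stratification datum on the self-product envelope $D_\Prism(1)$, reads off the log connection from the first-order Taylor term, and uses quasi-nilpotence (\Cref{q-nil}) to recover the transition isomorphism from the full Taylor series; the paper in fact defers most of the routine verification to the argument of \cite[Theorem 3.3.1]{Guo21}. The one point worth highlighting is the ``main obstacle'' you flag, namely that $D_\Prism(1)$ must be identified with the $\xi$-completion of $D_\Prism[s_i]$ with no further relations, so that convergence of the universal Taylor series inside $D_\Prism(1)$ coincides with the $\xi$-adic convergence in $D_\Prism[\wt\delta_i]^\wedge$ used in \Cref{q-nil}. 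This is genuinely the crux, and the paper treats it implicitly; it does hold, and your sketch of why is correct: for $g\in I$ in the second factor, $g\otimes 1-1\otimes g$ is a $P\wh\otimes_\Bdr P$-linear combination of the $\delta_i$ and hence becomes $\xi$-divisible once the $s_i=\delta_i/\xi$ are adjoined, so the $I/\xi$ on the second factor is redundant, and forming the $\xi$-torsionfree quotient commutes with adjoining free polynomial variables, giving $D_\Prism(1)\cong D_\Prism[s_i]^\wedge$ as wanted. Apart from spelling out this identification more explicitly than the paper does, your proposal and the paper's argument are the same.
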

\begin{proof}
	The proof is similar to that of \cite[Theorem 3.3.1]{Guo21}.
	We first notice that  by \Cref{pm uni} the prism $D_\Prism$ associated to the prismatic envelope for the surjection $P\ra P/I$ covers the final object of the topos.
	Let $\calF$ be a coherent prismatic crystal over $R$.
	By Taylor expansion, the formula in \Cref{q-nil} is exactly the transition isomorphism of the pullbacks along the two projections
	\[
	\phi:\mathrm{pr}_1^*\calF(D_\Prism) \ra \mathrm{pr}_0^*\calF(D_\Prism),
	\]
	where $\mathrm{pr}_i$ are two canonical maps of rings $D(1)_\Prism=P[\frac{I,\Delta(1)}{\xi}]^\wedge_\tf  \leftleftarrows P[\frac{I}{\xi}]$ induced by $P\langle \Delta(1)\rangle \leftleftarrows P$.
	Here $\Delta(1)$ is the kernel ideal for the diagonal surjection $P^{\wh\otimes_{\Bdr} 2}\ra P$.
	To define the log connection on $D_\Prism$, we note that the following map has the image in $\mathcal{F}(D_\Prism)\otimes_P \frac{\Delta(1)/\Delta(1)^2}{\xi}$ (which is isomorphic to $\cong \mathcal{F}(D_\Prism)\otimes_P \frac{\Omega_{P/\Bdr}^1}{\xi}$)
	\[
	\id \otimes 1 - \phi\circ (1\otimes \id): \mathcal{F}(D_\Prism) \rra \mathrm{pr}_0^*\mathcal{F}(D_\Prism)/(\frac{\Delta(1)^2}{\xi^2}).
	\]
	This allows us to define the log connection as 
	\[
	\nabla: \mathcal{F}(D_\Prism) \rra \mathcal{F}(D_\Prism)\otimes_P \frac{\Omega_{P/\Bdr}^1}{\xi}.
	\]
	Then as a consequence of the crystal condition, both $\mathrm{pr}_i^*\calF(D_\Prism)$ are isomorphic to $\calF(D(1)_\Prism)$, and the existence of the isomorphism implies the convergence of the power series.
	Moreover, integrability for the log connection coming from the crystal follows from the crystal conditions again, and can be proved as in \cite{Sta}[07J6].
	
	Conversely, given an integrable, quasi-nilpotent log connection $(M,\nabla)$ over an envelope, Lemma \ref{pm uni} allows us to define a finite module over any prism $(B,\xi B)$ via the pullback along a map $D_\Prism \ra B$.
	Then the rest is to show the independence of the choice of maps, which follows from the quasi-nilpotence and the integrability, and can be checked as in the proof of \cite[Theorem 3.3.1]{Guo21} (especially \cite[Claim 3.3.4]{Guo21}).
	Here we note that the quasi-nilpotence assumption is used to show the formal power series associated to the transition morphism converges, thus exists.
\end{proof}

We also compute the \v{C}ech-Alexander complex under the prismatic setting.
As a preparation, we introduce the following notations.
As before, let $P=\Bdr\langle T_1,\ldots, T_m\rangle\ra P/I \ra R=P/(I,\xi)$  be surjections, and let $D_\Prism$ be the associated envelope.
Denote $P(l)$ to be the inverse limit 
\[
P(l):=\varprojlim_n \left( \Bdrn\langle T_i\rangle^{\wh\otimes_{\Bdrn} (l+1)} \right)\cong \varprojlim_n \Bdrn \langle T_i,\delta_{j,i} \rangle,
\]
where $\delta_{j,i}$ for $1\leq j\leq l,~1\leq i\leq m$ correspond to natural generators of the kernel ideal $\Delta(l)$ for the diagonal surjection $P(l) \ra P$,
and the complete tensor product $\wh\otimes_{\Bdrn}$ above is defined as the integrally $p$-complete tensor product.
\begin{proposition}\label{pm product}
	Let $P, R$ and $P(l)$ be as above.
	The $(l+1)$-th self fiber  product of the prism $D_\Prism$ over the final object in the prismatic topos is representable by the prism associated to the prismatic envelope $D(l)_\Prism$ for $P(l)\ra P(l)/(\Delta(l),I)=P/I$.
\end{proposition}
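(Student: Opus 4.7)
The plan is to verify the universal property directly: for any prism $(B,\xi B) \in X/\Bdr_\Prism$, we exhibit a natural bijection between maps of prisms $D(l)_\Prism \to B$ over $R$ and $(l+1)$-tuples of maps of prisms $D_\Prism \to B$ over $R$. This is precisely the defining property of $D(l)_\Prism$ as the $(l+1)$-fold coproduct of $D_\Prism$ in the category of prisms over $R$; passing to the opposite category, $D(l)_\Prism$ then represents the desired $(l+1)$-fold self fiber product over the final object in the prismatic topos (no sheafification is needed since the topology is indiscrete).

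First I would invoke the universal property of the prismatic envelope from \Cref{pm envelope rep}. Applied to $D_\Prism$, it says that a map of prisms $D_\Prism \to B$ over $R$ is the same as a continuous $\Bdr$-algebra map $f\colon P \to B$ whose composition with $B \to B/\xi$ factors as $P \to R \to B/\xi$, where the second arrow is the fixed prism structure on $B$. Applied to $D(l)_\Prism$ (with the ambient surjection $P(l) \to P(l)/(\Delta(l), I) = P/I$), it says a map $D(l)_\Prism \to B$ of prisms over $R$ is the same as a continuous $\Bdr$-algebra map $g \colon P(l) \to B$ whose mod-$\xi$ reduction kills both $\Delta(l)$ and $I$ and whose induced map $R = P(l)/(\Delta(l), I, \xi) \to B/\xi$ coincides with the prism structure map.

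Next I would use the universal property of $P(l) \cong \varprojlim_n \Bdrn\langle T_i, \delta_{j,i}\rangle$ as the $(l+1)$-fold completed tensor product over $\Bdr$: continuous $\Bdr$-algebra maps $g \colon P(l) \to B$ into a $\xi$-adically complete $\Bdr$-algebra are in natural bijection with $(l+1)$-tuples $(f_0,\ldots,f_l)$ of continuous $\Bdr$-algebra maps $P \to B$. Under this bijection, the conditions on $g$ translate exactly to the condition that each $\bar f_i \colon P \to B/\xi$ equals the fixed composition $P \to R \to B/\xi$: the vanishing of $\Delta(l)$ modulo $\xi$ becomes the statement that all the $\bar f_i$ agree, and their common value coinciding with $P \to R \to B/\xi$ automatically forces each to kill $I$ and to be compatible with the prism structure on $B$. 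Combined with the first step, this gives the desired bijection.

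The argument is essentially a bookkeeping exercise with universal properties, so I do not expect substantive obstacles. The one point worth verifying separately is that $D(l)_\Prism$ genuinely lies in $X/\Bdr_\Prism$, i.e., satisfies the conditions of \Cref{pm site}(i). This is inherited from the construction: $P(l)$ is the Tate algebra $\Bdr\langle T_i, \delta_{j,i}\rangle$, so applying the envelope construction to the finitely generated ideal $(\Delta(l), I) \subset P(l)$ and passing to the $\xi$-torsionfree quotient of the $\xi$-adic completion yields a noetherian, $\xi$-torsionfree, $\xi$-adically complete $\Bdr$-algebra whose reduction modulo each $\xi^m$ is of finite type over $\Bdrm\langle T_i, \delta_{j,i}\rangle$, exactly as required.
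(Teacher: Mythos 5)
Your overall strategy is right and matches the paper's: identify maps of prisms $D(l)_\Prism \to B$ with $(l+1)$-tuples of maps $D_\Prism \to B$ via the universal property of the envelope (\Cref{pm envelope rep}) and the structure of $P(l)$. But there is a genuine gap at the step you dismiss as bookkeeping: the assertion that ``continuous $\Bdr$-algebra maps $g\colon P(l)\to B$ into a $\xi$-adically complete $\Bdr$-algebra are in natural bijection with $(l+1)$-tuples $(f_0,\ldots,f_l)$ of continuous $\Bdr$-algebra maps $P\to B$.'' Restriction gives one direction, but the converse is not automatic. Given the $f_i$, one obtains a map from the \emph{uncompleted} tensor product $P^{\otimes_{\Bdr}(l+1)}\to B$, and the question is whether this factors through the $p$-completed tensor product $P(l)=\varprojlim_n \Bdrn\langle T_i\rangle^{\wh\otimes(l+1)}$. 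That would be formal if each $B/\xi^n$ were a topologically finite type (hence $p$-complete) $\Bdrn$-algebra, but by \Cref{pm site}(i) it is only assumed to be of \emph{finite type} over some $\Bdrn\langle S_j\rangle$ --- polynomial variables are allowed, so $B/\xi^n$ need not be $p$-complete, and the completed-tensor-product universal property does not directly apply.

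This is exactly the point where the paper invokes \Cref{power-bounded on relative Bdr} (which rests on \Cref{power-bounded on relative}, itself using Nagata compactification and the first Riemann extension theorem): the images of the $T_i$ under the $f_i$ are shown to be finite, hence power-bounded, over a ring of definition, so the map from the uncompleted tensor product actually lands inside a $p$-complete affinoid subalgebra of $B/\xi^n$ and therefore extends uniquely to $P(l)/\xi^n$. Without this power-boundedness input the factorization fails in general. The rest of your argument --- translating the conditions involving $\Delta(l)$ and $I$, and checking that $D(l)_\Prism$ satisfies \Cref{pm site}(i) --- is fine and parallels the paper; the one substantive obstacle is precisely the one you did not anticipate.
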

\begin{proof}
	Let $(B,\xi B)$ be a prism over $R$.
	We first notice that by Remark \ref{pm envelope rep}, to give a map of prisms from $(D_\Prism,\xi D_\Prism)$ to $(B,\xi B)$, it is equivalent to give a continuous $\Bdr$-linear morphism $g:P=\Bdr\langle T_i\rangle \ra B$, satisfying the following commutative diagram
	\[
	\xymatrix{P \ar[r] \ar[d]_g &R \ar[d]^f \\ 
		B \ar[r] & B/\xi.}\tag{$\ast$}
	\]
	This implies that a map from $(l+1)$-th self product of the prism $(D_\Prism, \xi D_\Prism)$ to $(B,\xi B)$ is equivalent to $(l+1)$ continuous $\Bdr$-linear morphisms $g_0,\ldots, g_l:P \ra B$, such that the induced compositions $P \ra B \ra B/\xi$ all factor through the  morphism $f:R \ra B/\xi$.
	
	We then consider the integral level.
	To give $(l+1)$ continuous maps from $P$ to $B$, it suffices to give a sequence of $\Ainfn$-linear (for $\Ainfn=\Ainf/\xi^n$) continuous maps $g_{0,n},\ldots, g_{l,n}:\Ainfn\langle T_i\rangle \ra B/\xi^n$, compatible with $n\in \NN$ via natural surjections, and with the commutative diagrams $(\ast)$ above (after inverting $p$).
	When $n=1$, the maps $g_{j,n}$ all factors through the same map $\calO_K\langle T_i\rangle \ra R_0 \ra B/\xi$, where $R_0$ is a ring of definition of $R$ and $R_0\ra B/\xi$ is induced from $f:R \ra B/\xi$.
	This is equivalent to give a map $\calO_K\langle T_i\rangle^{\otimes_{\calO_K} (l+1)} \ra B/\xi$ that factors through the diagonal and the above surjection  $\calO_K\langle T_i\rangle \ra R_0 \ra B/\xi$.
	Notice that the tensor product is a priori only the ordinary tensor product of $\calO_K$-rings, but since $R_0$ is $p$-complete, the above induces uniquely a continuous map $\calO_K\langle T_i\rangle^{\wh\otimes_{\calO_K} (l+1)} \ra B/\xi$ for the $p$-adic completed tensor product.
	For general $n\in \NN$, the choice of $\{g_{0,n},\ldots, g_{l,n}\}$ amounts to give a map from $\Ainfn\langle T_i\rangle^{\otimes_{\Ainfn} (l+1)} \ra B/\xi^n$, whose composition with $B/\xi^n\ra B/\xi$ factors through the diagonal surjection of $\Ainfn\langle T_i\rangle^{\otimes_{\Ainfn} (l+1)}$ and $\Ainfn\langle T_i\rangle \ra R_0 \ra B/\xi$.
	Using \Cref{power-bounded on relative Bdr}, the images of incomplete tensor product in $B/\xi^n$ are within an affinoid algebra over $\Bdrn$, so the map uniquely factors through a map $\Ainfn\langle T_i\rangle^{\wh\otimes_{\Ainfn} (l+1)} \ra B/\xi^n$, where the tensor product of the source is $p$-completed.
	In this way, by inverting $p$ and taking the inverse limit with respect to $n$, we see to give $(l+1)$ continuous $\Bdr$-linear map from $P \ra B$ satisfying the diagram $(\ast)$, it is the same as giving the following commutative diagram
	\[
	\xymatrix{ P(l) \ar@{>>}[r] \ar[d] & P=\Bdr\langle T_i\rangle \ar@{>>}[r] & R \ar[ld]^f\\
		B \ar[r] & B/\xi &.}
	\]
	In this way, by the construction of the prismatic envelope in Definition \ref{pm envelope} and Remark \ref{pm envelope rep}, we see this is equivalent to give a map of prisms from $(D(l)_\Prism, \xi D(l)_\Prism)$ to $(B,\xi B)$, where $D(l)_\Prism$ is the prismatic envelope.
	
\end{proof}
\begin{corollary}\label{pm Cech}
	Let $P=\Bdr\langle T_i\rangle\ra P/I\ra R=P/(I,\xi)$ be surjections with $R$ being topologically finite type over $K$.
	Let $\calF$ be a sheaf over $X/\Bdr_\Prism$, and let $D(l)_\Prism$ be the prismatic envelope as in Lemma \ref{pm product}.
	Then the following natural map from the homotopy limit over the simplicial diagram $\Delta^\op$ is a quasi-isomorphism
	\[
	R\varprojlim_{[l]\in \Delta^\op} \calF(D(l)_\Prism,\xi D(l)_\Prism) \rra R\Gamma(X/\Bdr_\Prism,\calF).
	\]
\end{corollary}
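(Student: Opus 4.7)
The plan is to realize the simplicial object $D(\bullet)_\Prism$ as the Čech nerve of a cover of the final object of the topos $\Sh(X/\Bdr_\Prism)$, and then invoke a standard Čech-to-cohomology comparison.

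First, I would use the indiscrete topology to identify sheaves with presheaves, so that $R\Gamma(X/\Bdr_\Prism, \calF) = R\Hom_\Sh(\mathbf{1}, \calF)$, where $\mathbf{1}$ is the constant presheaf with singleton values. By the Yoneda lemma, for any prism $(E,\xi E)$ the representable presheaf $h_E$ satisfies $\Hom_\Sh(h_E, \calF) = \calF(E,\xi E)$; since evaluation at a fixed object is exact on abelian presheaves (limits and colimits being computed pointwise), $h_E$ is projective, so in fact $R\Hom_\Sh(h_E, \calF) = \calF(E,\xi E)$. Totalizing the cosimplicial object then identifies the left hand side with $R\Hom_\Sh(h_{D(\bullet)_\Prism}, \calF)$, so the task reduces to showing that the augmentation $h_{D(\bullet)_\Prism} \to \mathbf{1}$ is a weak equivalence of simplicial presheaves.

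To verify this, I would evaluate at each test prism $(B,\xi B)$. Combining \Cref{pm product} with Yoneda gives
\[
h_{D(l)_\Prism}(B,\xi B) \;=\; \Hom_{X/\Bdr_\Prism}\bigl((B,\xi B),\, D_\Prism\bigr)^{l+1},
\]
equipped with its standard simplicial structure. By \Cref{pm uni}, the set $X := \Hom_{X/\Bdr_\Prism}\bigl((B,\xi B), D_\Prism\bigr)$ is non-empty, and the augmented simplicial set $X^{\bullet+1} \to \ast$ is classically contractible: pick any basepoint $x\in X$ and contract via the degeneracies along $x$. This produces the desired pointwise weak equivalence of simplicial presheaves.

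The hard part will really be conceptual bookkeeping rather than computation. One must check that the Čech-to-cohomology identification is valid for the indiscrete topology, which amounts to verifying that representable presheaves are sufficiently projective to allow the replacement of underived $\Hom$ by derived $R\Hom$. Once this foundational point is in place, the chain of equivalences
\[
R\varprojlim_{[l]\in \Delta^\op}\calF(D(l)_\Prism,\xi D(l)_\Prism) \;\simeq\; R\Hom_\Sh(h_{D(\bullet)_\Prism}, \calF) \;\xrightarrow{\sim}\; R\Hom_\Sh(\mathbf{1}, \calF) \;=\; R\Gamma(X/\Bdr_\Prism,\calF)
\]
is immediate, and the naturality of each identification matches it with the map in the statement.
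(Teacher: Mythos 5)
Your proof is correct and follows the same route the paper takes, which cites "general category theory" together with \Cref{pm product} and \Cref{pm uni} without elaboration; you are essentially unwinding that phrase, correctly using the indiscrete topology to reduce to the pointwise contractibility of the Čech nerve of a weakly initial object and the projectivity of representables to replace $R\Hom$ by $\Hom$. The only cosmetic point is that it is $\mathbb{Z}[h_E]$ (the free abelian presheaf on the representable) rather than $h_E$ itself that is projective, but this is the standard abuse of notation and does not affect the argument.
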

\begin{proof}
	This follows from the general category theory and Proposition \ref{pm product}, as the the simplicial diagram of prisms $[l]\mapsto (D(l)_\Prism,\xi D(l)_\Prism)$ in the prismatic site $X/\Bdr_\Prism$ forms the \v{C}ech nerve of the covering map $(D_\Prism, \xi D_\Prism) \ra 1_{X/\Bdr_\Prism}$ (Lemma \ref{pm uni}), where $1_{X/\Bdr_\Prism}$ is the final object in the prismatic topos $\Sh(X/\Bdr_\Prism)$.
\end{proof}
As an application, we obtain a tensor product formula relating the cohomology of a crystal and its reduction.
\begin{corollary}\label{coh of red}
	Let $\mathcal{F}$ be a flat prismatic crystal over $X$, and let $\ol{\mathcal{F}}=\mathcal{F}\otimes_\Bdr K$ be its reduction.
	Then the natural map below is an isomorphism
	\[
	R\Gamma(X/\Bdr_\Prism, \mathcal{F})\otimes^L_\Bdr K \rra R\Gamma(X/\Bdr_\Prism, \ol{\mathcal{F}}).
	\]
\end{corollary}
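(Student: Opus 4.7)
The plan is to reduce to a levelwise identification via the Čech--Alexander presentation of \Cref{pm Cech}. Present the affinoid $X = \Spa(R)$ as a quotient of some $P = \Bdr\langle T_i\rangle$, form the associated envelopes $D(l)_\Prism$ of \Cref{pm product}, and apply \Cref{pm Cech} to both $\mathcal{F}$ and $\ol{\mathcal{F}}$:
\[
R\Gamma(X/\Bdr_\Prism,\mathcal{F})\simeq R\varprojlim_{[l]\in\Delta^\op}\mathcal{F}(D(l)_\Prism),\qquad R\Gamma(X/\Bdr_\Prism,\ol{\mathcal{F}})\simeq R\varprojlim_{[l]\in\Delta^\op}\ol{\mathcal{F}}(D(l)_\Prism).
\]

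Next I would verify the levelwise statement. By construction of \Cref{pm envelope} each envelope $D(l)_\Prism$ is $\xi$-torsionfree, hence flat over the discrete valuation ring $\Bdr$. Since $\mathcal{F}$ is a flat prismatic crystal, the value $\mathcal{F}(D(l)_\Prism)$ is locally free over $D(l)_\Prism$, and therefore is $\xi$-torsionfree and flat over $\Bdr$. Consequently
\[
\mathcal{F}(D(l)_\Prism)\otimes^L_\Bdr K \;\simeq\; \mathcal{F}(D(l)_\Prism)/\xi \;\simeq\; \mathcal{F}(D(l)_\Prism)\otimes_{D(l)_\Prism} D(l)_\Prism/\xi \;\simeq\; \ol{\mathcal{F}}(D(l)_\Prism),
\]
which is the desired identification term by term.

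The only delicate step is to commute $-\otimes^L_\Bdr K$ with the totalization. This holds because $K$ is a perfect $\Bdr$-complex: the Koszul resolution $K\simeq[\Bdr\xra{\xi}\Bdr]$ realizes $-\otimes^L_\Bdr K$ as the cone of multiplication by $\xi$, or equivalently as $R\Hom_\Bdr(K^\vee,-)$ with $K^\vee\simeq K[-1]$. Either description shows that the functor $-\otimes^L_\Bdr K$ preserves arbitrary homotopy limits in $D(\Bdr)$. Combining this commutation with the levelwise identification yields
\[
R\Gamma(X/\Bdr_\Prism,\mathcal{F})\otimes^L_\Bdr K \simeq R\varprojlim_{[l]}\bigl(\mathcal{F}(D(l)_\Prism)\otimes^L_\Bdr K\bigr) \simeq R\varprojlim_{[l]}\ol{\mathcal{F}}(D(l)_\Prism) \simeq R\Gamma(X/\Bdr_\Prism,\ol{\mathcal{F}}),
\]
as wanted.

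The only potentially subtle point is the commutation of derived tensor with the totalization; it would fail in general for tensoring with a non-perfect $\Bdr$-complex, but the two-term free resolution of $K$ makes it routine here. Everything else reduces to unwinding definitions and invoking the Čech-Alexander computation already established.
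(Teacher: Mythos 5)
Your proof is correct and follows essentially the same route as the paper: present $R$ via a surjection from a Tate algebra, reduce to the \v{C}ech--Alexander cosimplicial presentation of \Cref{pm Cech}, identify each cosimplicial level using the $\xi$-torsionfreeness of $D(l)_\Prism$ and the local freeness of $\mathcal{F}(D(l)_\Prism)$, and conclude. The one point you supply that the paper leaves implicit is the justification for commuting $-\otimes^L_\Bdr K$ with the cosimplicial totalization (via the two-term Koszul resolution $[\Bdr\xra{\xi}\Bdr]$ of $K$), which is a worthwhile clarification of a step the paper passes over in silence.
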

\begin{proof}
	First notice the map of cohomology above is induced by the reduction map of crystals $\mathcal{F}\ra \ol{\mathcal{F}}$.
	So it suffices to check that the map of cohomology is an isomorphism, assuming $R$ admits a surjection as in \Cref{pm Cech}.
	Then we notice that by the local freeness of each $\mathcal{F}(D(l)_\Prism, \xi D(l)_\Prism)$ over $D(l)$ and the $\xi$-torsionfreeness of $D(l)$ over $\Bdr$, the derived tensor product of the cosimplicial diagram $[l]\mapsto  \mathcal{F}(D(l)_\Prism, \xi D(l)_\Prism)$ with $K$ is equal to 
	\[
	[l] \lmt \ol{\mathcal{F}}(D(l)_\Prism,\xi D(l)_\Prism)= \mathcal{F}(D(l)_\Prism ,\xi D(l)_\Prism) /\xi.
	\]
	Thus the rest follows from \Cref{pm Cech}.
\end{proof}

\section{From infinitesimal to prismatic}\label{sec inf to prism}
In this section, we show how to compute cohomology of a prismatic crystal locally by using infinitesimal cohomology, for rigid spaces that has l.c.i. singularity.

\subsection{A variant of Simpson's functor}\label{subsec Simpson}
We start this subsection with introducing a variant of Simpson's functor in \cite[Lemma 19]{Simp}, over the ($\infty$) category of filtered objects (rings, modules, or complexes) over a $\Bdr$-algebra.
This will be used to connect the infinitesimal theory to the prismatic theory.
\begin{construction}\label{Simp constr}
	Let $(M,\Fil^\bullet M)$ be a $\mathbb{N}^\op$-indexed descending filtered complex of $\Bdr$-modules.
	Namely $\Fil^\bullet M$ is an object in $\DF(\Mod_{\Bdr})=\Fun(\mathbb{N}^\op, \Mod_{\Bdr})$ as in the beginning of \cite[Section 5]{Guo21}.
	Then we define the functor $\Psi(M,\Fil^\bullet M)$ (or $\Psi(M)$ in short if there is no confusion) as an object in $\scrD(\Bdr)$ by the following formula
	\[
	\Psi(M,\Fil^\bullet M):= \left( \colim_{i\in\NN} \frac{Fil^iM}{\xi^i} \right)^\wedge.
	\]
	Here $(-)^\wedge$ above is the derived $\xi$-adic completion, and precisely the colimit is taken over the following diagram of $\Bdr$-complexes
	\[
	\xymatrix{ \Fil^0 M & \Fil^1 M \ar[l] \ar[d]^{\cdot \xi} & & \\
		& \Fil^1 M  & \Fil^2 M \ar[d]^{\cdot \xi} \ar[l] &\\
		& & \Fil^2 M & \Fil^3 M \ar[l] \ar[d]^{\cdot \xi} \\
		& & & \vdots ,}
	\]
	where all horizontal arrows are defining morphisms of the filtration.
\end{construction}
\begin{remark}\label{Simp diagram}
	The idea of the diagram in Construction \ref{Simp constr} is that we want to add the denominator $\xi^i$ to the $i$-th filtration.
	To see this, we change the notation and denote $\frac{\Fil^iM}{\xi^i}$ to be the $\Bdr$-complex $\Fil^i M$ but formally adjoining the denominator $\frac{1}{\xi^i}$.
	Then the diagram in \Cref{Simp constr} can be rewritten as the following
	\[
	\xymatrix{ \Fil^0 M & \Fil^1 M \ar[l] \ar[d] & & \\
		& \frac{\Fil^1 M}{\xi}  & \frac{\Fil^2 M}{\xi} \ar[d] \ar[l]&\\
		& & \frac{\Fil^2 M}{\xi^2}  & \frac{\Fil^3M}{\xi^2} \ar[l] \ar[d] \\
		& & & \vdots ,}
	\]
	where each vertical map identifies $\frac{Fil^i M}{\xi^{i-1}}$ with $\xi\cdot \frac{Fil^i M}{\xi^i}$ via the multiplication by $\xi$.
	Note that in the special case when each $\Fil^iM$ is a complex of $\xi$-torsionfree $\Bdr$-modules, the vertical maps are all inclusion maps.
	
\end{remark}
\begin{remark}\label{Simp CAlg}
	When $(M,\Fil^\bullet M)=(A,I^\bullet)$ is a noetherian $\xi$-torsionfree $\Bdr$-algebra $A$ together with a filtration given by a power of its ideal $I$, the module $\Psi(A,I^\bullet)$ is also equipped with a natural $\Bdr$-algebra structure, by
	\[
	\frac{I^i}{\xi^i}\otimes \frac{I^j}{\xi^j} \rra \frac{I^{i+j}}{\xi^{i+j}}.
	\]
	More generally, under the Day convolution on $\DF(\Mod_{\Bdr})$ (see for example \cite{GP18}), the functor $\Psi:\DF(\Mod_{\Bdr}) \ra 
	\scrD(\Bdr)$ is lax-symmetric monoidal, thus inducing a functor on $\mathbb{E}_\infty$:
	\[
	\Psi:\mathrm{CAlg}(\DF(\Mod_{\Bdr})) \rra \mathrm{CAlg}(\scrD(\Mod_{\Bdr})).
	\]
	Intuitively, using the notation in the above remark, this is to say the following multiplicative structure in $\Psi(M,\Fil^\bullet M)$
	\[
	\frac{\Fil^i M}{\xi^i} \otimes \frac{\Fil^j M}{\xi^j} \rra \frac{\Fil^{i+j} M}{\xi^{i+j}}.
	\]
\end{remark}
\begin{remark}\label{Simp A-linear}
	Moreover, when $(M,\Fil^\bullet M)$ is a filtered complex of $A$-modules for a $\xi$-adic complete $\Bdr$-algebra $A$, as the arrows in the diagram of \Cref{Simp constr} above are all $A$-linear, the object $\Psi(M,\Fil^\bullet M)$ is also a complex of $A$-modules.
\end{remark}
In the following, for a $K$-module $M$, we denote $M\otimes_K \xi^i/\xi^{i+1}$ as $M(i)$, called the \emph{$i$-th Tate twist} of $M$.
This would not change the underlying $K$-module structure, but its Galois structure is replaced by the $i$-th Tate twist when $M$ itself is equipped with one. 
\begin{remark}\label{Simp gr}
	Given a filtered $\Bdr$-complex $(M,\Fil^\bullet M)$ such that $\Fil^\bullet M$ is uniformly bounded to the right, we note that the derived mod $\xi$ reduction  has the following formula
	\[
	\Psi(M,\Fil^\bullet M)\otimes^L_{\Bdr} K \cong \bigoplus_{n\in \NN} \gr^n (M\otimes^L_\Bdr K)(-n),
	\]
	where $M\otimes^L_\Bdr K$ is the filtered complex of $K$-modules endowed with filtration $\Fil^iM\otimes^L_\Bdr K$.
	This follows from \cite[Tag 0EEV]{Sta}, the commutativity of colimit and derived tensor product, and the diagram in Construction \ref{Simp constr}.
\end{remark}

\subsection{Prisms associated to infinitesimal thickenings}\label{subs prism to inf}
We now construct a functor from the infinitesimal site to the prismatic site, over the de Rham period ring $\Bdr$.

To start, we consider the following pro-version analogue of the infinitesimal site considered in \cite{Guo21}.
\begin{definition}
	Let $X$ be a rigid space over $K$.
	The \emph{pro-infinitesimal site} of $X/\Bdr$, denoted as $X/\Bdr_\mathrm{pinf}$, is defined as the category of $(U,T_\ast):=(U,T_e)$ for $e\in \NN$, such that 
	\begin{itemize}
		\item each $(U,T_e)$ is an infinitesimal thickening of affinoid rigid spaces over $\Bdre$ (in the sense of \cite[Section 2.1]{Guo21}, with $T_e$ flat over $\Bdre$ and $U$ an affinoid open subspace of $X$;
		\item  transition maps $T_e \ra T_{e+1}$ are also infiniteisimal thickenings that are compatible with maps from $U$, and there is a compatible system of closed immersions $T_e\ra \Spa(\Bdre \langle x_1,\ldots, x_l\rangle)$ for some fixed $l\in \mathbb{N}$.
	\end{itemize}
The category is equipped with indiscrete topology.
\end{definition}
\begin{remark}\label{inf final}
	Let $R$ be a topologically of finite type $K$-algebra, and let $P\ra R$ be a surjection for a smooth $\Bdr$-algebra $P$ (in the sense that each $P/\xi^e$ is smooth over $\Bdre$ as an adic space).
	Then the induced pro-infinitesimal thickening $(U=\Spa(R), T_e=\Spa(P/I^e))$ is weakly final over $\Spa(R)/\Bdr_\mathrm{inf}$, where $I=\ker(P\ra R)$.
\end{remark}
\begin{remark}
	Analogous to the classical infinitesimal theory, we can define the infinitesimal structure sheaf $\mathcal{O}_{X/\Bdr}$ and the notion of crystals over $X/\Bdr_\mathrm{pinf}$, as in \cite{Guo21}.
	Here for a pro-infinitesimal thickening $(U,T_e=\Spa(B_e))$ we have 
	\[
	\mathcal{O}_{X/\Bdr}(U,T_e)=\varprojlim_e B_e.
	\]
	
	Moreover, by \cite[Theorem 7.2.3, Theorem 3.3.1]{Guo21}, the category of crystals over $X/\Bdr_{\mathrm{inf}}$ in the sense of \cite{Guo21} is naturally equivalent to that over $X/\Bdr_\mathrm{pinf}$, inducing natural isomorphisms between their cohomology.
\end{remark}
Next we define a natural functor from the infinitesimal site to the prismatic site.
Let $R$ be a fixed topologically finite type algebra over $K$, and let $X=\Spa(R)$.
\begin{definition}\label{ass prism}
	Let $(U=\Spa(A),T_e=\Spa(B_e))\in X/\Bdr_\mathrm{pinf}$ be a pro-infinitesimal thickening over the rigid space $X=\Spa(R)$ over $K$, and let $B=\varprojlim_e B_e$, with the ideal $I=\ker(B\ra A)$.
	The \emph{associated prism} of $(U,T_e)$ is defined as 
	\[
	\left(B' \ra B'/\xi \leftarrow R\right),
	\]
	where $B'=B[\frac{I}{\xi}]^\wedge_\tf $, the symbol $(-)^\wedge$ is the classical $\xi$-adic completion, and the arrow on the right is the natural composition $R\ra A \ra B[\frac{I}{\xi}]^\wedge_\tf/\xi$.
\end{definition}
\begin{remark}\label{pm final}
	When $(U,T_e)$ is a pro-infinitesimal thickening as in \Cref{inf final}, by the construction above together with \Cref{pm uni}, the associated prism covers the final object of the prismatic site.
\end{remark}
By construction, this defines a map of ringed sites
\[
\pi:(X/\Bdr_\Prism,\mathcal{O}_\Prism) \rra (X/\Bdr_\mathrm{pinf}, \mathcal{O}_{X/\Bdr}).
\]
Moreover, by taking the coherent pullback, we can produce a prismatic crystal from an infinitesimal crystal.
\begin{lemmadef}\label{functor crys}
	There is a natural functor between categories of crystals 
	\[
	\pi^*:\mathrm{Cry}_{X/\Bdr_\mathrm{pinf}} \ra \mathrm{Cry}_{X/\Bdr_\Prism},
	\]
	 such that for an infinitesimal thickening $(U=\Spa(A),T_e=\Spa(B_e))$ with $(B'\ra B'/\xi\leftarrow R)$ being the associated prism as in \Cref{ass prism}, we have
	\[
	\pi^*\mathcal{F}(B')=\mathcal{F}(U,T_e)\otimes_B B'.
	\]
\end{lemmadef}
\begin{proof}
	For each $(U=\Spa(A), T_e=\Spa(B_e))$, we construct a module over $B'=B[\frac{I}{\xi}]^\wedge_\tf $ by taking the tensor product as in the formula above.
	Then by the crystal condition of $\mathcal{F}\in \mathrm{Cry}_{X/\Bdr_\mathrm{inf}}$, this satisfies the crystal condition with respect to prisms coming from infinitesimal thickenings.
	Then it suffices to notice that by \Cref{pm final} the category of associated prisms covers the final objects of the prismatic topos.
\end{proof}

\subsection{Local calculation for l.c.i singularities}\label{subsec loc cal on lci}
In this subsection, we give some concrete local formulae computing prismatic cohomology of crystals using Simpson's functor, for affinoid rigid space that has l.c.i singularities (in the sense of \cite[Appendix]{GL20}).
Precisely, we assume the following throughout the subsection.
\begin{assumption}\label{Assump}
	We let $R$ be a fixed topologically finite type $K$-algebra, and let $P$ be the ring $\Bdr \langle T_i\rangle$.
	Assume $I$ is a Koszul-regular ideal of $P$ (in the sense of \cite[Tag 07CU]{Sta}), such that $P/I$ is flat over $\Bdr$, and $P/(I,\xi)$ is isomorphic to $R$.
\end{assumption}
Slightly abuse the language, we call any such surjection $P \ra P/I$ as above a \emph{regular closed immersion}.
\begin{example}\label{pm envelope eg2}
	Assume $R$ is a topologically finite type algebra over $K$ that has l.c.i singularities.
	By definition in \cite[Proposition 5.3]{GL20}, the kernel ideal $\ol I$ of any surjection from $K \langle T_i\rangle$ onto $R$ is a Koszul-regular ideal in $K\langle T_i\rangle$.
	In particular, by \cite[Tag 0669]{Sta}, as the union of $\xi$ with a lift of any (Zariski) local Koszul-regular sequence of $\ol I$ is a Koszul-regular sequence, the ideal $\ker(P\ra R)$ is also a Koszul-regular ideal of $P$.
	So by taking any lift of Koszul-regular sequence of $\ol I$, we see Zariski (thus analytic) locally the affinoid algebra $R$ always admits a setup as in \Cref{Assump}.

\end{example}
\begin{remark}\label{sift}
	It is worth mentioning the category of (regular) closed immersions are sifted.
	This is because given any two surjections $P_i \ra P_i/I_i \ra R$ for $i=1,2$ and $P_i$ being of the form $\Bdr \langle T_j\rangle$, we can form the product 
	\[
	P:=\varprojlim_e \left( (P_1\otimes_\Bdr \Bdre)\otimes_\Bdre (P_2\otimes_\Bdr \Bdre) \right),
	\]
	which naturally admits a surjection to $R$ together with maps from $P_i$.
	Moreover, by taking the ideal $I\subset P$ generated by the image of $I_1$ and $I_2$, the quotient ring $P/I$ is a flat $\Bdr$-lift of $R$ along $\Bdr\ra K$.
	At last, when both surjections $P_i \ra R$ are locally complete intersection, we can use the analytic cotangent complex to check that $P \ra R$ is also regular (\cite[Appendix]{GL20}).
\end{remark}

We start by computing the prismatic envelope using Simpson's functor.
\begin{lemma}\label{prism envelope}
	Let $P$, $I$, and $R$ be as in \Cref{Assump}.
	Then the prismatic envelope $P[\frac{I}{\xi}]^\wedge_\tf $ for $P\ra P/I$ is naturally isomorphic to $\Psi(P,I^\bullet)$.
	In particular, $\Psi(P,I^\bullet)$ lives in cohomological degree zero and is a $\xi$-adically complete, $\xi$-torsionfree noetherian algebra over $P$.
\end{lemma}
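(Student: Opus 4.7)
The plan is to realize both sides as the $\xi$-adic completion of one and the same finite-type $P$-subalgebra of $P[1/\xi]$. Fix a Koszul-regular generating sequence $f_1,\dots,f_m$ of $I$; the target subalgebra is
\[
A := P[f_1/\xi,\dots,f_m/\xi] \subset P[1/\xi].
\]

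First, I would compute the Simpson colimit $\colim_{n\in\NN} I^n/\xi^n$ appearing inside $(-)^\wedge$ in \Cref{Simp constr}, before $\xi$-adic completion. The key point is that $P$ is $\xi$-torsionfree (being a $\xi$-adic limit of flat $\Bdre$-algebras), hence each power $I^n$ is $\xi$-torsionfree, so each term $I^n/\xi^n$ of the diagram in \Cref{Simp diagram} embeds into $P[1/\xi]$ as the submodule $I^n\cdot\xi^{-n}$. Under these embeddings every arrow of the zig-zag diagram becomes a module inclusion, and the colimit identifies with the $P$-submodule $\sum_{n\geq 0} I^n\cdot\xi^{-n}\subset P[1/\xi]$; this sum is precisely the subalgebra $A$.

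Second, I would match $A$ with $P[I/\xi]_\tf$, i.e.\ the prismatic envelope before $\xi$-adic completion. The natural $P$-algebra map
\[
\phi: P[x_1,\dots,x_m]/(\xi x_i - f_i)\longrightarrow P[1/\xi],\qquad x_i\longmapsto f_i/\xi,
\]
is well-defined, has image $A$, and its kernel is exactly the $\xi^\infty$-torsion of the source: inverting $\xi$ in the source turns $\xi x_i = f_i$ into $x_i = f_i/\xi$, so $(P[x_i]/(\xi x_i - f_i))[1/\xi]\cong P[1/\xi]$, and $\ker\phi$ coincides with the kernel of the localization map, namely the $\xi^\infty$-torsion. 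Therefore $P[I/\xi]_\tf\cong A$.

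Third, I would $\xi$-complete. Since $A$ is finitely generated over the noetherian ring $P$ it is itself noetherian, and as a subring of $P[1/\xi]$ it is $\xi$-torsionfree. For a noetherian $\xi$-torsionfree $\Bdr$-algebra the derived $\xi$-adic completion agrees with the classical $\xi$-adic completion and yields a noetherian, $\xi$-torsionfree, $\xi$-adically complete ring. Applying this to $A$ simultaneously produces $\Psi(P, I^\bullet)$ (by \Cref{Simp constr}) and $P[I/\xi]^\wedge_\tf$ (by \Cref{pm envelope}), giving the isomorphism together with all asserted structural properties.

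The main obstacle is the first step: the diagram of \Cref{Simp constr} is a zig-zag rather than a filtered diagram, so care is needed to identify its colimit with the naive sum. The $\xi$-torsionfreeness of $P$ is exactly what makes every arrow of the zig-zag an injection into the common ambient ring $P[1/\xi]$, reducing the colimit to the sum $\sum_n I^n\xi^{-n}$. The Koszul-regularity hypothesis on $I$ plays no essential role in the identification itself; it is the structural hypothesis under which the resulting envelope interacts well with the later infinitesimal and Hodge-Tate comparisons.
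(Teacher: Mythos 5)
Your overall strategy---realizing both sides as the $\xi$-adic completion of the same finitely generated $P$-subalgebra $A = P[f_1/\xi,\dots,f_m/\xi]\subset P[1/\xi]$---is genuinely different from the paper's, which constructs the comparison map $P[\tfrac{I}{\xi}]^\wedge_\tf \to \Psi(P,I^\bullet)$ and checks it is an isomorphism by derived Nakayama, i.e.\ after $\otimes^L_\Bdr K$, matching the graded pieces $\bigoplus_n \bar I^n/\bar I^{n+1}$ on both sides. Your Steps 2 and 3 are sound. But Step 1 contains a real gap, and your closing remark that ``the Koszul-regularity hypothesis on $I$ plays no essential role in the identification itself'' pinpoints exactly where the error lies.

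A colimit over a zig-zag of submodule inclusions is \emph{not} automatically the union, and $\xi$-torsionfreeness of $P$ and of the $I^n$ is not enough. Take $P=\Bdr$, $I=(\xi)$: then $A_n=\xi^{-n}I^n=\Bdr$ and $B_n=\xi^{-n}I^{n+1}=\xi\Bdr$, all sitting inside $\Bdrr$ with all arrows inclusions, yet already the first pushout $(\Bdr\oplus\Bdr)/\{(b,-b):b\in\xi\Bdr\}$ has the nonzero $\xi$-torsion class $(1,-1)$, so the colimit is not the union $\Bdr$. (Of course this $I$ violates \Cref{Assump} since $P/I=K$ is not $\xi$-torsionfree, but that is precisely the point.) What forces $\colim_n \tfrac{I^n}{\xi^n}$ to coincide with $\sum_n I^n\xi^{-n}$ is $\xi$-torsionfreeness of the quotients $P/I^n$ for every $n$---equivalently, flatness of $P/I^n$ over $\Bdr$. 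This is where Koszul-regularity of $I$ together with flatness of $P/I$ enters: Zariski-locally $I^n/I^{n+1}\cong\Sym^n(I/I^2)$ is free over $P/I$, hence flat over $\Bdr$, so $P/I^n$ is flat by induction on the exact sequences $0\to I^{n-1}/I^n\to P/I^n\to P/I^{n-1}\to 0$. Once you have this, the identification goes through by a telescoping argument: given $(a_0,\dots,a_N)\in\bigoplus A_i$ with $\sum a_i=0$ in $P[1/\xi]$, the partial sums $S_j=a_0+\cdots+a_j$ lie in $B_j=\xi^{-j}I^{j+1}$ because $P/I^{j+1}$ has no $\xi$-power torsion, exhibiting the element as a sum of the defining relations. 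With Step~1 repaired this way your argument does work, and it has a pleasant side benefit: since Steps~2 and 3 use only an arbitrary finite generating set of $I$, you do not need a global Koszul-regular generating sequence and can dispense with the Zariski-local/\v{C}ech gluing the paper invokes at the end of its proof.
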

\begin{proof}
	We first assume that $I$ admits a finite set of generators $(f_i)$ such that $(\xi, f_i)$ is a Koszul-regular sequence, which is always true Zariski locally around $\Spec(R)$ in $\Spec(P)$ (c.f. \Cref{pm envelope eg2}).
	Then there exists a natural map from the polynomial algebra $P[x_i]$ to $\Psi(P,I^\bullet)$, sending $x_i$ onto $\frac{f_i}{\xi}\in \frac{I}{\xi}$.
	This induces a natural homomorphism from the noetherian ring $P[\frac{f_i}{\xi}]=P[x_i]/(\xi\cdot x_i -f_i)$ to $\Psi(P,I^\bullet)$, and factors through the derived $\xi$-completion of $P[\frac{f_i}{\xi}]$.
	Note by \cite[Tag 0A06]{Sta} the classical $\xi$-completion and the derived $\xi$-completion of a noetherian $\Bdr$-algebra are the same.
	Thus the above leads to a natural map from the actual noetherian algebra $P[\frac{f_i}{\xi}]^\wedge$ to $\Psi(P,I^\bullet)$, where the former is classically $\xi$-completed.
	Moreover, as $(\xi, f_i)$ forms a regular sequence of the ring $P$, the map of rings $\Bdr[y_i] \ra P$, $y_i\mapsto f_i$ is flat after localizing at the ideal $(\xi)$ (c.f. \cite[Proposition 1]{Har66}).
	As a consequence, the ring $P[\frac{f_i}{\xi}]$ has no $\xi$-torsion, and we get a map from $P[\frac{I}{\xi}]^\wedge_\tf$ to $\Psi(P, I^\bullet)$.

	To show the isomorphism, by the derived Nakayama lemma (\cite[Tag 0G1U]{Sta}) for $\Bdr$, it suffices to check the above is an isomorphism after a derived tensor product with $K$.
	On the one hand, by \Cref{Simp gr} we have
	\[
	\Psi(P,I^\bullet)\otimes^L_\Bdr K \cong \bigoplus_{n\in \NN} \gr^n (P\otimes^L_\Bdr K).
	\]
	By the assumption of $P$, each $I^n/I^{n+1}$ is Tor-independent of $K$ over $\Bdr$.
	So the above tensor product is equal to the discrete module as below
	\[
	\bigoplus_{n\in \NN} (\ol I)^n/(\ol I)^{n+1},
	\]
	where $\ol I$ is the tensor product $I\otimes_\Bdr K$.
	On the other hand, the assumption of $P$ and $I=(f_i)$ implies the  ring $P[x_1, \ldots, x_m]/(\xi x_i-f_i, 1\leq i\leq m)$ is flat over $\Bdr$.
	In particular, we have
	\begin{align*}
		P[\frac{f_i}{\xi}]\otimes^L_\Bdr K & \cong P[\frac{f_i}{\xi}]\otimes_\Bdr K\\
		&\cong P/(\xi ,f_i)[x_i],
	\end{align*}
	which by assumption is exactly the direct sum $\bigoplus_{n\in \NN} (\ol I)^n/(\ol I)^{n+1}$.
	In this way, as the above two reductions are abstractly isomorphic to each other, to finish the proof, it suffices to notice that the map we constructed identifies the generators of each direct summand.
	
	In general, one can still form the map from $P[\frac{I}{\xi}]^\wedge_\tf $ to $\Psi(P,I^\bullet)$, replacing $(f_i)$ above by any finite set of generators of $I$.
	As the ring $P$ is noetherian, we can switch the order of the torsionfree quotient and the $\xi$-completion to get 
	\[
	P[\frac{I}{\xi}]^\wedge_\tf  = \left( P[\frac{f_i}{\xi}]/(\xi^\infty\text{-torsion}) \right)^\wedge.
	\]
	So to show the isomorphism, it suffices to take the derived reduction mod $\xi$ and consider the following
	\[
	\left( P[\frac{f_i}{\xi}]/(\xi^\infty\text{-torsion}) \right) \otimes^L_\Bdr K \ra \Psi(P, I^\bullet)\otimes^L_\Bdr K.
	\]
	But notice that the map commutes with any Zariski localization of $P$, and for each $\frakp\in \Spec(R)$ there is  a Zariski open neighborhood of $\frakp$ in $\Spec(P)$ such that $I$ admits a set of Koszul-regular generators and the above is an isomorphism.
	Thus a \v{C}ech complex argument for an open covering finishes the proof.
	
\end{proof}

From now on towards the end of the subsection, we use $D_{\inf}$ to denote the infinitesimal envelope $\varprojlim P/I^m$ for the surjection $P\ra R$ (c.f. \cite[Section 2.2]{Guo21}), and use $D_\Prism=P[\frac{I}{\xi}]^\wedge_\tf$ for the prismatic envelope, which by \Cref{prism envelope} is isomorphic to $\Psi(P,I^\bullet)$.

Next we use Simpson's functor to relate a connection over $D_{\inf}$ to a log connection over $D_\Prism$.
\begin{theorem}\label{inf to pm conn}
	Let $P,I, R$ and $D_{\inf}, D_\Prism$ be as above, let $\mathcal{F}$ be a flat infinitesimal crystal over $R/\Bdr_{\mathrm{pinf}}$, and let $(M, \nabla)$ the be the integrable connection of $\mathcal{F}$ at $D_{\inf}$ (as in \cite[Section 3]{Guo21}).
	Then the log connection of $\pi^*\mathcal{F}$ at $D_\Prism$ is integral, integrable, and is naturally isomorphic to the unique extension of $\nabla$ on $M\otimes_{D_{\inf}} D_\Prism$ by the Leibniz rule.
\end{theorem}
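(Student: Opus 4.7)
My plan is to compare the transition isomorphisms on the two sites via base change from the infinitesimal self-product to the prismatic self-product, then read off all three properties from the resulting Taylor expansion.

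First I would use \Cref{pm envelope alt} to obtain a natural structure map $D_{\inf} \to D_\Prism$, so that the pullback in \Cref{functor crys} identifies $\pi^*\mathcal{F}(D_\Prism)$ with $M \otimes_{D_{\inf}} D_\Prism$. The same reasoning applies at the level of self-products: the prismatic envelope $D(1)_\Prism$ described in \Cref{pm product} factors through the $(\Delta(1)+I+I')$-adic completion of $P(1)$, which is the infinitesimal self-product $D_{\inf}(1) := \varprojlim_n P(1)/(\Delta(1)+I+I')^n$. One verifies that the two projections $\mathrm{pr}_0, \mathrm{pr}_1$ on both sides are compatible under the resulting map $D_{\inf}(1) \to D(1)_\Prism$; this is the main technical input of the proof.

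Once this identification is in place, the crystal property for $\mathcal{F}$ on both sites means that the prismatic transition isomorphism $\phi: \pi^*\mathcal{F}(D_\Prism)\otimes_{D_\Prism,\mathrm{pr}_1} D(1)_\Prism \cong \pi^*\mathcal{F}(D_\Prism)\otimes_{D_\Prism,\mathrm{pr}_0} D(1)_\Prism$ is the base change along $D_{\inf}(1) \to D(1)_\Prism$ of the infinitesimal transition isomorphism $\phi_{\inf}$ coming from $\nabla$. Since divided powers exist in characteristic zero, $\phi_{\inf}$ admits a Taylor expansion
\[
\phi_{\inf}(m \otimes 1) = \sum_{E} \nabla^{E}(m) \otimes \frac{\delta^{E}}{E!}
\]
whose coefficients $\delta^{E}/E!$ lie in $D_{\inf}(1)$, i.e.\ without any $\xi$-denominators. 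Extracting the log connection on $M' := M \otimes_{D_{\inf}} D_\Prism$ from $\phi$ via the procedure in the proof of \Cref{pm cry via envelope}, and using the identification $\delta_i \leftrightarrow dT_i$, one obtains $\nabla'(m \otimes 1) = \sum_i \nabla_i(m) \otimes dT_i$ for $m \in M$, which sits inside $M'\otimes_P \Omega^1_{P/\Bdr}$. This is the integrality claim, since each $\wt\nabla_i(m\otimes 1) = \xi\cdot \nabla_i(m)$ is visibly $\xi$-divisible.

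Finally, the log connection $\nabla'$ satisfies the Leibniz rule over $D_\Prism$ by construction (as do all log connections coming from a crystal via \Cref{pm cry via envelope}), and since it agrees with $\nabla$ on $M \otimes 1$, it is uniquely determined on all of $M'$ by the Leibniz rule, proving the claimed uniqueness of the extension. Integrability of $\nabla'$ follows from integrability of $\nabla_{\inf}$ by the parallel Taylor argument applied to the higher self-product $D(2)_\Prism$ (which represents the triple self-fiber product in the prismatic topos); alternatively it can be read off from compatibility of $\phi$ with the cosimplicial structure. The main obstacle throughout is the compatibility step identifying $D(1)_\Prism$ as the associated prism of $D_{\inf}(1)$: everything else reduces to routine tracking of the Taylor expansion.
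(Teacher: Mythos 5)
Your proposal is essentially the same approach as the paper's proof. Both arguments hinge on the identification $\phi_\Prism \cong \phi_{\inf}\otimes_{D(1)_{\inf}} D(1)_\Prism$ coming from \Cref{functor crys} and \Cref{pm envelope alt} (the fact that the prismatic envelope factors through the $I$-adic completion), then extract the log connection from the transition isomorphism and read off integrality from the absence of $\xi$-denominators in $\phi_{\inf}$. The only real presentational difference is that you work directly with the explicit Taylor expansion of $\phi_{\inf}$ to conclude $\wt\nabla_i(m\otimes 1)=\xi\cdot\nabla_i(m)$, whereas the paper packages the same observation through the $\Psi$-functor formalism ($\phi_\Prism=\Psi(\phi_{\inf},(I,\Delta(1))^\bullet)$) and then appeals to a commutative diagram to identify the restriction to $M$ with $\nabla$. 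These are two ways of articulating the same mechanism, and your handling of integrability via $D(2)_\Prism$ (or, as you note, cosimplicial compatibility) matches the paper's appeal to the general crystal machinery of \Cref{pm cry via envelope}. The "main obstacle" you flag—the compatibility of the two projections under $D_{\inf}(1)\to D(1)_\Prism$—is indeed the content carried by \Cref{functor crys} together with \Cref{pm envelope alt}, so your plan is complete.
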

By \Cref{functor crys}, we can denote the log connection as $(\pi^*M, \nabla)$, with $\pi^*M\cong M\otimes_{D_{\inf}} D_\Prism$.
Note that by \Cref{pm cry integral}, the log connection $(\pi^*M, \nabla)$ is in particular quasi-nilpotent.

\begin{proof}
	By proof of \Cref{pm cry via envelope}, to describe the log connection structure on $\pi^*M=M\otimes_{D_{\inf}} D_\Prism$ (see \Cref{functor crys}), it suffices to consider the difference map
	\[
	\phi_\Prism:\mathrm{pr}_1^* (\pi^*\mathcal{F})(D_\Prism) \rra \mathrm{pr}_0^*(\pi^*\mathcal{F})(D_\Prism),
	\]
	where $\phi_\Prism$ is the isomorphism of $D(1)_\Prism$-modules induced from the crystal condition.
	Here by \Cref{functor crys}, the isomorphism $\phi_\Prism$ is naturally isomorphic to $\phi_{\inf}\otimes_{D(1)_{\inf}} D(1)_\Prism$, with 
	\[
	\phi_{\inf}= \left( \xymatrix{\mathrm{pr}_1^* \mathcal{F}(D_{\inf}) \ar[r]^\sim & \mathrm{pr}_0^*\mathcal{F}(D_{\inf})} \right).
	\]
	On the other hand, notice that the isomorphism $\phi_{\inf}$ is filtered under the $(I,\Delta(1))$-adic filtration, and in particular by applying $\Psi(-, (I,\Delta(1))^\bullet)$ and \Cref{prism envelope} we have
	\[
	\phi_\Prism=\Psi(\phi_{\inf}, (I,\Delta(1))^\bullet).
	\]
	Thus by taking the difference between $\id\otimes 1$ and $\phi_\Prism$ and mod the ideal $(\frac{\Delta(1)}{\xi})^2$, we get a natural log connection on $\pi^*M=M\otimes_{D_{\inf}} D_\Prism$.

	Moreover, by the property of colimits, there is a natural commutative diagram sending the connection $(M,\nabla)$ to the log connection on $\pi^*M$, compatible with the natural structure map $D_{\inf} \ra D_\Prism$:
	\[
	\xymatrix{
		M \ar[r]^\nabla \ar[d]& M\otimes_P \Omega_{P/\Bdr}^1 \ar[d]\\
		M\otimes_{D_{\inf}} D_\Prism \ar[r] & (M\otimes_{D_{\inf}}D_\Prism) \otimes_P \frac{\Omega_{P/\Bdr}^1}{\xi},}
	\]
	where the vertical maps are natural maps compatible with the inclusion $\Omega_{P/\Bdr}^1\ra \frac{\Omega_{P/\Bdr}^1}{\xi}$ over the ring $P$.
	From the diagram, we see the log connection on $\pi^*M$ coincides with $\nabla$ when restricted to $M$, and is uniquely extended by $\nabla$ using the Leibniz rule of the log connection.
	
	At last, the integrability and the quasi-nilpotence either follow from the general result in \Cref{pm cry via envelope}, or the above commutative diagram and Leibniz rule, as $\wt \nabla_i(x)$ is $\xi$-divisible for any $i$ and $x\in M$, and thus by Definition \ref{q-nil} the connection $\Psi(\nabla)$ is integral and thus quasi-nilpotent.

\end{proof}
The following lemma is useful when relating the log connection $\pi^*M$ with the filtration.
\begin{lemma}\label{prism envelope 2}
	Assume the same as in \Cref{inf to pm conn}.
	Then $\pi^*M$ is naturally isomorphic to $\Psi(M,I^\bullet M)$.
\end{lemma}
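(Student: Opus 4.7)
The strategy is to mimic the proof of \Cref{prism envelope}, now with coefficients in the flat $D_{\inf}$-module $M$. First, I would construct a natural $\Bdr$-linear comparison map
\[
\alpha\colon \Psi(M,I^\bullet M) \rra \pi^*M = M\otimes_{D_{\inf}} D_\Prism.
\]
The point is that the image of $I^i$ in $D_\Prism$ is divisible by $\xi^i$, so an element $x = \sum f_{1,k}\cdots f_{i,k}\,m_k \in I^iM$ (with $f_{j,k}\in I$, $m_k\in M$) admits a well-defined lift $\sum m_k\otimes (f_{1,k}\cdots f_{i,k}/\xi^i)\in M\otimes_{D_{\inf}} D_\Prism$ that matches the transition map $\frac{I^iM}{\xi^i}\to \frac{I^{i+1}M}{\xi^{i+1}}$ in the Simpson diagram. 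Since $M\otimes_{D_{\inf}} D_\Prism$ is $\xi$-adically complete and $\xi$-torsionfree (as $M$ is flat over $D_{\inf}$ and $D_\Prism$ is $\xi$-torsionfree over $D_{\inf}$), the colimit extends uniquely to the derived $\xi$-completion.

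Next, since both sides are derived $\xi$-complete complexes over $\Bdr$, by the derived Nakayama lemma (\cite[Tag 0G1U]{Sta}) it suffices to check that $\alpha\otimes^L_\Bdr K$ is an isomorphism. For the left-hand side, \Cref{Simp gr} together with the fact that the filtration $I^\bullet M$ is by $\Bdr$-flat submodules (which follows from the Koszul-regularity of $I$, the flatness of $P/I$ and $\gr^n_I P$ over $\Bdr$, and the flatness of $M$ over $D_{\inf}$) gives
\[
\Psi(M,I^\bullet M)\otimes^L_\Bdr K \;\cong\; \moplus_{n\in \NN}\, \gr^n_I M \otimes_\Bdr K\,(-n).
\]

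For the right-hand side, using that $M$ is flat over $D_{\inf}$ and that by (the proof of) \Cref{prism envelope} the derived reduction $D_\Prism\otimes^L_\Bdr K$ is the discrete graded algebra $\moplus_n \bar I^n/\bar I^{n+1}$ with $\bar I = I\otimes_\Bdr K$, I compute
\[
(M\otimes_{D_{\inf}} D_\Prism)\otimes^L_\Bdr K
\;\cong\; M\otimes^L_{D_{\inf}}\bigl(\moplus_{n}\bar I^n/\bar I^{n+1}\bigr)
\;\cong\; \moplus_n (I^nM/I^{n+1}M)\otimes_\Bdr K,
\]
where the last step uses flatness of $M$ over $D_{\inf}$ to identify $M\otimes_{D_{\inf}}(I^n/I^{n+1}) = I^nM/I^{n+1}M$. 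Tracing through the construction of $\alpha$, on the $n$-th graded piece this map is exactly the identification described above, so $\alpha\otimes^L_\Bdr K$ is an isomorphism, which concludes the proof.

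The main obstacle is bookkeeping with the Tate twists and the flatness/Tor-independence required at each stage to justify passing from derived to underived tensor products; once one verifies (via the Koszul-regular hypothesis) that the graded pieces $\gr^n_I D_{\inf}$ and hence $\gr^n_I M$ are $\Bdr$-flat, the rest is a direct graded-piece-by-graded-piece identification, exactly parallel to the scalar case $M=D_{\inf}$ already treated in \Cref{prism envelope}.
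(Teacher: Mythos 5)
Your proof is correct, but it takes a genuinely longer route than the paper's. The paper constructs the comparison map in the opposite direction, $M\otimes_{D_{\inf}} D_\Prism \to \Psi(M,I^\bullet M)$, and then settles the isomorphism with a one-line completeness argument: derived tensor product commutes with the colimit appearing in the definition of $\Psi$, so before $\xi$-completion both sides are $\colim_n \frac{I^nM}{\xi^n}$; since $M$ is finite over the noetherian ring $D_{\inf}$ and $D_\Prism=\Psi(D_{\inf},I^\bullet)$ is derived $\xi$-complete, the left side $M\otimes_{D_{\inf}}D_\Prism$ is already $\xi$-complete, so tensoring with $M$ commutes with the completion step and the two sides coincide. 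You instead invoke derived Nakayama and run a graded-piece-by-graded-piece check, re-proving for $M$ the analysis that \Cref{prism envelope} already carried out for $D_{\inf}$ itself; this works (your identification of both sides with $\bigoplus_n (I^nM/I^{n+1}M)\otimes_\Bdr K$ is correct given the flatness hypotheses) but requires more bookkeeping (flatness of $\gr^n_I M$ over $\Bdr$, Tor-independence, etc.) that the paper's proof bypasses entirely. One small gloss on your construction of $\alpha$: the well-definedness of the assignment $x=\sum f_{1,k}\cdots f_{i,k}m_k \mapsto \sum m_k\otimes (f_{1,k}\cdots f_{i,k}/\xi^i)$ independently of the chosen representation is not immediate as phrased; the cleanest justification is that flatness of $M$ gives $I^iM\cong I^i\otimes_{D_{\inf}}M$, after which $\alpha$ is just $(-)\otimes_{D_{\inf}}M$ applied to the canonical map $\frac{I^i}{\xi^i}\to D_\Prism$, which makes well-definedness automatic.
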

\begin{proof}
	There is a natural map below
	\[
	M\otimes_{D_{\inf}} D_\Prism = M\otimes_{D_{\inf}}^L \Psi(D_{\inf}, I^\bullet) \rra \Psi(M,I^\bullet M).
	\]
	By the commutativity between the derived tensor product functor and the colimit, the above is an isomorphism before the derived $\xi$-adic completion for $\Psi(-,-)$ as in \Cref{Simp constr}.
	So thanks to the derived $\xi$-completeness of $D_\Prism=\Psi(D_{\inf}, I^\bullet)$ and the finiteness of $M$ over $D_{\inf}$, we see the left hand side above is automatically $\xi$-complete, and hence isomorphic to the right.
\end{proof}
In the l.c.i case, we also get the faithfulness of the functor from infinitesimal crystals to prismatic crystals.
\begin{proposition}
	Let $X$ be an affinoid rigid space over $K$ that is a local complete intersection.
	The functor $\pi^*$ in \Cref{functor crys} from the category of flat infinitesimal crystals over $X/\Bdr_{\mathrm{pinf}}$ to the category of flat prismatic crystals over $X/\Bdr_\Prism$ is faithful.
\end{proposition}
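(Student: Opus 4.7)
The plan is to reduce the faithfulness of $\pi^*$ on morphisms to the injectivity of the structural map $D_{\inf}\to D_\Prism$, and to verify this injectivity by a filtration analysis.

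First I would reduce to the local setup of \Cref{Assump}. Around every point of $X$ the l.c.i.\ hypothesis provides a regular closed immersion $P=\Bdr\langle T_i\rangle\to P/I$ with $P/(I,\xi)=R$ (see \Cref{pm envelope eg2}), and by \Cref{sift} these presentations form a sifted category, so it is enough to establish faithfulness after such a base change. Under the equivalence of \Cref{pm cry via envelope} and the parallel infinitesimal equivalence alluded to in \Cref{subs prism to inf}, a flat infinitesimal crystal $\mathcal F$ corresponds to a finite locally free $D_{\inf}$-module $M$ with integrable quasi-nilpotent connection $\nabla$; by \Cref{inf to pm conn} and \Cref{prism envelope 2} the functor $\pi^*$ sends $(M,\nabla)$ to $(M\otimes_{D_{\inf}}D_\Prism,\nabla_\Prism)$ with $\nabla_\Prism$ the Leibniz extension of $\nabla$, and a morphism $\phi\colon\mathcal F\to\mathcal G$ becomes $\phi_M\otimes\id_{D_\Prism}\colon M\otimes D_\Prism\to N\otimes D_\Prism$.

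Given two morphisms of flat infinitesimal crystals with the same image under $\pi^*$, their difference $\phi_M\colon M\to N$ satisfies $\phi_M\otimes\id_{D_\Prism}=0$. Since $N$ is finite locally free over $D_{\inf}$, the unit map $N\to N\otimes_{D_{\inf}}D_\Prism$ is, in a local trivialization, a direct sum of copies of $D_{\inf}\to D_\Prism$, so the task reduces to showing that the latter is injective.

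The key filtered claim is: \emph{if $f\in P$ has image in $\xi^n D_\Prism$, then $f\in(I,\xi)^n\subset P$}. I would prove this by induction on $n$, using the graded description $D_\Prism/\xi\cong\bigoplus_{k\ge0}\bar I^k/\bar I^{k+1}$ established in the proof of \Cref{prism envelope}. In the inductive step, write $f=\sum_{j=0}^n\xi^j g_j$ with $g_j\in I^{n-j}$ (from the previous step), divide by $\xi^n$ inside the $\xi$-torsion free ring $D_\Prism$, reduce mod $\xi$, and use the orthogonality of distinct graded degrees in $D_\Prism/\xi$ to read off that each $g_j$ lies in $I^{n-j+1}+\xi I^{n-j}$; that is exactly the refinement needed to land in $(I,\xi)^{n+1}$.

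To deduce injectivity of $D_{\inf}=\varprojlim_m P/I^m\to D_\Prism$, take $x=(x_m)$ in the kernel with compatible lifts $\tilde x_m\in P$. Vanishing of the $\xi$-adic limit in $D_\Prism$ forces $\tilde x_m\in(I,\xi)^k$ for all sufficiently large $m$, by the filtered claim. Combined with $\tilde x_m\equiv\tilde x_{m'}\pmod{I^m}$ for $m'\ge m$, we obtain $\tilde x_m\in\bigcap_k\bigl((I,\xi)^k+I^m\bigr)$. An explicit computation (splitting $(I,\xi)^k=\sum_{j}\xi^j I^{k-j}$ and noting that summands with $k-j\ge m$ land in $I^m$) shows that the image of $(I,\xi)^k$ in $P/I^m$ lies in $\xi^{k-m+1}(P/I^m)$; since the finitely generated $P/I$-module $P/I^m$ is $\xi$-adically separated (the pair $(\xi,I)$ being Koszul regular on $P$), this intersection is zero, so $\tilde x_m\in I^m$ and $x=0$. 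The main obstacle is the filtered claim: it is where the $\xi$-torsion freeness of $D_\Prism$ and the compatibility of the $\xi$-adic filtration on $D_\Prism$ with the $(I,\xi)$-adic filtration on $P$ must be combined from the graded picture. Once it is in place, the Krull/Artin--Rees step is routine.
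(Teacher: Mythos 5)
Your argument is correct and follows the paper's initial reduction: faithfulness boils down to the injectivity of $D_{\inf}\to D_\Prism$ under \Cref{Assump}. Where you diverge is in how this injectivity is established. The paper reduces, via left exactness of $\varprojlim$, to a per-degree claim, written as injectivity of $P/(I^n,\xi^n)\to P[\frac{I}{\xi}]_\tf/\xi^n$, to be checked by flat base change along $\Bdr[y_j]\to P$, $y_j\mapsto f_j$; but as stated that per-degree map is not injective (already for $P=\Bdr\langle T\rangle$, $I=(T)$, $n=2$ the element $\xi T$ is nonzero in $P/(T^2,\xi^2)$ yet maps to $\xi^2 x=0$), and the per-degree claim the argument actually needs is injectivity of $P/(I,\xi)^n\to D_\Prism/\xi^n$, which is exactly your filtered claim that the preimage of $\xi^n D_\Prism$ in $P$ is $(I,\xi)^n$. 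Your graded proof of that claim is an elementary and self-contained alternative to the flat-base-change mechanism: in $D_\Prism/\xi\cong\bigoplus_k\ol I^k/\ol I^{k+1}$ the $n+1$ terms $g_j/\xi^{n-j}$ fall in $n+1$ distinct degrees, so their sum vanishing forces each $g_j\in I^{n-j}\cap(I^{n-j+1}+\xi P)=I^{n-j+1}+\xi I^{n-j}$, the last equality using that $P/I^{n-j}$ is $\xi$-torsion-free (this is where Koszul-regularity actually enters). Your Krull-intersection step is also sound; one small remark is that $\bigcap_l\xi^l(P/I^m)=0$ already follows from $P/I^m$ being a noetherian quotient of the $\xi$-adically complete ring $P$ (so $\xi$ lies in its Jacobson radical), without needing Koszul-regularity at that point.
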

\begin{proof}
	By the tensor product formula of $\pi^*M$ and the compatiblility between connections and log connections, it suffices to show that under Assumption \ref{Assump} for the ring $P=\Bdr \langle T_i\rangle$, the ideal $I$ and the ring $R=P/(I,\xi)$, the natural map $D_{\inf}\ra D_\Prism=P[\frac{I}{\xi}]^\wedge_\tf$ is an inclusion.

	By definition and \Cref{prism envelope}, we have $D_{\inf}$ is the $I$-adic completion of $P$, and $D_\Prism$ is the $\xi$-adic completion of $P[\frac{I}{\xi}]_\tf$.
	So thanks to the left exactness of the inverse limit, it suffices to show the injectivity for each $n\in \NN$ below
	\[
	P/(I^n,\xi^n) \rra P[\frac{I}{\xi}]_\tf/\xi^n, \tag{$\ast$}
	\]
	where we use the equality $\varprojlim P/(I^n,\xi^n) =\varprojlim P/I^n$ by the $\xi$-adic completeness of $P$.
	This can be checked Zariski locally, where we can assume $I$ admits a finite set of regular generators $(f_1,\ldots ,f_l)$.
	In this case, the natural map below is flat
	\[
	\xymatrix{ \Bdr[y_j] \ar[r]^{y_j\mapsto f_j~~~~} & P=\Bdr \langle T_i\rangle}
	\]
	Thus the injectivity of $(\ast)$ for each $n$ follows from that of 
	\[
	\Bdrn[y_j] \ra \Bdrn[x_j]/(\xi\cdot x_j-y_j),
	\]
	by base change along the flat map $\Bdr[y_j] \ra P$.

\end{proof}

\begin{remark}
	From \Cref{inf to pm conn}, the log connection associated to the image of $\pi^*$ is integral and integrable.
	One might also be able to use the explicit formula of the connection to describe the essential image of the functor $\pi^*$.
\end{remark}

We then give a formula computing prismatic cohomology of a crystal that comes from the infinitesimal site, using Simpson's functor and the de Rham complex.

Assume there is a system of affinoid rigid spaces  $\{X_e\}$, with each $X_e$ flat over $\Bdre$, lifting a given $K$-rigid space $X$.
Let $\mathcal{F}$ be a crystal over $X/\Bdr_{\mathrm{pinf}}$, and let $\mathcal{F}_e$ be its restriction to $X_e$, as a crystal over the infinitesimal site $X_e/\Bdre_{\inf}$ (c.f. \cite[Section 2.4]{Guo21}).
Following \cite[Section 2]{Guo21}, the $I_{X_e/\Bdre}$-adic filtration on an infinitesimal crystal $\calF_e$ induces a natural filtration of $R\Gamma(X_e/\Bdre_{\inf}, \calF_e)$, compatible with the closed immersion $X_e/\Bdre_{\inf} \ra X_{e+1}/\mathrm{B^+_{dR,e+1}}_{\inf}$.
In particular, by the limit formula $R\Gamma(X/\Bdr_{\mathrm{pinf}}, \mathcal{F})=R\varprojlim_e R\Gamma(X_e/\Bdre_{\inf},\mathcal{F}_e)$, the above  defines a filtration on infinitesimal cohomology of $X/\Bdr$ as below
\begin{definition}\label{lifted inf}
	Let $X$ be a rigid space over $K$, and let $\{X_e\}$ be a compatible system of $\Bdre$-flat lifts of $X$.
	For a crystal $\mathcal{F}$ over $X/\Bdr_{\mathrm{pinf}}$, the $i$-th \emph{lifted (infinitesimal) filtration} on its cohomology is
	\[
	\wt\Fil^iR\Gamma(X/\Bdr_{\mathrm{pinf}}, \mathcal{F}) := \varprojlim_e R\Gamma(X_e/\Bdre_{\inf}, \mathcal{I}_{X_e/\Bdre}^i\mathcal{F}_e).
	\]
\end{definition}
Note that similar to \cite[Theorem 7.2.3 (iii)]{Guo21}, the derived tensor product of the lifted filtration with $\Bdre$ over $\Bdr$ is isomorphic to the filtration of $R\Gamma(X_e/\Bdre_{\inf},\calF_e)$.

\begin{remark}
	When $P/I$ is smooth over $\Bdr$ (in the sense that each reduction mod $\xi^e$ is a smooth affinoid algebra over $\Bdre$), the $i$-th lifted filtration coincides with the $i$-th Hodge filtration of the de Rham complex of $\mathcal{F}$ over $P/I$.
	In particular, this is the $\Bdr$-linear lift of $i$-th the Hodge filtration of the de Rham complex of $\mathcal{F}(R)$ over $K$, hence the name.
\end{remark}
\begin{remark}
	When $X$ is a rigid space defined over a discretely valued subfield $k$, it admits a canonical (depending on a choice embedding $k\ra \Bdr$) $G_k$-equivariant lifts $\{X_e\}$, where each $X_e$ is the complete base extension of the $k$-rigid space with $\Bdre$.
	In particular the lifted filtration admits a natural action by Galois group.
\end{remark}
\begin{theorem}\label{prism coh}
	Let $P,I,R$ be as in \Cref{Assump} with $X=\Spa(R)$, and let $\calF$ be a flat infinitesimal crystal over $X/\Bdr_{\mathrm{pinf}}$.
	There is a natural isomorphism, functorial in $\mathcal{F}$ and the surjection $P\ra P/I$, computing cohomology of the prismatic crystal $\pi^*\mathcal{F}$
	\[
	\Psi(R\Gamma(X/\Bdr_{\mathrm{pinf}}, \mathcal{F}), \wt \Fil^\bullet) \cong R\Gamma(X/\Bdr_\Prism, \pi^*\mathcal{F}).
	\]	
\end{theorem}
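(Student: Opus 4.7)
\noindent\textit{Proof plan.} The strategy is to compute both sides as totalizations of \v{C}ech--Alexander complexes for the regular immersion $P\to R$, apply Lemmas \ref{prism envelope} and \ref{prism envelope 2} termwise, and then commute Simpson's functor $\Psi$ past the totalization.

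By \Cref{pm Cech}, the right-hand side of the asserted isomorphism equals the totalization of the cosimplicial object $[l]\mapsto \pi^{*}\mathcal{F}(D(l)_\Prism)$, where $D(l)_\Prism$ is the prismatic envelope for the surjection $P(l)\to R$ with kernel $(I,\Delta(l))$. Since $\Delta(l)\subset P(l)$ is generated by the Koszul-regular sequence $\{\delta_{j,i}\}$ and $P(l)/(I,\Delta(l))\cong P/I$ is flat over $\Bdr$, the ideal $(I,\Delta(l))$ is Koszul-regular in $P(l)$ (the tensor product of the two Koszul complexes resolves the quotient), so the pair $(P(l),(I,\Delta(l)))$ satisfies \Cref{Assump}. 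Consequently \Cref{prism envelope 2} applies in each cosimplicial degree and gives
\begin{equation*}
\pi^{*}\mathcal{F}(D(l)_\Prism)\;\cong\;\Psi\bigl(\mathcal{F}(D(l)_{\inf}),\,(I,\Delta(l))^\bullet\mathcal{F}(D(l)_{\inf})\bigr).
\end{equation*}
Meanwhile the cosimplicial filtered diagram $[l]\mapsto(\mathcal{F}(D(l)_{\inf}),(I,\Delta(l))^\bullet\mathcal{F}(D(l)_{\inf}))$, upon reduction modulo $\xi^e$ and passage to $R\varprojlim_e$, recovers $(R\Gamma(X/\Bdr_{\mathrm{pinf}},\mathcal{F}),\wt\Fil^\bullet)$ by the infinitesimal \v{C}ech--Alexander formula applied to the thickening $X_e\hra \Spa(P(l)/\xi^e)$ together with \Cref{lifted inf}.

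The main technical step is to commute $\Psi$ with the totalization. The obstacle is that $\Psi$ is defined by an $\NN$-indexed filtered colimit followed by derived $\xi$-adic completion, whereas totalization is a cosimplicial homotopy limit, and these operations do not commute in general. Both sides of the proposed isomorphism are, however, already derived $\xi$-complete (the right-hand side because every $\pi^{*}\mathcal{F}(D(l)_\Prism)$ is so), so by derived Nakayama it suffices to verify the isomorphism after $\otimes^L_\Bdr K$. By \Cref{Simp gr}, the $K$-reduction of the left-hand side equals $\bigoplus_{n}\gr^n(R\Gamma(X/\Bdr_{\mathrm{pinf}},\mathcal{F})\otimes^L_\Bdr K)(-n)$. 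On the other hand, using \Cref{coh of red} together with the explicit identification $D(l)_\Prism/\xi\cong \bigoplus_n(I,\Delta(l))^n/(I,\Delta(l))^{n+1}$ obtained in the proof of \Cref{prism envelope}, the $K$-reduction of the right-hand side splits in each cosimplicial degree into a direct sum of shifted graded pieces of the infinitesimal \v{C}ech complex; summing these term by term gives the matching decomposition.

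Functoriality in $\mathcal{F}$ and in the choice of regular immersion $P\to P/I$ is automatic, as each ingredient ($D(l)_{\inf}$, $D(l)_\Prism$, $\pi^{*}$, $\Psi$, and the filtrations) is manifestly functorial, while independence of auxiliary choices is handled by the siftedness of \Cref{sift}. The hardest step will be the mod-$\xi$ comparison: one must verify that the direct-sum splitting of $D(l)_\Prism/\xi$ is compatible with the face and degeneracy maps of the \v{C}ech diagram, so that the two graded decompositions really do agree as cosimplicial objects, not merely termwise in each $l$.
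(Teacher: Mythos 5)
Your proof plan follows the paper's own route up to the crucial step: both you and the paper reduce via \Cref{pm Cech} to the cosimplicial diagram $[l]\mapsto \mathcal{F}(D(l)_\Prism)$, apply \Cref{prism envelope 2} termwise to rewrite each term as $\Psi(\mathcal{F}(D(l)_{\inf}),(I,\Delta(l))^\bullet)$, and then need to pull $\Psi$ out of the totalization. At that point the paper simply asserts that ``homotopy limit along $\Delta^\op$ commutes with $\Psi$,'' which is exactly the step you flag as delicate (since $\Psi$ is a filtered colimit followed by completion, and filtered colimits do not commute with arbitrary cosimplicial totalizations).

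Your proposed workaround, however, does not actually escape the issue you identify. After reducing mod $\xi$ via derived Nakayama, you must compare $\bigoplus_n\gr^n\bigl(R\varprojlim_{[l]}\mathcal{F}(D(l)_{\inf})\otimes^L_\Bdr K\bigr)$ with $R\varprojlim_{[l]}\bigoplus_n\gr^n\bigl(\mathcal{F}(D(l)_{\inf})\otimes^L_\Bdr K\bigr)$ --- and commuting the infinite direct sum with the cosimplicial homotopy limit is the same species of problem as commuting the colimit in $\Psi$ with it. So derived Nakayama merely relocates the obstruction rather than removing it. The resolution, for both the paper's assertion and your mod-$\xi$ variant, is that the terms $\mathcal{F}(D(l)_{\inf})$ and their filtration steps are \emph{discrete} $\Bdr$-modules; for a cosimplicial object of discrete modules the totalization is just the normalized cochain complex, and forming the cochain complex is a termwise (finite) operation that commutes with filtered colimits and direct sums, after which the $\xi$-adic completion (a limit) poses no difficulty. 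Once you make this observation, your mod-$\xi$ detour becomes unnecessary and you can run the paper's argument directly; if you do prefer the Nakayama route, you should also note explicitly that the comparison map you are testing is the canonical limit-comparison map $\Psi(R\varprojlim)\to R\varprojlim\Psi$, and that the termwise splitting of $\mathcal{F}(D(l)_\Prism)/\xi$ is automatically cosimplicial because it is induced by the natural transformation of \Cref{Simp gr} (so the concern in your final sentence is in fact handled by functoriality). Your verification that $(P(l),(I,\Delta(l)))$ satisfies \Cref{Assump} is correct and is implicit in the paper.
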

\begin{proof}
	The idea is to compare \v{C}ech-Alexander complexes of two sides.
	Denote $D(n)_{\inf}$ to be the formal completion of the complete tensor product $P(n)=P^{\wh\otimes_{\Bdr} n+1}:= \Bdr \langle T_i, \delta_{i,j}; 1\leq i\leq l, 1\leq j\leq n \rangle$ along the surjection onto $P/I$, and let $\Delta(n)$ be the ideal of $D(n)_{\inf}$ defined by $(\delta_{i,j})$.
	Let $D(n)_\Prism =P(n)[\frac{I, \Delta(n)}{\xi}]^\wedge_\tf\cong \Psi(P(n),(I,\Delta(n))^\bullet)$ be the prismatic envelope of $P/I$ inside of $P^{\wh\otimes_{\Bdr} n+1}$ (\Cref{prism envelope}).
	Note that by \Cref{pm envelope alt}, the ring $D(n)_\Prism$ is also naturally isomorphic to $D(n)_{\inf}[\frac{I,\Delta(n)}{\xi}]^\wedge_\tf$.
	
	We then turn to the \v{C}ech-Alexander complexes of $\calF$ and $\pi^*\mathcal{F}$.
	By \cite[Proposition 2.2.7]{Guo21} and its $\Bdr$-linear version \cite[Theorem 7.2.3]{Guo21}, we have a natural filtered isomorphism
	\[
	R\Gamma(X/\Bdr_{\mathrm{pinf}},\mathcal{F}) \cong R\varprojlim_{[n]\in \Delta^\op} \mathcal{F}(D(n)_{\inf}), \tag{1}
	\]
	where the left side is endowed with the lifted filtration, and each term of the right side is equipped with the $(I,\Delta(n))$-adic filtration (as $(I,\Delta(n))$ is the kernel ideal of the pro-infiniteisimal thickening $D(n)_{\inf} \ra P/I$).
	On the other hand, by Corollary \ref{pm Cech} we have the natural isomorphism
	\[
	R\Gamma(X/\Bdr_\Prism, \pi^*\mathcal{F}) \cong R\varprojlim_{[n]\in \Delta^\op} \mathcal{F}(D(n)_\Prism), \tag{2}
	\]
	Moreover, by the construction of $\pi^*\mathcal{F}$ in \Cref{inf to pm conn} and  \Cref{prism envelope 2} the right hand side above is equal to
	\[
	R\varprojlim_{[n]\in \Delta^\op} \Psi(\mathcal{F}(D(n)_{\inf}),(I,\Delta(n))^\bullet).
	\]
	In this way, by applying $\Psi$ at the sequence (1) using the lifted filtration, and notice that homotopy limit along $\Delta^\op$ commuts with $\Psi$, we get
	\begin{align*}
		R\Gamma(X/\Bdr_\Prism,\pi^*\mathcal{F}) &\cong  R\varprojlim_{[n]\in \Delta^\op} \Psi(\mathcal{F}(D(n)_\Prism), (I,\Delta(n))^\bullet)\\
		&\cong  \Psi(R\varprojlim_{[n]\in \Delta^\op} \mathcal{F}(D(n)_{\inf}), \wt\Fil^\bullet ) \\
		&\cong \Psi(R\Gamma(X/\Bdr_{\mathrm{pinf}},\mathcal{F}), \wt\Fil^\bullet).
	\end{align*}

\end{proof}
As a corollary, we have the following explicit formula computing prismatic cohomology.
\begin{corollary}\label{prism dR}
	Let $P$, $I$, $R$ be as in \Cref{Assump}.
	Let $\mathcal{F}$ be a flat infinitesimal crystal over $X/\Bdr_{\mathrm{pinf}}$, and let $M$ be its section at the pro infinitesimal thickening $D_{\inf}=\varprojlim_n P/I^n$.
	Then there  is an isomorphism, functorial in $\mathcal{F}$, as below
	\[
	R\Gamma(X/\Bdr_\Prism, \pi^*\mathcal{F}) \cong \left( \wt M \ra \wt M\underset{P}{\motimes}\frac{\Omega_{P/\Bdr}^1}{\xi} \ra \cdots \ra \wt M\underset{P}{\motimes} \frac{\Omega_{P/\Bdr}^d}{\xi^d}\right),
	\]
	where $\wt M=\pi^*\mathcal{F}(P[\frac{I}{\xi}]^\wedge_\tf)\cong M\otimes_{D_{\inf}} P[\frac{I}{\xi}]^\wedge_\tf$ is the section of $\pi^*\mathcal{F}$ at the prismatic envelope $P[\frac{I}{\xi}]^\wedge_\tf$, and the right hand side is the de Rham complex associated to the log connection of $\wt M$
\end{corollary}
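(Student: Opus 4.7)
My plan is to deduce the corollary from Theorem~\ref{prism coh} together with the standard computation of infinitesimal cohomology via a regular closed immersion into a smooth ambient, followed by a degreewise unpacking of Simpson's functor on the resulting Hodge filtration.

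Applying Theorem~\ref{prism coh} to the regular closed immersion $P\to P/I$ gives a natural isomorphism
\[
R\Gamma(X/\Bdr_\Prism, \pi^*\mathcal{F}) \;\cong\; \Psi\bigl(R\Gamma(X/\Bdr_{\mathrm{pinf}}, \mathcal{F}),\, \wt\Fil^\bullet\bigr).
\]
Since $P=\Bdr\langle T_i\rangle$ is smooth over $\Bdr$ and $P\to R$ is a regular closed immersion, I invoke the standard de Rham description of infinitesimal cohomology for a crystal along a closed immersion into a smooth ambient (see \cite[Section 7]{Guo21} for the $\Bdr$-linear version used here) to identify
\[
R\Gamma(X/\Bdr_{\mathrm{pinf}}, \mathcal{F}) \;\cong\; \bigl(M \xrightarrow{\nabla} M\otimes_P \Omega^1_{P/\Bdr} \xrightarrow{\nabla} \cdots \xrightarrow{\nabla} M\otimes_P \Omega^d_{P/\Bdr}\bigr),
\]
with $\nabla$ the connection on $M=\mathcal{F}(D_{\inf})$. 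Under this identification the lifted filtration translates into the Hodge (brutal) truncation $\wt\Fil^i = \sigma^{\geq i}(M\otimes_P \Omega^\bullet_{P/\Bdr})$, compatibly with the remark after Definition~\ref{lifted inf}.

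The second step is the degreewise computation of $\Psi$ on this filtered complex. In cohomological degree $j$, the piece $(\wt\Fil^i)^j$ equals $M\otimes_P \Omega^j_{P/\Bdr}$ for $i\leq j$ and vanishes for $i>j$, so the colimit diagram of Construction~\ref{Simp constr} stabilizes at $i=j$. Because $M$ (finite over the $\xi$-complete $D_{\inf}$) and $\Omega^j_{P/\Bdr}$ (free over $P$) are $\xi$-torsionfree, the transitions are the inclusions $\xi^{-i}(M\otimes_P \Omega^j)\hookrightarrow \xi^{-(i+1)}(M\otimes_P \Omega^j)$ inside $(M\otimes_P \Omega^j)[1/\xi]$, and the colimit is $\xi^{-j}(M\otimes_P \Omega^j_{P/\Bdr})=(M\otimes_P \Omega^j_{P/\Bdr})/\xi^j$. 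Taking the derived $\xi$-adic completion and using Lemma~\ref{prism envelope 2} identifies the degree-$j$ output with $\wt M\otimes_P \Omega^j_{P/\Bdr}/\xi^j$, where $\wt M=M\otimes_{D_{\inf}} D_\Prism$.

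Finally, the differential produced by $\Psi$ extends $\nabla$ by absorbing the $\xi^{-j}$ prefactor, giving $d/\xi\colon \wt M\otimes_P \Omega^j/\xi^j \to \wt M\otimes_P \Omega^{j+1}/\xi^{j+1}$; by Theorem~\ref{inf to pm conn} this is exactly the de Rham differential of the integral log connection on $\wt M=\pi^*\mathcal{F}(D_\Prism)$, matching the complex in the statement. Functoriality in $\mathcal{F}$ and in the regular closed immersion is inherited from Theorems~\ref{prism coh} and~\ref{inf to pm conn}. I expect the principal obstacle to lie in the first step, namely the identification of the lifted filtration with the Hodge truncation under the de Rham realization: this requires matching the $(I,\Delta(n))$-adic filtrations on the \v{C}ech-Alexander complex used inside the proof of Theorem~\ref{prism coh} with the Hodge filtration through a Poincar\'e-lemma-style quasi-isomorphism, after which the remaining Simpson bookkeeping becomes routine.
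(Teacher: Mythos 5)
Your Step 1 (invoke Theorem~\ref{prism coh}) and your Step 2 (realize $R\Gamma_{\mathrm{pinf}}$ as the de Rham complex $M\otimes_P\Omega^\bullet_{P/\Bdr}$) both match the paper's proof. The gap is in your identification of the lifted filtration. On the de Rham complex of the \emph{ambient} smooth ring $P$ with coefficients in $M=\mathcal{F}(D_{\inf})$, the infinitesimal filtration is not the brutal truncation $\sigma^{\geq i}$; it is the standard convolution of the $I$-adic filtration on $M$ with the stupid filtration, so that in cohomological degree $j$ one has $(\wt\Fil^i)^j=I^{\max(i-j,0)}M\otimes_P\Omega^j_{P/\Bdr}$, which is nonzero for $i>j$. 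The remark after Definition~\ref{lifted inf} that you cite only identifies the lifted filtration with a Hodge truncation on the de Rham complex of a smooth $\Bdr$-lift $P/I$ of $R$, which exists only when $R$ is smooth; in the l.c.i.\ setting of Assumption~\ref{Assump}, the realization used in the corollary is over $P$, not over $P/I$, and the $I$-adic part of the filtration cannot be dropped.

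This is not a harmless simplification, because it changes the output. With the brutal truncation, the colimit of Construction~\ref{Simp constr} in degree $j$ does stabilize at $i=j$ and yields $M\otimes_P\frac{\Omega^j_{P/\Bdr}}{\xi^j}$, which is already $\xi$-adically complete (being finite over the $\xi$-complete noetherian ring $D_{\inf}$), so the subsequent $\xi$-completion is inert and Lemma~\ref{prism envelope 2} has nothing to act on; you would obtain $M$, not $\wt M$, in each degree. With the correct mixed filtration, the degree-$j$ colimit is $\bigl(\colim_k\frac{I^kM}{\xi^k}\bigr)\otimes_P\frac{\Omega^j_{P/\Bdr}}{\xi^j}$, and it is precisely here that Lemma~\ref{prism envelope 2} enters: its derived $\xi$-completion is $\Psi(M,I^\bullet M)\otimes_P\frac{\Omega^j_{P/\Bdr}}{\xi^j}\cong \wt M\otimes_P\frac{\Omega^j_{P/\Bdr}}{\xi^j}$. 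Your last paragraph correctly flags the filtration identification as the delicate point, but resolving it requires replacing the brutal truncation by the mixed filtration throughout; the remaining Simpson bookkeeping then goes through as you describe.
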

\begin{proof}
	Using \Cref{prism coh}, cohomology of $\pi^*\mathcal{F}$ is isomorphic to $\Psi(R\Gamma(X/\Bdr_{\mathrm{pinf}}, \mathcal{F}), \wt \Fil^\bullet)$.
	Then by taking an inverse limit of \cite[Theorem 4.1.1]{Guo21} with respect to $\Spa(P/(I,\xi^e))$ over $\Bdre$, we have
	\[
	\Psi(R\Gamma(X/\Bdr_{\mathrm{pinf}}, \mathcal{F}), \wt \Fil^\bullet) \cong \Psi(M\underset{P}{\otimes} \Omega_{P/\Bdr}^\bullet, \wt\Fil^\bullet),
	\]
	with $\wt\Fil^\bullet$ is the lifted infinitesimal filtration on the de Rham complex.
	Thus by writing out the right hand side above explicitly, we get the conclusion.
\end{proof}
In the special case when $R$ is smooth over $K$, we can obtain a simpler formula of cohomology of $\pi^*\mathcal{F}$ using the de Rham complex.
\begin{corollary}\label{coh of envelope}
	Assume $X=\Spa(R)$ is smooth of dimension $d$ over $K$, and $\wt R=\varprojlim_e \wt R/\xi^e$ is a smooth lift over $\Bdr$.
	Let $\mathcal{F}$ be a flat infinitesimal crystal over $X/\Bdr_{\mathrm{pinf}}$, and let $M$ be its section at the pro infinitesimal thickening $\wt R \ra R$.
	Then we have
	\[
	R\Gamma(X/\Bdr_\Prism,\pi^*\mathcal{F}) \cong \left(M \ra M\underset{\wt R}{\otimes} \frac{\Omega_{\wt R/\Bdr}^1}{\xi} \ra \cdots \ra M\underset{\wt R}{\otimes} \frac{\Omega_{\wt R/\Bdr}^d}{\xi^d} \right).
	\]
\end{corollary}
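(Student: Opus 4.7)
The plan is to reduce the statement to \Cref{prism coh} via the smoothness of $\wt R$. First, since $\wt R$ is smooth of relative dimension $d$ over $\Bdr$, after shrinking $X$ if necessary we may choose a presentation $\wt R = P/I$ where $P = \Bdr\langle T_1, \ldots, T_l\rangle$ is a Tate algebra and $I$ is a Koszul-regular ideal with $P/I$ flat over $\Bdr$; such a presentation exists because the conormal sheaf $I/I^2$ is locally free of rank $l - d$ on a smooth quotient, and any local lift of its generators forms a regular sequence. This places us in the setting of \Cref{Assump}, so \Cref{prism coh} applies and gives the natural isomorphism
\[
R\Gamma(X/\Bdr_\Prism,\pi^*\mathcal{F}) \;\cong\; \Psi\bigl(R\Gamma(X/\Bdr_{\mathrm{pinf}},\mathcal{F}),\ \wt\Fil^\bullet\bigr).
\]

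Next I exploit the smoothness of $\wt R$ on the infinitesimal side. By \Cref{inf final}, the pro-infinitesimal thickening $(X, \Spa(\wt R/\xi^e))$ is weakly final in $X/\Bdr_{\mathrm{pinf}}$, so the smooth-case infinitesimal Poincar\'e lemma (the $\Bdr$-linear version of \cite[Theorem 4.1.1]{Guo21} obtained via the inverse-limit formalism of \cite[Theorem 7.2.3]{Guo21}) identifies
\[
R\Gamma(X/\Bdr_{\mathrm{pinf}},\mathcal{F}) \;\cong\; \bigl(M \to M\otimes_{\wt R}\Omega^1_{\wt R/\Bdr} \to \cdots \to M\otimes_{\wt R}\Omega^d_{\wt R/\Bdr}\bigr).
\]
Moreover, computing this cohomology via the \v{C}ech-Alexander resolution coming from the self-products $\wt R^{\wh\otimes(n+1)}$ formally completed along the diagonal, one recognises the lifted filtration $\wt\Fil^\bullet$ as the Hodge filtration $\Omega^{\geq\bullet}_{\wt R/\Bdr}$ via the standard Atiyah-style identification of the diagonal-ideal filtration with the Hodge filtration on a smooth de Rham complex.

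It then remains to evaluate $\Psi$ on the Hodge-filtered de Rham complex. In cohomological degree $j$, only the indices $i \leq j$ contribute to the colimit, since the degree-$j$ part of $\Omega^{\geq i}$ vanishes for $i > j$; following the zigzag description in \Cref{Simp diagram}, the transitions realise the inclusions $\xi^{-i}\Omega^j \hookrightarrow \xi^{-(i+1)}\Omega^j$ inside $\Omega^j[1/\xi]$, whose colimit in degree $j$ is $\xi^{-j}\Omega^j$, canonically identified with $\Omega^j_{\wt R/\Bdr}/\xi^j$. The $\Bdr$-linear de Rham differentials commute with these scalings and descend to maps $\Omega^j/\xi^j \to \Omega^{j+1}/\xi^{j+1}$, and because each $\xi^{-j}\Omega^j$ is already $\xi$-adically complete the derived $\xi$-completion in \Cref{Simp constr} has no effect. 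Tensoring by the flat crystal $M$ preserves this description and yields the claimed divided de Rham complex. The main technical point in the argument will be the identification of $\wt\Fil^\bullet$ with the Hodge filtration in the second paragraph; once this is in hand, both the reduction via \Cref{prism coh} and the computation of $\Psi$ on Hodge-style filtrations are essentially formal.
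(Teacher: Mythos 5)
Your proposal is correct and follows essentially the same route as the paper: reduce via \Cref{prism coh} to an evaluation of $\Psi$ on the Hodge-filtered de Rham complex of the smooth lift $\wt R$, using the identification of the lifted filtration with the Hodge filtration on $\Omega^\bullet_{\wt R/\Bdr}$ (the paper records this identification in the remark following \Cref{lifted inf} and invokes the filtered isomorphism of \cite[Theorem 4.1.1]{Guo21}). You spell out a few more intermediate steps — the Koszul-regular presentation $\wt R = P/I$ (for which no shrinking of $X$ is actually needed, since the kernel of a surjection between smooth noetherian $\Bdr$-algebras is automatically Koszul-regular) and the degree-by-degree evaluation of $\Psi$ on the stupid filtration — but the underlying argument coincides with the paper's.
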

\begin{proof}
	As a smooth lift $\wt R/\xi^e$ itself is an infinitesimal envelope of $R$ into a smooth rigid space over $\Bdre$, we can apply Theorem \ref{prism coh} at the filtered isomorphism $R\Gamma(X/\Bdre_{\inf}, \mathcal{F})\cong (M/\xi^e\otimes_{\wt R/\xi^e} \Omega_{(\wt R/\xi^e)/\Bdre}^\bullet)$ in \cite[Theorem 4.1.1]{Guo21} to get the result.
\end{proof}

Another special case is when $\mathcal{F}$ is the infinitesimal structure sheaf, in which case $\pi^*\mathcal{F}$ is the prismatic structure sheaf $\mathcal{O}_\Prism$.
In this case, we can compute prismatic cohomology using Simpson's functor and the analytic derived de Rham complex.
\begin{corollary}
	Let  $P$, $I$, $R$ be as in \Cref{Assump}.
	Then there is an isomorphism as below
	\[
	R\Gamma(X/\Bdr_\Prism, \mathcal{O}_\Prism) \cong \Psi(\dR_{(P/I)/\Bdr}).
	\]
\end{corollary}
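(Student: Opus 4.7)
The plan is to apply Corollary \ref{prism dR} to the tautological case $\mathcal{F} = \mathcal{O}_{X/\Bdr}$, the infinitesimal structure sheaf. By construction of the functor $\pi^*$ in Lemma/Definition \ref{functor crys}, one has $\pi^*\mathcal{O}_{X/\Bdr} = \mathcal{O}_\Prism$, and the value of $\mathcal{O}_{X/\Bdr}$ at the pro-infinitesimal thickening $D_{\inf} = \varprojlim_n P/I^n$ is $M = D_{\inf}$ itself. Corollary \ref{prism dR} then yields, functorially in the regular closed immersion $P \to P/I$, an isomorphism
\[
R\Gamma(X/\Bdr_\Prism, \mathcal{O}_\Prism) \cong \Psi\!\left( D_{\inf} \underset{P}{\otimes} \Omega^\bullet_{P/\Bdr},\; \wt\Fil^\bullet \right).
\]

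The remaining task is to identify the filtered $\Bdr$-complex on the right with the Hodge-filtered analytic derived de Rham complex $\dR_{(P/I)/\Bdr}$. Under Assumption \ref{Assump}, the ring $P$ is formally smooth over $\Bdr$ and the immersion $P \to P/I$ is Koszul-regular with $P/I$ flat over $\Bdr$. In this situation, the standard regular-immersion presentation of (Hodge-completed) analytic derived de Rham cohomology identifies $\dR_{(P/I)/\Bdr}$ with the $I$-adic formal completion of the de Rham complex of $P/\Bdr$, namely $D_{\inf} \otimes_P \Omega^\bullet_{P/\Bdr}$; the Hodge filtration on $\dR_{(P/I)/\Bdr}$ matches the combination of the bête filtration on $\Omega^\bullet_{P/\Bdr}$ with the $I$-adic filtration. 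This identification is checked by reducing modulo $\xi^e$, where it specializes to the filtered version of \cite[Theorem 4.1.1]{Guo21} for the regular closed immersion $P/\xi^e \to P/(I,\xi^e)$ over $\Bdre$, and then passing to the inverse limit in $e$ (using flatness of $P/I$ over $\Bdr$ to ensure compatibility of the reductions).

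The main obstacle is matching the lifted infinitesimal filtration $\wt\Fil^\bullet$ of Definition \ref{lifted inf} with the Hodge-completed filtration on $\dR_{(P/I)/\Bdr}$. By definition, $\wt\Fil^i$ is the inverse limit over $e$ of $R\Gamma(X_e/\Bdre_{\inf}, \mathcal{I}^i_{X_e/\Bdre}\mathcal{O}_{X_e/\Bdre})$, where $X_e = \Spa(P/(I,\xi^e))$; Koszul-regularity of $(I,\xi)$ ensures that the graded pieces of both filtrations agree on the nose, being given by (shifts of) exterior powers of the (analytic) conormal bundle of $P/I$ inside $P$ with appropriate Tate twists. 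Once this filtered identification is established, applying $\Psi$ and combining with the display above gives the desired isomorphism $R\Gamma(X/\Bdr_\Prism, \mathcal{O}_\Prism) \cong \Psi(\dR_{(P/I)/\Bdr})$.
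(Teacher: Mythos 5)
Your argument is essentially the paper's, differing only in which comparison from \cite{Guo21} is invoked and at what stage: the paper feeds $\mathcal{F}=\mathcal{O}_{X/\Bdr}$ directly into Theorem \ref{prism coh} and then identifies $R\Gamma(X/\Bdr_{\mathrm{pinf}},\mathcal{O}_{X/\Bdr})$ with its lifted filtration to $\dR_{(P/I)/\Bdr}$ with its Hodge filtration by taking $R\varprojlim_e$ of the filtered isomorphism of \cite[Corollary 5.5.2]{Guo21}, whereas you first pass through the explicit $I$-adically completed de Rham complex $D_{\inf}\otimes_P\Omega^\bullet_{P/\Bdr}$ (via Corollary \ref{prism dR}, i.e. \cite[Theorem 4.1.1]{Guo21}) and then match it with $\dR_{(P/I)/\Bdr}$. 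Both routes rest on the same filtered derived de Rham/infinitesimal comparison, so this is the same proof presented with one extra intermediate step.
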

\begin{proof}
	Recall the analytic derived de Rham complex $\dR_{(P/I)/\Bdr}$ over $\Bdr$ is defined as the filtered object
	\[
	R\varprojlim_e \dR_{(P/(I,\xi^e))/\Bdre},
	\]
	where $\dR_{(P/(I,\xi^e))/\Bdre}$ is constructed as in \cite[Section 5]{Guo21} for affinoid algebras over $\Bdre$.
	By \cite[Corollary 5.5.2]{Guo21}, assuming \Cref{Assump}, the derived de Rham complex $\dR_{(P/(I,\xi^e))/\Bdre}$ is filtered isomorphic to $R\Gamma(\Spa(P/(I,\xi^e))/\Bdre_{\inf}, \mathcal{O}_{e \inf})$.
	Thus the formula follows from Theorem \ref{prism coh}.
\end{proof}
\begin{corollary}\label{lift coh}
	Assume $X=\Spa(R)$ is smooth of dimension $d$ over $K$, and $\wt R=\varprojlim_e \wt R/\xi^e$ is a smooth lift over $\Bdr$.
	Then we have
	\[
	R\Gamma(X/\Bdr_\Prism, \mathcal{O}_\Prism) \cong \left( \wt R\ra \frac{\Omega_{\wt R/\Bdr}^1}{\xi} \ra \cdots \ra \frac{\Omega_{\wt R/\Bdr}^d}{\xi^d} \right).
	\]
\end{corollary}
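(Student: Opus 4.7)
The plan is to deduce this corollary as a direct specialization of Corollary~\ref{coh of envelope} by taking $\mathcal{F}$ to be the infinitesimal structure sheaf $\mathcal{O}_{X/\Bdr}$ on $X/\Bdr_{\mathrm{pinf}}$. This sheaf is tautologically a flat infinitesimal crystal (the crystal condition just says sections pull back to sections of the structure sheaf, and the sections are themselves flat algebras over themselves), so Corollary~\ref{coh of envelope} applies verbatim once the two sides are identified.

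First I would identify $\pi^*\mathcal{O}_{X/\Bdr}$ with the prismatic structure sheaf $\mathcal{O}_\Prism$. By the construction of $\pi^*$ in Definition/Lemma~\ref{functor crys}, for a pro-infinitesimal thickening $(U,T_\bullet)$ with associated prism $B' = B[I/\xi]^\wedge_\tf$, one has
\[
\pi^*\mathcal{O}_{X/\Bdr}(B') \;=\; \mathcal{O}_{X/\Bdr}(U,T_\bullet)\otimes_B B' \;=\; B\otimes_B B' \;=\; B'.
\]
Since associated prisms cover the final object of the prismatic topos by Remark~\ref{pm final}, this matches $\mathcal{O}_\Prism$ as prismatic crystals. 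Second, I would compute the section $M$ of $\mathcal{O}_{X/\Bdr}$ at the pro-infinitesimal thickening $\{\Spa(\wt R/\xi^e)\}\to \Spa(R)$ provided by the smooth lift; this is tautologically $M = \wt R$.

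With these identifications in hand, the log connection on $\pi^*\mathcal{O}_{X/\Bdr}$ at the prismatic envelope comes, via Theorem~\ref{inf to pm conn}, from the canonical integrable connection on $\mathcal{O}_{X/\Bdr}$, which at $\wt R$ is simply the continuous $\Bdr$-linear differential $d\colon \wt R \to \Omega^1_{\wt R/\Bdr}$; it extends by the Leibniz rule to the map $\wt R \to \Omega^1_{\wt R/\Bdr}/\xi$ given by post-composing $d$ with the inclusion $\Omega^1_{\wt R/\Bdr}\hookrightarrow \Omega^1_{\wt R/\Bdr}/\xi$. Substituting $M=\wt R$ in the de Rham complex of Corollary~\ref{coh of envelope} therefore produces exactly the $\xi$-divided de Rham complex
\[
\wt R \;\longrightarrow\; \frac{\Omega_{\wt R/\Bdr}^1}{\xi} \;\longrightarrow\; \cdots \;\longrightarrow\; \frac{\Omega_{\wt R/\Bdr}^d}{\xi^d},
\]
as claimed. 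There is no genuine obstacle here, since everything is a direct specialization; the only point worth spelling out carefully is the compatibility between the classical differential on $\wt R$ and the log connection produced by $\pi^*$, which is built into the commutative diagram appearing in the proof of Theorem~\ref{inf to pm conn}.
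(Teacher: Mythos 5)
Your argument is correct and is exactly the specialization the paper has in mind: the text preceding the unnumbered corollary on $\Psi(\dR)$ explicitly notes that when $\mathcal{F}$ is the infinitesimal structure sheaf, $\pi^*\mathcal{F}=\mathcal{O}_\Prism$, so Corollary~\ref{lift coh} is Corollary~\ref{coh of envelope} with $\mathcal{F}=\mathcal{O}_{X/\Bdr}$ and $M=\wt R$. Nothing to add; the compatibility of $d$ on $\wt R$ with the log connection produced by $\pi^*$ is indeed the content of the diagram in the proof of Theorem~\ref{inf to pm conn}, as you say.
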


To finish this section, we show that a choice of regular immersion induces a splitting of reduced prismatic cohomology.
\begin{proposition}\label{split of HT}
	Let $X$ be a rigid space over $K$, and let $\{Y_e\}_{e\in \mathbb{N}}$ be a system of smooth rigid spaces over $\Bdre$ with $Y_{e+1}\times_{\mathrm{B_{dR,e+1}^+}} \Bdre \cong Y_e$, and $X\ra Y_0$ is a regular closed immersion.
	Let $\mathcal{F}$ be a flat infinitesimal crystal over $X/\Bdr_{\mathrm{pinf}}$.
	Then the choice of $\{Y_e\}$ induces an isomorphism as below
	\[
	R\Gamma (X/\Bdr_\Prism, \ol{\pi^*\mathcal{F}}) \cong \bigoplus_n \gr^n R\Gamma(X/K_{\mathrm{inf}}, \ol{\mathcal{F}}),
	\]
	where the latter is the direct sum of graded pieces for the infinitesimal filtration over cohomology of the crystal $\ol{\mathcal{F}}=\mathcal{F}/\xi$ over $X/K_{\mathrm{pinf}}$.
\end{proposition}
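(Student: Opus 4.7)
The plan is to read off the splitting as a direct consequence of three results already established in the paper: the ``Simpson-style'' computation of prismatic cohomology (\Cref{prism coh}), the derived tensor product formula for reductions of flat crystals (\Cref{coh of red}), and the explicit calculation of $\Psi(-,-)\otimes^L_\Bdr K$ as a direct sum of graded pieces (\Cref{Simp gr}). The data of $\{Y_e\}$ enters through the choice of lifted infinitesimal filtration $\wt\Fil^\bullet$ on infinitesimal cohomology.

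First I would reduce to the local Assumption \ref{Assump}: by working Zariski-locally on $Y_0$ we can embed an affinoid open of $Y_0$ as a regular closed subspace of $\Spa(K\langle T_1,\ldots,T_l\rangle)$, and lift this to a compatible system of regular closed immersions $Y_e \hookrightarrow \Spa(\Bdre\langle T_1,\ldots,T_l\rangle)$. Composing with $X \hookrightarrow Y_0$ and invoking \Cref{pm envelope alt} (so that the prismatic envelope depends only on the formal neighborhood, hence only on the system $\{Y_e\}$), we are in the regime where \Cref{prism coh} applies and gives
\[
R\Gamma(X/\Bdr_\Prism, \pi^*\mathcal{F}) \cong \Psi\bigl(R\Gamma(X/\Bdr_{\mathrm{pinf}}, \mathcal{F}), \wt\Fil^\bullet\bigr),
\]
with the lifted filtration $\wt\Fil^\bullet$ of \Cref{lifted inf} attached to the system $\{Y_e\}$.

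Next, the flatness of $\mathcal{F}$ implies, via the formula $\pi^*\mathcal{F}(B) = \mathcal{F}(U,T_\ast)\otimes_B B'$ of \Cref{functor crys} and the fact that envelopes $B'$ are $\xi$-torsionfree, that $\pi^*\mathcal{F}$ is itself a flat prismatic crystal whose reduction mod $\xi$ is $\ol{\pi^*\mathcal{F}}$. Applying \Cref{coh of red}, we get
\[
R\Gamma(X/\Bdr_\Prism, \ol{\pi^*\mathcal{F}}) \cong R\Gamma(X/\Bdr_\Prism, \pi^*\mathcal{F}) \otimes^L_\Bdr K.
\]
Then \Cref{Simp gr} (applied to the filtered complex $R\Gamma(X/\Bdr_{\mathrm{pinf}}, \mathcal{F})$ with the lifted filtration, which is cohomologically bounded since $X$ is affinoid of finite dimension) gives
\[
\Psi\bigl(R\Gamma(X/\Bdr_{\mathrm{pinf}},\mathcal{F}),\wt\Fil^\bullet\bigr) \otimes^L_\Bdr K \cong \bigoplus_{n\in\NN} \gr^n\!\bigl(R\Gamma(X/\Bdr_{\mathrm{pinf}},\mathcal{F})\otimes^L_\Bdr K\bigr)(-n).
\]
To conclude I use the compatibility noted after \Cref{lifted inf}: the derived base change of $(R\Gamma(X/\Bdr_{\mathrm{pinf}},\mathcal{F}),\wt\Fil^\bullet)$ along $\Bdr\to K$ is filtered isomorphic to $(R\Gamma(X/K_{\inf},\ol{\mathcal{F}}),\Fil^\bullet_{\inf})$; in particular their graded pieces agree, and the Tate twists $(-n)$ are invisible at the level of $K$-modules.

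The main obstacle is checking that the hypothesis of \Cref{Simp gr} is satisfied, i.e.\ that the lifted filtered complex is well-behaved enough that deriving the colimit commutes with reduction mod $\xi$. This reduces to the fact that for affinoid $X$ with l.c.i.\ singularities the infinitesimal cohomology sits in a bounded range of cohomological degrees and each $\gr^n$ is computed by wedge powers of the analytic cotangent complex, which are themselves coherent; combined with the $\xi$-torsionfreeness of the terms of the Simpson diagram (coming from the $\xi$-torsionfreeness of envelopes) this produces the desired formula without any completion issues.
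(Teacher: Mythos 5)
Your proposal follows exactly the paper's own proof: apply (the global version of) \Cref{prism coh} together with \Cref{coh of red} to identify the left-hand side with $\Psi(R\Gamma(X/\Bdr_{\mathrm{pinf}},\mathcal{F}),\wt\Fil^\bullet)\otimes^L_\Bdr K$, then invoke \Cref{Simp gr} to get $\bigoplus_n \gr^n(R\Gamma(X/\Bdr_{\mathrm{pinf}},\mathcal{F})\otimes^L_\Bdr K)(-n)$, and finally use the filtered base-change statement of \cite[Theorem 7.2.3]{Guo21} (equivalently the remark after \Cref{lifted inf}) to identify this with the infinitesimal filtration on $R\Gamma(X/K_{\mathrm{inf}},\ol{\mathcal{F}})$. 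The only differences are expository: you spell out the Zariski-local reduction to \Cref{Assump} and check the boundedness hypothesis of \Cref{Simp gr} and the flatness of $\pi^*\mathcal{F}$, all of which the paper leaves implicit.
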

\begin{proof}
	By the global version of Theorem \ref{prism coh} and \Cref{coh of red}, the regular immersion induces an isomorphism between the left hand side above and the following
	\[
	\Psi(R\Gamma(X/\Bdr_{\mathrm{pinf}}, \mathcal{F}), \wt \Fil^\bullet)\otimes^L_\Bdr K,
	\]
	where $\wt \Fil^\bullet$ is the lifted infinitesimal filtration.
	By \Cref{Simp gr} it is further equal to 
	\[
	\bigoplus_n \gr^n \left( R\Gamma(X/\Bdr_{\mathrm{pinf}}, \mathcal{F})\otimes^L_\Bdr K \right)(-n).
	\]
	Here the filtration is the infinitesimal filtration.
	At last, by \cite[Theorem 7.2.3]{Guo21}, the base change $R\Gamma(X/\Bdr_{\mathrm{pinf}}, \mathcal{F})\otimes^L_\Bdr K$ is isomorphic to infinitesimal cohomology $R\Gamma(X/K_{\mathrm{inf}}, \ol{\mathcal{F}})$ with the canonical infinitesimal filtration.
\end{proof}
Note that in the special case when $\mathcal{F}=\mathcal{O}_{X/\Bdr}$ is the infinitesimal structure sheaf, the $i$-th graded piece above is isomorphic to $R\Gamma(X, L\wedge^i\mathbb{L}^\an_{R/K})(-i)[-i]$.

\section{Hodge-Tate filtration}\label{sec HT}
Using the local computation from the last section and the simplicial resolutions, we are able to prove the Hodge-Tate filtration theorem.

\subsection{Smooth case}
In the smooth case, we first construct a natural map from Hodge-Tate cohomology to prismatic cohomology, using the Bockstein operator as in \cite{BS19}.
Let $R$ be a topological finite type algebra over $K$.
Consider the short exact sequence of $\Bdr$-modules
\[
\xi^{i+1}/\xi^{i+2} \rra \xi^i/\xi^{i+2} \rra \xi^i/\xi^{i+1}.
\]
We take the derived tensor product of the sequence with $R\Gamma(R/\Bdr_\Prism, \mathcal{O}_\Prism)$ over $\Bdr$, then the connecting morphism induces the following
\[
R\Gamma(R/\Bdr_\Prism, \mathcal{O}_\Prism) \otimes^L_\Bdr K\cdot \xi^i \rra R\Gamma(R/\Bdr_\Prism, \mathcal{O}_\Prism) \otimes^L_\Bdr K\cdot \xi^{i+1} [1].
\]
By the reduction isomorphism in \Cref{coh of red} we have
\[
R\Gamma(R/\Bdr_\Prism, \mathcal{O}_\Prism) \otimes^L_\Bdr K \cong R\Gamma(R/\Bdr_\Prism, \ol{\mathcal{O}}).
\]
Thus we can rewrite the connecting morphism as below
\[
\beta_\xi:R\Gamma(R/\Bdr_\Prism, \ol{\mathcal{O}}_\Prism)(i) \rra R\Gamma(R/\Bdr_\Prism, \ol{\mathcal{O}}_\Prism)(i+1)[1].
\]

As in \cite[Section 4]{BS19}, by evaluating at the $i$-th cohomology group, the above induces a $K$-linear cdga over $R$
\[
(\mathrm{H}^\bullet(R/\Bdr_\Prism,\ol{\mathcal{O}}_\Prism)(\bullet), \beta_\xi).
\]
So by the universal property of the algebraic de Rham complex $\Omega_{R/K}^{\bullet, alg}$ of $R/K$, we get a natural map of cdga
\[
(\Omega_{R/K}^{\bullet,alg}, d) \rra (\mathrm{H}^\bullet(R/\Bdr_\Prism,\ol{\mathcal{O}}_\Prism)(\bullet), \beta_\xi).
\]

The next result proves that the map induces an isomorphism of of Hodge-Tate cohomology with the analytic differentials when $R$ is smooth.
\begin{theorem}\label{HT comp}
	Assume $R$ is a smooth affinoid algebra over $K$.
	Then there is a natural isomorphism
	\[
	\mathrm{H}^i(R/\Bdr_\Prism, \ol{\mathcal{O}}_\Prism) \cong \Omega_{R/K}^i(-i),
	\]
	where $\Omega_{R/K}^i$ is the sheaf of $i$-th analytic differentials.
\end{theorem}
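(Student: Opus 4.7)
The plan is to reduce the statement to an explicit local computation using the $\xi$-divided de Rham complex provided by \Cref{lift coh}, and then verify directly that the natural map constructed via the Bockstein operator realizes the claimed isomorphism. Since smooth rigid spaces admit compatible smooth $\Bdr$-lifts Zariski-locally (by the standard unobstructedness of deforming smooth affinoids over the $\Bdre$), and since both sides of the stated isomorphism are Zariski sheaves while the cdga map
\[
(\Omega^{\bullet,alg}_{R/K},d)\longrightarrow (\mathrm{H}^\bullet(R/\Bdr_\Prism,\ol{\mathcal{O}}_\Prism),\beta_\xi)
\]
is defined globally, it suffices to prove that this map is an isomorphism after passing to a cover on which a smooth lift $\wt R=\varprojlim_e \wt R_e$ of $R$ exists.

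Under this assumption, \Cref{lift coh} identifies $R\Gamma(R/\Bdr_\Prism,\mathcal{O}_\Prism)$ with the complex
\[
A^\bullet:=\Bigl(\wt R\;\longrightarrow\;\tfrac{\Omega^1_{\wt R/\Bdr}}{\xi}\;\longrightarrow\;\cdots\;\longrightarrow\;\tfrac{\Omega^d_{\wt R/\Bdr}}{\xi^d}\Bigr),
\]
where I regard each $A^i$ as the $\xi$-torsionfree submodule $\xi^{-i}\Omega^i_{\wt R/\Bdr}\subset \Omega^i_{\wt R/\Bdr}[\tfrac{1}{\xi}]$ with differential $d$ extended $\Bdr$-linearly. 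By \Cref{coh of red}, $R\Gamma(R/\Bdr_\Prism,\ol{\mathcal{O}}_\Prism)$ is computed by $A^\bullet\otimes^L_\Bdr K$, which by $\xi$-torsionfreeness is just the naive quotient $A^\bullet/\xi$. The key observation is that $d\xi=0$ in $\Omega^1_{\wt R/\Bdr}$ since $\xi\in\Bdr$, so for any $\omega\in \Omega^i_{\wt R/\Bdr}$ one has
\[
d(\xi^{-i}\omega)\;=\;\xi^{-i}\,d\omega\;=\;\xi\cdot\xi^{-i-1}\,d\omega\;\in\;\xi\cdot A^{i+1}.
\]
Hence the differential on $A^\bullet/\xi$ is identically zero, and the identification $A^i/\xi A^i=\xi^{-i}\Omega^i_{\wt R/\Bdr}/\xi^{-i+1}\Omega^i_{\wt R/\Bdr}\cong \Omega^i_{R/K}(-i)$ yields the abstract isomorphism $\mathrm{H}^i(R/\Bdr_\Prism,\ol{\mathcal{O}}_\Prism)\cong \Omega^i_{R/K}(-i)$.

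To see that this matches the map $(\Omega^{\bullet,alg}_{R/K},d)\to(\mathrm{H}^\bullet,\beta_\xi)$, I compute $\beta_\xi$ on the explicit model: given a class $[\xi^{-i}\omega]\in A^i/\xi A^i$, lift it to $\xi^{-i}\omega\in A^i$; then $d(\xi^{-i}\omega)=\xi\cdot\xi^{-i-1}d\omega$, so dividing by $\xi$ and reducing modulo $\xi$ produces $[\xi^{-i-1}d\omega]\in A^{i+1}/\xi A^{i+1}\cong \Omega^{i+1}_{R/K}(-i-1)$. Thus $\beta_\xi$ is precisely the analytic differential under the above identification; in particular the universal continuous derivation $R\to \Omega^1_{R/K}(-1)$, $r\mapsto \xi^{-1}dr$, shows that the cdga map $(\Omega^{\bullet,alg}_{R/K},d)\to(\mathrm{H}^\bullet,\beta_\xi)$ factors through the analytic de Rham cdga $(\Omega^\bullet_{R/K}(-\bullet),d)$ and implements the desired isomorphism termwise.

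The main obstacle is conceptual rather than computational: one must ensure that the argument is genuinely natural (so that it glues across the Zariski cover and is independent of the auxiliary smooth lift). This is handled by observing that the map being tested for isomorphy is defined globally via the Bockstein, while the local model and the vanishing $d\xi=0$ depend only on the fact that $\xi\in\Bdr$; any isomorphism between two local lifts acts trivially on both the source $\Omega^{\bullet,alg}_{R/K}$ and the target $(\mathrm{H}^\bullet,\beta_\xi)$, so the check of isomorphy may be performed on any single lift.
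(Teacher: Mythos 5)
Your proof is correct and follows essentially the same strategy as the paper's: start from the intrinsically defined cdga map $(\Omega^{\bullet,alg}_{R/K},d)\to(\mathrm{H}^\bullet(R/\Bdr_\Prism,\ol{\mathcal{O}}_\Prism)(\bullet),\beta_\xi)$, then check it factors through the analytic quotient and is a termwise isomorphism via the $\xi$-divided de Rham model coming from \Cref{prism coh}. The paper proceeds by choosing a regular surjection $P=\Bdr\langle T_i\rangle\to R$ and invoking \Cref{prism dR} together with the Simpson graded-piece formula of \Cref{Simp gr} to get $\mathrm{H}^i\cong \Omega^i_{R/K}(-i)$; you instead invoke the smooth-lift version \Cref{lift coh} and deduce the splitting directly from the identity $d\xi=0$, which forces the differential on $A^\bullet/\xi$ to vanish. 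Both of those are corollaries of the same general theorem, and your explicit Bockstein computation makes transparent a step the paper states without detail. One small economy you could make: the Zariski localization is unnecessary, since a smooth \emph{affinoid} over $K$ always admits a compatible system of smooth lifts $\{\wt R_e\}$ globally — the lifting obstructions lie in higher coherent cohomology of the affinoid, which vanishes by Kiehl's theorem, so each square-zero deformation problem along $\Bdr/\xi^{e+1}\to\Bdr/\xi^e$ is unobstructed and rigid. With that observation the gluing discussion in your last paragraph becomes moot, and your argument runs as a single local computation plus the universal-property argument for independence of the chosen lift, which is in spirit what the siftedness argument at the end of the paper's proof achieves.
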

Assume the above case when $R$ is smooth and affinoid, we call the Postnikov filtration of $R\Gamma(R/\Bdr_\Prism, \ol{\mathcal{O}}_\Prism)$ the \emph{Hodge-Tate filtration}.
\begin{proof}
	As we have constructed a natural map from algebraic de Rham complex of $R/K$ to the cdga $(\mathrm{H}^\bullet(R/\Bdr_\Prism,\ol{\mathcal{O}}_\Prism)(\bullet), \beta_\xi)$, it suffices to show that the map factors through the analytic de Rham complex of $R/K$, and induces a termwise isomorphism with the target.
	
	Let us assume $R$ admits a surjection from $P=\Bdr \langle T_i\rangle$, together with an ideal $I\in P$ satisfying \Cref{Assump}.
	By \Cref{prism dR}, prismatic cohomology is isomorphic to $\xi$-divided de Rham cohomology
	\[
	C_{P}:=\left( D_\Prism \ra D_\Prism \underset{P}{\motimes}\frac{\Omega_{P/\Bdr}^1}{\xi} \ra \cdots \ra D_\Prism \underset{P}{\motimes} \frac{\Omega_{P/\Bdr}^d}{\xi^d} \right),
	\]
	where each differential map is the continuous differential with respect to $(\xi,p)$-adic topology.
	Denote its termwise reduction by $\ol{C}_{P}$, which by \Cref{prism coh} and \Cref{Simp gr} is isomorphic to 
	\[
	\moplus_{i\in \mathbb{N}} L\wedge^i\mathbb{L}_{R/K}(-i)[-i].
	\]
	As $R$ is smooth over $K$, each $L\wedge^i\mathbb{L}_{R/K}$ is isomorphic to $i$-th analytic differential $\Omega_{R/K}^i$.
	So by applying the Bockstein operator $\beta_\xi$ and its associated complex, we have
	\[
	(\mathrm{H}^\bullet(\ol{C}_{P})(\bullet), \beta_\xi) = (\Omega_{R/K}^\bullet,d),
	\]
	where $d$ is the $p$-adic continuous differential.
	Thus the universal map from algebraic de Rham complex $(\Omega_{R/K}^{\bullet,alg}, d)$ to $(\mathrm{H}^\bullet(\ol{C}_{P})(\bullet), \beta_\xi)$ identifies its analytic quotient with the target.
	At last, notice that since the category of regular surjections $P\ra R$ is sifted as in \Cref{sift} (c.f. \cite[Lemma 5.5.4]{Guo21}), the isomorphism above is independent of the choice of $P\ra R$.
	So we are done.
	
\end{proof}

\subsection{Simplicial resolution for affinoid algebras}\label{subset simplicial res}
To extend the Hodge-Tate filtration to non-smooth rigid spaces, we will need the simplicial resolution of affinoid algebras by smooth ones. 
Here we mention that different from the algebraic setting, we cannot use the standard polynomial resolution where variables are infinite.
To resolve topologically finite type algebras over a $p$-adic field, we will need a version of simplicial resolution such that each term  satisfies the finite generatedness condition, thus within the category of topologically finite type affinoid algebras.

We start by recalling the following general construction resolving a given class of a given simplicial algebra using finite type polynomials.
As a convention, for a simplicial abelian group $A=A_\bullet$ and a co-degeneracy map $s^j:[n+1]\ra [n]$, we use $s_j$ to denote the corresponding map $A_n \ra A_{n+1}$ for $A$.
Similarly for (co-) face maps.
\begin{construction}\label{Iye const}
	Let $R$ be a commutative ring, and $A$ be a simplicial $R$-algebra.
	Let $d\in \NN$ be an integer, and $\omega\in A_{d-1}$ be a cycle of degree $d-1$.
	
	For each $n\in \NN$, let $X_n$ be the finite set
	\[
	X_n:=\{x_t|t:[n]\ra [d]\text{ is surjective and monotone}\}.
	\]
	Then by [\cite{Iy07}, 4.10], there is a simplicial $A$-algebra structure on $B=A[\{x\}|\partial x=\omega]$. defined as the free simplicial $A$-algebra such that $B_n=A_n[X_n]$.
	
	The lemma below describes the properties we need for this resolution.
\begin{lemma}\label{Iye prop}
	Let $R$ be a ring, and $A$ be a simplicial $R$-algebra.
	Then the simplicial $A$-algebra $B=A[\{x\}|\partial x=\omega]$ satisfies the following:
	\begin{enumerate}[(i)]
		\item For for $n\leq d-1$, we have $A_n=B_n$.
		\item For each $n\in \NN$, $B_n$ is a polynomial of finite variables over $A_n$.
		\item The natural map $A\ra B$ induces a short exact sequence
		\[
		0\rra A_{d-1}\cdot \omega\rra \pi_{d-1}(A)\rra \pi_{d-1}(B)\rra 0.
		\]
	\end{enumerate}
\end{lemma}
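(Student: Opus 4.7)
The plan is to dispatch (i) and (ii) by a direct count of $X_n$, and then to establish (iii) by inspecting the normalized Moore complex $N_\ast$ of $B$ in degrees $d-1$ and $d$.

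A monotone surjection $t\colon[n]\to[d]$ is determined uniquely by the $d$ positions in $\{1,\dots,n\}$ at which $t$ strictly increases (each such jump must have size exactly one, or surjectivity fails), so $|X_n|=\binom{n}{d}$. This vanishes for $n<d$, giving $X_n=\emptyset$ and hence $B_n=A_n[X_n]=A_n$, which is (i); and it is always finite, so $B_n$ is a polynomial ring in finitely many variables over $A_n$, which is (ii).

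For (iii), I would pass to the normalized Moore complex $N_\ast$. Since $A_k=B_k$ for $k\leq d-1$, we have $N_k(A)=N_k(B)$ in that range, whence $Z_{d-1}(A)=Z_{d-1}(B)$ and the map $\pi_{d-1}(A)\to\pi_{d-1}(B)$ is automatically surjective. The kernel is then measured by the additional boundaries $d(N_d(B))\subset N_{d-1}(B)$ that do not already lie in $d(N_d(A))$. Since $X_d$ contains the single element $x=x_{\mathrm{id}_{[d]}}$, the only new generator of $N_d(B)/N_d(A)$, modulo the degeneracies coming from $B_{d-1}=A_{d-1}$, is (the class of) $x$, and Iyengar's construction is rigged precisely so that the Moore differential sends $x$ to $\omega$. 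The extra boundaries in $N_{d-1}(B)$ are thereby identified with the $A_{d-1}$-submodule generated by $\omega$, and the short exact sequence drops out.

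The main obstacle will be confirming that no spurious extra boundaries in $N_{d-1}(B)$ are contributed by the higher-degree polynomial generators $x_t$ for $t\in X_n$ with $n>d$, or by polynomial products of $x$ with $A_d$ that collapse to degeneracies. Both are governed by unpacking Iyengar's definition of the face maps on $B_n=A_n[X_n]$ via precomposition with cofaces $[n-1]\to[n]$, subject to the rule that a composite $t\circ d^i$ which fails to be surjective is replaced by the appropriate degeneracy of $\omega$. Rather than redo this bookkeeping from scratch, I would invoke \cite[4.10]{Iy07}, already cited in the construction, to pin down the simplicial structure and read off (iii) directly from the description of $N_\ast(B)$ in degrees $\leq d$.
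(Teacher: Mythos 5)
Your counting argument for (i) and (ii) is correct and self-contained: a monotone surjection $t\colon[n]\to[d]$ necessarily increases by $0$ or $1$ at each step, and surjectivity forces exactly $d$ unit jumps among the $n$ steps, so $|X_n|=\binom{n}{d}$, which vanishes for $n\leq d-1$ and is always finite.

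For (iii), note that the paper itself offers no proof; the lemma is stated inside \Cref{Iye const} and justified only by the citation to \cite[4.10]{Iy07}. Your outline captures the right structure: $N_k(A)=N_k(B)$ for $k\leq d-1$ gives equality of $(d-1)$-cycles and hence surjectivity of $\pi_{d-1}(A)\to\pi_{d-1}(B)$, and the kernel is accounted for by the new boundary $d_0(x)=\omega$. The step you honestly flag as needing verification — that the higher-degree polynomial generators of $N_d(B)$ contribute no further boundaries in $N_{d-1}(B)$ beyond the $A_{d-1}$-span of $\omega$ — is exactly the content of \cite[4.10]{Iy07}, which you (like the paper) ultimately invoke. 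There is therefore no gap relative to the paper's own treatment; your proposal in fact supplies more detail than the source text does.
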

In particular the cycle $\omega\in \pi_{d-1}(A)$ is killed in $B$.
\end{construction}

Now we apply the above construction to the $p$-adic setting, to show the existence of topologically finite type resolutions for an affinoid algebra.
\begin{theorem}\label{res of pair}
	 Let $(B,I)$ be a topologically finitely presented algebra over $\Ainfe$ with a finitely generated ideal, such that $B/I$ is $p$-torsion free.
	Then there exists a quasi-isomorphism from simplicial pairs of $\Ainfe$-algebras with ideals,
	\[
	(\Ainfe\langle X_\bullet,Y_\bullet\rangle, (Y_\bullet))\rra (B,I),
	\]
	where both $X_i$ and $Y_i$ are finite for each $i\in \NN$.
\end{theorem}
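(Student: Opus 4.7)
The plan is to build the simplicial pair $(P_\bullet, J_\bullet) = (\Ainfe\langle X_\bullet, Y_\bullet\rangle, (Y_\bullet))$ inductively on simplicial degree, using the free adjunction of variables from \Cref{Iye const} to successively kill homotopy. For the base case, the topological finite presentation of $B$ produces a finite set $X_0$ whose images generate $B$ as a topological $\Ainfe$-algebra, and finite generation of $I$ produces a finite set $Y_0$ whose images generate $I$. Setting $P_0 := \Ainfe\langle X_0, Y_0\rangle$ with the $Y_0$-variables mapping to generators of $I$ yields a surjection of pairs $(P_0, (Y_0)) \twoheadrightarrow (B, I)$, as required at simplicial degree zero.

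For the inductive step, suppose $(P_{\leq n}, J_{\leq n})$ has been built so that the pair resolves $(B, I)$ in degrees $<n$, i.e. both $\pi_i(P_\bullet) = 0$ and $\pi_i(J_\bullet) = 0$ for $0 < i < n$. I will kill the $n$-th homotopy in two stages. First, I choose a finite collection of cycles $\omega_1, \ldots, \omega_r \in J_n$ generating $\pi_n(J_\bullet)$ over $B$ and, for each, apply \Cref{Iye const} to adjoin a new $Y$-type variable of simplicial degree $n+1$ with boundary $\omega_i$; since $\omega_i \in J_n$, the enlarged ideal is still generated by $Y$-variables. After this step $\pi_n(J_\bullet) = 0$, so by the long exact sequence of homotopy for $J_\bullet \hookrightarrow P_\bullet \twoheadrightarrow P_\bullet/J_\bullet$ the natural map $\pi_n(P_\bullet) \to \pi_n(P_\bullet/J_\bullet)$ is an isomorphism. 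In the second stage I select finitely many cycles $\omega'_j \in P_n$ representing the remaining classes and adjoin $X$-type variables with boundaries $\omega'_j$, leaving the ideal untouched. By \Cref{Iye prop}(i) these additions only perturb degrees $\geq n+1$, so the truncations in lower degrees are preserved and the construction can be iterated; the final resolution is the colimit of the successive $n$-stages.

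The principal obstacle is verifying the finiteness at each step, namely that $\pi_n(J_\bullet)$ and $\pi_n(P_\bullet/J_\bullet)$ are finitely generated as $B$- or $B/I$-modules, so that only finite sets $X_{n+1}, Y_{n+1}$ need be added. Over the non-noetherian base $\Ainfe$ standard noetherian syzygy arguments are unavailable, and this is where careful topological bookkeeping is required. I anticipate handling this by tracking generators through each Iyengar step using the explicit polynomial structure: each $P_n$ is a Tate algebra over $\Ainfe$ in finitely many variables, and \Cref{Iye const} controls the syzygies at each level by those at the previous level, starting from finite generation of $\ker(P_0 \to B)$, which follows from the topological finite presentation of $B$. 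The hypothesis that $B/I$ is $p$-torsion free is expected to enter in ensuring that the resolutions $J_\bullet \to I$ and $P_\bullet/J_\bullet \to B/I$ are compatible with reduction modulo suitable powers of $(p,\xi)$, where the finite generation can be checked on simpler quotients and then lifted via $(p,\xi)$-adic completeness of the Tate algebras $\Ainfe\langle X_n, Y_n\rangle$.
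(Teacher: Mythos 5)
Your overall strategy matches the paper's: start from a constant degree-zero pair given by finite generation of $B$ and $I$, then iterate Construction~\ref{Iye const} to kill homotopy degree by degree, taking the colimit. The main structural difference is bookkeeping: you kill $\pi_n(J_\bullet)$ and then $\pi_n(P_\bullet)$ in two separate passes using the long exact sequence of the pair, whereas the paper kills the cycles for both the full ring (``unit ideal'') and the ideal $(Y_\bullet)$ at each stage in parallel. This is a harmless repackaging. Do note, though, that after adjoining the new polynomial variables in each degree you must still take the term-wise $p$-adic completion to land back in $\Ainfe\langle X_\bullet, Y_\bullet\rangle$; this is not mentioned in your sketch, and Lemma~\ref{Iye prop}(ii) only gives polynomial finiteness before completion.

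The genuine gap is exactly the one you flag as the ``principal obstacle'': the finiteness of the homotopy modules to be killed at each stage is left as an ``anticipation'' rather than an argument, and your proposed strategy for handling it (``tracking generators through each Iyengar step'' and ``checking on simpler quotients mod powers of $(p,\xi)$'') is not what is needed. The paper's proof rests on a precise structural claim (Claim~\ref{claim for finiteness}): for a simplicial finitely generated ideal $J_\bullet$ inside a simplicial topologically finitely presented $\Ainfe$-algebra $A_\bullet$, each $\pi_n(J_\bullet)$ is a finitely generated $A_n$-module. The proof works with the normalized chain complex $N(J)_n = \cap_{i=1}^n \ker(d_i)$, writes it as the kernel of an $A_n$-linear map $J_n \to \bigoplus_{i=1}^n J_{n-1}$, and then invokes the fact that the kernel of a map from a finitely generated module to a $p$-torsion free module over a Tate algebra is finitely generated (citing \cite[Tag 0519]{Sta} and \cite[Corollary 5.1.3]{Guo21}); a second application handles $\ker(d_0|_{N(J)_n})$. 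This is where the $p$-torsion freeness of the relevant targets actually enters, and it is automatic for the ideals $(Y_n^{(\bullet)})$ in the Tate algebras rather than coming from a mod-$(p,\xi)$ reduction argument as you guess. Without this claim, or something equivalent, the induction cannot proceed with finite sets $X_n, Y_n$, so the proposal as written has a hole at its load-bearing step.
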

\begin{proof}
	We proceed the construction by induction as below.
	\begin{itemize}
		\item[Step 1] 
		By assumption, $B$ is topologically finitely presented and $I$ is finitely generated.
		We pick two finite sets $X^{(0)}$ and $Y^{(0)}$ that corresponds to the generators of $B$ (over $\Ainfe$ as a topological algebra) and $I$ (over $B$ as a module) separately, with surjections $(f^{(0)},g^{(0)}):\Ainfe\langle X^{(0)}, Y^{(0)}\rangle\ra B$ and $(Y^{(0)})\ra I$.
		We denote $(\Ainfe\langle X^{(0)}_\bullet, Y^{(0)}_\bullet\rangle, (Y^{(0)}_\bullet))$  to be the associated constant simplicial algebra with the ideal.
		\item[Step 2]
		We now construct the simplcial pair $(\Ainfe\langle X^{(1)}_\bullet,Y^{(1)}_\bullet\rangle, (Y^{(1)}_\bullet))$ together with a map 
		\[
		f^{(1)}:(\Ainfe\langle X^{(0)}_\bullet,Y^{(0)}_\bullet\rangle, (Y^{(0)}_\bullet))\rra (\Ainfe\langle X^{(1)}_\bullet,Y^{(1)}_\bullet\rangle, (Y^{(1)}_\bullet)),
		\]
		such that the image of $(\ker(f^{(0)}),\ker(g^{(0)}))$ is killed.
		
		Both $\ker(f^{(0)})$ and $\ker(g^{(0)})$ are finitely generated $\Ainfe\langle X^{(0)}_\bullet,Y^{(0)}_\bullet \rangle$-modules (this follows from \cite[Corollary 5.1.3 (iii)]{Guo21}).
		So we can pick finite generators $u_i$ of $\ker(f^{(0)})$ and $v_j$ of $\ker(g^{(0)})$ separately.
		Using \Cref{Iye const} we form a free simplicial algebra 
		\[
		\Ainfe\langle X^{(0)}_\bullet,Y^{(0)}_\bullet\rangle [\partial U_i=u_i][\partial V_j=v_j]
		\] 
		over $\Ainfe\langle X^{(1)}_\bullet,Y^{(1)}_\bullet \rangle$, such that the image of the following two maps vanish (Lemma \ref{Iye prop} (iii)):
		\begin{align*}
			\ker(f^{(0)})&\rra \pi_0(\Ainfe\langle X^{(1)}_\bullet,Y^{(1)}_\bullet\rangle [\partial U_i=u_i][\partial V_j=v_j]),\\
			\ker(g^{(0)})&\rra \pi_0((Y^{(0)}_\bullet,V_\bullet)).
		\end{align*}
		We take the term-wise $p$-adic completion at the simplicial ring $\Ainfe\langle X^{(0)}_\bullet,Y^{(0)}_\bullet\rangle [\partial U_i=u_i][\partial V_j=v_j]$.
		Then by Lemma \ref{Iye prop} (ii), we get a new simplicial pair over $\Ainfe$: 
		\[
		(\Ainfe\langle X^{(1)}_\bullet,Y^{(1)}_\bullet\rangle, (Y^{(1)})),
		\]
		where each $X^{(1)}_n$ (resp. $Y^{(1)}_n$) is the finite set given by  the union of $X^{(0)}_n$ (resp. $Y^{(0)}_n$) with the newly added variables from $[\partial U_i=u_i]$ (resp. $[\partial V_j=v_j]$), as in \Cref{Iye const}.
		Moreover, the image of the pair of ideals $(\ker(f^{(0)}),\ker(g^{(0)})$ in the $0$-th fundamental group vanishes, by the composition below
		\[
		\xymatrix{
			(\ker(f^{(0)}),\ker(g^{(0)})) \ar[r] \ar[rd]&(\Ainfe\langle X^{(0)}_\bullet,Y^{(0)}_\bullet\rangle [\partial U_i=u_i][\partial V_j=v_j], (Y^{(0)}\cup \wt V_\bullet)) \ar[d]\\
			&(\Ainfe\langle X^{(1)}_\bullet,Y^{(1)}_\bullet\rangle, (Y^{(1)})).}
		\]
		(Here we temporarily use $\wt V_\bullet$ to denote the newly added variables in this step).
		Note that by Lemma \ref{Iye prop} (iii) the zero-th fundamental group of $(\Ainfe\langle X^{(1)}_\bullet,Y^{(1)}_\bullet\rangle, (Y^{(1)}))$ is exactly isomorphic to $(A_e,I_e)$.
			
		\item[Step 3] 
		As fundamental groups $\pi_i$ of $(B,I)$ are trivial for $i\geq 1$, we use the above agreement to kill the cycles of $(\Ainfe\langle X^{(1)}_\bullet,Y^{(1)}_\bullet\rangle, (Y^{(1)}))$ inductively, starting from $\pi_1$.
		We first make the following observation about the finiteness: 
		\begin{claim}\label{claim for finiteness}
			Let $J=J_\bullet$ be a simplicial finitely generated ideal of a simplicial topologically finitely presented $\mathrm{A_{inf, e}}$-algebra $A_\bullet$.
			Then for each $n\in \NN$, the $n$-th fundamental group $\pi_n(J_\bullet)$ is a finitely generated $A_n$-module.
		\end{claim}
		Granting the Claim, we can apply it onto the unit ideal and the ideal $(Y^{(n)}_\bullet)$ in the simplicial ring $\Ainfe\langle X^{(n)}_\bullet,Y^{(n)}_\bullet\rangle $.
		Then exactly as in Step 2, by using \Cref{Iye const} and taking the term-wise $p$-adic completion, we can produce the simplicial pair $(\Ainfe\langle X^{(n+1)}_\bullet,Y^{(n+1)}_\bullet\rangle, (Y^{(n+1)}_\bullet))$, 
		so that $\pi_n$ of the simplicial pair $\Ainfe\langle X^{(n)}_\bullet,Y^{(n)}_\bullet\rangle $ is killed.
		By Lemma \ref{Iye prop} (ii), this will preserve the finite sets $X_i$ and $Y_i$ for $i\leq n$. 
		So we can take the colimit to get a simplicial finitely presented $\Ainfe$-pair
		\[
		(\Ainfe\langle X_\bullet,Y_\bullet\rangle, (Y_\bullet)):=\underset{n\in \NN}{\colim} (\Ainfe\langle X^{(n)}_\bullet,Y^{(n)}_\bullet\rangle, (Y^{(n)}_\bullet)).
		\]
		Note that by the construction, as $\pi_*(\Ainfe\langle X_\bullet,Y_\bullet\rangle, (Y_\bullet))=\pi_0(\Ainfe\langle X_\bullet,Y_\bullet\rangle, (Y_\bullet))=(A,I)$, the natural map from $	(\Ainfe\langle X_\bullet,Y_\bullet\rangle, (Y_\bullet))$ to its $0$-th fundamental group is a surjective quasi-isomorphism.
		This finishes the construction we need.
		\begin{proof}[Proof of the Claim]
			As in \cite[3.4]{Iy07}, to compute the $i$-th fundamental group, we can take the normalization of $J_\bullet$ by
			\[
			N(J)_n=\cap_{i=1}^n \ker(d_i).
			\]
			Then $\pi_n(J)$ is a quotient of the kernel ideal for the $A_n$-linear map 
			\[
			\ker(d_0|_{N(J)_n}:N(J)_n\ra N(J)_{n-1}).
			\]
			We note that the group $N(J)_n$ can be written as the kernel of the $A_n$-linear map 
			\[
			\xymatrix{J_n\ar[rr]^{\oplus_{i=1}^n d_i~~~~~~~~~~~~~~~~~~} &&\bigoplus_{i=1}^n J_{n-1},}
			\]
			where the $A_n$-linear structure on each component of the right side is defined through the ring homomorphism $d_i:A_n\ra A_{n-1}$.
			But since $J_n$ is finitely generated, while $J_{n-1}$ is $p$-torsion free, by \cite[Tag 0519]{Sta} and \cite[Corollary 5.1.3]{Guo21}, the kernel of the map above is finitely generated over $A_n$.
			This implies that the $p$-torsion free $A_n$-module $N(J)_n$ is finitely presented over $A_n$.
			In this way, the kernel of the $A_n$-linear map
			\[
			d_0:N(J)_n\rra N(J)_{n-1},
			\] 
			with the source being finitely presented and the target being $p$-torsion free, is finitely generated.
			This finishes the proof showing $\pi_n(J_\bullet)$ is finitely generated over $A_n$, and we are done.

		\end{proof}
		
	\end{itemize}
\end{proof}

Moreover, the above allows us to establish a rational version as well.
\begin{proposition}\label{res of pair rational}
	Let $(B,I)$ be a topologically finitely presented algebra over $\Bdre$ with a finitely generated ideal.
	Then there exists a quasi-isomorphism from a simplicial pair of $\Bdre$-algebras with ideals
	\[
	(\Bdre\langle X_\bullet,Y_\bullet\rangle, (Y_\bullet))\rra (B,I),
	\]
	where both $X_i$ and $Y_i$ are finite for each $i\in \NN$.
	Moreover, the same holds if we replace $\Bdre$ by $\Bdr$.
\end{proposition}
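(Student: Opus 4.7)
The plan is to reduce both assertions to the integral resolution \Cref{res of pair} by choosing integral formal models and then rationalizing.

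For the $\Bdre$ statement, I would first pick a topologically finitely presented $\Ainfe$-subalgebra $B^0 \subset B$ with $B^0[1/p] = B$, together with a finitely generated ideal $I^0 \subset B^0$ satisfying $I^0[1/p] = I$; such formal models exist by standard arguments for formal models of affinoid rigid spaces. Replacing $B^0$ by the preimage of the $p$-torsionfree quotient of $B^0/I^0$ under $B^0 \to B^0/I^0$, I may arrange $B^0/I^0$ to be $p$-torsionfree without altering the rationalization. Then \Cref{res of pair} applied to $(B^0, I^0)$ produces a quasi-isomorphism of simplicial pairs
\[
(\Ainfe\langle X_\bullet, Y_\bullet\rangle, (Y_\bullet)) \rra (B^0, I^0)
\]
with $X_n, Y_n$ finite for each $n$. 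Since $p$-inversion is exact and $\Ainfe\langle X\rangle[1/p] = \Bdre\langle X\rangle$, inverting $p$ termwise yields the desired resolution of $(B,I)$ over $\Bdre$.

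For the $\Bdr$ version, the input $B$ corresponds to a compatible system $\{B/\xi^e\}$ of topologically finitely presented $\Bdre$-algebras together with compatible finitely generated ideals $\{I/\xi^e\}$. I would pick a $\xi$-adically complete, topologically finitely presented $\Ainf$-model $(B^0, I^0)$ with $B^0[1/p]^\wedge_\xi \cong B$ and $I^0[1/p]^\wedge_\xi \cong I$, arranging both $B^0$ and $B^0/I^0$ to be $p$-torsionfree. Then I would run the inductive construction of \Cref{res of pair} directly over $\Ainf$, where at each stage the Iyengar free construction (\Cref{Iye const}) is followed by termwise $(p,\xi)$-adic completion in place of the $p$-adic completion used in the integral proof. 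The finiteness Claim in the proof of \Cref{res of pair} carries over because the simplicial algebras built from $B^0$ stay $p$-torsionfree and the analogue of \cite[Corollary 5.1.3]{Guo21} still applies in the $(p,\xi)$-adic setting. Rationalizing (invert $p$, complete $\xi$-adically) then gives the required resolution $(\Bdr\langle X_\bullet, Y_\bullet\rangle, (Y_\bullet)) \to (B, I)$.

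The main obstacle is the $\Bdr$ case, specifically lifting the argument of \Cref{res of pair} from the $\xi^e$-truncated setting to $\Ainf$ itself. This rests on verifying the $(p,\xi)$-adic analogue of the finiteness Claim, namely that $\pi_n$ of a simplicial finitely generated ideal in a simplicial topologically finitely presented $\Ainf$-algebra remains finitely generated when one uses the bigraded $(p,\xi)$-adic topology. Granted this technical input, the remainder of the inductive construction (building $X^{(n)}$, $Y^{(n)}$ step by step and taking colimits) proceeds essentially verbatim from the proof of \Cref{res of pair}.
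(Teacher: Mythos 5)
For the $\Bdre$ case your proposal is correct and follows the paper's route: choose an integral formal model over $\Ainfe$, apply \Cref{res of pair}, and invert $p$. (Your side remark about replacing the ideal to make $B^0/I^0$ $p$-torsionfree is a reasonable precision; note the modification should be on $I^0$, i.e.\ take $I^{0\prime}=\ker(B^0 \to (B^0/I^0)/(p\text{-torsion}))$, not on $B^0$, and one should also check this new ideal is still finitely generated, which is where the coherence of topologically finitely presented $\Ainfe$-algebras comes in.)

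For the $\Bdr$ case, however, your route has a genuine gap and diverges from the paper's argument. You propose running the inductive construction of \Cref{res of pair} over $\Ainf$ with $(p,\xi)$-adic completions and then rationalizing, explicitly flagging that this requires a $(p,\xi)$-adic analogue of the finiteness \Cref{claim for finiteness}. But that analogue is the heart of the matter and is not addressed: $\Ainf\langle T_i\rangle$ is \emph{not} noetherian (already $\Ainf/\xi\cong\mathcal{O}_K$ is not), so the finiteness of $\pi_n$ of a simplicial ideal is a nontrivial coherence statement, and it is not clear that the $\Ainfe$-argument in \Cref{res of pair} lifts verbatim to the full two-variable $(p,\xi)$ topology. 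The paper avoids this entirely by changing base to $\Bdr$ rather than $\Ainf$: it repeats Steps 1--3 directly over $\Bdr$, constructing compatible truncations over $\Bdre$ (choosing kernel generators whose mod-$\xi$ reductions are power bounded so the construction is compatible across $e$) and then passing to the inverse limit. The decisive simplification is that each $\Bdr\langle X_i,Y_i\rangle$ \emph{is} noetherian ($\Bdr$ is a complete DVR), so the finiteness in \Cref{claim for finiteness} is automatic and no new coherence input is needed. You should replace the $\Ainf$-based strategy by this direct $\Bdr$-based one, or else supply a proof of the $(p,\xi)$-adic finiteness claim over $\Ainf$.
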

\begin{proof}
	The case for $\Bdre$ follows from \Cref{res of pair} by choosing an integral model of the pair over $\Ainfe$, since $\Bdre=\Ainfe[\frac{1}{p}]$.
	To get the result for $\Bdr$-pair $(B,I)$, we need to adjust the proof of \Cref{res of pair}, changing its base from $\Ainfe$ to $\Bdr$.
	The Step 1 in \Cref{res of pair} is identical.
	For Step 2, we need to find a map 
	\[
	f^{(1)}:(\Bdr \langle X_\bullet^{(0)}, Y_\bullet^{(0)}\rangle, (Y_\bullet^{(0)})) \rra (\Bdr\langle X^{(1)}_\bullet,Y^{(1)}_\bullet\rangle, (Y^{(1)}_\bullet)),
	\]
	such that the image of $(\ker(f^{(0)}),\ker(g^{(0)}))$ vanishes.
    Let us choose a finite set of generators $\{h_l\}$ of the kernel ideals so that their mod $\xi$ reduction are power bounded (i.e. inside of $\mathcal{O}_K \langle X_\bullet^{(0)}, Y_\bullet^{(0)} \rangle$).
	Then imitating Step 2 in \Cref{res of pair}, we may choose a compatible set of maps
	\[
	(\Bdre\langle X^{(0)}_\bullet,Y^{(0)}_\bullet\rangle, (Y_\bullet^{(0)}))\rra (\Bdre\langle X^{(1)}_\bullet,Y^{(1)}_\bullet\rangle, (Y_\bullet^{(1)})),
	\]
	such that the image of the mod $\xi^e$ reduction of $\{h_l\}$ vanishes.
	Thus by taking the inverse limit with respect to $e$, we get Step 2.
	
	At last, to proceed Step 3 of \Cref{res of pair}, we notice that the finiteness in \Cref{claim for finiteness} holds automatically for $\Bdr$-pairs $(\Bdr\langle X^{(n)}_\bullet,Y^{(n)}_\bullet\rangle, (Y^{(n)}_\bullet))$ over $\Bdr$, as each $\Bdr\langle X^{(n)}_i,Y^{(n)}_i\rangle$ is noetherian.
	In this way, similar to the above modification on Step 2, we can mod powers of $\xi$ and kills the reduction of a fixed finite set of generators for $\pi_n(\Bdr\langle X^{(n)}_\bullet,Y^{(n)}_\bullet\rangle, (Y^{(n)}_\bullet))$.
	A further inverse limit of this compatible system of simplicial pairs finishes the proof.
\end{proof}
In the special case when $I$ is the zero ideal, we get the resolution of the algebra itself.
\begin{corollary}\label{res of alg}
	Assume $R$ is either a $p$-torsionfree topologically finitely presented $\Ainfe/\Bdre/\Bdr$-algebra.
	There is a simplicial $\Ainfe/\Bdre/\Bdr$-algebra resolution $A_\bullet \ra R$, such that each $A_n$ is isomorphic to a Tate algebra of finite variables over $\Ainfe/\Bdre/\Bdr$.
\end{corollary}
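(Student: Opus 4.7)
The corollary is an immediate specialization of \Cref{res of pair} (in the $\Ainfe$-case) and \Cref{res of pair rational} (in the $\Bdre$- and $\Bdr$-cases) to the pair $(R,(0))$ with zero ideal. The plan is to trace through that construction under the simplification $I=(0)$ and observe that no $Y$-variables are ever introduced.

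Concretely, in Step 1 of the proof of \Cref{res of pair}, the finite set $Y^{(0)}$ of generators of $I$ is empty, so the initial simplicial object reduces to the constant simplicial pair $(\Ainfe\langle X^{(0)}\rangle,(0))$. In Step 2, the kernel $\ker(g^{(0)})$ of the map $(Y^{(0)})\to I$ is automatically zero, so no auxiliary variables $V_j$ are needed; only the variables $U_i$ killing the generators of $\ker(f^{(0)})$ are adjoined via the construction of \Cref{Iye const}, and these are incorporated as $X$-type variables. This pattern persists through the inductive step: at every stage the ideal component stays zero, and by \Cref{Iye prop} (ii) each $X^{(n)}_i$ remains a finite set. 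The same reasoning applies verbatim in the $\Bdre$- and $\Bdr$-settings, using the modifications described in the proof of \Cref{res of pair rational} (namely, choosing integral models or working mod powers of $\xi$ before taking an inverse limit).

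Taking the colimit of this inductive system produces a surjective quasi-isomorphism $A_\bullet\to R$, where $A_\bullet$ is a simplicial Tate algebra with each $A_n$ of the form $\Ainfe\langle X_n\rangle$ (respectively $\Bdre\langle X_n\rangle$ or $\Bdr\langle X_n\rangle$) for a finite set $X_n$. This is precisely the statement of the corollary. Since the argument is purely an unwinding of the prior construction with the ideal component set to zero, there is no substantive obstacle; the only point to verify is that the $Y$-data can be cleanly suppressed throughout, which is the observation above.
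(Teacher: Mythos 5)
Your proposal is correct and follows exactly the paper's (implicit) argument: the corollary is stated as the special case of \Cref{res of pair} and \Cref{res of pair rational} with $I=(0)$, and the paper's entire justification is the preceding sentence ``In the special case when $I$ is the zero ideal, we get the resolution of the algebra itself.'' Your careful unwinding of why the $Y$-variables never appear is a faithful expansion of that remark, not a different route.
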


\subsection{Left Kan extension and Hodge-Tate filtration}
Now we are able to extend the Hodge-Tate filtration to general l.c.i rigid spaces, using simplicial resolution developed last subsection.
The idea is similar to the left Kan extension used in the algebraic setting, but we do it over affinoid algebras.

We start by observing that derived de Rham complex can be computed using simplicial resolution.
\begin{proposition}\label{left kan of ddR}
	Let $R$ be a topologically finite type $\Bdre$-algebra.
	Then for a given simplicial resolution $\Bdre \langle X_\bullet\rangle \ra R$ where each set of variables $X_n$ is finite, we have
	\begin{align*}
		&\underset{\Delta}{\colim} ~\mathbb{L}^\an_{\Bdre \langle X_\bullet\rangle/\Bdre} \cong \mathbb{L}^\an_{R/\Bdre}, \\
		&\underset{\Delta}{\colim} ~\dR_{\Bdre \langle X_\bullet\rangle/\Bdre}/\Fil^m \cong \dR_{R/\Bdre}/\Fil^m, \\
		&\text{filtered completion of }\left( \underset{\Delta}{\colim}~ \dR_{\Bdre \langle X_\bullet\rangle/\Bdre}\right) \cong \dR_{R/\Bdre}.
	\end{align*}
\end{proposition}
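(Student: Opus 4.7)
The plan is to reduce each of the three formulas to the fact that the functors $\mathbb{L}^{\an}_{-/\Bdre}$ and the Hodge-completed $\dR_{-/\Bdre}$ (together with each quotient $\dR/\Fil^m$) are, by construction in \cite[Section 5]{Guo21}, left Kan extended from the subcategory of finite-variable Tate algebras $\Bdre\langle T_1,\ldots,T_n\rangle$. Once that is recalled, the content of the proposition is the standard principle that a left Kan extended functor is computed on $R$ by $\underset{\Delta}{\colim}\, F(A_\bullet)$ for any simplicial resolution $A_\bullet \to R$ lying in the defining subcategory, and \Cref{res of alg} produces exactly such a resolution $\Bdre\langle X_\bullet\rangle \to R$ with each $X_n$ finite.

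For the first formula, I would recall that $\mathbb{L}^{\an}_{-/\Bdre}$ is the left Kan extension of $A \mapsto \Omega^{1,\an}_{A/\Bdre}$ from finite-variable Tate algebras. The resolution $\Bdre\langle X_\bullet\rangle\to R$ is a cofibrant replacement for $R$ in the relevant $\infty$-category, and the colimit identity follows from the universal property of left Kan extension. For the second formula, I would induct on $m$. The graded pieces satisfy $\gr^m \dR_{-/\Bdre} \cong L\wedge^m \mathbb{L}^{\an}_{-/\Bdre}[-m]$; since $L\wedge^m$ and degree shifts commute with geometric realizations of simplicial diagrams, the first formula supplies the colimit identity at each graded level. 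The cofiber sequences $\Fil^{m+1}\to \Fil^m \to \gr^m$, together with the fact that geometric realization preserves cofiber sequences, then inductively yields the identity for each $\dR/\Fil^m$.

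For the third formula, observe that by definition of the Hodge-completed analytic derived de Rham complex, $\dR_{R/\Bdre} \cong R\varprojlim_m \dR_{R/\Bdre}/\Fil^m$. Consequently, the filtered completion of $\underset{\Delta}{\colim}\, \dR_{\Bdre\langle X_\bullet\rangle/\Bdre}$, which is the object $R\varprojlim_m \bigl(\underset{\Delta}{\colim}\, \dR_{\Bdre\langle X_\bullet\rangle/\Bdre}/\Fil^m\bigr)$, coincides with $R\varprojlim_m \dR_{R/\Bdre}/\Fil^m = \dR_{R/\Bdre}$ by the second formula. Note we do \emph{not} need (nor could we expect) to exchange the colimit over $\Delta$ with the inverse limit over $m$.

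The main obstacle I anticipate is verifying cleanly that the \emph{analytic} nature of the construction does not spoil the left Kan extension argument: the functor $A \mapsto \Omega^{1,\an}_{A/\Bdre}$ on finite-variable Tate algebras involves continuous differentials and $p$-adic completions, so one must check that the simplicial colimit computed in the analytic setting really represents the left Kan extension of its restriction to such Tate algebras. Finiteness of each $X_n$ is essential here, since it ensures that each $\Omega^{1,\an}_{\Bdre\langle X_n\rangle/\Bdre}$ is (topologically) free of finite rank and behaves well under the formation of $L\wedge^m$, avoiding completion issues at each simplicial level; the completion questions are then isolated entirely into the final $R\varprojlim_m$ in the third formula.
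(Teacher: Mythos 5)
The proposal begs the question for the first and most substantial formula. You assert that ``by construction in \cite[Section 5]{Guo21},'' the functor $\mathbb{L}^{\an}_{-/\Bdre}$ is the left Kan extension of $A\mapsto \Omega^{1,\an}_{A/\Bdre}$ from finite-variable Tate algebras. That is not how the analytic cotangent complex is defined. As recalled in the paper's own proof (citing \cite[Remark 7.2.45]{GR}), $\mathbb{L}^\an_{R/\Bdre}$ is obtained by taking a ring of definition $R_0$, forming the \emph{algebraic} cotangent complex $\mathbb{L}_{R_0/\Ainfe}$, applying derived $p$-completion, and inverting $p$. The claim that this functor agrees with the left Kan extension from finite-variable Tate algebras is essentially \emph{equivalent} to the first displayed formula, not a known input to it. You do flag exactly this as ``the main obstacle I anticipate,'' but identifying the obstacle without resolving it leaves the argument circular: the left-Kan-extension property is what needs to be proved, not what may be assumed.

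The paper closes this gap by a different mechanism. It picks a compatible simplicial system of rings of definition $A_\bullet$ for $\Bdre\langle X_\bullet\rangle$, forms the transitivity triangle of algebraic cotangent complexes for $\Ainfe\to A_\bullet\to R_0$, and shows that after derived $p$-completion and inverting $p$ the third term $\underset{\Delta}{\colim}\,\mathbb{L}_{R_0/A_n}^\wedge[1/p]$ vanishes. This uses \cite[Corollary 5.2.15]{Guo21} to identify each $\mathbb{L}^\an_{R/\Bdre\langle X_n\rangle}$ with the algebraic cotangent complex (valid because the map $\Bdre\langle X_n\rangle\to R$ is surjective), the commutation of derived completion with colimits \cite[Tag 091V]{Sta}, and the fact that $\Bdre\langle X_\bullet\rangle$ resolves $R$. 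The first formula then drops out of the transitivity triangle, and nothing is asserted a priori about left Kan extensions. Your arguments for the second and third formulas, taking the first as given, are fine and match the paper's one-line reduction via derived wedge powers; but as written they rest on a foundation that was never established.
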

\begin{proof}
	We temporarily use the notation $(-)^\wedge_p$ to denote the derived $p$-completion.
	Let $R_0$ be a ring of definition of $R$, which is topologically finite type over $\Ainfe$.
	By induction, we can find a simplicial topologically finite type $\Ainfe$-algebras $A_\bullet \ra R_0$, such that each $A_n$ is a ring of definition of $\Bdre \langle X_n\rangle$.
	The triple of rings $\Ainfe \ra A_\bullet \ra R_0$ then induces the natural triangle of algebraic cotangent complexes
	\[
	\mathbb{L}_{A_\bullet/\Ainfe}\otimes_{A_\bullet}^L R_0  \rra \mathbb{L}_{R_0/\Ainfe} \rra \mathbb{L}_{R_0/A_\bullet},
	\]
	where the final term is equal to the homotopy colimit $\underset{[n]\in \Delta}{\colim}~ \mathbb{L}_{R_0/A_n}$.
	We then apply the derived $p$-completion at the triangle and then invert at $p$.
	By \cite[Tag 091V]{Sta}, the derived completion commutes with colimits.
	In particular we get the equalities
	\begin{align*}
		(\mathbb{L}_{R_0/A_\bullet})^\wedge_p[\frac{1}{p}] &= (\underset{[n]\in \Delta}{\colim} ~\mathbb{L}_{R_0/A_n})^\wedge_p [\frac{1}{p}] \\
		&\cong \underset{[n]\in \Delta}{\colim}~ (\mathbb{L}_{R_0/A_n})^\wedge_p [\frac{1}{p}] \\
		&= \underset{[n]\in \Delta}{\colim}~ \mathbb{L}^\an_{R/\Bdre \langle X_n\rangle}.
	\end{align*}
Note that by assumption the map $\Bdre \langle X_n\rangle \ra R$ is a surjection.
In particular, by \cite[Corollary 5.2.15]{Guo21} we have a natural isomorphism
\[
\mathbb{L}^\an_{R/\Bdre \langle X_n\rangle} \cong \mathbb{L}_{R/\Bdre \langle X_n\rangle},
\]
where the latter is the algebraic cotangent complex.
So by taking the colimit, the equalities above together with the assumption that $\Bdre \langle X_\bullet\rangle$ resolves the ring $R$, imply the following
\[
(\mathbb{L}_{R_0/A_\bullet})^\wedge_p[\frac{1}{p}] \cong \underset{[n]\in \Delta}{\colim}~\mathbb{L}_{R/\Bdre \langle X_n\rangle} \cong 0.
\]
Thus the triangle at the begining leads to the isomorphism
\[
\left(\mathbb{L}_{A_\bullet/\Ainfe}\otimes_{A_\bullet}^L R_0\right)^\wedge_p[\frac{1}{p}]  \cong  (\mathbb{L}_{R_0/\Ainfe})^\wedge_p[\frac{1}{p}],
\]
namely the first formula in the statement.

	For the equality on $\dR/\Fil^m$ and $\dR$, it suffices to notice that each graded piece is given by $L\wedge^i\mathbb{L}^\an[-i]$.
	Thus the last two isomorphisms follows from the first one by applying derived wedge powers.
\end{proof}
Following the above result on analytic cotangent complexes,  we can use the left Kan extension to give another construction of prismatic cohomology as below.
\begin{proposition}\label{left Kan of smooth}
	Let $R_0$ be a topologically finite type algebra over $\mathcal{O}_K$, and let $\epsilon:\mathcal{O}_K \langle X_\bullet\rangle \ra R_0$ be a simplicial resolution such that each set of variables $X_n$ is finite.
	\begin{enumerate}[(i)]
		\item The following colimits are independent of the choice of the resolution
		\[
		\left(\underset{\Delta}{\colim} ~R\Gamma(K \langle X_\bullet\rangle/\Bdr_\Prism, \mathcal{O}_\Prism)\right)^\wedge,~~~~~~~~~~~~\underset{\Delta}{\colim} ~R\Gamma(K\langle X_\bullet\rangle/\Bdr_\Prism, \ol{\mathcal{O}}_\Prism),
		\]
		where $(-)^\wedge$ is the derived $\xi$-completion.
		\item When $R=R_0[\frac{1}{p}]$ is smooth over $K$, the above are naturally isomorphic to $R\Gamma(R/\Bdr_\Prism, \mathcal{O}_\Prism)$ and $R\Gamma(R/\Bdr_\Prism, \ol{\mathcal{O}}_\Prism)$ separately.
	\end{enumerate} 
\end{proposition}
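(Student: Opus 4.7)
My plan is to prove both statements by reducing modulo $\xi$ via derived Nakayama on the $\xi$-adic topology, where the Hodge-Tate comparison (Theorem \ref{HT comp}) together with the splitting supplied by the canonical smooth lifts $\Bdr\langle X_n\rangle$ of $K\langle X_n\rangle$ (Proposition \ref{split of HT}) makes the computation transparent, and then invoking Proposition \ref{left kan of ddR} for the left Kan extension of the analytic derived de Rham complex.

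For part (i), I will first observe that derived $\xi$-completion commutes with $-\otimes^L_\Bdr K$ (any $K$-module is automatically derived $\xi$-complete). Combined with Corollary \ref{coh of red}, this reduces the independence for the $\xi$-completed colimit to that of the reduced colimit $\underset{\Delta}{\colim}\, R\Gamma(K\langle X_\bullet\rangle/\Bdr_\Prism, \ol{\mathcal{O}}_\Prism)$. Each $K\langle X_n\rangle$ carries the canonical smooth $\Bdr$-lift $\Bdr\langle X_n\rangle$, compatible with the simplicial structure maps. Applying Proposition \ref{split of HT} to the trivial crystal then yields a simplicially functorial splitting
\[
R\Gamma(K\langle X_n\rangle/\Bdr_\Prism, \ol{\mathcal{O}}_\Prism) \;\cong\; \bigoplus_{i\geq 0} \Omega^i_{K\langle X_n\rangle/K}(-i)[-i].
\]
Passing the colimit inside the direct sum and invoking Proposition \ref{left kan of ddR} identifies the result with $\bigoplus_{i\geq 0} L\wedge^i \mathbb{L}^{\mathrm{an}}_{R/K}(-i)[-i]$, which depends only on $R = R_0[\frac{1}{p}]$.

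For part (ii), functoriality of the prismatic site in the base rigid space yields a natural comparison map
\[
\Phi: \bigl(\underset{\Delta}{\colim}\, R\Gamma(K\langle X_\bullet\rangle/\Bdr_\Prism, \mathcal{O}_\Prism)\bigr)^\wedge \rra R\Gamma(R/\Bdr_\Prism, \mathcal{O}_\Prism),
\]
and derived Nakayama again reduces the problem to showing $\Phi$ is an equivalence modulo $\xi$. By Theorem \ref{HT comp} (applied locally via a smooth lift of $R$), the target modulo $\xi$ is $\bigoplus_i \Omega^i_{R/K}(-i)[-i]$, while by part (i) the source is $\bigoplus_i L\wedge^i \mathbb{L}^{\mathrm{an}}_{R/K}(-i)[-i]$; both agree by smoothness of $R$. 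On the $i$-th Hodge-Tate graded piece, $\Phi \otimes^L_\Bdr K$ will induce the canonical map $\underset{\Delta}{\colim}\, \Omega^i_{K\langle X_n\rangle/K} \to \Omega^i_{R/K}$, an equivalence by Proposition \ref{left kan of ddR}.

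The main obstacle will be the compatibility in part (ii): showing that $\Phi$ matches the Hodge-Tate identifications of source and target, rather than only witnessing an abstract isomorphism between them. This requires functoriality of the splitting of Proposition \ref{split of HT} in the chosen smooth lifts, which is straightforward for the canonical lifts $\Bdr\langle X_n\rangle$ but is subtler on the side of $R$, since a smooth $\Bdr$-lift of $R$ only exists locally on $\Spa(R)$; globalizing will require a \v{C}ech descent argument on a suitable affinoid open cover together with the sheaf property of both constructions.
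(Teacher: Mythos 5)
The plan follows the paper's overall contour — reduce to the reduced case via derived Nakayama, use the Hodge--Tate structure to compute the colimit, and compare with \Cref{left kan of ddR} — but there is a genuine gap at the central step of part (i). You claim that ``each $K\langle X_n\rangle$ carries the canonical smooth $\Bdr$-lift $\Bdr\langle X_n\rangle$, compatible with the simplicial structure maps,'' and use \Cref{split of HT} to obtain a \emph{simplicially functorial} direct-sum decomposition. This claim is false: while each $\Bdr\langle X_n\rangle$ is a canonical object, the face and degeneracy maps $\mathcal{O}_K\langle X_n\rangle \to \mathcal{O}_K\langle X_m\rangle$ of the simplicial resolution are $\mathcal{O}_K$-algebra maps whose defining coefficients lie in $\mathcal{O}_K$, and there is no ring section of $\theta\colon \Bdr\to K$ (nor of $\Bdre\to K$ for $e\ge 2$). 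Hence these structure maps do not lift canonically or compatibly to $\Bdr$-algebra maps $\Bdr\langle X_n\rangle \to \Bdr\langle X_m\rangle$, and the splitting of \Cref{split of HT} — which depends on the chosen system of lifts $\{Y_e\}$ — is not simplicial. Without the splitting, you cannot ``pass the colimit inside the direct sum,'' so the explicit formula $\bigoplus_i L\wedge^i\mathbb{L}^\an_{R/K}(-i)[-i]$ is not established by this route, and the independence claim in (i) is not proved. The same issue reappears in the last paragraph, where you again try to rely on ``functoriality of the splitting \dots for the canonical lifts $\Bdr\langle X_n\rangle$.''

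The fix is the one the paper uses: work with the Hodge--Tate filtration itself, not the splitting. For each smooth affinoid $K\langle X_n\rangle$ this is the Postnikov filtration (\Cref{HT comp}), which is canonical and therefore strictly functorial in maps of rigid spaces — no choice of lift is needed. Taking the colimit of the filtered complexes, the graded pieces compute via \Cref{left kan of ddR} to $L\wedge^i\mathbb{L}^\an_{R/K}(-i)[-i]$, and any map between colimits induced by a (zig-zag of) comparison(s) of resolutions is filtered with equivalences on graded pieces, hence itself an equivalence; derived Nakayama then handles the $\xi$-completed version. For the independence statement the paper in fact proceeds more structurally: it builds the category $\mathcal{C}$ of all maps $\mathcal{O}_K\langle E\rangle \to R_0$ with the indiscrete topology, shows (by the argument of \cite[Tag 08PS]{Sta}, using $p$-completeness) that any simplicial resolution maps equivalently to the final object of $\Sh(\mathcal{C})$, and identifies the colimit as the global section of a presheaf of filtered complexes — which packages the zig-zag formalism once and for all. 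Your outline for part (ii) — derived Nakayama plus the graded-piece comparison — is essentially the paper's argument and is recoverable once part (i) is repaired along these lines; the concern you raise about $R$ only having local smooth $\Bdr$-lifts is not the real obstruction (the graded-piece comparison never requires a lift on the $R$ side), and in any case for $R$ smooth affinoid the obstruction to a global lift vanishes.
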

\begin{proof}
	Consider the category $\mathcal{C}$ of $\mathcal{O}_K$-linear maps $(\mathcal{O_K} \langle E \rangle, \beta: \mathcal{O_K} \langle E \rangle \ra R_0)$ for finite sets $E$, and equip its opposite category with the indiscrete topology.
	Then it suffices to show that any such resolution  $\mathcal{O}_K \langle X_\bullet\rangle \ra R_0$ is equivalent to the final object of the associated topos.
	This is shown as in \cite[08PS]{Sta}, with the only difference being that we use the $p$-completeness of $\mathcal{O}_K \langle X_\bullet\rangle$ to get the equality
	\[
	\mathrm{Mor}_C((\mathcal{O}_K \langle E\rangle, \beta), (\mathcal{O}_K \langle X_\bullet\rangle, \epsilon)) = \mathrm{Mor}_\mathrm{Sets}((E, \beta|_E), (\mathcal{O}_K \langle X_\bullet\rangle, \epsilon)).
	\]
	
	Now we consider the presheaf of complexes $L\ol \Prism_{R/\Bdr}$ over $\mathcal{C}$ sending a given  map $\mathcal{O_K} \langle E \rangle \ra R_0$ onto reduced prismatic cohomology $R\Gamma(K \langle E\rangle/\Bdr, \ol{\mathcal{O}}_\Prism)$ together with its Hodge-Tate filtration as in \Cref{HT fil}.
	Then by the above argument,  the global section of $\ol \Prism_{R/\Bdr}$ is isomorphic to 
	\[
	\underset{\Delta}{\colim} ~R\Gamma(K\langle X_\bullet\rangle/\Bdr_\Prism, \ol{\mathcal{O}}_\Prism). \tag{$\ast$}
	\]
	In particular, it is independent of the choice of the simplicial resolution $\mathcal{O}_K \langle X_\bullet\rangle \ra R_0$.
	Moreover, by \Cref{HT fil} the $i$-th graded piece of the Hodge-Tate filtration is equal to the following colimit
	\[
	\underset{\Delta}{\colim} ~\Omega^i_{K\langle X_\bullet\rangle/K}(-1)[-1],
	\]
	which by \Cref{left kan of ddR} is naturally isomorphic to $L\wedge^i\mathbb{L}^\an_{R/K}(-1)[-1]$.
	In particular, when $R$ is smooth over $K$, as the analytic cotangent complex coincides with its continuous differential over $K$, the graded pieces of the colimit of the Hodge-Tate filtrations in $(\ast)$ is isomorphic to the Hodge-Tate filtration of $R\Gamma(R/\Bdr_\Prism, \ol{\mathcal{O}}_\Prism)$.
	So by taking colimits with respect to the exhaustive increasing filtrations of the both, we finish the proof of (ii) for reduced prismatic cohomology.
	At last, the extension of the above to prismatic cohomology of $\mathcal{O}_\Prism$ follows from the derived Nakayama's lemma by the reduction mod $\xi$.
\end{proof}
\begin{remark}
	Here we notice that \Cref{left Kan of smooth} start with a simplicial resolution in the integral level, which is a priori different from the left Kan extension but in the rational level.
	In fact, using the Hodge-Tate resolution as in the second half of the proof and \Cref{left kan of ddR}, one can show that the analogous statement of \Cref{left Kan of smooth} holds true for a simplicial resolution $K \langle X_\bullet\rangle \ra R$ over $K$, without assuming the resolution are defined integrally.
\end{remark}
\begin{definition}\label{left Kan HT}
	Let $R$ be a topologically finite type algebra over $K$, and let $K \langle X_\bullet\rangle \ra R$ be any simplicial resolution such that each $X_n$ is finite.
	\begin{enumerate}[(i)]
		\item We define \emph{derived prismatic cohomology} $L\Prism_{R/\Bdr}$ and its reduction $L\ol\Prism_{R/\Bdr}$ as below
		\begin{align*}
			&L\Prism_{R/\Bdr}:=\left(\underset{\Delta}{\colim} ~R\Gamma(K \langle X_\bullet\rangle/\Bdr_\Prism, \mathcal{O}_\Prism)\right)^\wedge; \\
			&L\ol\Prism_{R/\Bdr}:= underset{\Delta}{\colim} ~R\Gamma(K\langle X_\bullet\rangle/\Bdr_\Prism, \ol{\mathcal{O}}_\Prism),
		\end{align*}
	where $(-)^\wedge$ is the derived $\xi$-completion.
	\item The $i$-th Hodge-Tate filtration on $L\ol\Prism_{R/\Bdr}$ is defined as 
	\[
	\underset{\Delta}{\colim} ~\Fil_i^\mathrm{HT} R\Gamma(K\langle X_\bullet\rangle/\Bdr_\Prism, \ol{\mathcal{O}}_\Prism).
	\]
	\end{enumerate}
\end{definition}
Using the above notations, \Cref{left Kan of smooth} shows that for a smooth rigid space $X$ over $K$, its derived prismatic cohomology is isomorphic to prismatic cohomology $R\Gamma(X/\Bdr_\Prism, \mathcal{O}_\Prism)$, and its Hodge-Tate-filtered reduced derived prismatic cohomology is isomorphic to reduced prismatic cohomology $R\Gamma(X/\Bdr_\Prism, \ol{\mathcal{O}}_\Prism)$ with its Postnikov filtration.

Finally, we are able to show that derived prismatic cohomology coincides with originally defined prismatic cohomology, for general l.c.i rigid spaces.
\begin{theorem}\label{left kan of coh}
	Let $R$ be a topologically finite type algebra over $K$ that has l.c.i singularities.
	Then there are natural isomorphisms
	\begin{align*}
		L\Prism_{R/\Bdr} &\cong R\Gamma(R/\Bdr_\Prism, \mathcal{O}_\Prism);\\
		L\ol\Prism_{R/\Bdr} & \cong R\Gamma(R/\Bdr_\Prism, \mathcal{O}_\Prism).
	\end{align*}
\end{theorem}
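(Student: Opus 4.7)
The plan is to build a natural comparison map, reduce mod $\xi$ by derived Nakayama, and match graded pieces of Hodge-Tate-type filtrations. For the map, I fix a simplicial resolution $K\langle X_\bullet\rangle\to R$ by Tate algebras in finitely many variables (\Cref{res of alg}). Each surjection $K\langle X_n\rangle\twoheadrightarrow R$ turns every prism over $R$ into a prism over $K\langle X_n\rangle$, producing a cosimplicial system of functorial maps $R\Gamma(K\langle X_n\rangle/\Bdr_\Prism,\mathcal{O}_\Prism)\to R\Gamma(R/\Bdr_\Prism,\mathcal{O}_\Prism)$ with constant target; taking the homotopy colimit and $\xi$-completing defines $L\Prism_{R/\Bdr}\to R\Gamma(R/\Bdr_\Prism,\mathcal{O}_\Prism)$, and \Cref{coh of red} supplies the reduced analogue.

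Both sides are derived $\xi$-complete---the left by construction, the right via the \v{C}ech--Alexander complex of \Cref{pm Cech} computed from $\xi$-complete algebras---so derived Nakayama reduces the problem to the reduced isomorphism $L\ol\Prism_{R/\Bdr}\to R\Gamma(R/\Bdr_\Prism,\ol{\mathcal{O}}_\Prism)$. Since this is analytic-local on $\Spa(R)$, by \Cref{pm envelope eg2} I may assume a regular closed immersion $P=\Bdr\langle T_1,\ldots,T_m\rangle\twoheadrightarrow R$ with Koszul-regular kernel and smooth lifts $Y_e=\Spa(P/\xi^e)$, which satisfy the hypothesis of \Cref{split of HT}. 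That proposition, combined with \Cref{prism coh}, equips $R\Gamma(R/\Bdr_\Prism,\ol{\mathcal{O}}_\Prism)$ with a direct sum decomposition whose $i$-th summand is $R\Gamma(X,L\wedge^i\mathbb{L}^\an_{R/K})(-i)[-i]$. On the derived side, \Cref{HT comp} provides a Hodge-Tate filtration on each smooth term $R\Gamma(K\langle X_n\rangle/\Bdr_\Prism,\ol{\mathcal{O}}_\Prism)$, and by \Cref{left kan of ddR} the colimit of the $i$-th graded pieces over $\Delta$ is precisely $L\wedge^i\mathbb{L}^\an_{R/K}(-i)[-i]$.

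Finally, I will check that the comparison map respects these filtrations and induces the expected isomorphism on graded pieces. Both Hodge-Tate filtrations arise from the Bockstein operator $\beta_\xi$ attached to $\xi^{i+1}/\xi^{i+2}\hookrightarrow \xi^i/\xi^{i+2}\twoheadrightarrow \xi^i/\xi^{i+1}$, and this construction is natural in the affinoid $K$-algebra, so the comparison map is automatically Bockstein-filtered; the induced map on the $i$-th associated graded becomes the canonical map $\underset{\Delta}{\colim}\,\Omega^i_{K\langle X_\bullet\rangle/K}\to L\wedge^i\mathbb{L}^\an_{R/K}$, an isomorphism by \Cref{left kan of ddR}. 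Passing to the exhaustive colimit of filtrations then yields the reduced isomorphism, and hence the integral one. The hard part will be verifying that the direct sum decomposition supplied by \Cref{split of HT} on the usual prismatic side---which depends on the choice of regular immersion---matches the Postnikov-type filtration coming from left Kan extension under the comparison map; this amounts to characterizing both sides as the universal Bockstein-filtered object and invoking functoriality of the simplicial resolution.
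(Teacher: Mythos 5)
Your construction of the comparison map and the reduction mod $\xi$ via derived Nakayama are fine, and the outline of matching graded pieces against \Cref{left kan of ddR} is the right endgame. But the step you yourself flag as ``the hard part'' is a genuine gap, and the Bockstein strategy you propose for closing it does not work. Here is why. Your argument correctly observes that the comparison map $\colim_\Delta R\Gamma(K\langle X_\bullet\rangle/\Bdr_\Prism,\ol{\mathcal{O}}_\Prism)\to R\Gamma(R/\Bdr_\Prism,\ol{\mathcal{O}}_\Prism)$ is compatible with the Bockstein operator, hence with the Postnikov filtrations on both sides. On the source this Postnikov filtration has graded pieces $\colim_\Delta\Omega^i_{K\langle X_\bullet\rangle/K}(-i)[-i]$, as you want. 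But on the target, when $R$ is not smooth, the Postnikov filtration has graded pieces $\mathrm{H}^i(R/\Bdr_\Prism,\ol{\mathcal{O}}_\Prism)[-i]$, which are \emph{not} the objects $R\Gamma(X,L\wedge^i\mathbb{L}^\an_{R/K})(-i)[-i]$ appearing in the decomposition supplied by \Cref{split of HT}: the latter live in several cohomological degrees at once because $\mathbb{L}^\an_{R/K}$ does. So ``Bockstein-filteredness'' gives you a map of the wrong pair of filtrations; relating the Postnikov filtration of the target to the \Cref{split of HT} decomposition is essentially the theorem you are trying to prove. Invoking a ``universal Bockstein-filtered object'' does not avoid this, because the Bockstein structure does not detect the Hodge-Tate graded pieces once $R$ is singular.

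The paper sidesteps this issue by using a stronger resolution than \Cref{res of alg}: it invokes \Cref{res of pair rational} to resolve the \emph{pair} $(P,I)$ (where $P=\Bdr\langle T_i\rangle\twoheadrightarrow R$ is a chosen regular immersion) by simplicial pairs $(\Bdr\langle X_\bullet,Y_\bullet\rangle,(Y_\bullet))$. Each term, as well as $(P,I)$ itself, satisfies \Cref{Assump}, and the map of pairs makes the Simpson-functor computation of \Cref{prism coh}/\Cref{lift coh} functorial in $[n]\in\Delta$ \emph{with target the computation for $(P,I)$}. Consequently the comparison map is, by construction, a map of decomposed complexes as in \Cref{split of HT}, and on $i$-th graded pieces it is exactly $\colim_\Delta\Omega^i_{K\langle X_\bullet\rangle/K}(-i)[-i]\to L\wedge^i\mathbb{L}^\an_{R/K}(-i)[-i]$, an isomorphism by \Cref{left kan of ddR}. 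So the extra ingredient you are missing is the resolution of pairs, which replaces the compatibility check you could not carry out with a cosimplicial-diagram argument where the compatibility is visible term by term.
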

\begin{proof}
	By derived Nakayama's lemma, it suffices to show the second isomorphism.
	As both reduced cohomology are \'etale local, we may assume $R$ admits a regular closed immersion as in \Cref{Assump}.
	Namely, we assume that there are surjections $P=\Bdr \langle T_i\rangle \ra P/I \ra P/(I,\xi)\cong R$, where $I$ is a Koszul-regular ideal of $P$ with $P/I$ being flat over $\Bdr$.
	We then use \Cref{res of pair rational} to find a simplicial resolution of pairs 
	\[
	(\Bdr \langle X_\bullet, Y_\bullet\rangle, (Y_\bullet)) \rra (P,I),
	\]
	such that each of $X_n$ and $Y_n$ is a finite set.
	Note that the induced simplicial $K$-algebra $K \langle X_\bullet\rangle$ resolves $P/\xi=R$ under the quotient map.

	To compare two prismatic cohomology, we apply \Cref{left Kan of smooth} at the $K$-resolution $K \langle X_\bullet\rangle \ra R$ to represent derived prismatic cohomology.
	It then suffices to show the following isomorphism
	\[
	\underset{\Delta}{\colim}~ R\Gamma(K\langle X_\bullet\rangle/\Bdr_\Prism, \ol{\mathcal{O}}_\Prism) \rra R\Gamma(R/\Bdr_\Prism, \ol{\mathcal{O}}_\Prism).
	\]
	Notice that the pairs $(\Bdr \langle X_n, Y_n\rangle, (Y_n))$ and $(P,I)$ all satisfy \Cref{Assump}.
	In particular, we can apply Simpson's functor as in \Cref{prism coh} (and in particular \Cref{lift coh}) at the pair $(\Bdr \langle X_n, Y_n\rangle, (Y_n))$ to compute $R\Gamma(K \langle X_n\rangle/\Bdr_\Prism, \mathcal{O}_\Prism)$, which is functorial with respect to $[n]\in \Delta$.
	In particular, by taking the colimit at their reduced prismatic cohomology, we get the following graded morphism
	\begin{align*}
		\underset{\Delta}{\colim} \moplus_i \Omega_{K \langle X_\bullet\rangle/K}^i(-i)[-i] \rra \moplus_i L\wedge^i\mathbb{L}^\an_{R/K}(-i)[-i].
	\end{align*}
In this way, thanks to the simplicial descent for the analytic cotangent complex in \Cref{left kan of ddR}, we get the isomorphism for the graded pieces, thus the one for reduced prismatic cohomology.

\end{proof}
As a consequence, we obtain a naturally defined Hodge-Tate filtration on reduced prismatic cohomology.
\begin{corollary}\label{HT fil}
	Let $X$ be a rigid space over $K$ that is a local complete intersection.
	There exists an ascending exhaustive $\mathbb{N}^\op$-filtration $\Fil^\mathrm{HT}_\bullet$ on $R\Gamma(X/\Bdr_\Prism,\ol{\mathcal{O}}_\Prism)$, and is multiplicative under the canonical $\mathbb{E}_\infty$-ring structure.
	It satisfies the following:
	\begin{enumerate}[(i)]
		\item the $i$-th graded piece is naturally isomorphic to $R\Gamma(X, L\wedge^i\mathbb{L}^\an_{R/K})(-i)[-i]$;
		\item when $X$ is smooth and affinoid, it coincides with the Postnikov filtration as in \Cref{HT comp}.
	\end{enumerate} 
\end{corollary}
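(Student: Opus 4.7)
The plan is to transport the filtration defined on derived prismatic cohomology $L\ol\Prism_{R/\Bdr}$ in Definition \ref{left Kan HT} across the comparison isomorphism of Theorem \ref{left kan of coh}. Concretely, for an affinoid l.c.i.\ rigid space $X=\Spa(R)$, I would pick a simplicial resolution $K\langle X_\bullet\rangle \ra R$ with each $X_n$ a finite set of variables (which exists by Corollary \ref{res of alg}) and \emph{define}
\[
\Fil^{\mathrm{HT}}_i R\Gamma(X/\Bdr_\Prism,\ol\calO_\Prism):= \underset{\Delta}{\colim}~\Fil^{\mathrm{HT}}_i R\Gamma(K\langle X_\bullet\rangle/\Bdr_\Prism,\ol\calO_\Prism),
\]
where the right-hand Postnikov filtration is well-defined on each term by Theorem \ref{HT comp}. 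The independence of this definition from the chosen resolution follows from the representability argument used in Proposition \ref{left Kan of smooth}, applied termwise to the truncations $\tau^{\leq i}$ of the presheaf of reduced prismatic cohomology; and the identification with $R\Gamma(X/\Bdr_\Prism,\ol\calO_\Prism)$ itself is exactly the content of Theorem \ref{left kan of coh}. Exhaustiveness is inherited from the smooth case since filtered colimits of exhaustive filtrations remain exhaustive.

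For (i), I would compute the $i$-th graded piece as
\[
\gr^{\mathrm{HT}}_i \cong \underset{\Delta}{\colim}~\gr^{\mathrm{HT}}_i R\Gamma(K\langle X_\bullet\rangle/\Bdr_\Prism,\ol\calO_\Prism) \cong \underset{\Delta}{\colim}~\Omega^i_{K\langle X_\bullet\rangle/K}(-i)[-i],
\]
where the second isomorphism invokes Theorem \ref{HT comp} termwise. By Proposition \ref{left kan of ddR}, the colimit of $\Omega^i_{K\langle X_\bullet\rangle/K}=\wedge^i\Omega^1_{K\langle X_\bullet\rangle/K}$ computes $L\wedge^i\mathbb{L}^\an_{R/K}$, yielding the claimed graded piece $R\Gamma(X, L\wedge^i\mathbb{L}^\an_{R/K})(-i)[-i]$. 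Compatibility (ii) with the Postnikov filtration in the smooth affinoid case is automatic from Proposition \ref{left Kan of smooth}(ii), since for smooth $R$ the colimit recovers the honest prismatic cohomology together with its Postnikov filtration.

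For multiplicativity, I would observe that each $R\Gamma(K\langle X_n\rangle/\Bdr_\Prism,\ol\calO_\Prism)$ is an $\mathbb{E}_\infty$-algebra whose Postnikov filtration is multiplicative (as it agrees with the Hodge filtration of the $\xi$-divided de Rham complex computing it via Corollary \ref{coh of envelope} and Corollary \ref{prism dR}, which is a filtered cdga). Taking the colimit in filtered $\mathbb{E}_\infty$-algebras preserves this multiplicative structure, and then transporting across Theorem \ref{left kan of coh} gives multiplicativity on the target.

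Finally, to globalize from the affinoid setting to arbitrary l.c.i.\ rigid spaces $X$, I would note that both the left Kan extended object and $R\Gamma(X/\Bdr_\Prism,\ol\calO_\Prism)$ satisfy analytic/\'etale descent (since prismatic cohomology is defined sheaf-theoretically and the left Kan construction is local on $X$). Gluing the affinoid-local filtrations is routine by the independence of resolution. The main subtlety I expect is ensuring the filtered colimit along $\Delta$ interacts correctly with the derived $\xi$-completion implicit in passing between $L\Prism_{R/\Bdr}$ and $L\ol\Prism_{R/\Bdr}$, but since we work directly with the \emph{reduced} object this issue is bypassed, and the derived Nakayama step used in Theorem \ref{left kan of coh} only enters for the unreduced variant.
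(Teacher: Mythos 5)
Your proposal is correct and takes essentially the same route the paper intends: the Corollary is designed as a direct consequence of Definition \ref{left Kan HT} combined with Theorem \ref{left kan of coh}, and the paper gives no separate proof. Your spelling out of the details --- using Corollary \ref{res of alg} and Proposition \ref{left Kan of smooth} for independence of resolution, Proposition \ref{left kan of ddR} for the graded pieces, siftedness of the geometric realization for the $\mathbb{E}_\infty$-structure, commuting colimits for exhaustiveness, and descent for globalization --- is exactly the argument one would write if the paper had included one.
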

Note that by assumption, the $i$-th graded piece of the Hodge-Tate filtration is also isomorphic to $\gr^i R\Gamma(X/K_{\mathrm{inf}}, \ol{\mathcal{O}}_\Prism)(-i)$ for the infnitesimal filtration on infinitesimal cohomology,
which follows from the comparison between the infinitesimal cohomology and the derived de Rham cohomology as in \cite{Guo21}.

\section{Infinitesimal comparison}\label{sec inf compa}
In this section, we prove the infinitesimal comparison theorem, using the decalage functor $L_\eta$.

\begin{theorem}\label{decalage comp}
	Let $X$ be an affinoid rigid space over $K$ that has l.c.i. singularities.
	There is a natural map of complexes as below
	\[\xymatrix{R\Gamma(X/\Bdr_\mathrm{pinf}, \mathcal{O}_{X/\Bdr}) \ar[r]^\phi & L\eta_\xi R\Gamma(X/\Bdr_\Prism, \mathcal{O}_\Prism) \ar[r]^\psi& R\Gamma(X/\Bdr_\Prism, \mathcal{O}_\Prism).}\]
	The map $\phi$ is an isomorphism when $X$ is smooth and affinoid.
\end{theorem}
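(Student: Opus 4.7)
The plan is to work locally using a regular closed immersion as in \Cref{Assump}, exploit the de Rham-style formulas from \Cref{prism dR} and \Cref{coh of envelope}, and then globalize using the siftedness of such immersions noted in \Cref{sift}. The map $\psi$ will be the natural inclusion of the subcomplex associated with the decalage functor $L\eta_\xi$ applied to a $\xi$-torsionfree complex, which is tautological; the content is the construction of $\phi$ and its isomorphism in the smooth case.

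To construct $\phi$, I first assume we are in the setup of \Cref{Assump}, so that by \Cref{prism dR} (applied to $\mathcal{F}=\mathcal{O}_{X/\Bdr}$) prismatic cohomology is computed by the $\xi$-divided de Rham complex
\[
C_\Prism^\bullet:\quad D_\Prism \rra D_\Prism\underset{P}{\otimes}\tfrac{\Omega^1_{P/\Bdr}}{\xi} \rra \cdots \rra D_\Prism\underset{P}{\otimes}\tfrac{\Omega^d_{P/\Bdr}}{\xi^d},
\]
while infinitesimal cohomology is computed by the ordinary de Rham complex $C_{\inf}^\bullet$ with $C_{\inf}^i = D_{\inf}\otimes_P \Omega^i_{P/\Bdr}$. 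Interpreting $\tfrac{\Omega^i_{P/\Bdr}}{\xi^i}$ as the submodule $\xi^{-i}\Omega^i_{P/\Bdr}\subset \Omega^i_{P/\Bdr}[1/\xi]$, the structure map $D_{\inf}\ra D_\Prism$ combined with the inclusion $\Omega^i_{P/\Bdr}\hookrightarrow \xi^{-i}\Omega^i_{P/\Bdr}$ gives a morphism of complexes $C_{\inf}^\bullet\ra C_\Prism^\bullet$. In degree $i$ its image lies in $\Omega^i_{P/\Bdr}=\xi^i\cdot \xi^{-i}\Omega^i_{P/\Bdr}$, and the differential of such an element lies in $\Omega^{i+1}_{P/\Bdr}=\xi^{i+1}\cdot \xi^{-(i+1)}\Omega^{i+1}_{P/\Bdr}$, so the image factors through the subcomplex $L\eta_\xi C_\Prism^\bullet$. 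This produces $\phi$ locally.

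For the smooth case, \Cref{coh of envelope} identifies $C_\Prism^i$ with $\xi^{-i}\Omega^i_{\wt R/\Bdr}$ and $C_{\inf}^\bullet$ with $\Omega^\bullet_{\wt R/\Bdr}$. A direct computation on the $\xi$-torsionfree complex $C_\Prism^\bullet$ shows that $(L\eta_\xi C_\Prism)^i=\Omega^i_{\wt R/\Bdr}$: an element $c\in \xi^i C_\Prism^i=\Omega^i_{\wt R/\Bdr}$ satisfies $dc\in \Omega^{i+1}_{\wt R/\Bdr}=\xi^{i+1}C_\Prism^{i+1}$ automatically. Thus $\phi$ identifies $C_{\inf}^\bullet$ with $L\eta_\xi C_\Prism^\bullet$ termwise, hence as complexes.

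The main obstacle is verifying that these local constructions assemble into a globally defined morphism in the derived category. Two points require care: first, one must check that $C_\Prism^\bullet$ is $\xi$-torsionfree so that $L\eta_\xi$ is presented by the naive subcomplex formula used above; this follows from \Cref{prism envelope}, which gives $\xi$-torsionfreeness of $D_\Prism$, combined with the $P$-flatness of each $\Omega^i_{P/\Bdr}$. Second, independence of the choice of regular immersion $P\ra R$ is established using the siftedness in \Cref{sift}: given two presentations, their coproduct (as in the remark) receives maps from both and provides a comparison, and the morphism $\phi$ is functorial along these maps by the explicit formulas above. Globalization from the affinoid case then follows by standard \v{C}ech descent, since both $R\Gamma_{\mathrm{pinf}}(-/\Bdr,\mathcal{O})$ and $L\eta_\xi R\Gamma_\Prism(-/\Bdr,\mathcal{O}_\Prism)$ satisfy descent on affinoid open covers of $X$.
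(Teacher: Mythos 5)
Your proposal is essentially the paper's own proof: both realize prismatic and infinitesimal cohomology by the $\xi$-divided and ordinary de Rham complexes over $D_\Prism$ and $D_{\inf}$, observe that the image of $\Omega^i_{P/\Bdr}$ lies in the $\xi^i$-divisible part so that $\phi$ factors through $\eta_\xi$, and then verify the smooth isomorphism locally using a smooth lift $\wt R$. The one small divergence worth noting: for the map $\psi\colon L\eta_\xi\to\id$ the paper invokes the intrinsic criterion of \cite[Lemma 6.10]{BMS} after checking that $\mathrm{H}^0_\Prism(X/\Bdr)$ is $\xi$-torsionfree, whereas you work directly with a $\xi$-torsionfree \v{C}ech--Alexander or de Rham representative, which is equivalent but requires a presentation. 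Also, in the smooth case the paper first uses derived $\xi$-completeness of both sides to reduce to checking mod $\xi$, identifies both reductions with $\Omega^\bullet_{R/K}$ via the Hodge--Tate comparison, and only then finishes by a local termwise check with a smooth lift; your argument skips directly to the termwise identification $\eta_\xi C_\Prism^\bullet = \Omega^\bullet_{\wt R/\Bdr}$, which is cleaner and gives the same conclusion since both sides are $\xi$-complete and the identification is visibly compatible with differentials. Your explicit discussion of independence via siftedness of regular immersions and \v{C}ech descent is a useful precision that the paper leaves implicit.
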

To simplify the notation, we use $R\Gamma_\mathrm{inf}(X/\Bdr)$ and $R\Gamma_\Prism(X/\Bdr)$ to denote infinitesimal and prismatic cohomology.
\begin{proof}
	To start, we first note that map from infinitesimal cohomology to prismatic cohomology can be constructed without the l.c.i assumption on $X$.
	Recall there is a natural morphism of ringed sites in \Cref{ass prism},
	\[
	(X/\Bdr_\Prism, \mathcal{O}_\Prism) \rra (X/\Bdr_{\mathrm{inf}}, \mathcal{O}_{X/\Bdr}),
	\]
	sending a pro-infinitesimal thickening $(U=\Spa(A),T_e=\Spa(B_e))$ onto the prism
	\[
	(B'=B[\frac{I}{\xi}]^\wedge_\tf  \ra B'/\xi \leftarrow R),
	\]
	where $I$ is the kernel of the surjection $\mathcal{O}_{X/\Bdr}(U,T_e)=B:=\varprojlim B_e \ra A$.
	Notice that there is a natural map of $\Bdr$-algebras $B\ra B'$ by construction.
	Thus by taking the homotopy limit among all pro-infinitesimal thickenings in $X/\Bdr_{\mathrm{inf}}$, we get a map of complexes
	\[
	\psi:R\Gamma_\mathrm{inf}(X/\Bdr) \rra R\Gamma_\Prism(X/\Bdr).
	\]
	
	On the other hand, the decalage functor naturally induces a map $L\eta_\xi R\Gamma_\Prism(X/\Bdr) \ra R\Gamma_\Prism(X/\Bdr)$.
	To see this, by \cite[Lemma 6.10]{BMS}, it suffices to show that under the assumption $\mathrm{H}^0_\Prism(X/\Bdr)$ has no $\xi$-torsion.
	In this case, we consider the natural triangle induced by multiplication by $\xi$ on $\mathcal{O}_\Prism$, and get
	\[
	\xymatrix{R\Gamma_\Prism(X/\Bdr) \ar[r]^{\cdot \xi} & R\Gamma_\Prism(X/\Bdr) \ar[r] & R\Gamma_{\ol\Prism}(X)=R\Gamma(X/\Bdr_\Prism, \ol{\mathcal{O}}_\Prism)}.
	\]
	Notice that everything above has no negative cohomology.
	So by taking the long exact sequence and looking at $\mathrm{H}^0$, we get the $\xi$-torsionfreeness of $\mathrm{H}^0_\Prism(X/\Bdr)$.
	
	To see the map $\phi$ from infinitesimal cohomology to $L\eta_\xi R\Gamma_\Prism(X/\Bdr)$, we assume $X=\Spa(R)$  admits a regular closed immersion into $P=\Bdr \langle T_i\rangle$ as in \Cref{Assump}.
	In this case, by translating both cohomology into forms involving de Rham complexes (\cite[Theorem 4.1.1]{Guo21} for infinitesimal cohomology, and \Cref{prism dR} for prismatic cohomology), the natural map $\psi$ can be rewritten as 
	\[
	\psi: \left(D_{\inf} \ra D_{\inf}\otimes_P \Omega_P^1 \ra \cdots\right) \rra \left( D_\Prism \ra D_\Prism \otimes_P \frac{\Omega_P^1}{\xi} \ra \cdots\right),
	\]
	where $D_\Prism=D_{\inf}[\frac{I}{\xi}]^\wedge_\tf$.
	Notice that the image of each $D_{\inf}\otimes_P \Omega_P^i$ in $D_\Prism\otimes_P \frac{\Omega_P^i}{\xi^i}$ is contained in the submodule $D_\Prism\otimes_P \Omega_P^i=\xi^i \cdot D_\Prism\otimes_P \frac{\Omega_P^i}{\xi^i}	$.
	In particular, by the explicit construction of the $\eta_\xi$ functor in \cite[Section 6]{BMS}, as $\psi^i(\omega^i)$ (for $\omega^i\in D_{\inf}\otimes_P \Omega_P^i$) is contained in $\xi^i \cdot D_\Prism\otimes_P \frac{\Omega_P^i}{\xi^i}$, we see
	\[
	\psi^i(D_{\inf}\otimes_P \Omega_P^i) \subset \left( \eta_\xi (D_\Prism\otimes_P \frac{\Omega_P^\bullet}{\xi^\bullet}) \right)^i.
	\]
	Thus the map from infinitesimal cohomology to prismatic cohomology naturally factors through the map $\psi$.
	
	At last, let us further assume $X$ is smooth.
	By the derived $\xi$-completeness of $L\eta_\xi R\Gamma_\Prism(X/\Bdr)$ (\cite[Lemma 6.19]{BMS}) and that of infinitesimal cohomology $R\Gamma_\mathrm{inf}(X/\Bdr)$, it suffices to check the isomorphism after reducing $\phi$ mod $\xi$.
	Notice infinitesimal cohomology satisfies the formula 
	\[
	R\Gamma_\mathrm{inf}(X/\Bdr) \otimes^L_\Bdr K \cong \Omega_{R/K}^\bullet.
	\]
	On the other hand, by the Hodge-Tate comparison (\Cref{HT comp}) and \cite[Proposition 6.12]{BMS}, we have
	\[
	(L\eta_\xi R\Gamma_\Prism(X/\Bdr))\otimes^L_\Bdr K \cong \Omega_{R/K}^\bullet,
	\]
	where Bockstein operator is exactly the continuous differential operator in this form.
	So both the source and the target are abstractly isomorphic to each other.
	Finally, to see the map $\phi$  is indeed an isomorphism, it suffices to do this locally, assuming $R$ admits a smooth lift $\wt R$ over $\Bdr$, where we use the $\xi$-divided de Rham complex in \Cref{coh of envelope} to identify termwise generators.

\end{proof}
It is well-known to experts that the decalage functor does not commute with taking derived global sections (c.f. \cite[discussion above Theorem 1.17]{BMS}).
We give a simple example illustrating the discrepancy.
\begin{example}\label{eg decalage proper}
	Let $X$ be a projective curve over $K$, defined over a discretely valued subfield $k$ that has a perfect residue field.
	By \cite[Proposition 6.12]{BMS} and the Hodge-Tate comparison \Cref{HT comp}, the mod $\xi$-reduction $(L\eta_\xi C)\otimes_\Bdr^L K$ for $C=R\Gamma_\Prism(X/\Bdr)$ is isomorphic to the following explicit complex
	\[
	\left( \mathrm{H}^0(X,\mathcal{O}_X) \rra (\mathrm{H}^1(X, \mathcal{O}_X)(1) \oplus \mathrm{H}^0(X,\Omega_X^1)) \rra \mathrm{H}^1(X,\Omega_X^1)(1) \right).
	\]
	By the Galois equivariance, the complex is isomorphic to the derived sum of $\left( \mathrm{H}^0(X,\mathcal{O}_X) \ra \mathrm{H}^0(X,\Omega_X^1)\right)$ and $\left( \mathrm{H}^1(X,\mathcal{O}_X) \ra \mathrm{H}^1(X,\Omega_X^1) \right)(1)$, where each map is induced from the differential $\mathcal{O}_X\ra \Omega_X^1$, and is zero by the degeneration of the Hodge-de Rham spectral sequence.
	In particular $(L\eta_\xi C)\otimes_\Bdr^L K$ has nonzero direct summands of Hodge-Tate weight $(-1)$.
	On the other hand, the reduction $R\Gamma_{\inf}(X/\Bdr)\otimes_\Bdr K\cong R\Gamma_\mathrm{dR}(X/K)$ only has weight $0$ cohomology.
	Thus it is impossible to have a natural isomorphism between $R\Gamma_{\inf}(X/\Bdr)$ and $L\eta_\xi R\Gamma_\Prism(X/\Bdr)$ in general.
	
\end{example}
\begin{remark}
	The above example indicates that certain components of infinitesimal cohomology and decalaged-prismatic cohomology still coincide.
	In fact, for smooth rigid spaces in general, one may form a map between them, but in the other direction.
	To see this, let $X$ be a smooth rigid space and let $\{U_j\ra X\}$ be an affinoid open covering.
	Then we have
	\begin{align*}
		R\Gamma_\mathrm{inf}(X/\Bdr) \cong R\varprojlim_j R\Gamma_\mathrm{inf}(U_j/\Bdr)  \cong R\varprojlim_j L\eta_\xi R\Gamma_\Prism(U_j/\Bdr)  &
		 \leftarrow  L\eta_\xi  R\varprojlim_j R\Gamma_\Prism(U_j/\Bdr) \\
		&\cong L\eta_\xi R\Gamma(X/\Bdr),
	\end{align*}
where we use the natural isomorphism in \Cref{decalage comp} for smooth affinoid $U_j$ in the second isomorphism above.
When $X$ is smooth and proper, the above identifies the first row of Hodge-Tate spectral sequence with the weight $0$ part of the decalaged-prismatic cohomology after mod $\xi$.
\end{remark}
We also give an example illustrating the discrepancy for l.c.i rigid spaces that are not smooth.
\begin{example}\label{eg decalage aff}
	Let $X=\Spa(R)$ be an affinoid rigid space that has l.c.i singularities and is defined over a discretely valued subfield $k$.
	Assume its embedded dimension into polydiscs $K\langle T_i\rangle$ is $N$ (\cite[Appendix 5.6]{GL20}).
	By \cite[Proposition 6.12]{BMS} again, we can write $(L\eta_\xi C)\otimes_\Bdr^L K$ for $C=R\Gamma_\Prism(X/\Bdr)$ into the following direct sum of complexes
	\begin{align*}
		& \left(R \rra \Omega^1_{R/K} \rra \cdots \rra \Omega^N_{R/K}\right) \\
		&\moplus  \left(\mathrm{H}^{-1}(\mathbb{L}_{R/K}^\an) \rra \mathrm{H}^{-1}(L\wedge^2\mathbb{L}_{R/K}^\an) \rra \cdots \rra \mathrm{H}^{-1}(L\wedge^{N+1}\mathbb{L}_{R/K}^\an)\right)(-1) \\
		&\moplus \cdots \\
		& \moplus \left(\mathrm{H}^{-n}(L\wedge^n \mathbb{L}_{R/K}^\an) \rra \cdots \rra \mathrm{H}^{-n}(L\wedge^{n+N} \mathbb{L}_{R/K}^\an)\right) (-n) \\
		& \moplus \cdots.
	\end{align*}
Note that the weight zero part is the cohomology of the usual continuous de Rham complex $\Omega_{R/K}^\bullet$.
On the other hand, the mod $\xi$ reduction $R\Gamma_{\inf}(R/K)=R\Gamma_{\inf}(R/\Bdr)\otimes_\Bdr^L K$ is computed as the cohomology of the analytic derived de Rham complex $\dR_{R/K}$.
It is known that in this case the latter cohomology is a direct summand of the former one (\cite[Remark 6.2.3]{Guo21}, see also \cite[Section 5]{Bha12} for algebraic case), and different in general (see \cite[Example 4.4]{AK11} for an example of affine variety that has different cohomology).
So by choosing an affinoid open subset around the singularity for the example in loc. cit., we see those two cohomology are different in general.
\end{example}

\begin{remark}
	To globalize \Cref{decalage comp}, one can consider  a (pre)sheaf of complexes $\Prism_{X/\Bdr}$ over a general rigid space $X$, sending each affinoid open subset $\Spa(R)$ to its prismatic cohomology complex $R\Gamma_\Prism(R/\Bdr)$.
	In this case, similar to \cite{BMS}, one can show that for a smooth rigid space $X$ we have a natural isomorphism
	\[
	Ru_{X/\Bdr*} \mathcal{O}_{X/\Bdr} \rra L\eta_\xi \Prism_{X/\Bdr},
	\]
	where $Ru_{X/\Bdr *} \mathcal{O}_{X/\Bdr}$ is the infinitesimal cohomology sheaf as in \cite[Section 7]{Guo21}.
\end{remark}
\section{Pro-\'etale comparison}\label{sec proetale compa}
In this section, we prove the comparison theorem between prismatic cohomology over $\Bdr$ and pro-\'etale cohomology of $\BBdrp$, studied in \cite{Sch13} and \cite[Section 13]{BMS}.

\subsection{K\"unneth formula and \'etale localization}
As a preparation for the pro-\'etale comparison, we first prove the K\"unneth formula and the \'etale localization for reduced prismatic cohomology.

We start by observing that the analytic cotangent complex admits a natural product formula.
\begin{lemma}\label{Kunneth cotangent}
	Let $R_1$ and $R_2$ be two topologically finite type $K$-algebras, and let $p_i$ be the two natural maps from $R_i$ to $R_1\wh{\otimes}_K R_2$.
Then the following natural map is an isomorphism after an integral derived $p$-completion.
\[
Lp_1^*\mathbb{L}^\an_{R_1/K} \moplus Lp_2^*\mathbb{L}^\an_{R_2/K} \rra \mathbb{L}^\an_{R_1\wh{\otimes}_K R_2/K}.
\]
\end{lemma}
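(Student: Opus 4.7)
The plan is to reduce the claim to the case of finite Tate algebras $K\langle X\rangle$ via simplicial resolutions, where both sides reduce to explicit computations with free modules of continuous differentials, and then bootstrap up to the general case using the left Kan extension technique of \Cref{left kan of ddR}.

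First, apply \Cref{res of alg} to produce simplicial resolutions $K\langle X_\bullet\rangle\to R_1$ and $K\langle Y_\bullet\rangle\to R_2$ in which each of $X_n,Y_n$ is a finite set. Their termwise $p$-adically completed tensor product $K\langle X_\bullet,Y_\bullet\rangle := K\langle X_\bullet\rangle\wh{\otimes}_K K\langle Y_\bullet\rangle$ is again a simplicial finite Tate algebra over $K$. One checks, at the level of integral models, that this remains a resolution of $R_1\wh{\otimes}_K R_2$: the completed tensor product of $p$-torsionfree integral models of the two resolutions stays $p$-torsionfree, and $\pi_0$ is compatible with $p$-adic completion because each $R_i^0$ is topologically finitely presented over $\mathcal{O}_K$.

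For a single simplicial degree $n$, the claimed Künneth map is straightforward: both sides are discrete free $K\langle X_n,Y_n\rangle$-modules with matching generators $\{dx\}_{x\in X_n}\cup\{dy\}_{y\in Y_n}$, so we have an isomorphism
\[
K\langle X_n,Y_n\rangle\underset{K\langle X_n\rangle}{\otimes}\mathbb{L}^\an_{K\langle X_n\rangle/K}\moplus K\langle X_n,Y_n\rangle\underset{K\langle Y_n\rangle}{\otimes}\mathbb{L}^\an_{K\langle Y_n\rangle/K}\xrightarrow{\sim}\mathbb{L}^\an_{K\langle X_n,Y_n\rangle/K}.
\]
Take the homotopy colimit over $\Delta^\op$ of both sides. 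By \Cref{left kan of ddR} applied to the resolution $K\langle X_\bullet,Y_\bullet\rangle\to R_1\wh{\otimes}_K R_2$, the right-hand side computes the target $\mathbb{L}^\an_{R_1\wh{\otimes}_K R_2/K}$. For the first summand on the left, commuting $\colim_\Delta$ past the (derived $p$-completed) tensor product yields $\bigl(\colim_\Delta\mathbb{L}^\an_{K\langle X_\bullet\rangle/K}\bigr)\wh{\otimes}_K R_2$, which by another application of \Cref{left kan of ddR} (now to the resolution of $R_1$) is $\mathbb{L}^\an_{R_1/K}\wh{\otimes}_K R_2\cong Lp_1^*\mathbb{L}^\an_{R_1/K}$, and symmetrically for the second summand.

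The main obstacle is the careful bookkeeping of derived $p$-completion, derived tensor product, and homotopy colimits in the last step: these operations do not commute freely in general. The issue is handled by working throughout with $p$-torsionfree integral models, where derived $p$-completion commutes with homotopy colimits after restricting to the subcategory of $p$-complete objects (as already used in the proof of \Cref{left kan of ddR}), and by exploiting the fact that at each simplicial degree the map $K\langle X_n\rangle\to K\langle X_n,Y_n\rangle$ is flat, so the relevant derived base changes agree with ordinary completed base changes.
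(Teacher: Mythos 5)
Your proof takes a genuinely different route from the paper's. You resolve $R_1$ and $R_2$ by finite Tate algebras via \Cref{res of alg}, form the termwise completed tensor product as a resolution of $R_1\wh\otimes_K R_2$, observe that the K\"unneth isomorphism is obvious on each term (free modules on matching generators), and then pass to the homotopy colimit, invoking \Cref{left kan of ddR} to identify the target. The paper instead argues directly at the level of integral models: it writes $\mathbb{L}^\an_{R/K}$ as $(\mathbb{L}_{R_0/\mathcal{O}_K})^\wedge_p[\tfrac1p]$ following \cite[Remark 7.2.45]{GR}, invokes the classical algebraic K\"unneth formula \cite[Tag 09DL]{Sta} for $\mathbb{L}_{R_{1,0}\otimes_{\mathcal{O}_K} R_{2,0}/\mathcal{O}_K}$, and uses pseudo-coherence of the $p$-completed cotangent complexes \cite[Proposition 7.1.31, Lemma 7.1.25]{GR} to commute derived $p$-completion past the derived pullbacks $Lp_i^*$. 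Both arguments are correct. Your approach has the appeal of reusing the simplicial-resolution machinery developed in \Cref{subset simplicial res} of this paper, so the proof is more self-contained. But the ``careful bookkeeping'' you flag at the end is exactly where the cost lies: one must justify commuting derived $p$-completion, the derived pullbacks $Lp_i^*$, and $\colim_\Delta$ across a bisimplicial object (diagonal vs.\ iterated colimit, Eilenberg--Zilber), and also verify that the diagonal $\mathcal{O}_K\langle X_\bullet,Y_\bullet\rangle$ is still a resolution (using $p$-torsionfreeness, flatness over the valuation ring $\mathcal{O}_K$, and that derived $p$-completion commutes with colimits by \cite[Tag 091V]{Sta}). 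These are all doable, but the paper's pseudo-coherence argument handles all of them in one stroke: pseudo-coherence is the precise finiteness condition under which derived $p$-completion commutes with derived base change, which is the only delicate step in the direct argument. In short, your argument is valid and philosophically consistent with the left-Kan-extension theme of the paper, but it re-derives rather than quotes the key commutation, and the paper's shorter route is the more economical one once Gabber--Ramero is accepted.
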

\begin{proof}
	Let $R_{1,0}$ and $R_{2,0}$ be two topologically finite type $\mathcal{O}_K$-algebras whose generic fibers are $R_1$ and $R_2$ separately.
	By \cite[Remark 7.2.45]{GR}, the analytic cotangent complex $\mathbb{L}^\an_{R/K}$ is obtained by inverting $p$ at the derived $p$-completion of the algebraic cotangent complex $\mathbb{L}_{R_0/\mathcal{O}_K}$, where $R$ is any of $R_1, R_2$ or $R_1\wh{\otimes}_K R_2$ (respectively for $R_0$).
	Denote by $\mathbb{L}^\an_{R_0/\mathcal{O}_K}$ the derived $p$-completion of the algebraic cotangent complex $\mathbb{L}_{R_0/\mathcal{O}_K}$.
	Then we notice that by \cite[Proposition 7.1.31]{GR}, each of $\mathbb{L}^\an_{R_0/\mathcal{O}_K}$ for $R_0=R_{1,0}, R_{2,0}$ or $R_{1,0}\wh{\otimes} R_{2,0}$ are pseudo-coherent.
	On the other hand, recall the natural isomorphism for the algebraic cotangent complex (\cite[Tag 09DL]{Sta}) as below
	\[
	Lp_1^*\mathbb{L}_{R_{1,0}/\mathcal{O}_K} \moplus Lp_2^*\mathbb{L}_{R_{2,0}/\mathcal{O}_K} \rra \mathbb{L}_{R_{1,0}{\otimes}_{\mathcal{O}_K} R_{2,0}/\mathcal{O}_K}.
	\]
	In this way, by applying the derived $p$-completion and using \cite[Lemma 7.1.25]{GR}, we get a natural isomorphism
	\[
	Lp_1^*\mathbb{L}^\an_{R_{1,0}/\mathcal{O}_K} \moplus Lp_2^*\mathbb{L}^\an_{R_{2,0}/\mathcal{O}_K} \rra \mathbb{L}^\an_{R_{1,0}\wh{\otimes}_{\mathcal{O}_K} R_{2,0}/\mathcal{O}_K}.
	\]
	Thus  the result follows after inverting $p$.
\end{proof}
In the following, to simplify the notation we denote $R\Gamma_{\ol{\Prism}}(R)$ to be reduced prismatic cohomology $R\Gamma(R/\Bdr_\Prism, \ol{\mathcal{O}}_\Prism)$.
\begin{theorem}\label{Kunneth reduced}
	Let $R_1$ and $R_2$ be two topologically finite type algebras over $K$ that have l.c.i singularities, and let $p_i$ be the two natural maps $R_i \ra R_1\wh{\otimes}_K R_2$.
	There is a natural isomorphism of $\mathbb{E}_\infty$-rings over $R_1\wh{\otimes}_K R_2$:
	\[
	\left( Lp_1^*R\Gamma_{\ol{\Prism}}(R_1) \right) \otimes^L_{R_1\wh{\otimes} R_2} \left( Lp_2^* R\Gamma_{\ol{\Prism}}(R_2) \right) \rra R\Gamma_{\ol{\Prism}}(R_1\wh{\otimes}_K R_2).
	\]
\end{theorem}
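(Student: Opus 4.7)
The strategy will be to reduce the K\"unneth formula to the case of Tate algebras via the left Kan extension presentation of derived prismatic cohomology. First, observe that $R_1 \wh{\otimes}_K R_2$ automatically has l.c.i.\ singularities: by \Cref{Kunneth cotangent} its analytic cotangent complex decomposes as $Lp_1^*\mathbb{L}^\an_{R_1/K} \moplus Lp_2^*\mathbb{L}^\an_{R_2/K}$, each summand has Tor-amplitude in $[-1,0]$ under the l.c.i.\ hypothesis (\cite[Appendix]{GL20}), and this characterizes the l.c.i.\ property analytically. Consequently, \Cref{left kan of coh} identifies each of the three terms in the statement with its derived analogue $L\ol\Prism$ from \Cref{left Kan HT}.

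Next, using \Cref{res of alg}, I will pick simplicial resolutions $\epsilon_1\colon K\langle X_\bullet\rangle \to R_1$ and $\epsilon_2\colon K\langle Y_\bullet\rangle \to R_2$ with each $X_n, Y_n$ finite. The diagonal of the bisimplicial Tate algebra $K\langle X_m, Y_n\rangle = K\langle X_m\rangle \wh{\otimes}_K K\langle Y_n\rangle$ then provides a simplicial resolution $K\langle X_\bullet, Y_\bullet\rangle \to R_1 \wh{\otimes}_K R_2$ by Tate algebras (the flatness of $K\langle X_n\rangle$ over $K$ ensures that the completed tensor product already computes the derived one, and Eilenberg--Zilber ensures that the diagonal has the correct homotopy type). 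Since derived tensor product commutes with colimits in the $\infty$-category of $\mathbb{E}_\infty$-algebras, the left Kan extension construction of both sides reduces the whole statement to the case where $R_1 = K\langle X\rangle$ and $R_2 = K\langle Y\rangle$ for finite sets $X, Y$.

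For such pairs of Tate algebras, I will invoke the explicit smooth formula established in the proof of \Cref{HT comp}: taking the trivial regular closed immersion $\Bdr\langle T_i\rangle \to K\langle T_i\rangle$ and applying \Cref{prism coh} combined with \Cref{Simp gr}, the reduced prismatic cohomology splits as a sum of Tate twists of analytic differentials,
\[
R\Gamma_{\ol\Prism}(K\langle T_i\rangle) \cong \moplus_i \Omega^i_{K\langle T_i\rangle/K}(-i)[-i].
\]
The desired K\"unneth formula then reduces to the tautological decomposition
\[
\Omega^n_{K\langle X, Y\rangle/K} \cong \moplus_{i+j=n} p_1^*\Omega^i_{K\langle X\rangle/K} \underset{K\langle X, Y\rangle}{\motimes} p_2^*\Omega^j_{K\langle Y\rangle/K},
\]
compatibly with Tate twists, which is immediate from the fact that $K\langle X, Y\rangle = K\langle X\rangle \wh{\otimes}_K K\langle Y\rangle$.

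The main technical hurdle will be the second step: verifying that the diagonal of the bisimplicial resolution genuinely computes the derived $\wh\otimes_K$ at the $\mathbb{E}_\infty$-level, and that the comparison map constructed termwise from the Tate-algebra K\"unneth isomorphism assembles into a map of $\mathbb{E}_\infty$-algebras over $R_1 \wh\otimes_K R_2$. This should follow from the standard sifted-colimit formalism for $\mathbb{E}_\infty$-rings together with flatness of Tate algebras over $K$, but some care is needed to ensure that the formation of the completed tensor product commutes with geometric realization when restricted to the subcategory of finite-variable Tate algebras resolving our $R_i$.
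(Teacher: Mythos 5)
Your proposal takes a genuinely different route from the paper. The paper does not reduce to Tate algebras via left Kan extension. Instead, it works entirely with the Hodge--Tate filtration: it observes that $\gr^{\mathrm{HT}}_\bullet R\Gamma_{\ol\Prism}(R) \cong \Sym_R(\mathbb{L}^\an_{R/K}(-1)[-1])$ is the free symmetric algebra on its first graded piece, endows the source of the comparison map with the Day-convolution product filtration, and then (since both filtrations are exhaustive) reduces the whole statement to an isomorphism on graded algebras. Because $\Sym$ converts direct sums into tensor products, this collapses to the analytic cotangent complex K\"unneth of \Cref{Kunneth cotangent} — a statement the paper establishes once and for all via integral models and derived $p$-completion. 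Your approach instead localizes the statement at the Tate-algebra level of the left Kan extension, where the K\"unneth is elementary, and tries to propagate it by geometric realization. If carried through, your route would be structurally parallel to the way \Cref{left kan of coh} itself is proved, but it places the completion issue at a different and, I think, more delicate place.

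The technical hurdle you flag in your last paragraph is, however, a genuine gap rather than a routine verification. The claim that the diagonal $K\langle X_\bullet, Y_\bullet\rangle \to R_1 \wh\otimes_K R_2$ is a quasi-isomorphism is not a consequence of "flatness of Tate algebras over $K$": ordinary algebraic flatness over the field $K$ controls the \emph{ordinary} tensor product $\otimes_K$, which is automatically exact here, but says nothing about the Banach completion $\wh\otimes_K$. What you would actually need is strict exactness of the normalized chain complex of $K\langle X_\bullet\rangle$ (differentials with closed images, open onto them), so that the orthonormalizable module $K\langle Y_m\rangle$ can be completed-tensored through it. This is precisely the phenomenon that the paper's \Cref{Kunneth cotangent} navigates by passing to integral models and exploiting pseudo-coherence of the $p$-completed cotangent complex — and your proof, as written, invokes that lemma only in the preliminary l.c.i.\ observation, not to repair the step where it is most needed. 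If you instead follow the paper's route — reduce to graded pieces of the Hodge--Tate filtration and then cite \Cref{Kunneth cotangent} — the completion subtlety is handled exactly once, at the cotangent-complex level, rather than smeared across a bisimplicial realization. Your final reduction to the explicit formula $R\Gamma_{\ol\Prism}(K\langle T_i\rangle) \cong \moplus_i \Omega^i_{K\langle T_i\rangle/K}(-i)[-i]$ and the decomposition of $\Omega^n_{K\langle X,Y\rangle/K}$ is correct, but to reach it you would still need the resolution step, and you have not supplied it.
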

\begin{proof}
	Let $R$ be a topologically finite type algebra over $K$ that has l.c.i singularities.
	Recall the Hodge-Tate filtration from \Cref{HT fil} that there is an ascending exhaustive $\mathbb{N}^\op$-filtration $\Fil^{\mathrm{HT}}_\bullet$ on $R\Gamma_{\ol{\Prism}}(R)$, such that the $i$-th graded piece is equal to 
	\[
	L\wedge^i\mathbb{L}^\an_{R/K}(-i)[-i].
	\]
	Moreover, it is compatible with the $\mathbb{E}_\infty$-ring structure on $R\Gamma_{\ol{\Prism}}(R)$, and is induced from the infinitesimal filtration on the derived de Rham cohomology $\dR_{R/K}$.
	In particular, this implies that the graded algebra $\gr^{\mathrm{HT}}_\bullet R\Gamma_{\ol{\Prism}}(R)$ obtained by taking the graded pieces of the Hodge-Tate filtration on $R\Gamma_{\ol{\Prism}}(R)$ is generated by the first graded pieces $\mathbb{L}^\an_{R/K}(-1)[-1]$.
	Namely, we have
	\[
	\gr^{\mathrm{HT}}_\bullet R\Gamma_{\ol{\Prism}}(R) \cong \Sym_R (\mathbb{L}^\an_{R/K}(-1)[-1]).
	\]
	
	Back to the K\"unneth formula.
	By the functoriality and lax-monoidal structure, the maps $p_i:R_i \ra R_1\wh{\otimes} R_2$ induces a natural map of $\mathbb{E}_\infty$-algebras
	\[
	R\Gamma_{\ol{\Prism}}(R_1) \otimes_K R\Gamma_{\ol{\Prism}}(R_2) \rra R\Gamma_{\ol{\Prism}}(R_1\wh{\otimes}R_2).
	\]
	By the $R_1\wh{\otimes}R_2$-linearity of the target, we can improve the above to an $R_1\wh{\otimes}R_2$-linear homomorphism
	\[
	\left( Lp_1^*R\Gamma_{\ol{\Prism}}(R_1) \right) \otimes^L_{R_1\wh{\otimes} R_2} \left( Lp_2^* R\Gamma_{\ol{\Prism}}(R_2) \right) \rra R\Gamma_{\ol{\Prism}}(R_1\wh{\otimes}_K R_2).
	\]
	Notice that the Hodge-Tate filtration on both sides are compatible under the map.
	Moreover, by applying the Day convolution (\cite{GP18}) on the Hodge-Tate filtration of $R\Gamma_{\ol{\Prism}}(R_i)$, we can define a product filtration on the left side.
	As the colimit commutes with the tensor product, the product filtration is again ascending and exhaustive.
	The product filtration is further compatible with the Hodge-Tate filtration on $R\Gamma_{\ol{\Prism}}(R_1\wh{\otimes}_K R_2)$ under the map above.
	Thus to show the K\"unneth formula, it amounts to check the isomorphism after applying the graded pieces.
	
	Now by the observation on the Hodeg-Tate graded algebra, the induced map of graded algebras looks like
	\[
	Lp_1^* \Sym_{R_1} (\mathbb{L}^\an_{R_1/K}(-1)[-1]) \underset{R_1\wh{\otimes}R_2}{\motimes} Lp_2^* \Sym_{R_2} (\mathbb{L}^\an_{R_2/K}(-1)[-1]) \rra \Sym_{R_1\wh{\otimes}R_2} (\mathbb{L}^\an_{R_1\wh{\otimes}R_2/K}(-1)[-1]).
	\]
	Moreover, taking the pullback functors inside of the symmetric functor,  the left hand side above can be rewritten as 
	\[
	\Sym_{R_1\wh{\otimes}R_2} \left( Lp_1^*  \mathbb{L}^\an_{R_1/K}(-1)[-1] \moplus Lp_2^* \mathbb{L}^\an_{R_2/K}(-1)[-1] \right).
	\]
	Thus by the compatibility of the Hodge-Tate filtration and the infinitesimal filtration, the isomorphism follows from \Cref{Kunneth cotangent}.
	So we are done.	
\end{proof}
\begin{remark}
	The above uses crucially the pseudo-coherence of the analytic cotangent complex to free us from a $p$-adic completion ``rational''.
Using the condensed mathematics, one might ask if the above can be improved to a K\"unneth formula of non-reduced prismatic cohomology, namely an isomorphism of solid $\mathbb{E}_\infty$-algebras over $\Bdr$ as below
		\[
		R\Gamma(R_1/\Bdr, \mathcal{O}_\Prism)\otimes_\Bdr^L R\Gamma(R_2/\Bdr, \mathcal{O}_\Prism) \rra R\Gamma(R_1\wh{\otimes}_K R_2/\Bdr, \mathcal{O}_\Prism).
		\]
		Notice that the above is taking the tensor product over $\Bdr$-directly in the solid category, while a tensor product in the usual derived category would lose the topological completion.
\end{remark}

Reduced prismatic cohomology also enjoys the \'etale localization formula.
\begin{proposition}\label{etale loc}
	Let $R_1 \ra R_2$ be an \'etale morphism of topologically finite type $K$-algebras that have l.c.i singularities.
	Then the natural map of $\mathbb{E}_\infty$-algebras below is an isomorphism
	\[
	R_2\otimes_{R_1}^L R\Gamma_{\ol{\Prism}}(R_1) \rra R\Gamma_{\ol{\Prism}}(R_2).
	\]
\end{proposition}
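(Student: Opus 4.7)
The plan is to reduce the statement to its Hodge-Tate graded pieces, where it becomes a statement about the behavior of the analytic cotangent complex under an \'etale map. Both sides are equipped with the Hodge-Tate filtration from \Cref{HT fil}: the right side carries it directly, and the left side inherits a filtration by pulling back the Hodge-Tate filtration on $R\Gamma_{\ol{\Prism}}(R_1)$ along $R_1 \ra R_2$. Because an \'etale map of topologically finite type $K$-algebras is in particular flat, the functor $R_2 \otimes^L_{R_1} (-)$ is exact, so it preserves the filtration along with its graded pieces. The comparison map induced by the functoriality of reduced prismatic cohomology is filtered by the naturality statement in \Cref{HT fil}, together with the $R_2$-linearity of the target. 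Since both filtrations are ascending and exhaustive, to check that the comparison is an isomorphism it suffices to check on each graded piece.

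On the $i$-th graded piece, the map in question becomes
\[
R_2 \otimes^L_{R_1} L\wedge^i \mathbb{L}^\an_{R_1/K}(-i)[-i] \rra L\wedge^i \mathbb{L}^\an_{R_2/K}(-i)[-i].
\]
Since $R_1 \ra R_2$ is \'etale, the analytic cotangent complex $\mathbb{L}^\an_{R_2/R_1}$ vanishes, so the transitivity triangle for the analytic cotangent complex gives the natural isomorphism
\[
R_2 \otimes^L_{R_1} \mathbb{L}^\an_{R_1/K} \xrightarrow{\sim} \mathbb{L}^\an_{R_2/K}.
\]
Using flatness of $R_2$ over $R_1$, derived wedge powers commute with the base change, and we conclude the graded isomorphism. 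Taking the colimit over $i \in \mathbb{N}$ then yields the desired identification.

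The main subtlety I anticipate is a clean verification that the comparison map is genuinely filtered, i.e., that the Hodge-Tate filtration on $R\Gamma_{\ol{\Prism}}(R_2)$ agrees with the pullback filtration under the map induced by $R_1 \ra R_2$. This is essentially a functoriality statement for the Hodge-Tate filtration; via the construction through the analytic derived de Rham complex used in \Cref{HT fil}, it reduces to the well-known functoriality of the infinitesimal filtration on $\dR_{(-)/K}$ along any map of affinoid algebras. Once this filtered compatibility is in place, the remainder is the formal graded reduction described above.
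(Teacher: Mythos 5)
Your proof is correct and follows essentially the same strategy as the paper's: reduce to Hodge-Tate graded pieces using exhaustiveness of the filtration and then invoke the base change isomorphism $R_2 \otimes^L_{R_1} \mathbb{L}^\an_{R_1/K} \cong \mathbb{L}^\an_{R_2/K}$ together with its wedge powers. The only minor difference is that you derive this base change isomorphism from the transitivity triangle and the vanishing of $\mathbb{L}^\an_{R_2/R_1}$, while the paper cites it directly from Gabber--Ramero; these amount to the same thing.
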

\begin{proof}
	We first notice that the analytic cotangent complex satisfies the following natural isomorphism by \cite[Proposition 7.2.39, Theorem 7.2.42]{GR}
	\[
	R_2\otimes_{R_1}^L \mathbb{L}^\an_{R_1/K} \rra \mathbb{L}^\an_{R_2/K}.
	\]
	By taking derived wedge powers of the isomorphism and the commutativity of $L\wedge^i$ and derived tensor product $R_2\otimes_{R_1}^L (-)$, the above tensor product formula extends to all of $L\wedge^i\mathbb{L}^\an_{R/K}$.
	On the other hand, notice that the map in the statement is equivariant under the Hodge-Tate filtration on $R\Gamma_{\ol{\Prism}}(R_1)$ and $R\Gamma_{\ol{\Prism}}(R_2)$.
	Moreover, by construction of the Hodge-Tate filtration as in \Cref{HT fil}, the induced map of graded pieces is isomorphic to the map of wedge powers of analytic cotangent complexes, up to Tate twists and cohomological shifts.
	In particular, the induced map of graded algebras for the Hodge-Tate filtration is an isomorphism, namely
	\[
	R_2\otimes_{R_1}^L \gr^{\mathrm{HT}}_\bullet R\Gamma_{\ol{\Prism}}(R_1) \cong \gr^{\mathrm{HT}}_\bullet R\Gamma_{\ol{\Prism}}(R_2).
	\]
	In this way, by the exhaustiveness of the Hodge-Tate filtration, the isomorphism of graded pieces above implies the isomorphism of $R\Gamma_{\ol{\Prism}}(-)$ as in the statement.
\end{proof}

\subsection{Pro-\'etale comparison}\label{subsec proetale and enlarged}
Now we can prove the pro\'etale comparison.
\begin{theorem}\label{proet comp}
	Let $X$ be a smooth rigid space over $K$.
	There is a natural isomorphism of $\Bdr$-$\mathbb{E}_\infty$-algebras as below
	\[
	R\Gamma(X/\Bdr, \mathcal{O}_\Prism) \rra R\Gamma(X_\pe, \BBdrp).
	\]
\end{theorem}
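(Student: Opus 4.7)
The plan is to construct the comparison map via an enlargement of the prismatic site by perfectoid inputs, and then to verify it is an isomorphism by reducing to an explicit local computation on smooth affinoids admitting a compatible system of $\Bdr$-lifts. First I would enlarge $X/\Bdr_\Prism$ by adjoining $\Bdr$-prisms of the form $(\BBdrp(U),\xi)$ for every affinoid perfectoid $U = \Spa(S, S^+)$ in $X_\pe$; each such algebra is $\xi$-torsionfree and $\xi$-adically complete, and carries a natural map from the pro\'etale structure sheaf. The weak-finality argument of \Cref{pm uni} carries over verbatim to the enlarged site --- the envelope prism of any regular closed immersion $\Bdr\langle T_i\rangle \to R$ still covers the final object, since the power-boundedness step of \Cref{power-bounded on relative Bdr} only used the target being $\xi$-complete and $\xi$-torsionfree --- and the self-product computation of \Cref{pm product} still identifies the \v Cech nerve with prismatic envelopes. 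Thus \Cref{pm Cech} forces the cohomology of the enlarged site to agree with that of the original. Evaluating $\mathcal{O}_\Prism$ on the perfectoid prisms then produces a natural comparison map
\[
R\Gamma(X/\Bdr_\Prism, \mathcal{O}_\Prism) \rra R\Gamma(X_\pe, \BBdrp).
\]

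Next, since both sides satisfy analytic descent (the left-hand side by the \'etale localization of \Cref{etale loc} lifted to non-reduced level by derived Nakayama, and the right-hand side by standard pro\'etale hyperdescent), I would reduce to the case where $X = \Spa(R)$ is smooth affinoid admitting a compatible system of smooth $\Bdre$-lifts $\wt R/\xi^e$. In this setting \Cref{lift coh} computes the left-hand side as the $\xi$-divided de Rham complex
\[
\Bigl(\wt R \rra \frac{\Omega^1_{\wt R/\Bdr}}{\xi} \rra \cdots \rra \frac{\Omega^d_{\wt R/\Bdr}}{\xi^d}\Bigr).
\]
For the right-hand side I would invoke Scholze's geometric Poincar\'e lemma (\cite{Sch13}), which provides a resolution of $\BBdrp$ on $X_\pe$ by the de Rham period sheaf $\mathcal{OB}_{\mathrm{dR}}^+$ tensored with $\Omega^\bullet_{\wt R/\Bdr}$; existence of the smooth lift $\wt R$ allows the resulting pro\'etale complex to be simplified using \'etale-local toric coordinates, yielding precisely the $\xi$-divided de Rham formula above. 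A direct identification of generators and differentials then matches the two formulas.

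The main obstacle is verifying that the comparison map coming from the enlarged prismatic site does indeed factor through the identification supplied by the Poincar\'e lemma with the correct signs and $\xi$-divisions. A clean way to circumvent a direct verification is to check the isomorphism after reducing modulo $\xi$ and then invoke derived Nakayama (both sides are derived $\xi$-complete). The reduced left-hand side is computed by the Hodge-Tate filtration of \Cref{HT comp}, with graded pieces $\Omega^i_{R/K}(-i)[-i]$; the reduced right-hand side is $R\Gamma_\pe(X, \wh{\mathcal{O}}_X)$, which by Scholze's Hodge-Tate decomposition for smooth rigid spaces admits the same graded pieces. Since the Bockstein operator on the prismatic side and Faltings' connection on the pro\'etale side are both induced by the universal continuous derivation of $\wt R$ over $\Bdr$, their filtered compatibility identifies the two filtered complexes mod $\xi$, and derived Nakayama lifts this back to the isomorphism over $\Bdr$.
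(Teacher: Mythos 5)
Your construction of the comparison map is exactly the paper's: enlarge the prismatic site by perfectoid prisms $(\BBdrp(A)\to A\leftarrow R)$, show the enlargement does not change cohomology (the paper's \Cref{pm enlarged uni} and \Cref{pm enlarged comp}), and then evaluate at perfectoid prisms to map into pro-\'etale cohomology. The reduction to mod-$\xi$ via derived Nakayama is also the same. Where you depart is in the verification step, and that is where the gap lies.

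After mod $\xi$, the paper argues by \'etale localization (\Cref{etale loc}) and the K\"unneth formula for reduced prismatic cohomology (\Cref{Kunneth reduced}) to reduce to the one-dimensional affinoid disc. The curve case is then handled by \Cref{proet cotangent}, which compares the degree-$\leq 0$ truncations of both sides with the analytic cotangent complex $\mathbb{L}^\an_{R/\Bdr}$ by an explicit \v{C}ech--Alexander computation tracking generators. You instead propose either (a) to match a Poincar\'e-lemma resolution of $\BBdrp$ directly against the $\xi$-divided de Rham complex of \Cref{lift coh}, or (b) to match Hodge--Tate filtrations mod $\xi$ and argue by multiplicativity. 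Route (b) is a viable alternative to the paper's K\"unneth reduction, since both sides carry multiplicative Hodge--Tate/Postnikov filtrations generated in degree one and the map respects Postnikov filtrations automatically; but it still requires checking that the first graded piece of the comparison map is an isomorphism, and this is precisely the content of the paper's \Cref{proet cotangent}, which is not a formal statement.

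The concrete gap: your assertion that ``the Bockstein operator on the prismatic side and Faltings' connection on the pro\'etale side are both induced by the universal continuous derivation of $\wt R$ over $\Bdr$, their filtered compatibility identifies the two filtered complexes mod $\xi$'' substitutes a heuristic for the hardest step. One must actually verify that the map constructed via the enlarged site, evaluated on the kernel ideal of a regular closed immersion $P=\Bdr\langle T_i\rangle\to R$, sends $(\xi,\Delta(n))/(\xi,\Delta(n))^2$ into $\xi\cdot R[\Delta(n)/\xi]$ compatibly with the cosimplicial structure --- that is, that the specific comparison map you built induces the identity on $\mathbb{L}^\an_{R/\Bdr}\cong \tau^{\leq 0}(\,\cdot\,(1)[1])$ on both sides. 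Similarly, route (a) asserts that Scholze's $\mathcal{OB}_{\mathrm{dR}}^+$ Poincar\'e lemma, combined with toric coordinates, directly yields the $\xi$-divided (as opposed to the ordinary) de Rham complex; but the appearance of the $\xi$-divisions is exactly the delicate phenomenon the theorem is asserting (compare \Cref{rmk integral proetale comparison}), and cannot be taken for granted. In both versions of your argument the essential computation of \Cref{proet cotangent} is left out, so the proposal as written does not close.
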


We start by constructing a natural map.
Recall from \cite{Sch13} that given a rigid space $X$ over $K$, its pro-\'etale site $X_\pe$ has a basis consisting of affinoid perfectoid objects $U$, associated with a complete adic space $\hat U=\Spa(A,A+)$, where $(A,A+)$ is a perfectoid affinoid algebra over $K$.
There is a \emph{deRham sheaf} $\BBdrp$ over $X_\pe$, sending an affinoid perfectoid object $U$ to the following
\[
\BBdrp(U)=(W(A^{+\flat})[1/p])^\wedge, \text{for}~\hat U=\Spa(A,A^+),
\]
where $(-)^\wedge$ is the $\xi$-adic completion.
Here $\BBdrp(U)$ is a complete topological $\Bdr$-algebra with $\BBdrp(U)/\xi=A$.
To simplify the notation, we use $\BBdrp(A)$ to denote $\BBdrp(U)$, where $U$ is an affinoid perfectoid object in $X_\pe$ associated to the perfectoid space $\hat U=\Spa(A,A^+)$.

We then define an enlarged category $X/\Bdr_\Prism^e$ of prisms over $X=\Spa(R)$, whose object is the union of $X/\Bdr_\Prism$ and the collection of triples
\[
(\BBdrp(A) \ra A \leftarrow R),
\]
where $A=(A,A^+)$ is an affinoid perfectoid object in $X_\et$.
A morphism is a commutative diagram of triples that are continuous under the $(p,\xi)$-adic topology, and we equip the category with the indiscrete topology.
Similar to the discussions in the previous sections, we can define the structure sheaves and their cohomology.
The following lemma tells us that this enlargement would not change the cohomology.
\begin{lemma}\label{pm enlarged uni}
	Let $R$ be a topologically finite type $K$-algebra, and let $P=\Bdr \langle T_i\rangle$ admit surjections $P\ra P/I\ra R$.
	Then the prism associated to the envelope $D_\Prism=P[\frac{I}{\xi}]^\wedge_\tf$ covers the final object of $\Sh(R/\Bdr_\Prism^e)$.
\end{lemma}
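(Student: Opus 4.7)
The plan is to extend the argument of Proposition \ref{pm uni} to the enlarged site. Every object of the ordinary prismatic site $X/\Bdr_\Prism$ already receives a morphism from $D_\Prism$ by that proposition, so the only additional content is to construct, for each new object $(\BBdrp(A) \to A \leftarrow R)$ coming from an affinoid perfectoid $(A,A^+)$ in $X_\et$, a morphism of prisms $D_\Prism \to \BBdrp(A)$ in $X/\Bdr_\Prism^e$.

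By Remark \ref{pm envelope rep}, producing such a morphism amounts to giving a continuous $\Bdr$-linear homomorphism $\phi: P = \Bdr\langle T_i\rangle \to \BBdrp(A)$ whose composition with the quotient $\BBdrp(A) \to A$ equals the already-given map $P \to R \to A$; the $\xi$-torsionfreeness and $\xi$-adic completeness of $\BBdrp(A)$ then force $\phi$ to factor uniquely through $D_\Prism = P[\tfrac{I}{\xi}]^\wedge_\tf$. To produce $\phi$, I would first write the composition $P/\xi = K\langle T_i\rangle \to R \to A$ as $T_i \mapsto a_i$ with each $a_i \in A^+$, and use surjectivity of $\theta: W(A^{+\flat}) \to A^+$ (valid because $A$ is perfectoid) to pick lifts $\tilde a_i \in W(A^{+\flat}) \subset \BBdrp(A)$. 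Setting $\phi(T_i) = \tilde a_i$ defines a $\Bdr$-algebra map from the polynomial ring $\Bdr[T_i]$, and the essential point is that this extends to the Tate algebra $\Bdr\langle T_i\rangle$: at each level $e$, restricted power series $\sum c_I T^I$ with $c_I \to 0$ in $\Bdre$ evaluate to convergent elements of $\BBdrp(A)/\xi^e$, precisely because the $\tilde a_i$ lie in the integral subring $W(A^{+\flat})^\wedge_\xi$ built into the definition $\BBdrp(A) = W(A^{+\flat})[1/p]^\wedge_\xi$.

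Once $\phi$ is in hand, the commutativity mod $\xi$ gives $\phi(I) \subset \xi \BBdrp(A)$, so $\xi$-torsionfreeness allows $\phi$ to factor through $P[\tfrac{I}{\xi}]$, and $\xi$-adic completeness then yields the desired map $D_\Prism \to \BBdrp(A)$. The main technical subtlety, and the step worth being careful with, is the convergence/continuity claim when extending $\phi$ from $\Bdr[T_i]$ to the Tate algebra $\Bdr\langle T_i\rangle$: this is handled by reducing mod $\xi^e$ and using that the lifts $\tilde a_i$ sit inside a ring of power-bounded (integral) elements of the $\xi$-adic Banach $\Bdr$-algebra $\BBdrp(A)$, a feature intrinsic to $\BBdrp$ for perfectoid $A$ and replacing the role played by Proposition \ref{power-bounded on relative} and Corollary \ref{power-bounded on relative Bdr} in the proof of Proposition \ref{pm uni}.
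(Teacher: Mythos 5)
Your proposal is correct and follows essentially the same route as the paper: reduce to constructing a single map of prisms $D_\Prism \to \BBdrp(A)$ for each new affinoid perfectoid object, lift $P=\Bdr\langle T_i\rangle \to R \to A$ along $W(A^{+\flat}) \to A^+$, observe $\phi(I)\subset \xi\BBdrp(A)$, and conclude via $\xi$-torsionfreeness of $\BBdrp(A)$. The paper packages your continuity/convergence step by first choosing a ring of definition $R_0\subset R$ mapping into $A^+$ and a surjection $\Ainf\langle T_i\rangle \to R_0$, so the lift $\Ainf\langle T_i\rangle \to W(A^{+\flat})$ is a map of complete topological rings and one then inverts $p$ and $\xi$-completes; this is exactly the content of your "reduce mod $\xi^e$ and use that the lifts sit in the integral subring" remark, just arranged integrally from the outset.
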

\begin{proof}
	By \Cref{pm uni}, the prism associated to the envelope $D_\Prism$ is weakly initial when restricted to the subcategory $R/\Bdr_\Prism$.
	So it suffices to show that any prism $(\BBdrp(A) \ra A \leftarrow R)$ for a perfectoid algebra $(A,A^+)$ over $R$ admits a map from $D_\Prism$.
	Let $R_0$ be a ring of definition of $R$ that is topologically finite type over $\mathcal{O}_K$, such that the image of $R_0$ in $A$ is inside of $A^+$.
	By lifting generators of $I$ into $\Ainf \langle T_i\rangle$, we choose surjection $\Ainf \langle T_i\rangle \ra R_0$ compatible with $P \ra R$.
	Then by lifting the image of $T_i$ to $W(A^{+\flat})$, we can form the following commutative diagram
	\[
	\xymatrix{ 
		\Ainf \langle T_i \rangle \ar[d] \ar[r] & R_0 \ar[d]\\
		W(A^{+\flat}) \ar[r] & A^+.}
	\]
	Next we invert $p$ and take the $\xi$-adic completion, we get the following commutative diagram
	\[
	\xymatrix{
		P=\Bdr \langle T_i\rangle \ar[d] \ar[r]& R \ar[d]\\
		\BBdrp(A) \ar[r] & A.}
	\]
	Note that from the diagram,  the image of $I\subset P$ is inside of $\xi\cdot \BBdrp(A)$.
	At last, as $\xi$ is a nonzerodivisor in $\BBdrp(A)$ (\cite[Theorem 6.5]{Sch13}), we thus get a map of prisms by the $\xi$-torsion freeness of $\BBdrp(A)$
	\[
	\xymatrix{ P[\frac{I}{\xi}]^\wedge_\tf \ar[r] \ar[d] & P[\frac{I}{\xi}]_\tf/\xi \ar[d] & R \ar[l] \ar@{=}[d] \\
		\BBdrp(A) \ar[r] & A & R \ar[l].}
	\]
\end{proof}
The inclusion functor naturally defines a map of indiscrete ringed sites 
\[
\iota: (X/\Bdr_\Prism^e, \mathcal{O}_\Prism) \rra (X/\Bdr_\Prism, \mathcal{O}_\Prism).
\]
The above lemma then leads to the following simple comparison.
\begin{corollary}\label{pm enlarged comp} 
	The natural map of cohomology of the structure sheaf of $X/\Bdr_\Prism$ and $X/\Bdr_\Prism^e$ is an isomorphism.
	The same holds for the reduced structure sheaf $\ol{\mathcal{O}}_\Prism$.
\end{corollary}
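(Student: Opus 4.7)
The plan is to reduce the comparison to a termwise identification of Čech--Alexander complexes on both sites. Since $X = \Spa(R)$ is affinoid, I may choose a surjection $P = \Bdr\langle T_i\rangle \twoheadrightarrow R$ with kernel modulo $\xi$ an ideal $I \subset P$, so that the associated prismatic envelope $D_\Prism = P[\frac{I}{\xi}]^\wedge_\tf$ is defined and covers the final object in $\Sh(X/\Bdr_\Prism)$ by \Cref{pm uni} as well as in $\Sh(X/\Bdr_\Prism^e)$ by \Cref{pm enlarged uni}. Writing $D(l)_\Prism$ for the prismatic envelope of $P(l) \to P/I$ as in \Cref{pm product}, the Čech nerve computation of \Cref{pm Cech} gives
\[
R\Gamma(X/\Bdr_\Prism, \mathcal{O}_\Prism) \cong R\varprojlim_{[l] \in \Delta^\op} D(l)_\Prism,
\]
and my task is to prove the identical formula in the enlarged setting, so that the comparison map is a termwise isomorphism of cosimplicial objects.

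The key step is to extend \Cref{pm product} to the enlarged site: namely, to show that $D(l)_\Prism$ represents the $(l+1)$-fold self product of $D_\Prism$ even when tested against the enlarged prisms $(\BBdrp(A) \to A \leftarrow R)$ with $(A, A^+)$ a perfectoid affinoid over $R$. Equivalently, giving $(l+1)$ maps of prisms $D_\Prism \to \BBdrp(A)$ should be the same as giving one map of prisms $D(l)_\Prism \to \BBdrp(A)$. For this I would imitate the proof of \Cref{pm product}: each map $g_j:P \to \BBdrp(A)$ lifts mod $(p^n, \xi^n)$ through the Witt-vector construction $W(A^{+\flat})$ used in \Cref{pm enlarged uni}, and since $\BBdrp(A) = (W(A^{+\flat})[1/p])^\wedge_\xi$ is $(p,\xi)$-adically complete and $\xi$-torsion free, the $(l+1)$ maps assemble into a unique continuous map from the completed tensor product $P(l) = \Bdr\langle T_i, \delta_{j,i}\rangle$ to $\BBdrp(A)$, which then factors uniquely through the envelope $D(l)_\Prism$ by $\xi$-torsion freeness.

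Given this representability, the Čech--Alexander formula of \Cref{pm Cech} transfers to the enlarged site with the same cosimplicial diagram $[l] \mapsto D(l)_\Prism$, yielding
\[
R\Gamma(X/\Bdr_\Prism^e, \mathcal{O}_\Prism) \cong R\varprojlim_{[l] \in \Delta^\op} D(l)_\Prism,
\]
and thus the comparison with the smaller site is an isomorphism. The reduced case $\ol{\mathcal{O}}_\Prism$ follows verbatim from the same argument applied to the cosimplicial diagram $[l] \mapsto D(l)_\Prism/\xi$, using that the values of $\ol{\mathcal{O}}_\Prism$ on the enlarged prisms are compatible with reduction of these envelopes. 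The main obstacle is really the representability verification for perfectoid targets, but it ultimately reduces to the $(p,\xi)$-adic completeness of $\BBdrp(A)$ together with the Witt-vector lifting already exploited in \Cref{pm enlarged uni}.
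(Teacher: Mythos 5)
Your proof is correct and follows essentially the same route as the paper: identify the Čech nerve of the covering $D_\Prism \to 1$ in the enlarged topos with the one from $X/\Bdr_\Prism$ and conclude by the Čech--Alexander computation of \Cref{pm Cech}. The only difference is that you are more explicit about why the $(l+1)$-fold product remains representable when tested against perfectoid prisms: the paper simply invokes ``the same proof of \Cref{pm product},'' whereas that proof relies on \Cref{power-bounded on relative Bdr} for topologically finite type targets and does not apply verbatim to $\BBdrp(A)$; your substitution of the Witt-vector lifting together with $(p,\xi)$-adic completeness and $\xi$-torsion-freeness of $\BBdrp(A)$ — precisely the mechanism of \Cref{pm enlarged uni} — is the correct way to patch that step, so your version is, if anything, slightly more careful than the paper's.
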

\begin{proof}
	Using the same proof of \Cref{pm product}, the \v{C}ech nerve for the map from $D_\Prism$ to the final object of $X/\Bdr_\Prism^e$ is the same as the one for $X/\Bdr_\Prism$.
	So by taking the \v{C}ech-Alexander complex as in \Cref{pm Cech}, we get the result.
\end{proof}
\begin{remark}
	The above in particular shows that for any map of sheaves $\iota^{-1} \mathcal{F}_1 \ra \mathcal{F}_2$ over $X/\Bdr_\Prism^e$ that induces an isomorphism when restricted to $X/\Bdr_\Prism$, we have a natural isomorphism
	\[
	R\Gamma(X/\Bdr_\Prism, \mathcal{F}_1) \rra R\Gamma(X/\Bdr_\Prism^e, \mathcal{F}_2).
	\]
\end{remark}

Now we can build the natural map of cohomology.
By construction, each affinoid perfectoid object $U\in X_\pe$ for $\hat U=\Spa(A,A^+)$ over some affinoid open subset $\Spa(R)\subset X$ naturally determines a unique prism $(\BBdrp(A) \ra A \leftarrow R)$.
In particular, as the collection of such affinoid perfectoid objects forms a basis of $X_\pe$, by taking the derived limit over all such $U$ and $R$, we get
\[
R\Gamma(X/\Bdr_\Prism^e, \mathcal{O}_\Prism) \rra R\Gamma(X_\pe, \BBdrp).
\]
Thus by composing with the isomorphism as in \Cref{pm enlarged comp}, we get
\[
\xymatrix{ R\Gamma(X/\Bdr_\Prism, \mathcal{O}_\Prism) \ar[r]^\sim & \ar[r] R\Gamma(X/\Bdr_\Prism^e, \mathcal{O}_\Prism) \ar[r]& R\Gamma(X_\pe, \BBdrp).}
\]

\begin{proof}[Proof of \Cref{proet comp}]
By taking derived limits on an affinoid open covering, it suffices to assume $X=\Spa(R)$ is affinoid.
Moreover, as both sides are derived complete $\xi$-adically, by the derived Nakayama \cite[Tag 0G1U]{Sta}, it suffices to show the isomorphism after mod $\xi$.
Namely the following map of $\mathbb{E}_\infty$-algebras over $K$
\[
R\Gamma_{\ol{\Prism}} (R) \rra R\Gamma(X_\pe, \wh{\mathcal{O}}_X), \tag{$\ast$}
\]
where $\wh{\mathcal{O}}_X$ is the complete structure sheaf over the pro-\'etale site, as in \cite{Sch13}.
Here we notice that the map is also $R$-linear.

To proceed, we notice that a truncation of the $R$-complexes above can be computed using the analytic cotangent complex.
\begin{proposition}\label{proet cotangent}
	Let $R$ be a smooth affinoid algebra over $K$.
	There is a natural commutative diagram as below, compatible with the map in \Cref{proet comp}, such that every arrow becomes an isomorphism after a cohomological truncation at degree $\leq 0$:
	\[
	\xymatrix{ & \mathbb{L}^\an_{R/\Bdr} \ar[ld] \ar[rd] & \\
		R\Gamma_{\ol{\Prism}}(R)(1)[1] \ar[rr] && R\Gamma(X_\pe, \ol{\mathcal{O}}_X)(1)[1].}
	\]
\end{proposition}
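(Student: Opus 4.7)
The plan is to show all three objects have cohomology isomorphic to $R(1)$ in degree $-1$ and $\Omega^1_{R/K}$ in degree $0$ after truncation $\tau^{\leq 0}$, to construct the three natural maps, and to verify they induce the identity on these pieces. The three maps are built using the Hodge-Tate filtration on the prismatic side, a Faltings-type extension on the pro-\'etale side, and the transitivity triangle for $\Bdr \to K \to R$ on the cotangent complex side.

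First I would compute cohomology of each object. For the cotangent complex, the transitivity triangle and the identification $\mathbb{L}^\an_{K/\Bdr} = K(1)[1]$ (since $\xi$ is a nonzerodivisor in $\Bdr$) give a fiber sequence
\[
R(1)[1] \rra \mathbb{L}^\an_{R/\Bdr} \rra \mathbb{L}^\an_{R/K},
\]
with the right-hand term equal to $\Omega^1_{R/K}$ for smooth $R$; hence $\mathbb{L}^\an_{R/\Bdr}$ already lives in degrees $[-1,0]$ with $H^{-1}=R(1)$ and $H^0=\Omega^1_{R/K}$. On the prismatic side, \Cref{HT fil} gives $\Fil_0^{\mathrm{HT}} = R$ and $\gr^{\mathrm{HT}}_1 = \Omega^1_{R/K}(-1)[-1]$, and moreover the filtration coincides with the Postnikov filtration for smooth affinoid $R$ (\Cref{HT fil}(ii)); twisting by $(1)[1]$ produces the same two cohomology groups in the truncation. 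For the pro-\'etale side, Scholze's computation $R^i \nu_* \wh{\mathcal{O}}_X = \Omega^i_{R/K}(-i)$ combined with affinoid acyclicity of coherent sheaves yields the corresponding statement.

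Next I would construct the maps. The bottom arrow is the mod-$\xi$ reduction of the comparison map from the enlarged prismatic site, defined before the statement of the proposition. For the top-left map $\mathbb{L}^\an_{R/\Bdr} \to R\Gamma_{\ol{\Prism}}(R)(1)[1]$, I would Zariski-localize to a smooth $\Bdr$-lift $\wt R$, use the $\xi$-divided de Rham complex from \Cref{coh of envelope} to compute the target, and observe that $d\xi = 0$ in $\Omega^1_{\wt R/\Bdr}$ induces a splitting of both the source (via the transitivity triangle) and of the truncated target (via \Cref{split of HT}) into $R(1)[1] \oplus \Omega^1_{R/K}$. The matching pair of splittings defines the map directly, and independence of the smooth lift follows from the sifted structure of regular closed immersions (\Cref{sift}). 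The top-right map is constructed analogously through a toric perfectoid tower, using the Faltings extension in place of the Hodge-Tate filtration.

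The induced maps on $H^{-1} = R(1)$ and $H^0 = \Omega^1_{R/K}$ are the identity by direct inspection on the local models, which implies the claimed isomorphism on $\tau^{\leq 0}$; commutativity of the triangle is verified on the same local models, since $\wt R$ maps naturally into $\BBdrp$ of the toric perfectoid tower and the bottom comparison map factors through this common source. The main obstacle is canonicity and globalization of the top two maps, which I would handle by descent along the sifted category of regular closed immersions (\Cref{sift}) combined with \'etale localization of the relevant cohomology theories (\Cref{etale loc}); the latter ensures that the locally defined maps glue to globally defined ones on $X$ and are independent of all local choices.
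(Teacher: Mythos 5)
Your proposal takes a genuinely different route from the paper's, and the difference is concentrated precisely in how the two diagonal maps are built. The paper constructs them intrinsically: for \emph{any} prism $(B,\xi B)$ the commutative square $\Bdr\to B$, $R\to B/\xi$ induces a natural map $\mathbb{L}^\an_{R/\Bdr}\to \mathbb{L}^\an_{(B/\xi)/B}\cong B/\xi(1)[1]$, using only that $\xi$ is a nonzerodivisor in $B$. Taking derived limits of this single construction over the small site, the enlarged site, and the perfectoid prisms produces all three maps simultaneously, and compatibility of the triangle is then automatic because they all arise from the same formula. Your proposal instead defines the two top arrows separately via matching splittings — a smooth lift $\wt R$ on the prismatic side, a toric perfectoid tower on the pro-\'etale side — then argues well-definedness and commutativity afterwards through sifted colimits and a common map from $\wt R$ into $\BBdrp$ of the tower.

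This is where I see a genuine gap. Your map is defined as the direct sum of identities with respect to a chosen splitting of both $\mathbb{L}^\an_{R/\Bdr}$ and $\tau^{\leq 0}R\Gamma_{\ol\Prism}(R)(1)[1]$ into $R(1)[1]\oplus\Omega^1_{R/K}$. But a morphism between two complexes concentrated in degrees $[-1,0]$ is \emph{not} determined by its action on $\mathrm{H}^{-1}$ and $\mathrm{H}^0$; there is an off-diagonal component in $\Ext^1(\Omega^1_{R/K}, R(1))$ that depends on the splittings and does not show up in the graded check you propose ("the induced maps on $\mathrm{H}^{-1}$ and $\mathrm{H}^0$ are the identity"). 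Changing the lift $\wt R$ changes both splittings simultaneously, so verifying that the resulting map is independent of the lift — and that it coincides with the map the paper would call natural — is a real piece of work, not a formal consequence of \Cref{sift}. The paper's intrinsic construction simply never faces this question. Your subsequent verification of the isomorphism on $\tau^{\leq 0}$ (via the Hodge-Tate filtration, its coincidence with the Postnikov filtration for smooth $R$, K\"unneth, and \'etale localization) is sound and is essentially parallel to the paper's \v{C}ech-Alexander computation; I would accept that part, but you should first rebuild the diagonal maps from the universal map $\mathbb{L}^\an_{R/\Bdr}\to B/\xi(1)[1]$ before running it.
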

We grant this claim at the moment.
By the Hodge-Tate comparison as in \Cref{HT comp}, the above shows that the pro-\'etale comparison map in \Cref{proet comp} is an isomorphism when $X$ is a smooth curve.
On the other hand, as the \'etale localization holds on both sides of $(\ast)$ and everything is $R$-linear (\Cref{etale loc} for reduced prismatic cohomology and \cite{Sch13} for pro-\'etale cohomology of $\wh{\mathcal{O}}_X$), we may assume $X$ is equal to the $n$-dimensional closed disc $\mathbb{D}^n$.
At last, notice that both sides of $(\ast)$ satisfy the K\"unneth formula (\Cref{Kunneth reduced} for reduced prismatic cohomology and \cite[Proposition 8.14]{BMS} for the integral version of pro-\'etale cohomology).
Thus we can reduce the isomorphism to the case when $n=1$ and $X$ is a curve, which finishes the proof.

\end{proof}
\begin{proof}[Proof of \Cref{proet cotangent}]
	We first notice that for any prism $(B \ra B/\xi \leftarrow R)$, there is a natural map of analytic cotangent complex
	\[
	\mathbb{L}^\an_{R/\Bdr} \rra \mathbb{L}^\an_{(B/\xi)/B}\cong B(1)[1],
	\]
	where the isomorphism on the right hand side is because $\xi$ is a nonzerodivisor in $B$.
	This in particular includes the case when $B=\BBdrp(A)$ for a perfectoid $R$-algebra $(A,A^+)$.
	So by applying the derived limit over all prisms in $R/\Bdr_\Prism$, in $R/\Bdr_\Prism^e$, and over all prisms coming from affinoid perfectoid objects in $X_\pe$ separately, we get three maps from cotangent complex in the following diagram
	\[
	\xymatrix{ & \mathbb{L}^\an_{R/\Bdr} \ar[ld] \ar[d] \ar[rd] & \\
	  R\Gamma(R/\Bdr_\Prism, \ol{\mathcal{O}}_\Prism)(1)[1] &	R\Gamma(R/\Bdr_\Prism^e, \ol{\mathcal{O}}_\Prism)(1)[1] \ar[l]_\sim \ar[r] & R\Gamma(X_\pe, \ol{\mathcal{O}}_X)(1)[1],}
	\]
	where the left horizontal arrow is an isomorphism by \Cref{pm enlarged comp}.
	
	To show the isomorphism after the truncation, we first notice that by applying the distinguished triangle at the triple $\Bdr \ra K \ra R$ and the smoothness of $R$ over $K$, we see $\mathbb{L}^\an_{R/\Bdr}$ lives in degree $[-1,0]$ and fits in the triangle
	\[
	R(1)[1] \rra \mathbb{L}^\an_{R/\Bdr} \rra \Omega_{K/R}^1.
	\]
	Moreover, it is showed in \cite[Theorem 7.2.3]{Guo19} that the map below towards pro-\'etale cohomology is an isomorphism
	\[
	\mathbb{L}^\an_{R/\Bdr} \rra \tau^{\leq 0} \left( R\Gamma(X_\pe, \wh{\mathcal{O}}_X)(1)[1] \right).
	\]
	So it is left to show the isomorphism of $\mathbb{L}^\an_{R/\Bdr}$ with the truncated prismatic cohomology.
	
	Using the \'etale localization of both sides, we may assume $R=K\langle T_i\rangle$.
	As in the proof of \cite[Theorem 5.5.1]{Guo21}, by the distinguished triangle of the analytic cotangent complex for $\Bdr \ra P^{\wh\otimes_\Bdr \bullet +1} \ra R$, there is a cosimplicial diagram computing $\mathbb{L}^\an_{R/\Bdr}$ as follows
	\[
	\mathbb{L}^\an_{R/P^{\wh\otimes \bullet+1}}=\varprojlim_{[n]\in \Delta^\op} \mathbb{L}^\an_{R/P^{\wh\otimes n+1}},
	\]
	where $P^{\wh\otimes n+1}=P\langle \Delta(n) \rangle$ is the inverse limit (with respect to $e$) of the $(n+1)$-th complete self product of $\Bdre \langle T_i\rangle$.
	As each surjection $P\langle \Delta(n) \rangle\ra R$ is regular, the above cosimpliicial diagram is isomorphic to
	\[
	\varprojlim_{[n]\in \Delta^\op} (\xi, \Delta(n))/(\xi,\Delta(n))^2 [1].
	\]
	On the other hand, by \Cref{pm product} the above induces the \v{C}ech nerve for the weakly final prism $(P \ra P/\xi \leftarrow R)$, and we can form the \v{C}ech-Alexander complex for reduced prismatic cohomology in \Cref{pm Cech} as the following
	\[
	\varprojlim_{[n]\in \Delta^\op} P[\frac{\Delta(n)}{\xi}]^\wedge_\tf/\xi=\varprojlim_{[n]\in \Delta^\op} R[\frac{\Delta(n)}{\xi}].
	\]
	So the map $\mathbb{L}^\an_{R/\Bdr} \ra R\Gamma_{\ol{\Prism}}(R)(1)[1]$ is equivalent to the following map of homotopy limits
	\[
	\left( \varprojlim_{[n]\in \Delta^\op} (\xi, \Delta(n))/(\xi,\Delta(n))^2 [1] \right) \rra \left( \varprojlim_{[n]\in \Delta^\op} R[\frac{\Delta(n)}{\xi}](1)[1]\right) .
	\]
	Namely we reduce the question to show that the following map of cosimplicial diagrams induces an isomorphism of $\mathrm{H}^{\leq1}$
	\[
	\xymatrix{\xi\cdot R \ar@<2pt>[r]  \ar@<-2pt>[r]\ar[d] & (\xi,\Delta(1))/(\xi,\Delta(1))^2 \ar@<3pt>[r] \ar[r] \ar@<-3pt>[r] \ar[d] & (\xi,\Delta(2))/(\xi,\Delta(2))^2  \ar[d]  &\cdots \\
		\xi\cdot R \ar@<2pt>[r]  \ar@<-2pt>[r] & \xi\cdot R[\frac{\Delta(1)}{\xi}] \ar@<3pt>[r] \ar[r] \ar@<-3pt>[r]   & \xi\cdot R[\frac{\Delta(2)}{\xi}]  & \cdots.}
	\]
	Finally, to finish the calculation,  it suffices to notice that the maps above are all compatible with the original construction of the cosimplicial diagram $P^{\wh{\otimes}_\Bdr \bullet +1}$.
	So simply tracking the generators $T_i\in R=K \langle T_i\rangle$ and $\delta_i=1\otimes T_i -T_i \otimes 1 \in \Delta(1)$ for the first two degrees, we get the isomorphism of $\mathrm{H}^{\leq 1}$.
\end{proof}
\begin{corollary}
	Let $X$ be a smooth rigid space over $K$.
	Then there is a natural isomorphism for complexes of $\mathbb{E}_\infty$-$\mathrm{B_{dR}}$-algebras
	\[
	R\Gamma_{\inf}(X/\Bdr)[\frac{1}{\xi}] \rra R\Gamma_\Prism(X/\Bdr)[\frac{1}{\xi}].
	\]
\end{corollary}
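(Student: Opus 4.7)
The plan is to deduce this directly from the infinitesimal comparison \Cref{decalage comp}, combined with the elementary fact that the d\'ecalage functor $L\eta_\xi$ collapses to the identity after inverting $\xi$. For a smooth affinoid $X$, \Cref{decalage comp} supplies the factorization
\[
R\Gamma_{\inf}(X/\Bdr) \xrightarrow{\phi} L\eta_\xi R\Gamma_\Prism(X/\Bdr) \xrightarrow{\psi} R\Gamma_\Prism(X/\Bdr),
\]
in which $\phi$ is already an isomorphism, so the affinoid case reduces to showing that $\psi[\frac{1}{\xi}]$ is an isomorphism. Fixing a $\xi$-torsionfree model $C^\bullet$ of $R\Gamma_\Prism(X/\Bdr)$ (for instance the $\xi$-divided de Rham complex produced in \Cref{coh of envelope} when a smooth lift exists, or more generally the one from \Cref{prism dR}), the explicit description $(\eta_\xi C)^i = \{x \in \xi^i C^i : dx \in \xi^{i+1} C^{i+1}\}$ exhibits $\eta_\xi C \hookrightarrow C$ as a termwise isomorphism after inverting $\xi$, since both the $\xi$-divisibility constraint on $x$ and the congruence condition on $dx$ become vacuous.

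To globalize to an arbitrary smooth rigid space $X$, which is quasi-compact quasi-separated under the running conventions of the paper, I would fix a finite affinoid open cover $\{U_i\}$ and pass to the associated \v{C}ech nerve. Both infinitesimal cohomology (by \cite[Section 7]{Guo21}) and prismatic cohomology (directly from the construction of the prismatic site, via restriction along open immersions of affinoids) satisfy analytic descent, so each side is recovered as a finite homotopy limit of its values on the smooth affinoid intersections $U_{i_0} \cap \cdots \cap U_{i_k}$. Because the cover is finite, this homotopy limit commutes with the filtered colimit $(-)[\frac{1}{\xi}]$, and the termwise affinoid comparison assembles into the desired global isomorphism.

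The only real obstacle is bookkeeping: one must check that the identification $\psi[\frac{1}{\xi}] \simeq \mathrm{id}$ is natural enough in restrictions along open immersions of affinoids to upgrade to a morphism of \v{C}ech diagrams. Granting the naturality already present in the construction of \Cref{decalage comp} (in particular, the compatibility of the $\xi$-divided de Rham complex under restriction of a regular closed immersion), this is essentially formal and the globalization goes through.
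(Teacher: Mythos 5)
Your proof is correct, but it takes a genuinely different route from the paper. The paper's proof is a one-liner: it combines the just-established pro-\'etale comparison (\Cref{proet comp}), which identifies $R\Gamma_\Prism(X/\Bdr)$ with $R\Gamma_\pe(X,\BBdrp)$, with \cite[Theorem~13.1]{BMS}, which says that the natural map $R\Gamma_{\inf}(X/\Bdr)\to R\Gamma_\pe(X,\BBdrp)$ is an isomorphism after inverting $\xi$. Your argument instead bypasses pro-\'etale cohomology entirely: you start from the infinitesimal comparison \Cref{decalage comp}, note that $\phi$ is already an equivalence for smooth affinoids, and reduce to the observation that the d\'ecalage map $\psi: L\eta_\xi C\to C$ is an equivalence after inverting $\xi$. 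This last point is sound; it can be phrased termwise on a $\xi$-torsionfree model as you do, or more slickly by citing \cite[Lemma~6.14]{BMS} ($L\eta_\xi$ commutes with filtered colimits) together with the fact that $L\eta_\xi$ is the identity over a ring where $\xi$ is invertible. Your globalization through a finite affinoid \v{C}ech cover is also fine since the normalized \v{C}ech complex is concentrated in finitely many cosimplicial degrees, so the homotopy limit commutes with $(-)[\tfrac{1}{\xi}]$. The trade-off: your route is more self-contained inside this paper (relying on \Cref{decalage comp} and general $L\eta$-properties rather than the pro-\'etale machinery and an external theorem from \cite{BMS}), but requires a bit of extra care in globalizing the affinoid statement of \Cref{decalage comp}, whereas the paper's route is shorter because it piggybacks on a global comparison already proven in \cite{BMS}.
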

\begin{proof}
	This follows from \cite[Theorem 13.1]{BMS}.
\end{proof}

\section{Galois invariant}\label{sec Galois inv}
In this section, we show that Galois invariant of prismatic cohomology is isomorphic to infinitesimal cohomology, when the rigid space is defined over a discretely valued subfield.
We will use the condensed mathematics developed by Clausen-Scholze.
Our references are the lecture notes \cite{Sch19} by Scholze and the recent preprint \cite{Bos21} by Bosco.

Throughout the section, we fix a  discretely valued subfield $k$ of $K$ that has perfect residue field.
We denote by $G_k$ the Galois group for the extension $K/k$.
We also fix an embedding $k\ra \Bdr$ compatible with the surjection $\Bdr\ra K$.

\subsection{Condensed mathematics and condensed group cohomology}\label{subsec condensed}
We recall the basics about the condensed mathematics and condensed group cohomology, following \cite{Sch19}, \cite[Appendix]{Bos21} and \cite[Section 4.3]{BS15}.

\begin{convention}
	As in \cite{Sch19}, we fix a uncountable strong limit cardinality $\kappa$ throught the section.
	Every construction is defined to be $\kappa$-small unless otherwise specified.
\end{convention}

\begin{definition}\label{cond def}
	The category of \emph{condensed sets/groups/rings}, is defined as a sheaf of sets/groups/rings over the pro-\'etale site of a point $\ast_\pe$.
\end{definition}
To relate to the usual notion of the topological structure, we have the following results.
\begin{theorem}\label{cond and top}\cite[Example 1.5, Proposition 1.7, Theorem 2.16]{Sch19}
	There is a natural functor from the category of topological spaces/groups/rings to the category of condensed sets/groups/rings, denoted as $M\mapsto \ul M$, where the latter sends a profinite set $S\in \ast_\pe$ to the set of continuous maps from $S$ to $M$.
	The functor satisfies the following:
	\begin{enumerate}[(i)]
		\item the functor is faithful, and is fully faithful when restricted to those $T$ whose underlying topological space is $\kappa$-compactly generated;
		\item it induces an equivalence between compact Hausdorff spaces $X$ and qcqs condensed sets;
		\item it admits a left adjoint $X\mapsto X(\ast)_\mathrm{top}$, such that for a topological space $T$, the composition $\ul T(\ast)_\mathrm{top}$ is the same set but with the compact generated topology.
	\end{enumerate}
\end{theorem}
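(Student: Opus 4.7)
The plan is to work directly with the definition of the pro-\'etale site $\ast_\pe$ of a point, whose objects are profinite sets $S$ and whose covers are finite families of jointly surjective maps, and to use two simple observations throughout: first, $\ul T(\ast) = T$ as a set, because the point is profinite and continuous maps out of it are just elements of $T$; and second, sections $\ul T(S)$ for $S$ profinite are precisely continuous maps $S \to T$, so the functor is entirely governed by how $T$ is probed by profinite sets.

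For part (i), faithfulness follows immediately by evaluation at the point, since distinct continuous maps $T_1 \to T_2$ induce distinct maps of underlying sets $\ul T_1(\ast) \to \ul T_2(\ast)$. For full faithfulness when $T_1$ is $\kappa$-compactly generated, I would take a natural transformation $\eta : \ul T_1 \to \ul T_2$, define $f = \eta_\ast : T_1 \to T_2$ as a map of sets, and then show $f$ is continuous. This is the main technical step and where Gleason's projectivity for compact Hausdorff spaces becomes essential: every compact Hausdorff subspace $K \subseteq T_1$ admits a continuous surjection $S \twoheadrightarrow K$ from an extremally disconnected profinite set $S$, and naturality of $\eta$ applied to the composite $S \to T_1$ forces $f|_K$ to be continuous, since $\eta_S$ lands in $\mathrm{Cont}(S, T_2)$ and factors through $K$. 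Since $T_1$ is compactly generated, continuity of $f|_K$ on every such $K$ implies continuity of $f$.

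For part (ii), given a compact Hausdorff $X$, the Gleason surjection $S \twoheadrightarrow X$ gives a cover $\ul S \twoheadrightarrow \ul X$ in the condensed sense, while $\ul S \times_{\ul X} \ul S$ is represented by $S \times_X S$, which is closed in $S \times S$ and thus profinite; this gives qcqs. Conversely, a qcqs condensed set $Y$ admits $\ul S \twoheadrightarrow Y$ with $\ul S \times_Y \ul S$ representable by a profinite set $R$, and the coequalizer of the two maps $R \rightrightarrows S$ in compact Hausdorff spaces recovers an $X$ with $\ul X \cong Y$; one then checks the functor is fully faithful here via part (i) since compact Hausdorff is $\kappa$-compactly generated.

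For part (iii), I would define the proposed left adjoint by equipping $Y(\ast)$ with the colimit topology along the diagram of all maps $\ul S \to Y$ from profinite sets $S$, so that $U \subseteq Y(\ast)$ is open iff its preimage in $S$ is open for every such map. Continuous maps $(Y(\ast))_\mathrm{top} \to W$ for a topological space $W$ then correspond to compatible systems of continuous maps $S \to W$ for each $\ul S \to Y$, which is exactly the data of a morphism of condensed sets $Y \to \ul W$, verifying the adjunction. When $Y = \ul T$, maps $\ul S \to \ul T$ are continuous maps $S \to T$, and since every compact Hausdorff subspace of $T$ is a quotient of a profinite set, the resulting colimit topology is the compactly generated refinement of $T$. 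The group and ring variants follow because the forgetful functors to condensed sets preserve and reflect the relevant limits, so algebraic structure transports along the three parts without further work.
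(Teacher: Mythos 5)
The paper offers no proof for this theorem — it is cited directly from Scholze's lecture notes — so the benchmark is whether your reconstruction matches the argument there, and it essentially does. The one recurring imprecision is your use of "compact Hausdorff \emph{subspaces} of $T$": Scholze's notion of $\kappa$-compactly generated is defined via the final topology with respect to \emph{all} continuous maps $K \to T$ from $\kappa$-small compact Hausdorff $K$, not just subspace inclusions, and for non-Hausdorff $T$ these can be genuinely different topologies, since the image of such a map need not be a Hausdorff subspace. The fix is already latent in what you wrote: naturality of $\eta$ gives continuity of $f \circ g$ for \emph{every} continuous $g : S \to T_1$ with $S$ profinite, and since every continuous $K \to T_1$ from compact Hausdorff $K$ factors through a Gleason cover $S \twoheadrightarrow K$ as a quotient map, you conclude $f\circ g$ is continuous for every such $K$ mapping in — which is exactly what the definition of compact generation requires. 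The same adjustment should be made in your identification of $\ul T(\ast)_{\mathrm{top}}$ with the compactly generated refinement in part (iii). Parts (ii), the adjunction verification in (iii), and the reduction to sets via the forgetful functor are all fine as stated.
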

For a set/group/ring $M$, we say a condensed set/group/ring $A$ \emph{underlies} $M$ if we have $A(\ast)\cong M$.

Denote $\mathrm{LCA}$ to be the category of locally compact abelian groups, and $\Cd$ to be the category of condensed abelian group.
For two condensed abelian groups $M$ and $N$, we let the condensed group $\ul\Hom_\Cd(M,N)\in \Cd$ be the internal group of homomorphisms between them (\cite[Section 2]{Sch19}).

When restricted to abelian groups, the functor $M \mapsto \ul M$ induces a fully faithful embedding from $\mathrm{LCA}$.
\begin{theorem}\label{cond and lca}\cite[Proposition 4.2, Corollary 4.9]{Sch19}
	The natural functor $M \mapsto \ul M$ induces a fully faithful embedding
	\[
	D^b(\mathrm{LCA}) \rra D(\Cd),
	\]
	such that
	\[
	\ul\Ext^i_\Cd(\ul M, \ul N) \cong \ul{\Ext^i_\mathrm{LCA} (M,N)}.
	\] 
\end{theorem}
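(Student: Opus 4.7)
The plan is to prove the statement in two stages: first establish that the functor $M \mapsto \ul M$ induces a fully faithful embedding at the derived level, and then bootstrap to the internal $\ul\Ext$ description. The input already in hand is that on the level of abelian objects, $M \mapsto \ul M$ is fully faithful, since every locally compact Hausdorff group is $\kappa$-compactly generated and Theorem \ref{cond and top}(i) applies.

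For the first part, we must show that for all $M, N \in \mathrm{LCA}$ and all $i \geq 0$,
\[
\Ext^i_\mathrm{LCA}(M, N) \xrightarrow{\sim} \Ext^i_\Cd(\ul M, \ul N).
\]
My strategy is to reduce to a small list of building blocks using the structure theorem for LCA groups: every compactly generated $M \in \mathrm{LCA}$ is (non-canonically) of the form $\RR^a \times \ZZ^b \times K$ with $K$ compact abelian, and every LCA group is a filtered colimit of its open compactly generated subgroups sitting in an extension $0 \to M_0 \to M \to \RR^n \to 0$. Using that $M \mapsto \ul M$ preserves filtered colimits along closed embeddings and sends short exact sequences to short exact sequences, the two functors $\Ext^i_\mathrm{LCA}(-, -)$ and $\Ext^i_\Cd(\ul{-}, \ul{-})$ admit long exact sequences and compatible reductions. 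Hence it suffices to verify the isomorphism on the building blocks $\ZZ$, $\ZZ/n\ZZ$, $\RR$, and compact abelian groups (and their Pontryagin duals such as $\mathbb{T} = \RR/\ZZ$), where one can use explicit short exact sequences like $0 \to \ul\ZZ \to \ul\RR \to \ul\mathbb{T} \to 0$ together with the supply of projectives $\ZZ[S]$ for profinite $S$ in $\Cd$ to compute $R\Hom$ directly.

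The main obstacle will be the vanishing of higher condensed Ext in the cases involving a compact or real factor: one has to argue that no exotic condensed extensions exist beyond the ones already visible in $\mathrm{LCA}$. Concretely, the critical computations are $\Ext^{\geq 1}_\Cd(\ul\RR, \ul\RR) = 0$, $\Ext^{\geq 1}_\Cd(\ul\ZZ, \ul\RR) = 0$ and $\Ext^{\geq 2}_\Cd(\ul\ZZ, \ul K) = 0$ for $K$ compact, which together with Pontryagin duality span the remaining cases. These are handled by resolving the profinite-set projectives and using the fact that $\ul\RR$ and $\ul{S^1}$ are soft/flabby enough on the pro-\'etale site of a point so that their $\Ext^{\geq 1}$ into compactly generated targets vanish; this is exactly where the analytic nature of condensed mathematics enters, through the vanishing of higher cohomology of $\ul\RR$ on profinite sets.

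For the second part, the internal Ext statement follows by sheafifying the first: evaluating $\ul\Ext^i_\Cd(\ul M, \ul N)$ at a profinite set $S \in \ast_\pe$ amounts by adjunction to computing $\Ext^i_\Cd(\ul M \otimes \ZZ[S], \ul N)$, which, using that $\ZZ[S]$ represents continuous functions out of $S$ and that $M$ is fixed, reduces to $\Ext^i_\mathrm{LCA}(M, C(S, N))$ where $C(S, N)$ denotes continuous $N$-valued functions. A short check that $\Ext^i_\mathrm{LCA}(M, C(S, N)) = C(S, \Ext^i_\mathrm{LCA}(M, N))$ for profinite $S$ — which itself reduces via finite quotients of $S$ to the case of finite $S$, where it is tautological — identifies the right-hand side with $\ul{\Ext^i_\mathrm{LCA}(M, N)}(S)$, completing the proof.
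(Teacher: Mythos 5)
Your outline — reduce via the structure theory of locally compact abelian groups to a short list of building blocks, compute $R\Hom$ on those, then sheafify to obtain the internal $\ul\Ext$ — matches the overall shape of Scholze's argument in \cite{Sch19}. But the execution has concrete gaps. Your list of ``critical computations'' is miscalibrated: two of the three entries, $\Ext^{\geq 1}_\Cd(\ul\ZZ, \ul\RR)$ and $\Ext^{\geq 2}_\Cd(\ul\ZZ, \ul K)$, vanish for free because $\ul\ZZ = \ZZ[\ast]$ is one of the compact projective generators of $\Cd$ (take $S=\ast$ in the statement of the generators), so $\Ext^{\geq 1}_\Cd(\ul\ZZ, -)$ is identically zero. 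The computation that actually carries the weight of the theorem, and to which Scholze devotes genuine work, is $R\Hom_\Cd(\ul\RR, \ul\ZZ) = 0$ — and it is absent from your list. Worse, the heuristic you offer for the hard cases (``$\ul\RR$ is soft, so $H^{>0}(S, \ul\RR)=0$ for profinite $S$'') points the wrong way: that fact controls $\Ext^i_\Cd(\ZZ[S], \ul\RR)$, i.e.\ maps \emph{into} $\ul\RR$, whereas what you must compute is $R\Hom_\Cd(\ul\RR, -)$ \emph{out of} $\ul\RR$, which requires a (nontrivial) resolution of $\ul\RR$ on the source side.

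The more fundamental flaw is the appeal to Pontryagin duality to ``span the remaining cases.'' Pontryagin duality is a contravariant self-equivalence of $\mathrm{LCA}$, not of $\Cd$; it gives $\Ext^i_{\mathrm{LCA}}(M,N)\cong\Ext^i_{\mathrm{LCA}}(\hat N,\hat M)$ but a priori says nothing about $\Ext^i_\Cd(\ul M,\ul N)$ versus $\Ext^i_\Cd(\ul{\hat N},\ul{\hat M})$. That compatibility is a \emph{consequence} of the theorem you are trying to prove, not an available input, so using it is circular. Consequently the cases $R\Hom_\Cd(\ul\RR,\ul\ZZ)$, $R\Hom_\Cd(\ul K,\ul\RR)$, $R\Hom_\Cd(\ul K,\ul\ZZ)$, and $R\Hom_\Cd(\ul K,\ul{K'})$ (with $K,K'$ compact) all need direct arguments, the first being the crux. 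Finally, in the sheafification step, the reduction of $\Ext^i_{\mathrm{LCA}}(M, C(S,N))\cong C(S,\Ext^i_{\mathrm{LCA}}(M,N))$ to finite $S$ requires $\Ext^i_{\mathrm{LCA}}(M,-)$ to commute with the filtered colimit $C(S,N)=\varinjlim_i N^{S_i}$, which is not automatic and needs justification.
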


On the other hand, the category of condensed abelian groups behave exactly like the category of abelian groups.
\begin{theorem}\cite[Theorem 2.2]{Sch19}
	The category $\Cd$ is an abelian category, satisfying Grothendieck's axioms (AB3), (AB4), (AB5), (AB3*), (AB4*) and (AB6).
	Moreover, it is generated by compact projective objects $\mathbb{Z}[S]$, for $S\in \ast_\pe$ being an extremally disconnected set.
\end{theorem}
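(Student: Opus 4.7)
The plan is to verify each assertion by reducing to the concrete structure of the site $\ast_\pe$. Since condensed abelian groups are by \Cref{cond def} sheaves of abelian groups on $\ast_\pe$, the abelian structure together with the existence of arbitrary limits and colimits (axioms (AB3) and (AB3*)) follow immediately from the general theory of sheaves of abelian groups on a site. The content is therefore in the exactness axioms and the assertion about compact projective generators.

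Next, I would exploit the key structural feature of $\ast_\pe$: every profinite set admits a cover by an extremally disconnected set, and any cover of an extremally disconnected $S$ can be refined by the identity (since such $S$ is projective in the category of profinite sets). Consequently, the family of evaluation functors $M \mapsto M(S)$ indexed by extremally disconnected $S$ is exact and jointly conservative $\Cd \to \mathrm{Ab}$. Since filtered colimits and products (resp. filtered limits) are exact in $\mathrm{Ab}$, this pointwise criterion gives (AB5), hence (AB4), as well as (AB4*); the strengthening (AB6) is verified by the same pointwise argument, using that filtered colimits in $\mathrm{Ab}$ commute with arbitrary products of uniformly indexed filtered systems.

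For the second assertion, I would define $\mathbb{Z}[S]$ as the free condensed abelian group on the representable condensed set $S$, so that Yoneda yields $\Hom_\Cd(\mathbb{Z}[S], M) = M(S)$ for every $M \in \Cd$. Projectivity of $\mathbb{Z}[S]$ for extremally disconnected $S$ then reduces, via the pointwise-surjectivity criterion above, to the projectivity of such $S$ in profinite sets. Compactness follows from the fact that $M \mapsto M(S)$ preserves filtered colimits when $S$ is quasi-compact, again checked on the extremally disconnected basis. Finally, for generation: given any $M \in \Cd$, the canonical map
\[
\bigoplus_{S\text{ ext.\ disc.},\, m \in M(S)} \mathbb{Z}[S] \longrightarrow M,
\]
where the $(S,m)$-component corresponds under Yoneda to $m$, is an epimorphism because surjectivity can be tested on extremally disconnected $S$.

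The main obstacle I expect is the careful set-theoretic management of smallness required for $\mathbb{Z}[S]$ to be genuinely compact rather than merely $\kappa$-compact; this is precisely where the fixed strong limit cardinal $\kappa$ of our convention enters the argument, forcing us to restrict throughout to $\kappa$-small extremally disconnected sets and to interpret all generation/compactness statements relative to this cutoff.
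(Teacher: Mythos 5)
Your proposal is correct and follows exactly the route of Scholze's own proof of \cite[Theorem 2.2]{Sch19}: the paper does not reprove this statement but cites it, and Scholze's argument is precisely the reduction to the extremally disconnected basis of $\ast_\pe$ (on which the sheaf condition degenerates to sending finite disjoint unions to products, so all limits, filtered colimits, and coproducts are computed pointwise), followed by importing (AB3)--(AB6) from $\mathrm{Ab}$ and reading off compact projectivity of $\mathbb{Z}[S]$ from the identification $\Hom_\Cd(\mathbb{Z}[S],M)\cong M(S)$. Your closing remark on $\kappa$-smallness is also handled the same way in \cite{Sch19}, via the fixed uncountable strong limit cardinal.
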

Inside of the category $\Cd$, there is abelian subcategory called $\Sd$.
We recall its definition as below.
\begin{definition}\label{solid def}\cite[Definition 5.1]{Sch19}
	\begin{enumerate}
		\item For a profinite set $S$, we define the condensed abelian group $\mathbb{Z}[S]^\blacksquare:=\varprojlim_i \mathbb{Z}[S_i]$, where each $S_i$ is finite and $S=\varprojlim_i S_i$;
		\item A condensed group $A$ is called \emph{solid} if the natural map below is an isomorphism
		\[
		\Hom_\Cd(\mathbb{Z}[S], A) \rra \Hom_\Cd(\mathbb{Z}[S]^\blacksquare, A).
		\]
	\end{enumerate}
\end{definition}
Similarly one can define a complex $C\in D(\Cd)$ to be solid by replacing $\Hom$ by $R\Hom$, and it can be showed that those two notions are compatible (\cite[Theorem 5.8]{Sch19}).
We list the important properties of $\Sd$ below.
\begin{theorem}\cite[Theorem 5.8, Corollary 5.5]{Sch19}\label{solid prop}
	\begin{enumerate}[(i)]
		\item The category $\Sd$ is an abelian subcategory of $\Cd$ that is stable under limits, colimits and extensions.
		\item The objects $\prod_I \mathbb{Z}\in \Sd$ for any set of $I$ forms a family of compact projective generators of $\Sd$. In particular $\mathbb{Z}[S]^\blacksquare$ is solid for any profinite set $S$.
		\item The functor $D(\Sd)\subset D(\Cd)$ is fully faithful whose essential image are solid complexes, and a complex is solid if and only if each of its cohomology is solid.
		\item The inclusion $\Sd\subset \Cd$ admits a left adjoint $A\mapsto A^\blacksquare$, which is the unique colimit preserving functor extending $\mathbb{Z}[S] \mapsto \mathbb{Z}[S]^\blacksquare$.
		The left derived functor associated to $A\mapsto A^\blacksquare$ is also the left adjoint to the inclusion $D(\Sd)\subset D(\Cd)$.
	\end{enumerate}

\end{theorem}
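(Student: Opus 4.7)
\medskip

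The plan is to follow the template laid out in Scholze's analytic geometry notes, where all four parts ultimately rest on a single structural computation about $\mathbb{Z}[S]^\blacksquare$ for extremally disconnected profinite $S$. First I would verify that for any set $I$, the product $\prod_I \mathbb{Z}$ is compact projective in $\Cd$. Compactness is easy: it is representable by a cofiltered limit of finite free abelian groups, so $\Hom_\Cd(\prod_I\mathbb{Z},-)$ commutes with filtered colimits of abelian condensed groups because maps out of $\prod_I\mathbb{Z}$ factor through some finite quotient on each profinite test set. Projectivity is the subtle point and I would obtain it via a Nöbeling-style theorem: the underlying abelian group of $\prod_I \mathbb{Z}$ admits, in the condensed sense, enough ``coordinate'' functionals so that any surjection onto it splits. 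This is the technical heart and I would handle it by the transfinite induction of Nöbeling, organized on the cardinality of $I$.

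Next I would establish the key identification: for any extremally disconnected $S = \varprojlim S_i$, the condensed abelian group $\mathbb{Z}[S]^\blacksquare = \varprojlim \mathbb{Z}[S_i]$ is isomorphic to $\prod_I \mathbb{Z}$ for an appropriate set $I$. The reason is that $S$, being extremally disconnected, admits a description as the Stone dual of a complete Boolean algebra, and $\mathbb{Z}[S]^\blacksquare$ can be identified with the group of locally constant $\mathbb{Z}$-valued functions equipped with the inverse limit topology; applied to extremally disconnected $S$, such functions form a product of copies of $\mathbb{Z}$ indexed by a basis of clopens. Granting this, solidity of $\mathbb{Z}[S]^\blacksquare$ is tautological from Definition \ref{solid def}, since the natural map $\mathbb{Z}[S] \to \mathbb{Z}[S]^\blacksquare$ is then the unit for mapping into products.

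For (i), stability of $\Sd$ under limits is automatic from the definition since it is cut out by a Hom-condition against a single diagram $\mathbb{Z}[S] \to \mathbb{Z}[S]^\blacksquare$; stability under cokernels and extensions follows from the five-lemma applied to the defining diagram, using projectivity of $\mathbb{Z}[S]^\blacksquare = \prod_I\mathbb{Z}$ from (ii); stability under colimits reduces to the statement that arbitrary direct sums of solid groups are solid, which follows again from the compact-projective generator description. Part (iii) is then formal: since $\Sd$ has enough projective generators $\prod_I\mathbb{Z}$, every solid complex admits a resolution by such, and the fully faithful embedding $D(\Sd)\hookrightarrow D(\Cd)$ follows from checking on generators; the essential image being solid complexes is then a consequence of cohomological stability under limits and colimits within $\Cd$. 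For (iv), the inclusion $\Sd \hookrightarrow \Cd$ preserves limits by (i), and both categories are presentable, so the adjoint functor theorem yields the left adjoint $A \mapsto A^\blacksquare$; uniqueness of the colimit-preserving extension of $\mathbb{Z}[S]\mapsto \mathbb{Z}[S]^\blacksquare$ follows because every condensed abelian group is a colimit of $\mathbb{Z}[S]$'s indexed by a diagram of profinite sets.

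The main obstacle by a wide margin is the projectivity of $\prod_I \mathbb{Z}$ in $\Cd$, equivalently the Nöbeling-type structure theorem identifying $\mathbb{Z}[S]^\blacksquare$ with such a product for extremally disconnected $S$. Everything else is categorical bookkeeping once this building block is in place. I would expect to spend most of the effort here, since the argument requires a careful transfinite analysis of condensed-abelian-group maps out of products and a delicate interplay with the structure of extremally disconnected sets.
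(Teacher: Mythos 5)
The paper does not prove this result: the citation \cite[Theorem 5.8, Corollary 5.5]{Sch19} in the theorem header indicates that it is imported directly from Scholze's condensed mathematics lecture notes, and the paper only \emph{uses} it as a black box in \Cref{sec Galois inv}, so there is no in-paper proof to compare your proposal against. With that caveat, your sketch broadly matches the structure of the argument in \cite{Sch19}: the N\"obeling-type structure theorem (that $C(S,\mathbb{Z})$ is free abelian, leading to the identification of $\mathbb{Z}[S]^\blacksquare$ with $\prod_J\mathbb{Z}$ for extremally disconnected $S$ and its projectivity) is indeed the technical heart, and you correctly describe parts (i), (iii), (iv) as categorical bookkeeping once that is in hand.

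That said, your treatment of compactness contains a genuine error. You assert that $\prod_I\mathbb{Z}$ is compact in $\Cd$ because it is ``representable by a cofiltered limit of finite free abelian groups.'' Being an inverse limit of compact objects is not a reason for compactness; it is typically the mechanism by which compactness \emph{fails} (in ordinary abelian groups, $\prod_{\NN}\mathbb{Z}$ is an inverse limit of finitely generated free groups but is not finitely presented, hence not compact in $\mathrm{Ab}$). Note also that part (ii) of the theorem only asserts compact projectivity of $\prod_I\mathbb{Z}$ \emph{in $\Sd$}, not in $\Cd$. Compactness in $\Sd$ is a weaker condition: one only tests the Hom-functor against filtered diagrams with solid entries, and by part (i) such colimits agree in $\Sd$ and $\Cd$. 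Projectivity of $\prod_I\mathbb{Z}$ in the larger category $\Cd$ is correct and is where the N\"obeling input enters, but you should cleanly separate that from the compactness claim, which lives in $\Sd$ and is proved via the universal property $\Hom_\Cd(\mathbb{Z}[S]^\blacksquare,A)\cong\Hom_\Cd(\mathbb{Z}[S],A)$ for solid $A$ rather than via any inverse-limit presentation.
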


We then turn to the notion of condensed group cohomology.
We first recall its definition as in \cite[Appendix B.1]{Bos21}.
\begin{definition}\label{group coh def}
	Let $G$ be a condensed group, and let $A$ be a $G$-module in $\Cd$.
	The $i$-th \emph{condensed group cohomology} of $G$ in $A$, as an object in $\Cd$, is 
	\[
	\ul\Ext^i_{\mathbb{Z}[G]}(\mathbb{Z}, A).
	\]
\end{definition}
We use the notation $R\Gamma(G,-)$ for the composition functor $R\Hom_{\mathbb{Z}[G]}(\mathbb{Z}, -)=R\Gamma(\ast, R\ul\Hom_{\mathbb{Z}[G]}(\mathbb{Z}, -))$, evaluating at condensed $G$-modules.

The following useful proposition says condensed group cohomology recovers the usual notion of continuous group cohomology in many cases.
\begin{proposition}\label{group coh prop}\cite[Appendix B.2]{Bos21}
	Let $G$ be a profinite group, and let $A$ be a solid $\ul G$-module.
	\begin{enumerate}
		\item The complex $R\ul\Hom_{\mathbb{Z}[\ul G]}(\mathbb{Z}, A)$ is isomorphic to the complex of solid abelian groups
		\[
		A \rra \ul\Hom_{\mathbb{Z}}(\mathbb{Z}[\ul G], A) \rra \ul\Hom_{\mathbb{Z}}(\mathbb{Z}[\ul G \times \ul G], A) \rra \cdots.
		\]
		\item If $A=\ul M$ for some $T_1$ topological $G$-module $M$, then the global section functor induces a natural isomorphism below
		\[
		\Ext_{\mathbb{Z}[\ul G]}^i (\mathbb{Z}, \ul M) \rra \mathrm{H}^i_\ct(G,M).
		\]
	\end{enumerate}
\end{proposition}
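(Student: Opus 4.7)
The plan is to construct an explicit bar-type resolution of $\mathbb{Z}$ by $\mathbb{Z}[\ul G]$-modules and apply $R\ul\Hom_{\mathbb{Z}[\ul G]}(-,A)$. Set $P_n := \mathbb{Z}[\ul G^{n+1}]$, viewed as a $\mathbb{Z}[\ul G]$-module via left multiplication on the first factor, equipped with the standard alternating-sum face differentials and the augmentation $P_0 = \mathbb{Z}[\ul G] \twoheadrightarrow \mathbb{Z}$. The contracting homotopy $s_n \colon [g_0,\ldots,g_n] \mapsto [e,g_0,\ldots,g_n]$ is a condensed map (arising from continuous insertion of the identity), so the augmented complex $P_\bullet \to \mathbb{Z}$ is exact in $\Cd$.

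Since $P_n = \mathbb{Z}[\ul G] \otimes \mathbb{Z}[\ul G^n]$ is free as a $\mathbb{Z}[\ul G]$-module on the condensed abelian group $\mathbb{Z}[\ul G^n]$, the free-forgetful adjunction gives a natural isomorphism
\[
\ul\Hom_{\mathbb{Z}[\ul G]}(P_n, A) \cong \ul\Hom_\mathbb{Z}(\mathbb{Z}[\ul G^n], A),
\]
and tracking the simplicial structure shows the induced differentials agree with those in (1). To promote this underived identification into the derived statement, I would pass to the solid category using the left adjoint $(-)^\blacksquare$ of \Cref{solid prop}(iv): since $A$ is solid, both sides of the display are unchanged upon replacing $P_n$ by $P_n^\blacksquare$, and in $\Sd$ the solidified bar complex is adapted to computing $R\ul\Hom_{\mathbb{Z}[\ul G]^\blacksquare}(-, A)$, its terms being built from the compact projective generators $\prod_I \mathbb{Z}$ of \Cref{solid prop}(ii). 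If needed, one can first resolve each $\mathbb{Z}[\ul G^n]^\blacksquare$ by $\mathbb{Z}[\tilde{S}_n]^\blacksquare$ for extremally disconnected covers $\tilde{S}_n \twoheadrightarrow \ul G^n$ and form the total complex. This yields (1).

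For part (2), I would apply the global-section functor termwise. By the adjunction between $\mathbb{Z}[-]$ and the forgetful functor to condensed sets, $\ul\Hom_\mathbb{Z}(\mathbb{Z}[\ul G^n], \ul M)(\ast) = \Hom_\Cd(\ul G^n, \ul M)$. Since $G^n$ is compact Hausdorff (hence compactly generated) and $M$ is $T_1$, \Cref{cond and top}(i) identifies this with the set of continuous maps $G^n \to M$. Checking that the induced differentials match the standard continuous cochain differentials then gives the promised isomorphism with $\mathrm{H}^i_\ct(G,M)$.

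The main obstacle I anticipate is the derived step of part (1): the modules $P_n = \mathbb{Z}[\ul G^{n+1}]$ are in general not projective in $\Cd$ because the profinite sets $\ul G^n$ need not be extremally disconnected, so one must genuinely use the solidness of $A$ and the formalism of $(-)^\blacksquare$ to justify that the bar cochain complex represents $R\ul\Hom_{\mathbb{Z}[\ul G]}(\mathbb{Z}, A)$ rather than some smaller invariant. Everything else reduces to bookkeeping with the bar construction and the fully faithful embeddings of \Cref{cond and top} and \Cref{cond and lca}.
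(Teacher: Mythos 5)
The paper itself offers no independent proof of this statement: it simply cites \cite[Appendix B.2]{Bos21} and observes that Bosco's argument, written there for $A = \ul M$, goes through verbatim for general solid $A$. So the right benchmark is Bosco's argument, and your sketch is recognizably the same one: the bar resolution $P_n = \mathbb{Z}[\ul G^{n+1}]$, the observation that the contracting homotopy is a condensed map so that $P_\bullet \to \mathbb{Z}$ is exact in $\Cd$, the free–forgetful identification $\ul\Hom_{\mathbb{Z}[\ul G]}(P_n,A) \cong \ul\Hom_{\mathbb{Z}}(\mathbb{Z}[\ul G^n],A)$, and the passage through global sections for part (2). Your plan is correct in outline and aligned with the expected proof.

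The one place I would want you to be more precise is exactly the step you flag as the main obstacle. The reason the bar resolution — which is \emph{not} a projective resolution in $\Cd$ since $\ul G^n$ is not extremally disconnected — nonetheless computes $R\ul\Hom_{\mathbb{Z}[\ul G]}(\mathbb{Z},A)$ for solid $A$ is the following chain, each link of which is in \Cref{solid prop} (i.e.\ \cite[Theorem 5.8, Corollary 5.5]{Sch19}): (a) for a profinite set $S$, the \emph{derived} solidification of $\mathbb{Z}[S]$ is discrete and equals $\mathbb{Z}[S]^\blacksquare = \varprojlim_i \mathbb{Z}[S_i]$; (b) $\mathbb{Z}[S]^\blacksquare \cong \prod_I \mathbb{Z}$ is a compact projective object of $\Sd$; and therefore (c) for solid $A$, $R\ul\Hom_\mathbb{Z}(\mathbb{Z}[S],A) \cong R\ul\Hom_{\Sd}(\mathbb{Z}[S]^\blacksquare,A)$ is concentrated in degree $0$. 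Once you have (c), each $P_n$ is acyclic for $\ul\Hom_{\mathbb{Z}[\ul G]}(-,A)$ (after the free–forgetful reduction), and the bar cochain complex genuinely represents $R\ul\Hom_{\mathbb{Z}[\ul G]}(\mathbb{Z},A)$. Your fallback of resolving $\mathbb{Z}[\ul G^n]^\blacksquare$ by $\mathbb{Z}[\tilde S_n]^\blacksquare$ for extremally disconnected covers is then not needed — those objects are of the same shape $\prod_I\mathbb{Z}$ and already projective in $\Sd$ — and, taken on its own without (a), the hypercover idea does not quite close the gap, since you would still have to check that the resulting double complex collapses, which secretly uses (a) again. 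For part (2), beyond the adjunction $\ul\Hom_\mathbb{Z}(\mathbb{Z}[\ul G^n],\ul M)(\ast) = \Hom_\Cd(\ul G^n,\ul M)$ and \Cref{cond and top}, you should also note that $R\Gamma(\ast,-)$ is exact on $\Cd$ (as $\ast$ is extremally disconnected), which is what lets you take global sections termwise and get the continuous cochain complex computing $\mathrm{H}^i_\ct(G,M)$.
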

\begin{proof}
	Here we note that though the item (i) is slightly different from the statement in \cite[Appendix B.2.(i)]{Bos21} where $A$ is assumed to be $\ul M$ already, the proof applies to the more general case.
\end{proof}
\begin{remark}
	The condensed group cohomology functor $R\ul\Hom_{\mathbb{Z}[G]}(\mathbb{Z}, -)$ for a condensed group $G$ is the right derived functor of $\ul\Hom_{\mathbb{Z}[G]}(\mathbb{Z}, -)$.
	In particular, since the latter is lax-symmetric monoidal, it naturally induces a functor from $\mathbb{E}_\infty$-algebras in $\Cd$ (resp. $\Sd$) that admits a $G$-action to $\mathbb{E}_\infty$-algebras in $\Cd$ (resp. $\Sd$).
\end{remark}

We finish this subsection by collecting some results on affinoid algebras and its associated condensed rings.
We refer the reader to \cite[Appendix A]{Bos21} for a discussion of solid $k$-modules, together with the symmetric tensor product $-\otimes_k^\bs -$.
\begin{lemma}\label{cond of affinoid}
	Let $R_0=k \langle T_i\rangle/ I_0$ be a topologically finite type $k$-algebra, let $R=R_0\wh\otimes_k K\cong K\langle T_i\rangle/ I_0$, let $R_e= \Bdre \langle T_i\rangle/I_0$, and let $\wt R=\Bdr \langle T_i\rangle/I_0=\varprojlim R_e$.
	Then each of $\ul R_0, \ul R, \ul R_e, \ul{\wt R}$ is solid, and satisfies the formulae
	\[
	\ul R=\ul R_0\otimes_{\ul k}^\bs \ul K,~~~~~~\ul R_e =\ul R_0\otimes_{\ul k}^\bs \ul\Bdre,~~~~~~\ul{\wt R} = \ul R_0 \otimes_{\ul k}^\bs \ul \Bdr.
	\]
\end{lemma}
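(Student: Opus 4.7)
The plan is to proceed in three stages: first establish solidity of the underlying condensed rings of the four base rings $k, K, \Bdre, \Bdr$; second, bootstrap to the Tate algebras $k\langle T_i\rangle$, $K\langle T_i\rangle$, $\Bdre\langle T_i\rangle$, $\Bdr\langle T_i\rangle$; and finally, pass to the quotients by $I_0$. Throughout, I will exploit that $\Sd \subset \Cd$ is closed under limits, colimits, and extensions (\Cref{solid prop}), so solidity propagates easily once we have a class of generators.

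For the first stage, $k$ is a locally compact topological field, so $\ul k$ is solid by \Cref{cond and lca}; similarly $\ul K$ is solid (as a filtered colimit of its finite subextensions, or directly since $K$ underlies the condensed ring of its ring of integers, which is profinite modulo $p^n$). For $\ul\Bdre$ and $\ul\Bdr$, the standard presentation $\Bdre = \Ainfe[\tfrac{1}{p}]$ with $\Ainfe$ an inverse limit of profinite rings, combined with the fact that solidity is preserved under limits, colimits and inverting $p$, gives what we need. Then $\ul{k\langle T_i\rangle}$ is solid since it is the $p$-adic completion of the polynomial ring $k[T_i]$, which one can write as a limit of finite direct sums $\varprojlim_n (\bigoplus_{|I| \leq n} k/p^n)$ combined with inverting $p$, and each building block lies in $\Sd$. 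The analogous arguments work for $K, \Bdre, \Bdr$ in place of $k$.

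For the tensor product formulas, the key input is the compatibility between the solid tensor product $-\otimes^\bs_{\ul k} -$ and the completed tensor product $-\wh\otimes_k -$ on topologically finite type algebras, as laid out in the appendix of \cite{Bos21}. Granting this, one has $\ul{k\langle T_i\rangle}\otimes^\bs_{\ul k}\ul K \cong \ul{k\langle T_i\rangle\wh\otimes_k K} = \ul{K\langle T_i\rangle}$, and likewise with $\Bdre$ and $\Bdr$ replacing $K$. This handles the polynomial-algebra case of the tensor product identities.

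Finally, we pass to quotients. Write $R_0 = \mathrm{coker}(\ul{I_0} \to \ul{k\langle T_i\rangle})$ in $\Sd$; since $\Sd$ is abelian, $\ul R_0$ is solid, and analogously for $\ul R$, $\ul R_e$, $\ul{\wt R}$. The tensor product identities then follow from right-exactness of $-\otimes^\bs_{\ul k} \ul K$ (and its $\Bdre/\Bdr$-analogues), applied to the exact sequence $\ul{I_0} \to \ul{k\langle T_i\rangle} \to \ul R_0 \to 0$: tensoring with $\ul K$ yields $\ul{K\langle T_i\rangle}/\ul{I_0\cdot K\langle T_i\rangle} = \ul R$, and similarly for the other two. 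The main obstacle I anticipate is the case of $\ul\Bdr$ and $\ul{\wt R}$, where we are tensoring against a non-locally-compact, $\xi$-adically complete ring; here one must either verify the identity $\ul\Bdr = R\varprojlim_e \ul\Bdre$ in $\Sd$ and check that the solid tensor product commutes with this limit on finitely presented modules, or directly appeal to Bosco's statement on solid versus completed tensor products for topologically finite type algebras over $\Bdr$.
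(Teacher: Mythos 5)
Your overall strategy (solidity by writing things as limits/colimits of discrete pieces, then tensor formulas via Bosco's compatibility between $\ul{(-)}$ and completed tensor products) matches the paper's, but there are two places where your route diverges from or falls short of what the paper does.

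First, the detour through the Tate algebras followed by the right-exactness argument for quotients is unnecessary. The result you cite from Bosco (his A.26) already gives $\ul{V\wh\otimes_k W}\cong \ul V\otimes^{\bs}_{\ul k}\ul W$ for Banach $k$-modules/algebras $V,W$, and $R_0$ itself is a Banach $k$-algebra; the paper simply applies this directly to the pair $(R_0,K)$ (resp.\ $(R_0,\Bdre)$), with no passage through $k\langle T_i\rangle$. Your route is not wrong, but to make the right-exactness step rigorous you would need to identify the image of $\ul{I_0}\otimes^{\bs}_{\ul k}\ul K$ inside $\ul{K\langle T_i\rangle}$ with the condensed closed ideal $\ul{I_0 K\langle T_i\rangle}$, and the clean way to do that is — again — Bosco's A.26 applied to the Banach space $I_0$. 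So you end up invoking the same result anyway, just with extra bookkeeping. (Minor slip: $k/p^n=0$; you mean $\mathcal{O}_k/p^n$, or better, mimic the paper's $R\cong\colim_n\varprojlim_m p^{-n}(R'/p^mR')$ with $R'$ a ring of definition.)

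Second, and more substantively, your handling of the $\Bdr$ case is only a sketch of a plan, not an argument. You correctly identify that $\Bdr$ is Fr\'echet rather than Banach, so the Banach-algebra compatibility statement does not apply directly, and you propose two outs: commute the solid tensor product with $\varprojlim_e\Bdre$, or appeal to a hypothetical Fr\'echet analogue of A.26. The second alternative does not exist in the form you need; the first is exactly what the paper does, using the limit--tensor compatibility [Bos21, A.24(i)] with $V_n=\Bdrn$ and $W=\ul{R_0}$ to get
\[
\Bdr\otimes^{\bs}_{\ul k}\ul{R_0}\cong\varprojlim_n(\Bdrn\otimes^{\bs}_{\ul k}\ul{R_0})\cong\varprojlim_n\ul{R_n}=\ul{\wt R},
\]
where the middle isomorphism comes from A.26 applied to the Banach pair $(R_0,\Bdrn)$. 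You flagged the right issue but left the resolution open; to complete your proposal you must actually supply this limit--tensor step.
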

To save us from complicated notations, we use $k$, $K$, $\Bdre$, and $\Bdr$ instead of $\ul k$, $\ul K$, $\ul \Bdre$ or $\ul\Bdr$ to denote their associated solid rings.
\begin{proof}
	For the solidity of the rings above, this can be checked using the definition of the functor $M\mapsto \ul M$ in \Cref{cond and top}, and the fact that solid modules are preserved under limits and colimits as in \Cref{solid prop}.
	For example the ring $R$ is computed as 
	\[
	R\cong \colim_{n\in\NN} \left(\varprojlim_{m\in\NN} p^{-n}(R'/p^mR') \right),
	\]
	where $R'$ is a ring of definition of $R$, and thus each $R'/p^mR'$ is a discrete abelian group.
	By \Cref{cond and top}, it then suffices to notice that a continuous map from a profinite set $S$ to $R$ can be written as a of limit of continuous maps from $S$ to $p^{-n}(R'/p^mR')$, for some $n\in \mathbb{N}$.
	
	For the tensor product formulae, the first two with $\ul K$ and $\Bdrn$ follows  from the observation that the functor taking the associated condensed group commutes with the complete tensor product, over the category of Banach $k$-algebras (\cite[Appendix A.26]{Bos21}).
	For the last one, we apply the limit-tensor formula in \cite[Appendix A.24.(i)]{Bos21} at $V_n=\Bdrn$ and $W=\ul R_0$, to get
	\[
	(\varprojlim_n \Bdrn)\otimes_k^\bs \ul R_0 \cong \varprojlim_n (\Bdrn \otimes_k^\bs \ul R_0) \cong \varprojlim_n (\ul{\Bdrn \langle T_i\rangle/I_0}),
	\]
	where the last formula follows from \cite[Appendix A.26]{Bos21} (recall from \cite{Fon94} that $\Bdr$ is a $k$-Fr\'echet algebra).
	To finish the proof, we notice that by definition of the functor $M\mapsto \ul M$ in \Cref{cond and top} we have $\varprojlim_n  (\ul{\Bdrn \langle T_i\rangle/I_0})=\ul{\Bdr \langle T_i\rangle/I_0}=\ul{\wt R}.$	
\end{proof}
\subsection{Galois invariant of cohomology}
We now compute Galois invariants of cohomology.

We recall the following fact computing continuous Galois cohomology of $\Bdr$-modules.
\begin{lemma}\label{Galois of Bdr}
	We have the following equalities:
\[
\mathrm{H}_{\mathrm{cont}}^n(G_k, K(i))= \begin{cases}
	k,~n=i=0;\\
	k\cup \log\chi,~n=1,~i=0;\\
	0,~n\geq 2,~or~i\neq 0.
\end{cases}
\]
The same results hold if we replace $K(0)$ by $\Bdr$.
Here $\log \chi\in \mathrm{H^1}_{\mathrm{cont}}(G_k, \mathbb{Q}_p)=\mathrm{Hom_{cont}}(G_k,\mathbb{Q}_p)$ is the logarithm of the cyclotomic character.
\end{lemma}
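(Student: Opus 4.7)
The plan is to reduce to Tate's classical theorem on the Galois cohomology of $K = \hat{\bar k}$, and then bootstrap to $\Bdr$ via the $\xi$-adic filtration, whose graded pieces are Tate twists of $K$.

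First I would recall Tate's theorem: for a $p$-adic field $k$ with perfect residue field and $K = \hat{\bar k}$, one has $\mathrm{H}^0_{\mathrm{cont}}(G_k, K(i)) = k$ if $i=0$ and $0$ otherwise; $\mathrm{H}^1_{\mathrm{cont}}(G_k, K(i)) = k \cdot \log\chi$ if $i=0$ and $0$ otherwise; and $\mathrm{H}^n_{\mathrm{cont}}(G_k, K(i)) = 0$ for $n \geq 2$. This is the base case and settles the $K$-statement.

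Next I would lift to $\Bdr$ via the $\xi$-adic filtration. The chosen generator $\xi = ([\epsilon]-1)/(\varphi^{-1}[\epsilon]-1)$ of $\ker\theta$ carries cyclotomic Tate twist weight $+1$ (since $\xi$ differs from $t = \log[\epsilon]$ by a unit in $\Bdr^\times$ that is $G_k$-invariant modulo lower-order issues, and $t$ has weight $1$), so $\xi^i \Bdr/\xi^{i+1}\Bdr \cong K(i)$ $G_k$-equivariantly. Feeding this into the short exact sequences
\[
0 \to K(n) \to \Bdr/\xi^{n+1} \to \Bdr/\xi^n \to 0
\]
and running induction on $n$ with the long exact sequence in continuous cohomology, Tate's theorem forces each connecting map $\mathrm{H}^i(G_k, \Bdr/\xi^n) \to \mathrm{H}^{i+1}(G_k, K(n))$ into or out of a vanishing group whenever $n \geq 1$. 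Hence the transition map $\mathrm{H}^i_{\mathrm{cont}}(G_k, \Bdr/\xi^{n+1}) \to \mathrm{H}^i_{\mathrm{cont}}(G_k, \Bdr/\xi^n)$ is an isomorphism for every $n \geq 1$, and each such group agrees with $\mathrm{H}^i_{\mathrm{cont}}(G_k, K)$: namely $k$ in degrees $0$ and $1$ (with the degree-$1$ class being $\log\chi$) and $0$ in higher degrees.

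Finally I would pass to the limit $\Bdr = \varprojlim_n \Bdr/\xi^n$. Because the transition maps on cohomology are isomorphisms, the Mittag-Leffler condition holds trivially, so the Milnor sequence collapses and $\mathrm{H}^i_{\mathrm{cont}}(G_k, \Bdr) \cong \varprojlim_n \mathrm{H}^i_{\mathrm{cont}}(G_k, \Bdr/\xi^n)$, yielding the claimed values. The main technical point, and essentially the only one requiring care, is the verification that $\xi$ has Tate weight $+1$ so that the graded pieces truly are $K(n)$; once that is in place, both the inductive computation and the limit step are routine. Everything here takes place comfortably in the condensed/continuous cohomology formalism of \Cref{subsec condensed}, and the same argument produces the result as an identity of condensed abelian groups, which is the form in which it will be used later.
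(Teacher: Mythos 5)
Your argument matches the paper's (very terse) proof step for step: cite Tate's theorem for the graded pieces, dévissage along the $\xi$-adic filtration, and pass to the inverse limit via the Milnor sequence. One correction on what you flag as the "main technical point": the unit $u = t/\xi$ (with $t = \log[\epsilon]$) is \emph{not} $G_k$-invariant modulo $\xi$ — one has $\theta(u) = \zeta_p - 1$, and $g$ sends this to $\zeta_p^{\chi(g)} - 1$ — so it is in fact false that $g\cdot\xi \equiv \chi(g)\,\xi \pmod{\xi^2}$ for the particular generator $\xi = \mu/\varphi^{-1}(\mu)$ fixed in the conventions. What is true, and is all you need, is that
\[
\xi^i\Bdr/\xi^{i+1}\Bdr = (\ker\theta)^i/(\ker\theta)^{i+1} \cong K(i)
\]
as a $G_k$-module \emph{independently of the chosen generator}, because $t^i$ always serves as a basis on which $G_k$ acts literally by $\chi^i$. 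With that small repair, the long-exact-sequence induction (the connecting maps land in or come from groups that Tate forces to vanish for $n\ge 1$) and the Mittag–Leffler collapse of the Milnor sequence go through exactly as you wrote, which is what the paper's one-line proof has in mind.
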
\label{Galois of P}
\begin{proof}
	The calculation of Galois cohomology for $K(i)=\xi^i\Bdr/\xi^{i+1}\Bdr$ can be found in [\cite{Tat67}].
	For $\Bdr$ this follows from an induction and an inverse limit process.
\end{proof}
\begin{lemma}\label{Galois of diff}
	Let $P$ be the $\Bdr$-algebra $\Bdr \langle T_j\rangle$, and let $\Omega_{P/\Bdr}^i=\varprojlim \Omega_{\Bdre \langle T_j\rangle /\Bdre}^i$ be the inverse limit of $p$-adic continuous differentials.
	Let $P_0=k \langle T_j\rangle$ be the affinoid $k$-subalgebra inside of $P$, let $I_0$ be an ideal in $P_0$, and let $I=I_0\otimes_{P_0} P$ be the ideal in $P$.
	Then for $n,i \in \mathbb{N}$ we have 
	\[
	\mathrm{H}_{\mathrm{cont}}^n(G_k, \Omega_{P/\Bdr}^i\otimes_P I)= \begin{cases}
			\Omega_{P_0/k}^i\otimes_{P_0} I_0,~n=0;\\
			\Omega_{P_0/k}^i\otimes_{P_0} I_0\cup \log\chi,~n=1;\\
			0,~n\geq 2,
		\end{cases}
		\]
	where $\Omega_{P_0/k}^i$ is the $i$-th continuous differential of $P_0$ over $k$.
	The same holds if we replace $P$ by $K \langle T_i\rangle$ or $\Bdre \langle T_i\rangle$.
\end{lemma}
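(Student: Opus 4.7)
The strategy is to reduce the computation to Lemma \ref{Galois of Bdr} by isolating the $G_k$-trivial factor. The module $\Omega_{P/\Bdr}^i\otimes_P I$ is free over $P$ on the basis $\{dT_{j_1}\wedge\cdots\wedge dT_{j_i}\}$ times $I$, so a first observation is that it admits a clean description as the completed base change from $k$ to $\Bdr$ of a $G_k$-trivial module. Concretely, since $P$ is topologically $P_0\wh\otimes_k \Bdr$, the module $\Omega_{P/\Bdr}^i$ identifies with $\Omega_{P_0/k}^i\wh\otimes_k \Bdr$, and using $I=I_0\otimes_{P_0}P$, one gets a topological (condensed) identification
\[
\Omega_{P/\Bdr}^i\otimes_P I \;\cong\; M_0\,\wh\otimes_k\,\Bdr, \qquad M_0:=\Omega_{P_0/k}^i\otimes_{P_0} I_0,
\]
where $M_0$ is equipped with the trivial $G_k$-action.

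Next, I would promote this to the condensed setting and apply a tensor product formula for Galois cohomology. Using \Cref{cond of affinoid} together with the fact that $M_0$ is a Banach $k$-module (so $\ul M_0$ is solid), the identification above becomes an isomorphism of solid $\Bdr$-modules $\ul M_0\otimes_k^\bs \Bdr$ with the trivial $G_k$-action on the first factor. Since condensed group cohomology commutes with the symmetric monoidal tensor product against an object with trivial $G_k$-action (this is the place one invokes the projection formula in $D(\Sd)$, cf. \cite[Appendix B]{Bos21}), I would deduce
\[
R\Gamma_\ct(G_k,\, \Omega_{P/\Bdr}^i\otimes_P I)\;\cong\; \ul M_0\otimes_k^\bs R\Gamma_\ct(G_k,\Bdr),
\]
and then \Cref{group coh prop}.(ii) identifies this condensed cohomology with the desired continuous Galois cohomology after evaluating at the point.

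Plugging in Lemma \ref{Galois of Bdr}, which says $R\Gamma_\ct(G_k,\Bdr)$ is concentrated in degrees $0$ and $1$ with values $k$ and $k\cdot\log\chi$, gives the stated answer for $P=\Bdr\langle T_j\rangle$. The cases $K\langle T_j\rangle$ and $\Bdre\langle T_j\rangle$ follow by the same argument, replacing $\Bdr$ by $K$ or $\Bdre$ in the tensor factor and invoking the corresponding part of Lemma \ref{Galois of Bdr}; for $\Bdre$ one can alternatively use the short exact sequences $0\to K(e-1)\to \Bdre\to \mathrm{B}^+_{\mathrm{dR},e-1}\to 0$ and induction on $e$, together with the vanishing of $H^n_\ct(G_k,K(i))$ for $i\neq 0$ or $n\geq 2$.

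The main technical obstacle is justifying the tensor product decomposition of continuous Galois cohomology, since $M_0$ need not be finite free over $k$ (e.g.\ when $I_0$ is an arbitrary ideal). The cleanest remedy is to phrase everything in the solid category, where $\ul M_0$ is a compact/flat object (as a closed submodule of a free topologically finitely generated $k$-module), so that $-\otimes_k^\bs\Bdr$ is exact and commutes with $R\ul\Hom_{\mathbb{Z}[\ul G_k]}(\mathbb{Z},-)$ past the trivial factor. Once this formal step is in place, everything else is a direct substitution into Lemma \ref{Galois of Bdr}.
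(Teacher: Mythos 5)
Your proposal takes a genuinely different route from the paper's. The paper's proof is a one-liner: filter by powers of $\xi$ to reduce to the Banach modules $K\langle T_j\rangle(e)$ (Tate-twisted by the graded pieces $\xi^e/\xi^{e+1}$), and then invoke a Tate--Sen computation (citing Liu--Zhu) directly for those modules. You instead factor $\Omega^i_{P/\Bdr}\otimes_P I$ topologically as $M_0\wh\otimes_k\Bdr$ with $M_0=\Omega^i_{P_0/k}\otimes_{P_0}I_0$ carrying trivial $G_k$-action, and then invoke a projection formula $R\Gamma(G_k,\, M_0\otimes^\bs_k N)\simeq M_0\otimes^{L\bs}_k R\Gamma(G_k,N)$ to reduce all the way to \Cref{Galois of Bdr}. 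What your approach buys is modularity: the only period-ring input is Lemma \ref{Galois of Bdr} (classical Tate/Fontaine), and the affinoid factor is handled by pure category theory in $D(\Sd)$, sidestepping the geometric Tate--Sen machinery. What the paper's approach buys is that it proceeds via a ready-made citation and does not need to justify the projection formula.

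Two places where your write-up should be tightened. First, the phrase ``$\ul M_0$ is a compact/flat object'' is misleading: for $M_0$ a $k$-Banach module, $\ul M_0$ is \emph{flat} as a solid $k$-module (which is what you actually need to commute $-\otimes^\bs_k \ul M_0$ past cohomology of a termwise-identified cochain complex), but it is \emph{not} compact/dualizable in $\Sd$ unless $M_0$ is finite-dimensional. The duality that drives the termwise projection formula is supplied by $\ZZ[\ul G_k^n]^\blacksquare$ being dualizable (a countable product of $\ZZ$'s has dual a countable direct sum), not by $\ul M_0$. You should separate those two roles. Second, you should make explicit the reduction from the Fr\'echet case $\Bdr$ to the Banach levels $\Bdre$ and pass through $R\varprojlim_e$, exactly as you sketch for the $\Bdre$ statement at the end; as written the main argument quietly treats $\Bdr$ as if the completed tensor $-\wh\otimes_k\Bdr$ and the projection formula were immediate in the Fr\'echet setting. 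Finally, the reference to \cite[Appendix B]{Bos21} for the projection formula is optimistic — you should either locate the precise statement or deduce it from the dualizability of $\ZZ[S]^\blacksquare$ for profinite $S$; that step is the one nontrivial lemma your argument hangs on.
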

\begin{proof}
	Similarly to \Cref{Galois of Bdr}, by a limit argument and the induction, one can reduce to the calculation of continuous group cohomology for $P/\xi(i)=K \langle T_i\rangle (i)$, which follows from Tate-Sen formalism (see \cite[Lemma 3.10]{LZ17} for example).
\end{proof}

Now we compute Galois invariant of infinitesimal cohomology.
\begin{theorem}\label{Galois of inf}
	Let $Y$ be a rigid space over $k$, and let $Y_K$ be its complete base extension to $K$.
	\begin{enumerate}[(i)]
		\item There is a natural complex of condensed abelian groups  $\ul{R\Gamma}_{\inf}(Y/k)$ and a complex of $\ul G_k$-condensed modules $\ul{R\Gamma}_{\inf}(Y_K/\Bdr)$ underlying infinitesimal cohomology $R\Gamma(Y/k_{\inf}, \mathcal{O}_{Y/k})$ and $R\Gamma(Y_K/\Bdr_\mathrm{pinf}, \mathcal{O}_{Y_K/\Bdr})$ separately;
		\item the above complexes satisfy a solid base change formula
		\[
		\ul{R\Gamma}_{\inf}(Y/k) \otimes_k^{L\bs} \Bdr \cong \ul{R\Gamma}_{\inf}(Y_K/\Bdr);
		\]
		\item the $\ul G_k$-action induces the equality
		\[
		R\Gamma(\ul G_k, \Bdr)\otimes_k R\Gamma(Y/k_{\inf}, \mathcal{O}_{Y/k}) \rra R\Gamma(\ul G_k, \ul{R\Gamma}_{\inf}(Y_K/\Bdr)).
		\]
	\end{enumerate}

\end{theorem}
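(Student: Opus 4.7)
The plan is to build the condensed enhancements of the two infinitesimal cohomology complexes from local Čech--Alexander complexes associated to closed immersions into affinoid spaces over $k$ and over $\Bdr$, then bootstrap (ii) from the tensor-product formula of \Cref{cond of affinoid}, and finally deduce (iii) by a projection-formula argument using that the base complex has trivial Galois action.

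For (i) I would proceed Zariski locally. Choose a finite affinoid cover $\{\Spa(R_{0,\alpha})\}$ of $Y$ and surjections $P_{0,\alpha}=k\langle T_i\rangle\twoheadrightarrow R_{0,\alpha}$ with kernel $I_{0,\alpha}$; these are weakly final in the appropriate local pro-infinitesimal site by \Cref{inf final}. The de Rham--\v{C}ech bicomplex computing $R\Gamma_{\inf}(Y/k)$ is then termwise a $k$-Fr\'echet module of the form $\varprojlim_n (P_{0,\alpha_0}\wh\otimes_k\cdots\wh\otimes_k P_{0,\alpha_j})/J^n\otimes \wedge^\bullet\Omega^1$. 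By \Cref{cond of affinoid} and the limit--tensor formula \cite[Appendix A.24]{Bos21} each of these terms lifts canonically to a solid $\ul k$-module; all the differentials are continuous $k$-linear so are morphisms of solid modules. The same procedure over $\Bdr$ with $P_\alpha:=P_{0,\alpha}\wh\otimes_k\Bdr$ produces $\ul{R\Gamma}_{\inf}(Y_K/\Bdr)$, and the natural continuous $G_k$-action on $P_\alpha$ (trivial on $P_{0,\alpha}$, induced through $\Bdr$) makes it a complex of solid $\ul G_k$-modules. Independence of the chosen local embeddings uses the usual sifted argument from \Cref{sift}, applied in the solid category.

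For (ii), I would observe that by construction the $\Bdr$-model of each term in the bicomplex is obtained from the $k$-model by applying $(-)\otimes_k^\bs\Bdr$: indeed \Cref{cond of affinoid} gives
\[
\ul{P_\alpha/J^n}\;\cong\;\ul{P_{0,\alpha}/J_0^n}\otimes_k^\bs\Bdr,
\]
and differentials base change compatibly. Since $\Bdr$ is a $k$-Fr\'echet space, it is solid-flat over $k$ (being an inverse limit of the flat Banach spaces $\Bdre$ via \cite[Appendix A.24, A.62]{Bos21}), so the termwise base change induces the derived statement
\[
\ul{R\Gamma}_{\inf}(Y/k)\otimes_k^{L\bs}\Bdr\;\cong\;\ul{R\Gamma}_{\inf}(Y_K/\Bdr).
\]

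For (iii), substituting the formula from (ii) yields
\[
R\Gamma(\ul G_k,\ul{R\Gamma}_{\inf}(Y_K/\Bdr))\;\cong\;R\Gamma\bigl(\ul G_k,\,\ul{R\Gamma}_{\inf}(Y/k)\otimes_k^{L\bs}\Bdr\bigr),
\]
and since the first tensor factor carries the trivial $\ul G_k$-action while the second is a $\ul G_k$-representation, I would appeal to a projection formula of the shape
\[
R\Gamma(\ul G_k,\,M\otimes_k^{L\bs}N)\;\cong\;M\otimes_k^{L}R\Gamma(\ul G_k,N),
\]
valid for $M\in D(\mathrm{Solid}_k)$ with trivial $G_k$-action and $N$ a $\ul G_k$-module. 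To justify this, note that $R\Gamma(\ul G_k,\Bdr)$ is by \Cref{Galois of Bdr} perfect over $k$ (in fact concentrated in degrees $0,1$ with each cohomology a one-dimensional $k$-vector space), so the derived tensor product on the right collapses to ordinary $\otimes_k$. The projection formula itself reduces to the case $M=k$ (tautological) via the bar-resolution description of condensed group cohomology in \Cref{group coh prop}.(i) together with the fact that $\ul\Hom_k(\mathbb{Z}[\ul G_k^{j+1}],\,M\otimes_k^{L\bs}\Bdr)\cong M\otimes_k^{L\bs}\ul\Hom_k(\mathbb{Z}[\ul G_k^{j+1}],\Bdr)$ when $M$ is dualizable over $k$; for general $M$ one writes $M$ as a filtered colimit of perfect $k$-complexes and uses that $R\Gamma(\ul G_k,\Bdr)$ being perfect makes the right-hand side commute with such colimits. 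The hard step, which I expect to be the main obstacle, is verifying this last claim carefully in the solid setting: one needs $\Bdr$ as a topological $k$-vector space to be cotensored well enough over $\ul G_k$, and the interaction with infinite colimits on the $M$-side requires invoking the nuclearity/flatness properties of $\Bdr$ established in \cite[Appendix A]{Bos21}. Once the projection formula is in hand, combining with (ii) gives the stated equality.
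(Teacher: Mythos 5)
Your constructions for (i) and (ii) track the paper's proof closely: the paper also passes through a condensed structure sheaf on the (pro-)infinitesimal site, evaluates on the \v{C}ech--Alexander cosimplicial envelopes, and applies \Cref{cond of affinoid} together with the limit--tensor formula and solid flatness of $\Bdr$ over $k$ to get the base change. So far so good.

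Part (iii) is where I see a real gap. You reduce the statement to an abstract projection formula
\[
R\Gamma(\ul G_k,\,M\otimes_k^{L\bs}\Bdr)\;\cong\;M\otimes_k^{L}R\Gamma(\ul G_k,\Bdr)
\]
for $M$ with trivial $\ul G_k$-action, justified first for $M$ dualizable and then allegedly for general $M$ by writing $M$ as a filtered colimit of perfect $k$-complexes. But the $M$ you need is $\ul{R\Gamma}_{\inf}(Y/k)$, which is built out of \emph{inverse} limits of Banach $k$-modules $\ul{P_0(n)/(I_0,\Delta(n))^m}$; these are infinite-dimensional Banach spaces that are not perfect $k$-complexes and cannot be presented as filtered colimits of perfect complexes in $D(\mathrm{Solid}_k)$. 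So the dualizability reduction does not apply, and the attempted bootstrap via colimits runs in the wrong direction for the limits that actually occur. More importantly, the projection formula is not a purely formal statement: already for $M = \ul{K\langle T_i\rangle/J_0}$ (a single Banach space with trivial action) the claimed isomorphism
\[
R\Gamma(\ul G_k,\, \ul{K\langle T_i\rangle/J_0}\,\otimes_k^{\bs}\Bdr)\;\cong\;\ul{K\langle T_i\rangle/J_0}\otimes_k R\Gamma(\ul G_k,\Bdr)
\]
is essentially the content of the paper's \Cref{Galois of diff}, which is proved by Tate--Sen theory; it encodes genuine arithmetic input and is not a consequence of nuclearity/flatness alone. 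The paper avoids the abstract projection formula entirely: it computes $R\Hom_{\mathbb{Z}[\ul G_k]}(\mathbb{Z}, \ul{P(n)/(I_0,\Delta(n))^m})$ directly via the short exact sequence $\ul{(I_0,\Delta(n))^mP(n)}\to\ul{P(n)}\to\ul{P(n)/(I_0,\Delta(n))^mP(n)}$, applies \Cref{Galois of diff}, then passes to the inverse limits in $m$ and over $\Delta^{\op}$ using the \emph{finiteness} of $R\Gamma(\ul G_k,\Bdr)$ to commute limits and tensor products. If you want to salvage your route, you should state and prove the projection formula for the specific class of modules appearing (quotients of $P_0(n)$ by powers of the augmentation ideal), and observe that the Tate--Sen input is unavoidable; at that point the argument becomes essentially the paper's.
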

\begin{remark}\label{Galois fail}
	Before we jump to the proof, one might attempt to ask if we can replace the condensed infinitesimal cohomology $\ul{R\Gamma}_{\inf}(Y_K/\Bdr)$ in (iii) by the actual infinitesimal cohomology $R\Gamma(Y_K/\Bdr_\mathrm{pinf}, \mathcal{O}_{Y_K/\Bdr})$ and compute continuous group cohomology in the classical sense.
	However, even though each object in $Y_K/\Bdr_{\mathrm{pinf}}$ admits a natural $k$-Fr\'echet structure, one cannot conclude that $R\Gamma(Y_K/\Bdr_\mathrm{pinf}, \mathcal{O}_{Y_K/\Bdr})$ is also topological.
	This is due to the lack of the notion of a derived/homotopy limit for topological groups.
\end{remark}
\begin{proof}
We first give the definition of condensed infinitesimal cohomology as in part (i).
We define the condensed structure sheaf $\ul{\mathcal{O}}_{\inf}$ over $Y/k_{\inf}$, by sending an affinoid infinitesimal thickening $(R_0, S_0)$ onto the condensed ring $\ul S_0$.
Here recall that both $R_0$ and $S_0$ are topologically finite type algebras over $k$, and $S_0\ra R_0$ is a surjection with the kernel being nilpotent.
On the other hand, recall for a pro-infinitesimal thicking $(\Spa(R), \Spa(B_e))\in Y_K/\Bdr_{\mathrm{pinf}}$ with $B_e$ being a topologically finite type algebra over $\Bdre$, we can consider the induced condensed ring $\ul B=\varprojlim \ul B_e$.
	This allows us to define the condensed structure sheaf $\ul{\mathcal{O}}_{Y_K/\Bdr}$ over $Y_K/\Bdr_\mathrm{pinf}$ separately, sending any such pro-infinitesimal thickening onto $\ul B$.
	\footnote{A priori this is only a presheaf in condensed ring.
		To get the sheafifiness, one can use for example \cite[Appendix A.15]{Bos21}.}
Our condensed infinitesimal cohomology are then defined as infinitesimal cohomology of condensed structure sheaves, as objects in $D(\Cd)$
\begin{align*}
	\ul{R\Gamma}_{\inf}(Y/k) & := R\Gamma(Y/k_{\inf}, \ul{\mathcal{O}}_{Y/k});\\
	\ul{R\Gamma}_{\inf}(Y_K/\Bdr) & := R\Gamma(Y_K/\Bdr_\mathrm{pinf}, \ul{\mathcal{O}}_{Y_K/\Bdr}).
\end{align*}

	We now temporarily let $Y=\Spa(R_0)$ for a topological finite type algebra $R$ over $k$, and let $R=R_0\wh\otimes_k K$ be its complete field extension.
	Suppose we have a surjection of affinoid $k$-algebras $P_0=k \langle T_i \rangle \ra R$ with $R=P_0/I_0$ for an ideal $I_0\subset P_0$, and let $P$ be the $\Bdr$-algebra $\Bdr \langle T_i\rangle$.
	We then obtain a $\ul G_k$-condensed algebra underlying the infinitesimal envelope $D_{\inf}=\varprojlim \Bdre \langle T_i\rangle/I_0^e$ of the pro-infinitesimal thickening $(Y,\Spa(P/(\xi^e,I_0^e)))$ as below
	\[
	\ul D_{\inf}:=\ul{\mathcal{O}}_{Y_K/\Bdr}(Y,\Spa(P/(\xi^e,I_0^e))) \cong \varprojlim \ul{\Bdre \langle T_i\rangle/I_0^e}.
	\]
	Here we note that by rewriting this limit, the above is also equal to $\varprojlim \ul{\Bdr \langle T_i\rangle /I_0^e}$, where  by \Cref{cond of affinoid} each $\ul{\Bdr \langle T_i\rangle/I_0^e}$ is isomorphic to the solid tensor product $\Bdr \otimes^\bs_k \ul{P_0/I_0^e}$.
	So thanks to the limit-tensor product formula \cite[Appendix A.24]{Bos21}, since $\{P_0/I_0\}$ is an inverse system of $K$-Banach algebras, we get
	\[
	\ul D_{\inf} \cong  \Bdr\otimes^\bs_k \varprojlim_e \ul{P_0/I_0} =\Bdr \otimes^\bs_k \ul{D}_{0,{\inf}},
	\]
	where $D_{0,{\inf}}$ is the infinitesimal envelope of $R_0$ inside of $P_0$, as a pro-infinitesimal thickening in $R_0/k_{\inf}$, and $\ul{D}_{0,{\inf}}$ is the section of $\ul{\mathcal{O}}_{Y/k}$ at it.
	
	Now we apply the above computation to the \v{C}ech-Alexander complex for the infinitesimal sites.
	Recall from \cite[Proposition 2.27]{Guo21} that infinitesimal cohomology for $Y/k_{\inf}$ and $Y_K/\Bdr_{\inf}$ are computed by the cosimplicial complexes $D_{0,{\inf}}(\bullet)$ and $D_{\inf}(\bullet)$, where $D_{0,{\inf}}(n)$ and $D_{\inf}(n)$ are formal completion for the surjections $P_0(n):=P_0^{\wh\otimes_k n+1} \ra R_0$ and $P(n):=P^{\wh\otimes_\Bdr n+1} \ra R$ separately.
	We apply these to the condensed structure sheaves to get the cosimplicial condensed algebras 
	\begin{align*}
			\ul{R\Gamma}_{\inf}(Y/k) &\cong R\varprojlim_{[n]\in \Delta^\op} \ul{D}_{0,{\inf}}(n) ;\\
			\ul{R\Gamma}_{\inf}(Y_K/\Bdr) & \cong R\varprojlim_{[n]\in \Delta^\op} \ul{D}_{{\inf}}(n).
	\end{align*}
Here we note that since a condensed group is a sheaf over the pro-\'etale site $\ast_\pe$, by applying the derived global section functor $R\Gamma(\ast,-)$,  we see from below that the complex of condensed groups $\ul{R\Gamma}_{\inf}(Y/k)$ underlie the corresponding usual infinitesimal cohomology
\begin{align*}
	R\Gamma(\ast, \ul{R\Gamma}_{\inf}(Y/k)) &\cong R\varprojlim_{[n]\in \Delta^\op} R\Gamma(\ast, \ul{D}_{0,{\inf}}(n)) \\
	&\cong R\varprojlim_{[n]\in \Delta^\op} D_{0,{\inf}}(n) \\
	&\cong R\Gamma_{\inf}(Y/k_{\inf}, \mathcal{O}_{Y/k}).
\end{align*}
Similarly for $\ul{R\Gamma}_{\inf}(Y_K/\Bdr)$.
Here we use \Cref{cond and top} together with the exactness of $R\Gamma(\ast, -)$, as each profinite set admits a map from $\ast$.

Moreover, notice that each $\ul{D}_{{\inf}}(n)$ satisfies the solid tensor product formula 
\[
\ul{D}_{{\inf}}(n) \cong \Bdr \otimes^\bs_k \ul{D}_{0,{\inf}}(n).
\]
So combining with the flatness of $\ul{\Bdr}$ over $\ul k$ as in \cite[Appendix A.6, A.10, A.12]{Bos21}, we get the solid tensor product formula for condensed infinitesimal cohomology as in item (ii)
\[
R\varprojlim_{[n]\in \Delta^\op} \ul{D}_{{\inf}}(n) \cong \Bdr \otimes^{L\bs}_k R\varprojlim_{[n]\in \Delta^\op} \ul{D}_{0,{\inf}}(n).
\]

To compute Galois cohomology, we notice the following formula at each $[n]\in \Delta^\op$ \footnote{We apologize for the abuse of notation here: $\ul D_{\inf}(n)$  (similarly for other $(n)$ in the rest of proof) means the $n$-th term in the cosimplicial diagram, not the $n$-th Tate twist.}
\begin{align*}
	R\Hom_{\mathbb{Z}[\ul G_k]}(\mathbb{Z}, \ul{D}_{{\inf}}(n)) & =  R\Hom_{\mathbb{Z}[\ul G_k]}\left(\mathbb{Z}, \varprojlim_m \ul{P(n)/(I_0, \Delta(n))^mP(n)}\right),
\end{align*}
where $(I_0, \Delta(n))P(n)$ is the ideal of $P(n)$ generated by the ideal $(I_0,\Delta(n))\subset P_0(n)$.
By commuting the order of limit and Galois cohomology, the above is further equal to 
\[
R\varprojlim_m R\Hom_{\mathbb{Z}[\ul G_k]}\left(\mathbb{Z}, \ul{P(n)/(I_0, \Delta(n))^mP(n)}\right).
\]
Here we use explicitly the equality  $\varprojlim_m \ul{P(n)/(I_0, \Delta(n))^mP(n)} \cong R\varprojlim_m \ul{P(n)/(I_0, \Delta(n))^mP(n)}$, by the repleteness of $\Sh(\ast_\pe)$ (\cite[3.1.10, 3.2.3, 4.2.8]{BS15}) together with the surjectivity of condensed groups $\ul{P(n)/(I_0, \Delta(n))^{m+1}P(n)}\ra \ul{P(n)/(I_0, \Delta(n))^mP(n)}$ (\cite[Appenxid A.15]{Bos21}).
Notice that by \Cref{Galois of diff} and \Cref{group coh prop}, applying $R\Hom_{\mathbb{Z}[\ul G_k]}(\mathbb{Z}, - )$ at the short exact sequence 
\[
\ul{(I_0, \Delta(n))^mP(n)} \ra \ul{P(n)}  \ra \ul{P(n)/(I_0, \Delta(n))^mP(n)},
\] we get 
\[
R\Hom_{\mathbb{Z}[\ul G_k]}\left(\mathbb{Z}, \ul{P(n)/(I_0, \Delta(n))^mP(n)}\right) \cong R\Hom_{\mathbb{Z}[\ul G_k]}(\mathbb{Z}, \Bdr) \otimes_k \left( P_0(n)/(I_0, \Delta(n))^mP(n) \right).
\]
After a further inverse limit with respect to $m$, and with a help of the finiteness of Galois cohomology of $\Bdr$ as in \Cref{Galois of Bdr}, we have
\[
R\Hom_{\mathbb{Z}[\ul G_k]}(\mathbb{Z}, \ul{D}_{{\inf}}(n)) \cong  R\Hom_{\mathbb{Z}[\ul G_k]}(\mathbb{Z}, \Bdr)\otimes_k D_{0,{\inf}}(n).
\]	
In this way, applying the above formula at the cosimplicial diagram, we get
\begin{align*}
		R\Hom_{\mathbb{Z}[\ul G_k]}(\mathbb{Z}, \ul{R\Gamma}_{\inf}(Y_K/\Bdr)) & \cong  R\Hom_{\mathbb{Z}[\ul G_k]}(\mathbb{Z}, R\varprojlim_{[n]\in \Delta^\op} \ul{D}_{{\inf}}(n)) \\
		& \cong R\varprojlim_{[n]\in \Delta^\op}  R\Hom_{\mathbb{Z}[\ul G_k]}(\mathbb{Z}, \ul{D}_{{\inf}}(n)) \\
		&  \cong R\varprojlim_{[n]\in \Delta^\op} \left( R\Hom_{\mathbb{Z}[\ul G_k]}(\mathbb{Z}, \Bdr)\otimes_k D_{0,{\inf}}(n) \right)\\
		& \cong R\Hom_{\mathbb{Z}[\ul G_k]}(\mathbb{Z}, \Bdr) \otimes_k \left( R\varprojlim_{[n]\in \Delta^\op}D_{0,{\inf}}(n) \right)\\
		& \cong R\Hom_{\mathbb{Z}[\ul G_k]}(\mathbb{Z}, \Bdr)  \otimes_k R\Gamma_{\inf}(Y/k, \mathcal{O}_{Y/k}).
\end{align*}
Here the second to the last isomorphism again follows from the finiteness of Galois cohomology of $\Bdr$ in \Cref{Galois of Bdr}.

At last, to get the result for general rigid spaces $Y$ (which are assumed to be qcqs), we can form a homotopy limit over a finite affinoid \v{C}ech covering, and apply the equalities above.
Here we use the finiteness of $R\Hom_{\mathbb{Z}[\ul G_k]}(\mathbb{Z}, \Bdr)$ and the flatness of $\ul\Bdr$ over $\ul k$ again to guarantee that the homotopy limits commute with the tensor products.
\end{proof}
Here we notice that using the same method, we can improve the above into a filtered version as below.
\begin{corollary}\label{Galois filtered}
		Let $Y$ be a rigid space over $k$, and let $Y_K$ be its complete base extension to $K$.
		\begin{enumerate}[(i)]
			\item There is a natural complex of condensed abelian groups  $\ul{\Fil}^i_{\inf}(Y/k)$ and a complex of $\ul G_k$-condensed modules $\ul{\Fil}^i_{\inf}(Y_K/\Bdr)$ underlying the $i$-th infinitesimal filtration $R\Gamma(Y/k_{\inf}, \mathcal{I}^i_{Y/k})$ and the $i$-th lifted filtration $\wt\Fil^iR\Gamma(X/\Bdr_{\mathrm{pinf}}, \mathcal{O}_{X/\Bdr})$ associated to $\{Y_{\Bdre}\}$  (c.f. \Cref{lifted inf}) separately;
			\item the above complexes satisfy a solid base change formula
			\[
			\ul{\Fil}^i_{\inf}(Y/k) \otimes_k^{L\bs} \Bdr \cong \ul{\Fil}^i_{\inf}(Y_K/\Bdr);
			\]
			\item the $\ul G_k$-action induces the equality
			\[
			R\Gamma(\ul G_k, \Bdr)\otimes_k R\Gamma(Y/k_{\inf}, \mathcal{I}^i_{Y/k}) \rra R\Gamma(\ul G_k, \ul{\Fil}^i_{\inf}(Y_K/\Bdr)).
			\]
		\end{enumerate}
	The same holds if we replace $\Bdr$ by $\Bdre$.
\end{corollary}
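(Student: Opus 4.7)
The plan is to adapt the proof of \Cref{Galois of inf} verbatim, but with ideals inserted at every stage. First, I would define the condensed ideal sheaf $\ul{\mathcal{I}}^i_{Y/k}$ over $Y/k_{\inf}$ by sending an affinoid thickening $(R_0,S_0)$ with kernel ideal $J_0$ to the condensed subgroup $\ul{J_0^i}\subset \ul S_0$; similarly, over $Y_K/\Bdr_{\mathrm{pinf}}$, for a pro-thickening $(\Spa R, \Spa B_e)$ coming from a chosen compatible system of lifts, I would send it to the condensed ideal $\varprojlim_e \ul{(J_e)^i}$, where $J_e$ is the kernel of $B_e\to R$. This gives $\ul{\Fil}^i_{\inf}(Y/k)$ and $\ul{\Fil}^i_{\inf}(Y_K/\Bdr)$ as derived global sections over the two infinitesimal sites; that these underlie the usual cohomology groups follows exactly as in \Cref{Galois of inf} by applying $R\Gamma(\ast,-)$ term-wise to the \v Cech--Alexander cosimplicial diagrams.

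Next, localize to an affinoid $Y=\Spa(R_0)$ with a surjection $P_0=k\langle T_j\rangle\to R_0$, kernel $I_0$, and set $P=\Bdr\langle T_j\rangle$. The \v Cech--Alexander complex for the lifted filtration is computed by $[n]\mapsto (I_0,\Delta(n))^i\cdot D_{\inf}(n)$ where $D_{\inf}(n)$ is the formal completion of $P(n)=P^{\wh\otimes n+1}$ along $(I_0,\Delta(n))$ (this uses the observation in \Cref{lifted inf} that the lifted filtration is the inverse limit of infinitesimal ideal sheaves over $\Bdre$). By the limit--tensor product formula \cite[Appendix A.24]{Bos21} applied to the Fr\'echet system $\{P_0(n)/(I_0,\Delta(n))^{m}\}_m$ and its ideal subsystem $\{(I_0,\Delta(n))^i/(I_0,\Delta(n))^{m}\}_m$, each cosimplicial term satisfies the solid tensor product formula
\[
\ul{(I_0,\Delta(n))^i\cdot D_{\inf}(n)} \cong \Bdr \otimes_k^{\bs} \ul{(I_0,\Delta(n))^i\cdot D_{0,\inf}(n)},
\]
and this gives part (ii) after taking the homotopy limit along $\Delta^\op$ and invoking the flatness of $\Bdr$ over $k$ in the solid setting \cite[Appendix A.6, A.10, A.12]{Bos21}.

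For part (iii), compute Galois cohomology term-wise. The key input is that for each $n$ and $m$, the quotient ideal $(I_0,\Delta(n))^i P(n)/(I_0,\Delta(n))^m P(n)$ is, after filtering by powers of $(I_0,\Delta(n))$, built out of successive quotients of the form $\Omega$-type pieces to which \Cref{Galois of diff} applies. By devissage through the short exact sequences
\[
\ul{(I_0,\Delta(n))^{m}P(n)}\to \ul{(I_0,\Delta(n))^i P(n)} \to \ul{(I_0,\Delta(n))^i P(n)/(I_0,\Delta(n))^m P(n)}
\]
and by the repleteness argument of \cite[3.1.10, 3.2.3, 4.2.8]{BS15} together with surjectivity of condensed reductions \cite[Appendix A.15]{Bos21}, I obtain
\[
R\Hom_{\mathbb{Z}[\ul G_k]}(\mathbb{Z}, \ul{(I_0,\Delta(n))^i\cdot D_{\inf}(n)}) \cong R\Hom_{\mathbb{Z}[\ul G_k]}(\mathbb{Z}, \Bdr) \otimes_k \left( (I_0,\Delta(n))^i\cdot D_{0,\inf}(n) \right).
\]
Using the finiteness of $R\Hom_{\mathbb{Z}[\ul G_k]}(\mathbb{Z}, \Bdr)$ from \Cref{Galois of Bdr}, the homotopy limit along $\Delta^\op$ commutes with the tensor product, and globalization via a finite affinoid cover finishes the argument. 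The $\Bdre$ version is formally identical, replacing $\Bdr$ with $\Bdre$ throughout and using the same finiteness of Galois cohomology.

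The main technical obstacle will be justifying the solid tensor product decomposition of the ideal subsystems $(I_0,\Delta(n))^i P(n)/(I_0,\Delta(n))^m P(n)$: one must check that these actually sit inside the Fr\'echet framework of \cite[Appendix A.24]{Bos21} so that the limit--tensor interchange is legal, rather than a mere formal manipulation. Once that is in place, each remaining step is a mild adaptation of the corresponding step in \Cref{Galois of inf}.
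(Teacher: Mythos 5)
Your proposal is correct and is exactly what the paper intends: the paper gives no explicit proof of \Cref{Galois filtered}, merely stating that it follows "using the same method" as \Cref{Galois of inf}, and your argument is precisely that method with the ideal $(I_0,\Delta(n))^i$ inserted at every stage of the \v Cech--Alexander complex, invoking \Cref{Galois of diff} (which is already formulated for arbitrary ideals $I_0\otimes_{P_0}P$) for the term-wise Galois cohomology, the limit--tensor interchange for part (ii), and devissage through short exact sequences for part (iii). Your closing caveat about checking that the ideal subsystems sit inside the Fr\'echet framework of \cite[Appendix A.24]{Bos21} is the right thing to flag, and it does hold since $(I_0,\Delta(n))^iP_0(n)/(I_0,\Delta(n))^mP_0(n)$ is a closed subspace of the Banach quotient $P_0(n)/(I_0,\Delta(n))^mP_0(n)$.
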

\begin{remark}\label{Galois tensor product}
	It is worth mentioning that \ref{Galois filtered} (ii) is \emph{not} the same as the filtered tensor product formula considered in the usual sense; instead we endow $\Bdr$ here with the trivial filtration at the degree $0$.
	To get the usual filtered tensor product formula for infinitesimal cohomology, we may regard $\Bdr$ as a filtered solid algebra with $\Fil^i\Bdr= \xi^i\Bdr$.
	In this way, using the similar idea of proof as in \Cref{Galois of inf} (with the difference that we take the filtered completion of the filtered tensor product $\ul D_{0, {\inf}}(n)\otimes_k^\bs \Bdr$ as $\ul D_{\inf}(n)$), one can also prove the following tensor product formula in the filtered derived category $\DF(k)$:
	\[
	(\ul{R\Gamma}_{\inf}(Y/k)\otimes^L_k \Bdr)^\wedge \cong \ul{R\Gamma}_{\inf} (Y_K/\Bdr),
	\]
	where the right hand side is equipped with the usual infinitesimal filtration for $Y_K/\Bdr_{\inf}$ (instead of the lifted filtration), and $(-)^\wedge$ is the filtered completion.
\end{remark}

Notice that the proof of \Cref{Galois of inf} above also implies the following formula of Galois invariant.
\begin{corollary}\label{Galois inv}
	Let $Y$ be a rigid space over $k$, and let $Y_K$ be its complete base extension to $K$.
	Then we have an isomorphism of $k$-vector spaces
	\[
	\mathrm{H}^0(\ul G_k, \ul{\mathrm{H}}^i_{\inf}(Y_K/\Bdr) \cong \mathrm{H}^i(Y/k_{\inf}, \mathcal{O}_{Y/k}).
	\]
\end{corollary}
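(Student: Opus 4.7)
The plan is to extract \Cref{Galois inv} as the degree $i$ refinement of \Cref{Galois of inf}(iii), using the base change formula of \Cref{Galois of inf}(ii) together with the explicit computation of Galois cohomology of $\Bdr$ in \Cref{Galois of Bdr}.

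First I would use the solid base change isomorphism
\[
\ul{R\Gamma}_{\inf}(Y_K/\Bdr)\ \cong\ \ul{R\Gamma}_{\inf}(Y/k)\otimes^{L\bs}_{k}\Bdr
\]
from \Cref{Galois of inf}(ii), combined with the flatness of $\ul{\Bdr}$ as a solid $\ul{k}$-module (as invoked at the end of the proof of \Cref{Galois of inf}), to deduce that the derived tensor product collapses cohomologically: for each $i$ there is a natural isomorphism of condensed $\ul G_{k}$-modules
\[
\ul{\mathrm{H}}^i_{\inf}(Y_K/\Bdr)\ \cong\ \ul{\mathrm{H}}^i_{\inf}(Y/k)\otimes^{\bs}_{k}\Bdr,
\]
where the $\ul G_k$-action is carried entirely by the second factor since $Y$ is defined over $k$.

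Next I would run the same projection-formula manipulation used in the final paragraph of the proof of \Cref{Galois of inf}(iii): because $\ul{\mathrm{H}}^i_{\inf}(Y/k)$ is a solid $k$-module with trivial $\ul G_k$-action, and because $R\Gamma(\ul G_k,\Bdr)$ is concentrated in degrees $0,1$ and finite dimensional in each by \Cref{Galois of Bdr} (after identifying condensed and continuous group cohomology via \Cref{group coh prop}), one gets a natural isomorphism
\[
R\Gamma(\ul G_k,\ \ul{\mathrm{H}}^i_{\inf}(Y_K/\Bdr))\ \cong\ \ul{\mathrm{H}}^i_{\inf}(Y/k)\otimes_{k}^{L\bs}R\Gamma(\ul G_k,\Bdr).
\]
Taking $\mathrm{H}^0$ and using $\mathrm{H}^0(\ul G_k,\Bdr)=k$, this gives $\mathrm{H}^0(\ul G_k,\ul{\mathrm{H}}^i_{\inf}(Y_K/\Bdr))\cong \ul{\mathrm{H}}^i_{\inf}(Y/k)$; evaluating at $\ast$ and recalling from \Cref{Galois of inf}(i) that $\ul{\mathrm{H}}^i_{\inf}(Y/k)$ underlies $\mathrm{H}^i(Y/k_{\inf},\mathcal{O}_{Y/k})$ yields the desired identification of $k$-vector spaces.

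The main obstacle is the justification of the projection formula above---i.e.\ the commutation of the derived condensed Hom functor $R\ul\Hom_{\mathbb{Z}[\ul G_k]}(\mathbb{Z},-)$ with the solid tensor by $\ul{\mathrm{H}}^i_{\inf}(Y/k)$, which need not be finite dimensional over $k$ when $Y$ is not proper. The cleanest way to handle this is to resolve $\mathbb{Z}$ by a bar complex of free $\mathbb{Z}[\ul G_k]$-modules as in \Cref{group coh prop}(i), apply it termwise to the tensor product, and use the limit-tensor product compatibility of \cite[Appendix A.24]{Bos21} together with flatness of $\ul\Bdr$. Alternatively, one can avoid this issue altogether by running the $[n]\in\Delta^{\op}$-cosimplicial argument of \Cref{Galois of inf}(iii) degreewise on $\mathrm{H}^i$; since each $\ul D_{\inf}(n)$ is a filtered limit of Banach $k$-modules tensored with $\Bdr$, the degreewise $\mathrm{H}^0$ of Galois cohomology already picks out the $k$-structure, and one concludes by applying a degenerate spectral sequence argument whose $E_2^{p,q}$ terms vanish for $p\geq 2$ by \Cref{Galois of Bdr}.
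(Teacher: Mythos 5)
Your main route differs from the paper's. The paper proves this corollary by staying at the level of the \v{C}ech--Alexander cosimplicial complex: it first records that $\mathrm{H}^j(\ul G_k, \ul D_{\inf}(n)) = D_{0,\inf}(n)$ for $j=0,1$ and vanishes for $j\geq 2$ at each cosimplicial term (using \Cref{Galois of Bdr} and \Cref{Galois of diff}, which treat exactly these ideal-adically filtered quotients of Tate algebras over $\Bdr$), and then concludes by an inductive run through the cosimplicial diagram combined with Mayer--Vietoris on an affinoid cover. You instead pass to the cohomology groups $\ul{\mathrm{H}}^i_{\inf}(Y_K/\Bdr) \cong \ul{\mathrm{H}}^i_{\inf}(Y/k)\otimes^\bs_k\Bdr$ first (Step 1, which is fine given the flatness of $\ul\Bdr$ over $\ul k$ cited in the proof of \Cref{Galois of inf}) and then try to compute $\mathrm{H}^0(\ul G_k,-)$ of this tensor product via a projection formula.

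The gap you flag is the crux, and it is genuine. The projection formula $R\Gamma(\ul G_k, M\otimes^\bs_k\Bdr)\cong M\otimes^{L\bs}_k R\Gamma(\ul G_k,\Bdr)$ is established in the paper only for $M$ of the very particular Fr\'echet form $\Omega^i_{P_0/k}\otimes_{P_0}I_0$ and inverse limits thereof, via the explicit Tate--Sen computation in \Cref{Galois of diff}. Finite-dimensionality of $R\Gamma(\ul G_k,\Bdr)$ alone does not give this commutation for an arbitrary solid $k$-module $M$. Your $M=\ul{\mathrm{H}}^i_{\inf}(Y/k)$ is the cohomology of a complex of Fr\'echet modules; it need not be Hausdorff, finitely generated, or of Banach/Fr\'echet type, so \Cref{Galois of diff} simply does not apply to it, and invoking \cite[Appendix A.24]{Bos21} (a limit--tensor compatibility for inverse systems of Banach modules) plus the bar complex does not close this. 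Your fallback is in the right spirit --- it is essentially the paper's argument --- but the indexing is off: the vanishing that is actually available as input is $\mathrm{H}^q(\ul G_k,\ul D_{\inf}(p))=0$ for $q\geq 2$ on the $E_1$-page filtered by cosimplicial degree, not a vanishing of $E_2^{p,q}$ for $p\geq 2$ in the hypercohomology spectral sequence (the latter is part of what the corollary is asserting, so one cannot assume it). The correct repair is to carry out the computation term-by-term in the \v{C}ech--Alexander complex and then induct along the coboundary maps, exactly as the paper does, rather than jumping directly to $\ul{\mathrm{H}}^i$.
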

\begin{proof}
	Thanks to \Cref{Galois of Bdr} and \Cref{Galois of P}, we have 
	\[
	\mathrm{H}^i(\ul G_k, \ul D_{\inf}(n))= D_{0, {\inf}}(n),~i=0,1;
	\]
	and vanishes for other $i$.
	Thus  the statement follows by inductively applying the equality at the cosimplicial diagram $\ul D_{\inf}(n)$ and Mayer-Vietoris sequences for \v{C}ech covering by affinoid open subsets.
\end{proof}
\begin{remark}
	When $Y$ is affinoid, by choosing a closed immersion into a polydisc $k\langle T_i\rangle$, one can compute its infinitesimal cohomology using the completed de Rham complex of $k\langle T_i\rangle^\wedge_{I_0}$ over $k$, where $I_0$ is the defining ideal of $Y$.
	In the special case when $Y$ is smooth, we can even use the continuous de Rham complex of $Y/k$.
	Pointed out as in \cite[Remark 5.14, 5.22]{Bos21}, the underlying topological structures of the condensed infinitesimal cohomology and the usual infinitesimal cohomology are not the same.
	Thus the underlying topological group of condensed Galois invariant in \Cref{Galois of inf} is different from the usual continuous Galois invariant of infinitesimal cohomology.
	However, one may still use criteria in \cite[Lemma 4.3.9]{BS15} to check that the underlying group structures, forgetting their topology, would coincide.
\end{remark}
We are now ready to compute Galois invariant of prismatic cohomology.
\begin{theorem}\label{Galois of prism}
	Let $Y$ be a rigid space over $k$ that has l.c.i singularities, and let $Y_K$ be its complete field extension to $K$.
	\begin{enumerate}[(i)]
		\item There is a natural complex of condensed groups $\ul{R\Gamma}_\Prism(Y_K/\Bdr)$ underlying prismatic cohomology $R\Gamma_\Prism(Y_K/\Bdr, \mathcal{O}_\Prism)$;
		\item Galois invariant of prismatic cohomology is equal to infiniteismal cohomology of $Y/k_{\inf}$, namely
		\[
		R\Gamma(\ul G_k, \Bdr)\otimes_k R\Gamma(Y/k_{\inf}, \mathcal{O}_{Y/k}) \rra R\Gamma(\ul G_k, \ul{R\Gamma}_\Prism(Y_K/\Bdr)).
		\]
	\end{enumerate}
\end{theorem}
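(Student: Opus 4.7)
My plan mirrors the proof of Theorem \ref{Galois of inf}, enhanced by Simpson's functor to transfer the computation from infinitesimal to prismatic cohomology. For part (i), I would define the condensed prismatic structure sheaf $\ul{\mathcal{O}}_\Prism$ on $Y_K/\Bdr_\Prism$ by $(B,\xi B) \mapsto \ul B$ with the $\xi$-adic topology on $B$; solidity of each $\ul B$ follows by the limit-of-Banach argument of \Cref{cond of affinoid} together with \cite[Appendix A.24]{Bos21}. Setting $\ul{R\Gamma}_\Prism(Y_K/\Bdr) := R\Gamma(Y_K/\Bdr_\Prism, \ul{\mathcal{O}}_\Prism)$, I would check that it underlies the usual prismatic cohomology by applying $R\Gamma(\ast,-)$ to the \v{C}ech--Alexander complex of \Cref{pm Cech} and using $\ul B(\ast) = B$ together with exactness of the global sections functor.

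For part (ii), the key intermediate step is to upgrade \Cref{prism coh} to a condensed isomorphism
\[
\ul{R\Gamma}_\Prism(Y_K/\Bdr) \cong \Psi\bigl(\ul{R\Gamma}_{\inf}(Y_K/\Bdr), \ul{\wt\Fil}^\bullet\bigr),
\]
where the lifted filtration $\ul{\wt\Fil}^\bullet$ is the condensed avatar of \Cref{lifted inf}, carrying a natural $\ul G_k$-action. The proof of \Cref{prism coh} runs parallel in the condensed setting, since every step --- forming associated prisms, the \v{C}ech--Alexander cosimplicial comparison, and the Simpson-filtered de Rham identification --- is compatible with condensed structures. For $Y_K = Y \wh\otimes_k K$ with the canonical system of lifts $\{Y_{\Bdre}\}$ coming from base change, the example following \Cref{prism coh} combined with the base change \Cref{Galois filtered}(ii) identifies $\ul{\wt\Fil}^i$ with $\Bdr \otimes_k^{L\bs} \ul{\Fil}^i_{\inf}(Y/k)$ as a $\ul G_k$-equivariant solid complex.

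Next I would apply $R\Gamma(\ul G_k, -)$ and commute it past Simpson's functor $\Psi$. Since $\Psi$ is a derived $\xi$-completion of a filtered colimit, and since $\ul G_k$ has uniformly bounded cohomological dimension on our class of solid $\Bdre$-modules (by the finiteness in \Cref{Galois of Bdr}), both the colimit and the completion commute with condensed Galois cohomology. Applying \Cref{Galois filtered}(iii) for $\Bdre$ to each term of the Simpson diagram then yields
\[
R\Gamma(\ul G_k, \ul{R\Gamma}_\Prism(Y_K/\Bdr)) \cong \Bigl(\underset{i\in\NN}{\colim}\; R\Gamma(\ul G_k, \Bdre) \otimes_k R\Gamma(Y/k_{\inf}, \mathcal{I}^i_{Y/k})\Bigr)^\wedge.
\]

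It remains to evaluate this colimit-completion and identify it with $R\Gamma(\ul G_k, \Bdr) \otimes_k R\Gamma(Y/k_{\inf}, \mathcal{O}_{Y/k})$. The key inputs are: the uniform identification $R\Gamma(\ul G_k, \Bdr) \cong R\Gamma(\ul G_k, \Bdre) \cong R\Gamma(\ul G_k, K)$ for all $e \geq 1$, which follows from $R\Gamma(\ul G_k, K(i)) = 0$ for $i \neq 0$ in \Cref{Galois of Bdr} applied to the Tate-twisted filtrations on $\Bdre$; the resulting vanishing of the $\xi$-multiplication maps between Galois-cohomology terms in Simpson's diagram, which factor through $R\Gamma(\ul G_k, (\Bdr/\xi^e)(1)) = 0$; and the exhaustiveness of the infinitesimal filtration on $R\Gamma(Y/k_{\inf}, \mathcal{O}_{Y/k})$. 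The main obstacle I anticipate is the careful bookkeeping required in this last step: the zigzag indexing of Simpson's colimit interacts with the vanishing $\xi$-arrows in a subtle way in the derived category, and one must verify that the homotopy colimit collapses to precisely the claimed tensor product rather than producing spurious shifts or extensions from the cofiber of the zero $\xi$-maps.
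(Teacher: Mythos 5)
Your part (i) construction takes a different route than the paper: you propose to define the condensed structure sheaf $\ul{\mathcal{O}}_\Prism$ directly on the prismatic site and take cohomology, whereas the paper defines $\ul{R\Gamma}_\Prism(Y_K/\Bdr)$ \emph{by applying Simpson's functor} $\Psi$ to the already-constructed condensed infinitesimal complex $\ul{R\Gamma}_{\inf}(Y_K/\Bdr)$ with its condensed lifted filtration, then verifies via $R\Gamma(\ast,-)$ that this underlies ordinary prismatic cohomology. Your approach is plausible but requires a fresh verification of solidity of each $\ul B$ for an arbitrary prism and a condensed \v{C}ech--Alexander argument; the paper's route is more economical because it piggybacks entirely on the already-established \Cref{Galois filtered}.

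There is, however, a genuine error in your part (ii). You claim that after applying $R\Gamma(\ul G_k,-)$, the $\cdot\xi$ arrows in Simpson's zigzag \emph{vanish}, ``factoring through $R\Gamma(\ul G_k, (\Bdr/\xi^e)(1))=0$.'' This is the wrong conclusion drawn from the right vanishing. The vertical arrow in \Cref{Simp constr} is the inclusion $\xi^{-i}\ul{\Fil}^{i+1} \hookrightarrow \xi^{-i-1}\ul{\Fil}^{i+1}$, whose cokernel sits in a nonzero Tate weight. Because $R\Gamma(\ul G_k, K(j))=0$ for $j\neq 0$ kills the \emph{cokernel} (and not the source!), the induced map on condensed Galois cohomology is an \emph{isomorphism}, not zero. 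The paper makes this explicit: after applying $R\Hom_{\mathbb{Z}[\ul G_k]}(\mathbb{Z},-)$ term-by-term to the Simpson diagram for $C_e$ and invoking \Cref{Galois filtered}, one obtains $R\Gamma(\ul G_k,\Bdr)\otimes_k C'$ where $C'$ is the same zigzag but with \emph{identity} vertical arrows, and then $C'\cong R\Gamma(Y/k_{\inf},\mathcal{O}_{Y/k})$ simply because the infinitesimal filtration is exhaustive. Consequently the ``main obstacle'' you anticipate --- spurious shifts or extensions coming from cofibers of zero $\xi$-arrows --- does not exist, and would in fact lead you to the wrong answer if you tried to pursue it.

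One further caution: your formula $\bigl(\colim_{i} R\Gamma(\ul G_k,\Bdre)\otimes_k R\Gamma(Y/k_{\inf},\mathcal{I}^i_{Y/k})\bigr)^\wedge$ only makes sense once you specify the transition maps, and with your (erroneous) vanishing claim the colimit would degenerate rather than produce the asserted answer. After correcting the maps to isomorphisms, the colimit collapses cleanly as above. Finally, as in the paper, you should take $R\varprojlim_e$ of the $C_e$'s and commute it with $R\Gamma(\ul G_k,-)$; the finiteness in \Cref{Galois of Bdr} is what allows the commutation and the final tensor-product identification.
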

\begin{proof}
Similar to \Cref{prism coh}, we apply Simpson's functor at the condensed complex $\ul{R\Gamma}_{\inf}(Y_K/\Bdr)$ and its lifted filtration (associated to the canonical lifts $\{Y_\Bdre\}$).
Namely, we consider the condensed complex below
\begin{align*}
	\ul{R\Gamma}(Y_K/\Bdr_\Prism) &= \Psi(\ul{R\Gamma}_{\inf}(Y_K/\Bdr), \ul\Fil^\bullet_{\inf}) \\
	&:=\left(\underset{n\in \mathbb{N}}{\colim} ~ \xi^{-n}\cdot\ul{\Fil}^n_{\inf}(Y_K/\Bdr) \right)^\wedge, \tag{$\ast$}
\end{align*}
where $(-)^\wedge$ is the condensed version of derived $\xi$-adic completion, i.e. the functor $R\varprojlim_e (-\otimes_\Bdr^{L\bs }\Bdre)$.
Then to see $\ul{R\Gamma}(Y_K/\Bdr_\Prism)$ underlies the usual prismatic cohomology, we apply $R\Gamma(\ast, -)$ at $\ul{R\Gamma}(Y_K/\Bdr_\Prism)$, and use a global version of \Cref{prism coh} and \Cref{Galois filtered}.
Here we notice that thanks to the compactness of the final object $\ast$, the functor $R\Gamma(\ast, -)$ commutes with the colimit in the formula above.

For (ii), we first consider Galois cohomology of the complex 
\[
C_e:=\left( \underset{n\in \mathbb{N}}{\colim} ~ \xi^{-n}\cdot\ul{\Fil}^n_{\inf}(Y_K/\Bdr) \right) \otimes_\Bdr^\bs \Bdre.
\]
By \Cref{Galois filtered}, applying $R\Hom_{\mathbb{Z}[\ul G_k]}(\mathbb{Z}, -)$ at this colimit diagram, we get the following derived tensor product $R\Gamma(\ul G_k, \Bdr)\otimes_k C'$, where $C'$ is the following colimit
\[
\xymatrix{R\Gamma(Y/k_{\inf}, \mathcal{O}_{Y/k}) & R\Gamma(Y/k_{\inf}, \mathcal{I}^1_{Y/k}) \ar[l] \ar[d]^{\id} & & \\
	& R\Gamma(Y/k_{\inf}, \mathcal{I}^1_{Y/k})  & R\Gamma(Y/k_{\inf}, \mathcal{I}^2_{Y/k}) \ar[d]^{\id} \ar[l] &\\
	& & R\Gamma(Y/k_{\inf}, \mathcal{I}^2_{Y/k}) & R\Gamma(Y/k_{\inf}, \mathcal{I}^3_{Y/k}) \ar[l] \ar[d]^{\id} \\
	& & & \vdots }.
\]
Notice that from the diagram it is clear that $C'\cong R\Gamma(Y/k_{\inf}, \mathcal{O}_{Y/k})$, so we have
\[
R\Hom_{\mathbb{Z}[\ul G_k]}(\mathbb{Z}, C_e)\cong R\Gamma(\ul G_k, \Bdr)\otimes_k  R\Gamma(Y/k_{\inf}, \mathcal{O}_{Y/k}).
\]

At last, by construction the condensed complex $\ul{R\Gamma}(Y_K/\Bdr_\Prism)$ is equal to $R\varprojlim_e C_e$.
Thus by switching the order of right derived functors, we get Galois invariant of condensed prismatic cohomology as below
\begin{align*}
	 R\Gamma(\ul G_k, \ul{R\Gamma}_\Prism(Y_K/\Bdr)) & \cong R\Hom_{\mathbb{Z}[\ul G_k]}(\mathbb{Z}, R\varprojlim_e C_e) \\
	 & \cong R\varprojlim_e R\Hom_{\mathbb{Z}[\ul G_k]}(\mathbb{Z}, C_e)\\
	 & \cong R\Gamma(\ul G_k, \Bdr)\otimes_k  R\Gamma(Y/k_{\inf}, \mathcal{O}_{Y/k}).
\end{align*}
\end{proof}
\begin{corollary}
	Let $Y$ be a proper rigid space over $k$ that has l.c.i singularities.
	Then Galois invariant of condensed prismatic cohomology $\ul{\mathrm{H}}^i_\Prism(Y_K/\Bdr)$ is finite dimensional over $k$.
\end{corollary}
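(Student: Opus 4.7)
The plan is to deduce the corollary from the computation of condensed Galois hypercohomology in Theorem~\ref{Galois of prism}(ii) via a spectral sequence analysis of the hyperderived functor $R\Gamma(\ul G_k, -)$.

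First, Theorem~\ref{Galois of prism}(ii) gives
\[
R\Gamma(\ul G_k, \ul{R\Gamma}_\Prism(Y_K/\Bdr)) \simeq R\Gamma(\ul G_k, \Bdr) \otimes_k R\Gamma_{\inf}(Y/k).
\]
For $Y$ proper with l.c.i.\ singularities over $k$, the complex $R\Gamma_{\inf}(Y/k)$ is bounded with finite-dimensional cohomology (as noted in Remark~\ref{rmk globalize}, the total infinitesimal cohomology of a proper l.c.i.\ rigid space is finite), and by Lemma~\ref{Galois of Bdr} we have $H^p(\ul G_k, \Bdr) \cong k$ for $p \in \{0, 1\}$ and $0$ otherwise. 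Hence
\[
H^n(\ul G_k, \ul{R\Gamma}_\Prism(Y_K/\Bdr)) \cong H^n_{\inf}(Y/k) \oplus H^{n-1}_{\inf}(Y/k)
\]
is finite-dimensional over $k$ for each $n$.

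Next I would feed this into the Grothendieck-type hyperderived spectral sequence
\[
E_2^{p,q} = H^p(\ul G_k, \ul{\mathrm{H}}^q_\Prism(Y_K/\Bdr)) \Longrightarrow H^{p+q}(\ul G_k, \ul{R\Gamma}_\Prism(Y_K/\Bdr)).
\]
If one can establish the vanishing $E_2^{p,q} = 0$ for all $p \geq 2$, then every differential $d_r$ with $r \geq 2$ is forced to vanish, the spectral sequence degenerates at $E_2$, and one obtains a short exact sequence
\[
0 \to H^1(\ul G_k, \ul{\mathrm{H}}^{i-1}_\Prism) \to H^i(\ul G_k, \ul{R\Gamma}_\Prism) \to H^0(\ul G_k, \ul{\mathrm{H}}^i_\Prism) \to 0.
\]
This exhibits the invariants $\ul{\mathrm{H}}^i_\Prism(Y_K/\Bdr)^{\ul G_k} = H^0(\ul G_k, \ul{\mathrm{H}}^i_\Prism)$ as a quotient of the finite-dimensional $k$-vector space $H^i(\ul G_k, \ul{R\Gamma}_\Prism)$, proving the corollary.

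The main obstacle is therefore the vanishing $H^p(\ul G_k, \ul{\mathrm{H}}^q_\Prism) = 0$ for $p \geq 2$. Modulo $\xi$, the Hodge-Tate splitting of Remark~\ref{rmk globalize} identifies $\ul{\mathrm{H}}^q_{\ol\Prism}(Y_K/\Bdr)$ with $\bigoplus_j H^{q-j}(Y, L\wedge^j \mathbb{L}^{\mathrm{an}}_{Y/k}) \otimes_k K(-j)$ as a Galois module, and Tate's vanishing $H^{\geq 2}(\ul G_k, K(n)) = 0$ from Lemma~\ref{Galois of Bdr} delivers the reduced statement. One then lifts via the $\xi$-adic filtration, using that the successive graded pieces $\xi^n/\xi^{n+1} \cong K(n)$ are Tate twists, combined with derived $\xi$-adic completeness of $\ul{\mathrm{H}}^q_\Prism$ and a Mittag-Leffler argument for the inverse system $\{\ul{\mathrm{H}}^q_\Prism/\xi^n\}$. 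The principal technical subtlety is the possible presence of $\xi^\infty$-torsion in $\ul{\mathrm{H}}^q_\Prism$, which is exactly the content of the open Question~\ref{ques iso invert xi}; this can be circumvented by running the whole spectral sequence argument already at the level of the finite truncations $C_e = (\colim_n \xi^{-n}\ul\Fil^n_{\inf}(Y_K/\Bdr)) \otimes_\Bdr^\bs \Bdre$ from the proof of Theorem~\ref{Galois of prism} (where each $H^q(C_e)$ is an iterated $\Bdre/\xi$-extension of Hodge-Tate pieces and therefore has vanishing $H^{\geq 2}_{\mathrm{gal}}$ by induction on $e$) and then passing to $R\varprojlim_e$, the finiteness established in the first paragraph ensuring the needed Mittag-Leffler property.
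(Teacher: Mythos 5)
Your overall architecture matches the paper's terse one-line proof, which invokes the same three ingredients: the Galois-cohomology computation of the complex (Theorem~\ref{Galois of prism}(ii), which the paper reaches via Corollary~\ref{Galois inv}), the finiteness of $R\Gamma(\ul G_k, \Bdr)$ from Lemma~\ref{Galois of Bdr}, and the finiteness of $R\Gamma_{\inf}(Y/k)$ for proper $Y$. You then make explicit the degeneration of the hypercohomology spectral sequence that the paper leaves implicit, and you correctly identify that the entire argument hinges on the vanishing $H^p(\ul G_k, \ul{\mathrm{H}}^q_\Prism)=0$ for $p\geq 2$, without which $E_\infty^{0,i}$ is only a proper subobject of $E_2^{0,i}$ and finiteness of $E_2^{0,i}$ does not follow.

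One technical claim in your sketch is, however, imprecise: "derived $\xi$-adic completeness of $\ul{\mathrm{H}}^q_\Prism$" does not follow from derived completeness of the complex $\ul{R\Gamma}_\Prism$; cohomology groups of a derived complete complex need not themselves be derived complete, and indeed the potential presence of $\xi^\infty$-torsion (Question~\ref{ques iso invert xi}) is exactly the kind of phenomenon that obstructs this. Your instinct to retreat to the truncations $C_e$ is the right fix, but even there the passage $R\varprojlim_e$ needs more care than a Mittag-Leffler hand-wave: one must control $\lim^1$ of the infinite-dimensional $H^{q-1}(C_e)$'s (which are infinite rank precisely for non-smooth $Y$ by Remark~\ref{rmk globalize}) and verify that $H^0(\ul G_k,-)$ commutes with the limit on cohomology. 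The cleaner route, consistent with how Corollary~\ref{Galois inv} is proved, is to work directly with the \v{C}ech--Alexander cosimplicial diagram $\ul D(n)_\Prism$: each term has $H^{\geq 2}(\ul G_k, -)=0$ by the analogue of Lemma~\ref{Galois of diff} for prismatic envelopes, so the double-complex spectral sequence has only two nonzero columns at $E_1$ and degenerates there. The paper's own proof elides all of this, so your write-up is at least as detailed; but the completeness claim as stated would not survive a careful referee.
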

\begin{proof}
	This follows from \Cref{Galois inv}, the finiteness of $R\Gamma(\ul G_k, \Bdr)$ in \Cref{Galois of Bdr}, and finiteness of infinitesimal cohomology $R\Gamma(Y/k_{\inf}, \mathcal{O}_{Y/k})$ for proper rigid space as in \cite{Guo21}.
\end{proof}

\end{document}